\newcites{S}{References}
\numberwithin{equation}{section}
\newenvironment{equations}{\equation\aligned}{\endaligned\endequation}
\newtheorem{theorem}{Theorem}[section]
\newtheorem{lemma}[theorem]{Lemma}
\newtheorem{proposition}[theorem]{Proposition}
\newtheorem{corollary}[theorem]{Corollary}
\theoremstyle{definition}
\newtheorem{definition}{Definition}
\newtheorem{assumption}{Assumption}
\newcommand{\assumnum}[1]{\textsf{\Alph{#1}}}
\newenvironment{example}
  {\pushQED{\qed}\examplex}
  {\popQED\endexamplex}
\newenvironment{remark}
  {\pushQED{\qed}\remarkx}
  {\popQED\endremarkx}
\crefname{assumption}{Assumption}{Assumptions}
\crefname{examplex}{Example}{Examples}
\crefname{theorem}{Theorem}{Theorems}
\crefname{section}{Section}{Sections}
\crefname{enumi}{Assumption}{Assumptions}
\crefname{equation}{Inequality}{Inequalities}
\Crefname{equation}{Equation}{Equations}
    \let\Cref\crtCref
    \let\cref\crtcref
\def\vQ{\textit{Q}}
\def\hvQ{\widehat{\textit{Q}}}
\def\net{\textup{\textsf{net}}}
\def\icubes{\mathlarger{\mathlarger{\boxplus}}}
\def\mid{\mathrm{mid}}
\def\relu{\mathrm{ReLU}}
\def\SS{\texttt{\textup{SS}}}
\newcommand*{\rom}[1]{\expandafter\@slowromancap\romannumeral #1@}
\newcommand{\floor}[1]{\left\lfloor {#1} \right\rfloor}
\newcommand{\ceil}[1]{\left\lceil {#1} \right\rceil}
\newcommand{\Norm}[1]{\left\| {#1} \right\|}
\newcommand{\nnorm}[2][0]{\norm[{#1}]{{#2}}_{n,2}}
\newcommand{\fnorm}[2][0]{\norm[{#1}]{{#2}}_{\textup{F}}}
\newcommand{\opnorm}[2][0]{\norm[{#1}]{{#2}}_{\textup{op}}}
\DeclareMathOperator*{\argmin}{\arg\!\min}
\DeclareMathOperator*{\argmax}{\arg\!\max}
\def\iidsim{\stackrel{\textup{iid}}{\sim}}
\def\indsim{\stackrel{\textup{ind}}{\sim}}
\def\zero{\mathbf 0}
\def\one{\mathbf 1}
\def\ind{\mathbbm{1}}
\def\kl{\textup{KL}}
\def\supp{\textup{supp}}
\def\Ber{\texttt{\textup{Bernoulli}}}
\def\Cat{\texttt{\textup{Cat}}}
\def\Unif{\texttt{\textup{Unif}}}
\def\Pois{\texttt{\textup{Poisson}}}
\def\Dir{\texttt{\textup{Dir}}}
\def\Geo{\texttt{\textup{Geometric}}}
\def\N{\texttt{\textup{N}}}
\def\d{\textup{d}}
\def\e{\textup{e}}
\def\P{\textsf{\textup{P}}}
\def\R{\mathbb{R}}
\def\greekvectors#1{%
 \@for\next:=#1\do{%
    \def\X##1;{\expandafter\def\csname b##1\endcsname{\bm{\csname##1\endcsname}}}
    \expandafter\X\next;}
 \@for\next:=#1\do{%
    \def\X##1;{\expandafter\def\csname h##1\endcsname{\widehat{\csname##1\endcsname}}}
    \expandafter\X\next;}
 \@for\next:=#1\do{%
    \def\X##1;{\expandafter\def\csname t##1\endcsname{\widetilde{\csname##1\endcsname}}}
    \expandafter\X\next;}
 \@for\next:=#1\do{%
    \def\X##1;{\expandafter\def\csname ba##1\endcsname{\bar{\csname##1\endcsname}}}
    \expandafter\X\next;}
 \@for\next:=#1\do{%
    \def\X##1;{\expandafter\def\csname c##1\endcsname{\check{\csname##1\endcsname}}}
    \expandafter\X\next;}
 \@for\next:=#1\do{%
    \def\X##1;{\expandafter\def\csname u##1\endcsname{\underline{\csname##1\endcsname}}}
    \expandafter\X\next;}
 \@for\next:=#1\do{%
    \def\X##1;{\expandafter\def\csname hb##1\endcsname{\widehat{\bm{\csname##1\endcsname}}}}
    \expandafter\X\next;}
}
    \def\command@factory#1{\expandafter\def\csname #1\endcsname{\mathbf{#1}} }
    \def\command@factory#1{\expandafter\def\csname b#1\endcsname{\mathbbm{#1}} }
    \def\command@factory#1{\expandafter\def\csname t#1\endcsname{\texttt{\textup{#1}}} }
    \def\command@factory#1{\expandafter\def\csname c#1\endcsname{\mathcal{#1}} }
    \def\command@factory#1{\expandafter\def\csname s#1\endcsname{\textsf{\textup{#1}}}}
    \def\command@factory#1{\expandafter\def\csname f#1\endcsname{\mathfrak{#1}} }
    \def\command@factory#1{\expandafter\def\csname sc#1\endcsname{\mathscr{#1}} }
    \def\command@factory#1{\expandafter\def\csname ba#1\endcsname{\bar{#1}} }
    \def\command@factory#1{\expandafter\def\csname h#1\endcsname{\widehat{#1}} }
    \def\command@factory#1{\expandafter\def\csname ti#1\endcsname{\widetilde{#1}} }
    \def\command@factory#1{\expandafter\def\csname u#1\endcsname{\underline{#1}} }
\begin{document}

\begin{frontmatter}

\title{Adaptive variational Bayes: Optimality, computation and applications}
\runtitle{Adaptive variational Bayes}

\begin{aug}
\author[A]{\fnms{Ilsang} \snm{Ohn}\ead[label=e1,mark]{ilsang.ohn@inha.ac.kr}}
\and
\author[B]{\fnms{Lizhen} \snm{Lin}\ead[label=e2,mark]{lizhen01@umd.edu}}
\address[A]{Department of  Statistics, Inha University \\ 
\printead{e1} }%
\address[B]{Department of Mathematics, The University of Maryland \\ 
\printead{e2} }%
\end{aug}
\runauthor{I. Ohn and L. Lin}

\begin{abstract}
In this paper, we explore adaptive inference based on variational Bayes. Although several studies have been conducted to analyze the contraction properties of variational posteriors, there is still a lack of a general and computationally tractable variational Bayes method that performs adaptive inference. To fill this gap, we propose a novel \textit{adaptive variational Bayes} framework, which can operate on a collection of models. The proposed framework first computes a variational posterior over each individual model separately and then combines them with certain weights to produce a variational posterior over the entire model. It turns out that this combined variational posterior is the closest member to the posterior over the entire model in a predefined family of approximating distributions. We show that the adaptive variational Bayes attains optimal contraction rates adaptively under very general conditions. We also provide a methodology to maintain the tractability and adaptive optimality of the adaptive variational Bayes even in the presence of an enormous number of individual models, such as sparse models. We apply the general results to several examples, including deep learning and sparse factor models, and derive new and adaptive inference results. In addition, we characterize an implicit regularization effect of variational Bayes and show that the adaptive variational posterior can utilize this. 
\end{abstract}

\begin{keyword}[class=MSC2020]
\kwd[Primary ]{62C10}
\kwd[; secondary ]{62G20}
\end{keyword}

\begin{keyword}
\kwd{Variational Bayes}
\kwd{Adaptive inference}
\kwd{Posterior contraction rates}
\kwd{Model selection consistency}
\kwd{Deep neural networks}
\kwd{Quasi-posteriors}
\end{keyword}

\end{frontmatter}

\section{Introduction}

The bias-variance trade-off, a fundamental principle of statistical inference, suggests that a statistical model needs to be appropriately chosen in order to gain useful information from data efficiently. For example, in nonparametric regression or density estimation, the complexity of a model should be selected or specified according to the smoothness of the true regression function or density to optimize the prediction or estimation risk. Another example is high-dimensional linear regression, where it is almost necessary to impose certain sparsity on the regression coefficients to reduce statistical variability in high-dimensional estimation. It will be easier to determine a suitable model when we have some knowledge of the underlying data-generating process. However, this knowledge is rarely available in practice. Therefore, accurate inference requires the statistical method to be \textit{adaptive}, i.e., to be able to learn from data the relevant features of the data-generating process so that appropriate model determinations can be made.

From this adaptive inference viewpoint, Bayesian inference is appealing. Given a collection of models, a Bayesian procedure can automatically learn the appropriate model via a hierarchical prior design, which first assigns a prior over models followed by a prior on the parameter given the selected model. A number of studies \citep{belitser2003adaptive,lember2007universal,ghosal2008nonparametric,arbel2013bayesian,gao2020general,han2021oracle} have shown that with a carefully designed hierarchical prior, one can conduct optimal inference adaptively using the (original) posterior distribution. However, posterior inference based on sampling the posterior distribution over different models is usually computationally demanding and often very inefficient, especially when moves between models are poorly proposed in a Markov chain Monte Carlo (MCMC) algorithm.

High-dimensional and/or huge data sets have been more frequent in practical applications, with which computing posterior distributions exactly by MCMC algorithms is time-consuming and even intractable. An attractive and popular alternative is \textit{variational Bayes} that offers computationally fast approximations, called \textit{variational posteriors}, of posterior distributions using optimization algorithms. Along with the increasing popularity of variational Bayes, theoretical analysis of variational posteriors has been conducted recently.  \citet{zhang2020convergence} provided mild and general sufficient conditions for establishing contraction rates of variational posteriors. \citet{pati2018statistical} and  \citet{yang2020alpha} developed variational Bayes theoretic frameworks that can deal with latent variable models.  \cite{alquier2020concentration} investigated the contraction properties of variational fractional posteriors where the likelihood is replaced by its fractional version with a positive exponent less than 1. There are several studies that derived contraction rates of variational posteriors for a specific statistical model, for example, mixture models \citep{cherief2018consistency}, sparse (Gaussian) linear regression  \citep{ray2021variational, yang2020variational}, sparse logistic linear regression \citep{ray2020spike} and sparse factor models \citep{ning2021spike}.

Nonetheless, the adaptivity of variational posteriors remains an important and largely open problem. Although the existing theoretical frameworks have produced adaptive variational Bayes methods for some specific models, they require specially designed prior distributions and variational families to achieve adaptivity, which restricts the theory's usefulness. Another problem concerns computation. Construction of a computationally tractable variational Bayes method over a collection of models is not an easy task in general. Exceptionally, \citet{zhang2020convergence} proposed a potentially adaptive variational Bayes procedure with an additional model selection stage, which we call \textit{model selection variational Bayes}. They showed that after selecting the best model among multiple models and conducting variational inference over the selected model, the variational posterior arising from this process could be optimal under mild assumptions. Although not clearly stated, this procedure appears capable of achieving adaptive optimality in many problems. In \citet{cherief2019consistency}, a similar model selection approach was applied to variational fractional posteriors and its theoretical properties were studied. 

In this paper, we propose a new variational Bayes method, called \textit{adaptive variational Bayes}. Instead of selecting the best model and using the variational posterior over the selected model for subsequent inference tasks, the proposed framework aggregates multiple variational posteriors, each of which is obtained for each individual model separately, with certain weights to produce a variational posterior over the entire model, referred to as the \textit{adaptive variational posterior}. This aggregated variational posterior turns out to be a closer approximation to the original posterior than the variational posterior over the selected model. Theoretically, the adaptive variational posterior can attain optimal contraction rates adaptively for a wide variety of statistical problems under mild conditions on priors and variational families. To the best of our knowledge, our framework is the first general recipe for establishing the adaptive contraction of variational posteriors.

We summarized our contributions as follows.

\begin{enumerate}
    \item \textbf{Computational tractability.} As we have mentioned previously, Bayesian inference via a posterior distribution over multiple models has been shown to be an adaptively optimal procedure in a wide range of statistical applications. Therefore naturally, it is desirable to approximate the posterior distribution over the entire model, which is not easy to compute and often intractable. We demonstrate that the adaptive variational posterior, which is an aggregate of individual variational posteriors, is the closest member to the posterior in a predefined family of approximating distributions. This implies that the adaptive variational Bayes can inherit the computational tractability of variational Bayes methods used for obtaining the individual variational posteriors as long as the model complexity is not too large.
    
    \item \textbf{Adaptive contraction rate and model selection consistency.} We formulate mild conditions under which the adaptive variational posterior can attain optimal contraction rates adaptively. Our theoretical conditions are slightly simpler than those of \cite{zhang2020convergence} in the sense that the ``prior mass condition'' can be ``hidden'', and that a condition associated with a stronger divergence than the Kullback-Leibler divergence can be relaxed. We want to clarify that these technical simplifications are not related to any aspect of the proposed method, but are made by rearranging the proof of \cite{zhang2020convergence}. We also provide some easily verifiable sufficient conditions for our theoretical assumptions. Moreover, we show that the adaptive variational Bayes method does not severely overestimate and underestimate the ``best'' model that leads to an optimal contraction rate. We apply our general theory to deep neural network models and derive adaptive optimal contraction rates in a number of applications. 
    
    \item \textbf{Extension to combinatorial model spaces.} Although completely parallelizable, the computation of the adaptive variational posterior becomes intractable when the number of models is extremely large because of the need to obtain a variational posterior for every individual model. This is the case for statistical models involving a ``combinatorial'' model structure such as high-dimensional sparse linear regression, where the individual models may be divided by a sparse pattern of the regression coefficients. This is clearly different from a ``nested'' model space such as a mixture model, in which the individual models can be ordered by their complexity, such as the number of mixture components. We show that, however, the proposed approach can be applied to combinatorial model spaces by utilizing tailored priors and variational families. Our method is particularly useful for model spaces with both combinatorial and nested structures, such as sparse factor models and high-dimensional nonparametric regression, and we study these examples.
    
    \item \textbf{Regularization via variational approximation.} In fact, under the theoretical conditions we formulate in our main theory, of which a key part is related to the choice of a prior, the original posterior distribution can contract adaptively also. In this regard, our theory does not reveal a theoretical merit of the adaptive variational posterior over the original posterior. However, we find that there is a situation where the adaptive variational posterior is guaranteed to behave well while the original posterior is not. This phenomenon, which we refer to as \textit{implicit variational Bayes regularization}, comes from the choice of variational families.
    
    \item \textbf{Theoretical justification of the use of quasi-likelihood.} We consider the use of quasi-likelihoods in the proposed adaptive variational Bayes framework. We formulate conditions on quasi-likelihoods under which adaptive contraction rates are achieved by variational quasi-posteriors. We apply the general result to stochastic block models and nonparametric regression with sub-Gaussian errors. 
\end{enumerate}

The rest of the paper is organized as follows. We first introduce some notation in the rest of this section.  In \cref{sec:method}, we develop a general framework for adaptive variational Bayes inference. Concretely, we provide a design of the prior distribution and variational family, and describe a general and simple scheme for computation of the proposed variational posterior. In \cref{sec:theory}, we study the contraction properties of the proposed variational posterior, including oracle rates, adaptivity and model selection properties.  In \cref{sec:dnn}, we apply our general results to variational deep learning. In \cref{sec:sparse}, we propose an approach by which the adaptive variational Bayes is applicable even when the number of individual models is quite large. In \cref{sec:regularization}, we study the implicit variational Bayes regularization and derive adaptive contraction rates using this regularization effect.  In \cref{sec:quasi}, we study the theoretical properties of variational quasi-posteriors where the usual likelihood function is replaced with an alternative quasi-likelihood.

\subsection{Notation}

Let $\R$, $\R_+$, $\R_{\ge0}$, $\bZ$, $\bN_0$ and $\bN$  be the sets of real numbers, positive numbers, nonnegative numbers, integers, nonnegative integers and natural numbers, respectively. We denote by $\ind(\cdot)$ the indicator function.  For two integers $z_1,z_2\in\bZ$ with $z_1\le z_2$, we let $[z_1:z_2]:=\cbr{z\in\bZ:z_1\le z\le z_2}$ and if $z_2\ge2$, we use the shorthand $[z_2]:=[1:z_2]$ and $\bN_{\ge z_2}:=\bN\setminus[z_2-1]=\cbr{z\in\bN:z\ge z_2 }$. For $d\in\bN$, let $\zero_d$ and $\one_d$ denote the $d$-dimensional vectors of 0's and of 1's, respectively and let $\I_d$ denote the $d\times d$-dimensional identity matrix. For a $d$-dimensional vector $\x:=(x_j)_{j\in[d]}\in\R^d$, we denote $|\x|_q:=\del[0]{\sum_{j=1}^d|x_j|^q}^{1/q}$ for  $q\ge1$, which are the usual Euclidean $q$-norm, and denote $|\x|_0:=\sum_{j=1}^d\ind(x_j\neq0)$ and $|\x|_\infty:=\max_{j\in[d]}|x_j|$.  Let $\Delta_d:=\cbr[0]{\balpha\in[0,1]^d:|\balpha|_1=1}$. We denote by $\bS_{++}^{d}$ the set of $d\times d$ symmetric positive definite matrices. For a real number $x\in\R$, we define $\floor{x}:=\max\{z\in\bZ:z\le x\}$,  and $\ceil{x}:=\min\{z\in\bZ:z\ge x\}$. For two real numbers $a,b\in\R$, we write $a\vee b:=\max\{a,b\}$ and $a\wedge b:=\min\{a,b\}$. Moreover, for two real vectors  $\a:=(a_j)_{j\in[d]}\in\R^d$ and $\b:=(b_j)_{j\in[d]}\in\R^d$ with the same dimension, we write $\a\vee\b:=(a_j\vee b_j)_{j\in[d]}$ and $\a\wedge\b:=(a_j\wedge b_j)_{j\in[d]}$. For an arbitrary set $B$, we denote by $B^\complement$ its complement and by $|B|$ its cardinality. Let $\bP(B)$ be the powerset of $B$, i.e., $\bP(B):=\{B':B'\subset B\}$. For two positive sequences $(a_n)_{n\in \mathbb{N}}$ and $(b_n)_{n\in \mathbb{N}}$, we write $a_n\lesssim b_n$ or  $b_n\gtrsim a_n$ or $a_n=\scO(b_n)$, if there exists a positive constant $C>0$ such that $a_n\le Cb_n$ for any $n\in \mathbb{N}$. Moreover, we write $a_n\asymp b_n$ if both $a_n\lesssim b_n$  and  $a_n\gtrsim b_n$ hold. We write $a_n=\sco(b_n)$ if $\lim_{n\to\infty}a_n/b_n=0.$  Absolute constants denoted by e.g., $\fc_1,\fc_2,\dots$ may vary from place to place.

For a measurable space $(\bX,\cX)$, we denote by $\cP(\bX)$ the set of all probability measures supported on $\bX$. For a probability measure $\P\in\cP(\bX)$ and a function $g$ on $\bX$, we write $\P g:=\int_{\bX} g\d\P$, i,e., $\P g$ denotes the expectation of $g$ with respect to the measure $\P$. For two probability distributions $\P_1\in\cP(\bX)$ and $\P_2\in\cP(\bX)$, we denote by $\kl(\P_1, \P_2)$ the Kullback-Leibler (KL) divergence from $\P_2$ to $\P_1$, which is defined by $ \kl(\P_1,\P_2):=\int \log(\frac{\d\P_1}{\d\P_2})\d\P_1$ if $\P_1\ll\P_2$ and $ \kl(\P_1,\P_2):=\infty$ otherwise. For simplicity, we slightly abuse a notation to denote $\kl(\balpha_1,\balpha_2):=\kl(\Cat(\balpha_1), \Cat(\balpha_2))$ for any $\balpha_1,\balpha_2\in\Delta_m$ and any $m\in\bN$, where $\Cat(\balpha)$ stands for the categorical distribution with probability vector $\balpha.$ For $\rho\in(0,1)\cup(1,\infty)$, the $\rho$-R\'enyi divergence from $\P_2$ to $\P_1$ is defined as $\sD_\rho(\P_1,\P_2):=\frac{1}{\rho-1}\log\del[1]{\int \del{\frac{\d\P_1}{\d\P_2}}^{\rho-1}\d\P_1}$ if $\P_1\ll\P_2$ and $ \sD_\rho(\P_1,\P_2):=\infty$ otherwise. For $x\in\bX$, let $\delta(\cdot;x)$ be a Dirac-delta measure at $x$ such that $\delta(B;x)=1$ if $x\in B$ and $\delta(B;x)=0$ otherwise.

\section{Adaptive variational Bayes}
\label{sec:method}

In this section, we develop a novel variational Bayes framework called adaptive variational Bayes. We first describe a statistical setup, then introduce an estimation procedure in the proposed framework as well as a general and simple scheme for its computation.

\subsection{Statistical experiment and models}
\label{sec:method:exprmt}

We describe our setup for a \textit{statistical experiment}, which, in this paper, is defined as a pair of a sample space and a set of some distributions on the sample space. For each sample size $n\in\bN$,  suppose that we observe a $\bY_n$-valued sample $\Y^{(n)}$, where $\bY_n$ is a measurable \textit{sample space} equipped with a reference $\sigma$-finite measure $\mu_n$.  We then model the sample as having a distribution $\P_{\blambda}^{(n)}\in\cP(\bY_n)$ determined by a \textit{natural parameter} $\blambda$ in a measurable \textit{natural parameter space} $\Lambda_n$. The natural parameter space can be infinite-dimensional, for 
instance, see \cref{example:mixture} below, where the natural parameter space is given as the space of density functions. We assume that there exists a nonnegative function $\sp_n:\Lambda_n\times \bY_n\mapsto\R_{\ge0}$, called a \textit{likelihood (function)}, such that, for every parameter $\blambda\in\Lambda_n$,  $\int_{\bY_n}\sp_n(\blambda,\y^{(n)})\d\mu_n(\y^{(n)})=1$ and 
    \begin{equation}
        \label{eq:likelihood}
        \P_{\blambda}^{(n)}(\d\y^{(n)})=\sp_n(\blambda, \y^{(n)}) \mu_n(\d\y^{(n)})
    \end{equation}
for any $\y^{(n)}\in\bY_n$. We denote by $\cP(\bY_n;\sp_n,\Lambda_n)$ the set of all distributions of the form \labelcref{eq:likelihood}. 

With a countable set of model indices $\cM_n$ that we call a \textit{model space}, we consider a collection of probability models $\{\cP_{n,m}\}_{m\in\cM_n}$  for estimation based on the sample $\Y^{(n)}$, where each model is of the form
    \begin{equation*}
    \cP_{n,m}:=\cbr{\P_{\sT(\btheta)}^{(n)}\in\cP(\bY_n;\sp_n,\Lambda_n):\btheta\in\Theta_{n,m}}
    \end{equation*}
with a parameter space $\Theta_{n,m}$ and a measurable map $\sT:\cup_{m\in\cM_n}\Theta_{n,m}\mapsto \Lambda_n$ called a \textit{natural parameterization map}. For simplicity, we let $\Theta_{n,\cM_n}:=\cup_{m\in\cM_n}\Theta_{n,m}$. The map $\sT$ may depend on the sample size $n$ but we do not specify the subscript $n$ to this for brevity. We refer to ``submodels'' $\{\cP_{n,m}\}_{m\in\cM_n}$ as \textit{individual models} or simply models. 

In order to obtain computational tractability (see \cref{subsec:algorithm}), we assume that the parameter spaces $\{\Theta_{n,m}\}_{m\in\cM_n}$ are disjoint. This disjointness assumption is satisfied when the dimensions of the parameter spaces are different from each other. For example, $\{\R^m\}_{m\in\bN}$ are disjoint.
Furthermore, the natural parametrization map can mitigate any restriction of the disjointness assumption. One may construct a collection of disjoint parameter spaces, which possibly yields non-disjoint natural parameter spaces such that $\cbr{\sT(\btheta):\btheta\in\Theta_{n,m'}}\cap\cbr{\sT(\btheta):\btheta\in\Theta_{n,m}}\neq \emptyset$ for some $m\neq m'$.

\subsubsection{Model space and examples}
\label{subsec:modelspace}

For the rest of the paper, we specify the following two types of model spaces. 

\begin{definition}[Model spaces] $\:$
\begin{enumerate}
    \item We say that a model space $\cM_n$ is \textit{nested} or has a nested structure  if $\cM_n$ is a subset of $\bN^q$ for some $q\in\bN$.
    \item We say that a model space $\cS_n$ is \textit{combinatorial} or has a combinatorial structure if $\cS_n$ is the powerset of some subset $\bar{S}$ of $\bN$, i.e., $\cS_n=\bP(\baS):=\cbr{S:S\subset\baS}$.
\end{enumerate}
\end{definition}

We can impose an (strict partial) order $<$  on a nested model space $\cM_n\subset\bN^q$, which is defined sensibly in the context. For example, we write $m< m'$ for two model indices $m:=(m_j)_{j\in[q]},m':=(m_j')_{j\in[q]}\in\cM_n$, if $m_j< m_j'$ for every $j\in[q]$, or we write  $m< m'$  if   $\prod_{j\in[q]}m_j<\prod_{j\in[q]}m_j'$. We relate this order to the complexities of the individual models measured in an appropriate sense. Then we can say that the model $m'$ is more complex than the model $m$ if $m< m'$ and sort the individual models by their complexities. A collection of models with a nested model space is used to adapt ``smoothness'' for nonparametric regression or density estimation, by choosing an appropriately complex model among the individual models with various complexities. We provide two examples.

\begin{example}[Gaussian mixture model]
\label{example:mixture}
Consider a statistical experiment in which a $d$-dimensional real-valued sample $\Y^{(n)}:=(\Y_1,\dots, \Y_n)\in\bY_n:=(\R^d)^{\otimes n}$ of size $n$ is observed. Let $\cG^d$ denote the set of all probability density functions on $\R^d$. Let $\sp_n:\cG^d\times(\R^d)^{\otimes n}\mapsto\R_{\ge0}$ be a likelihood function such that
    \begin{equation*}
        \sp_n(g, \Y^{(n)})=\prod_{i=1}^ng(\Y_i) \mbox{ for $g\in\cG^d$,}
    \end{equation*}
and let $\P_{g}^{(n)}$ be a distribution of the sample $\Y^{(n)}$ with a probability density function $\sp_n(g,\cdot)$. We aim to estimate the density of the sample and we use Gaussian mixtures for this. Given \textit{the number of components} $m\in\bN$, and a mixture parameter $\btheta:=(\bvarpi, \bvartheta_1,\dots, \bvartheta_m, \bSigma_1,\dots,\bSigma_m)\in\Theta_{n,m}:=\Delta_m\times(\R^d)^{\otimes m}\times(\bS_{++}^d)^{\otimes m}$,  we let $\sT:\cup_{m\in\bN}\Theta_{n,m}\mapsto \cG^d$ be a map defined as
    \begin{equation*}
        \sT(\btheta)=\sum_{k=1}^m\varpi_kg_{\N(\bvartheta_k,\bSigma_k)},
    \end{equation*}
where $g_{\N(\bvartheta,\bSigma)}$ stands for the probability density function of the multivariate Gaussian distribution with mean $\bvartheta$ and covariance matrix $\bSigma$. Usually, we consider a nested model space $\cM_n:=[m_{\max}]$ with a pre-specified upper bound $m_{\max}\in\bN$ of the number of components. Then a Gaussian mixture model refers to a collection of the models  $\{\cP_{n,m}:=\cbr[0]{\P_{\sT(\btheta)}^{(n)}:\btheta\in\Theta_{n,m}}\}_{m\in\cM_n}$ with the disjoint
parameter spaces $\{\Theta_{n,m}\}_{m\in\cM_n}$. 
\end{example}

\begin{example}[Stochastic block model]
\label{example:sbm}
Suppose that we observe a sample $(Y_{i,j})_{(i,j)\in[n]^2:i>j}$ from a graph of $n$ nodes, where $Y_{i,j}\in\{0,1\}$ indicates whether there is a connection between the $i$ and $j$-th nodes. For $m\in[n]$, let 
    \begin{align}
        \cU_m&:=\cbr{\U\in [0,1]^{m\times m}:\U=\U^\top}, \label{eq:def_sm}\\
        \cZ_{n, m}&:=\cbr{\Z=(\z_1,\dots,\z_n)^\top\in\{0,1\}^{n\times m}: |\z_i|_1=1}.  \label{eq:def_znm}
    \end{align}
The stochastic block model assumes that the connectivities between $n$ nodes follow a distribution
    \begin{equation*}
        (Y_{i,j})_{(i,j)\in[n]^2:i>j}\sim \P_{\sT(\U, \Z)}^{(n)}:=\bigotimes_{i=2}^{n}\bigotimes_{j=1}^{i-1}\Ber(\Omega_{i,j})
        \mbox{ with }\Omega_{i,j}:=\z_i^\top\U\z_j
    \end{equation*}
for a \textit{community-wise connectivity probability matrix} $\U\in\cU_m$ and a \textit{community assignment matrix} $\Z:=(\z_1,\dots, \z_n)^\top\in\cZ_{n,m}$ with \textit{the number of communities} $m\in[n]$, where $\sT$ is a map defined by $\sT(\U,\Z)=(\z_i^\top\U\z_j)_{(i,j)\in[n]^2:i>j}$. Note that the parameter spaces $\{\cU_m\times\cZ_{n,m}\}_{m\in[n]}$ are disjoint. Here the model space $[n]$ is nested. For computational easiness, we consider some subset $\cM'_n\subset[n]$ which is also nested.
\end{example}

Unlike a nested model space, a combinatorial model space defined as $\cS_n:=\bP(\baS)$ has the ``exponentially growing'' cardinality $2^{|\baS|}$. A leading example is a sparse model, of which the model space $\cS_n$ is given by $\bP([d_n])$ with $d_n\in\bN$ being the number of parameters and each member of $\cS_n$ is a set of indices of ``active'' parameters. In this paper, we investigate model spaces with both combinatorial and nested structures instead of model spaces that only have a combinatorial structure, because our proposal has some merits for a nested structure. A sparse factor model is an example of the former.

 \begin{example}[Sparse factor model]
\label{example:sparsefactor}
Consider a factor model, where a $d_n$-dimensional  real-valued sample $\Y^{(n)}:=(\Y_1,\dots, \Y_n)\in\bY_n:=(\R^{d_n})^{\otimes n}$ of size $n$ is assumed to follow a distribution $\P_{\bSigma}^{(n)}:=\bigotimes_{i=1}^n\N\del{\zero_{d_n}, \bSigma}$ with a covariance matrix given by
    \begin{equation*}
       \bSigma= \sT(\L):=\L\L^\top+\I_{d_n}\in\bS_{++}^{d_n}
    \end{equation*}
for a \textit{factor loading matrix} $\L\in\R^{d_n\times m}$ and the factor dimensionality $m\in\bN$. To deal with high-dimensional settings where $d_n$ is much larger than $n$, we may assume the sparsity of the loading matrix. For a matrix $\L=(L_{j,k})_{j\in[d_n],k\in[m]}\in\R^{d_n\times m}$, let $\L_{j,:}:=(L_{j,k})_{k\in[m]}$ be the $j$-th row of $\L$ for $j\in[d_n]$ and we denote the ``row'' support of  $\L$ by
    \begin{align*}
        \supp(\L):=\cbr{j\in[d_n]:\abs[0]{\L_{j,:}}_0> 0}.
    \end{align*}
Given an upper bound $m_{\max}\in\bN$ of the factor dimensionality, we consider parameter spaces defined  as
    \begin{equation*}
        \Theta_{n,m,S}:=\cbr{\L\in\R^{d_n\times m}: \supp(\L)=S}.
    \end{equation*}
for each $m\in[m_{\max}]$ and $S\subset[d_n]$, which are disjoint. Note that the model space $[m_{\max}]\times \bP([d_n]) $ has both nested and combinatorial structures and its cardinality $m_{\max}2^{d_n}$ is exponential in the dimension $d_n$ of the sample.
\end{example}

As we will see in \cref{subsec:algorithm}, the extremely large cardinality of a model space makes our proposed adaptive variational Bayes computationally intractable. Therefore we need a modified approach that can address the case of combinatorial structures. We first focus on nested model spaces whose cardinalities are manageable, and describe details of the approach for dealing with combinatorial structures later in \cref{sec:sparse}.

\subsection{Prior and variational posterior}

Following the idea of the use of a hierarchical prior on model and parameter spaces suggested in the previous studies on Bayesian adaptation  \citep{lember2007universal,ghosal2008nonparametric,gao2020general,han2021oracle}, we consider a hierarchical prior distribution $\Pi_n$ on the entire parameter space $\Theta_{n,\cM_n}$, which is of the form
    \begin{equation}
    \label{eq:prior}
        \Pi_n=\sum_{m\in\cM_n}\alpha_{n,m}\Pi_{n,m},
    \end{equation}
where $\balpha_n:=(\alpha_{n,m})_{m\in\cM_n}\in \Delta_{|\cM_n|}$ and $\Pi_{n,m}\in\cP(\Theta_{n,m})$ for each $m\in\cM_n$. As $\Pi_n(\btheta\in\Theta_{n,m})=\sum_{m\in\cM_n}\alpha_{n,m}\Pi_{n,m}(\btheta\in\Theta_{n,m})=\alpha_{n,m}$, the term $\alpha_{n,m}$ represents the prior probability of a model $m\in\cM_n$. The (original) posterior distribution induced from the prior in \labelcref{eq:prior} is given by
    \begin{equation}
        \d\Pi_n(\btheta|\Y^{(n)}):=\frac{\sp_n(\sT(\btheta), \Y^{(n)})\d\Pi_n(\btheta)}{\int \sp_n(\sT(\btheta), \Y^{(n)})\d\Pi_n(\btheta) }.
    \end{equation}
    
Instead of exactly computing the posterior  $\Pi_n(\cdot|\Y^{(n)})$, here we seek the closest distribution, denoted by $\hvQ_n$, to the posterior among a variational family $\cQ_n\subset\cP(\Theta_{n,\cM_n})$ of ``hierarchical'' distributions given by
    \begin{equation}
    \label{eq:vfamily}
        \cQ_n:=\cbr[4]{\sum_{m\in\cM_n}\gamma_{n,m}\vQ_{n,m}:(\gamma_{n,m})_{m\in\cM_n}\in \Delta_{|\cM_n|}, \vQ_{n,m}\in\cQ_{n,m}},
    \end{equation}
where $\cQ_{n,m}\subset\cP(\Theta_{n,m})$ is a  predefined set of some distributions over the individual model $m\in\cM_n$. That is, we consider the variational posterior defined as
    \begin{equations}
    \label{eq:vbayes}
        \hvQ_n\in\argmin_{\vQ\in\cQ_n}\kl\del[1]{\vQ,\Pi_n(\cdot|\Y^{(n)})}.
    \end{equations}
We will show in the next section that the variational posterior $\hvQ_n$ attains optimal contraction rates adaptively under mild conditions. We, therefore, refer to it as the \textit{adaptive variational posterior distribution}.  

We close this subsection by introducing some additional notation and well-known facts. Let $\sp_{n,\Pi}(\Y^{(n)}):=\int\sp_n(\sT(\btheta), \Y^{(n)})\d\Pi(\btheta)$  be a marginal likelihood of a sample $\Y^{(n)}$ with respect to a prior distribution $\Pi$. We define
    \begin{equation}
       \scE_n(\vQ,\Pi,\sp_n):=-\int\log\sp_n(\sT(\btheta), \Y^{(n)})\d\vQ(\btheta)+\kl(\vQ, \Pi)
    \end{equation}
for any distributions $\vQ,\Pi\in\cP(\Theta_{n,\cM_n})$, where we suppress the dependence on the natural parametrization map $\sT$ and the sample $\Y^{(n)}$ for simplicity. Then by simple algebra, it can be shown that the minimization program in \labelcref{eq:vbayes} is equivalent to
    \begin{equation}
        \hvQ_n\in\argmin_{\vQ\in\cQ_n}\scE_n(\vQ,\Pi_n,\sp_n).
    \end{equation}
The negative of the function $\scE_n$ is called the evidence lower bound (ELBO) in the related literature, which is named after the property that the ELBO is a lower bound of the log marginal likelihood, i.e., $\log\sp_{n,\Pi}(\Y^{(n)})\ge-\scE_n(\vQ,\Pi,\sp_n)$.

\subsection{A general scheme for computation}
\label{subsec:algorithm}

The minimization program in \labelcref{eq:vbayes} needs to be done over distributions on the entire parameter space $\Theta_{n,\cM_n}$ which is usually quite ``big'' and even has a varying dimension. Therefore, this seems to be computationally intractable. However, as stated in the following theorem, if the parameter spaces are disjoint as we have assumed, the minimization program can be done by combining the minimization results over the individual models.

\begin{theorem}
\label{thm:compute}
Suppose that the parameter spaces $\{\Theta_{n,m}\}_{m\in\cM_n}$ are disjoint. Then the variational posterior distribution in \labelcref{eq:vbayes} satisfies
    \begin{equation}
        \label{eq:vposterior}
        \hvQ_n=\sum_{m\in\cM_n}\hgamma_{n,m}\hvQ_{n,m},
    \end{equation}
where
    \begin{equation}
    \label{eq:vpost_comp}
        \hvQ_{n,m}\in\argmin_{\vQ\in\cQ_{n,m}}\scE_n\del[1]{\vQ,\Pi_{n,m},\sp_n}
    \end{equation}
and 
    \begin{equation}
     \label{eq:vpost_mprob}
        \hgamma_{n,m}:=\frac{1}{\hat{Z}_{\gamma,n}} \alpha_{n,m}\exp\del[1]{-\scE_n\del[1]{\hvQ_{n,m},\Pi_{n,m},\sp_n}}
    \end{equation}
with $\hat{Z}_{\gamma,n}:=\sum_{m'\in\cM_n}\alpha_{n,m'}\exp\del[1]{-\scE_n(\hvQ_{n,m'},\Pi_{n,m'},\sp_n)}$ being the normalizing constant.
\end{theorem}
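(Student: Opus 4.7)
The plan is to exploit the disjointness of the parameter spaces $\{\Theta_{n,m}\}_{m\in\cM_n}$ to decompose the variational objective $\scE(\Xi,\Pi_n,\sq_n,\Y^{(n)})$ into a sum of ``per-model'' objectives plus a simplex penalty, and then minimize the two pieces separately.

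First, I would observe that every element $\Xi\in\bV_n$ admits a \emph{unique} representation $\Xi=\sum_{m\in\cM_n}\gamma_{n,m}\Xi_{n,m}$ with $\bgamma_n\in\Delta_{|\cM_n|}$ and $\Xi_{n,m}\in\bV_{n,m}\subset\bP(\Theta_{n,m})$: because the $\Theta_{n,m}$ are disjoint, the weights are recovered as $\gamma_{n,m}=\Xi(\Theta_{n,m})$ and $\Xi_{n,m}$ as the conditional of $\Xi$ on $\Theta_{n,m}$ (the choice of $\Xi_{n,m}$ is immaterial when $\gamma_{n,m}=0$). The same disjointness applied to $\Pi_n=\sum_m\alpha_{n,m}\Pi_{n,m}$ shows that on $\Theta_{n,m}$ one has $\d\Pi_n=\alpha_{n,m}\d\Pi_{n,m}$ and $\d\Xi=\gamma_{n,m}\d\Xi_{n,m}$, hence $\d\Xi/\d\Pi_n=(\gamma_{n,m}/\alpha_{n,m})(\d\Xi_{n,m}/\d\Pi_{n,m})$ whenever $\Xi_{n,m}\ll\Pi_{n,m}$.

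Next, I would split both the expected log-likelihood and the KL term along this decomposition. For the likelihood term,
\begin{equation*}
-\int\log\sq_n(\sT_n(\btheta),\Y^{(n)})\,\d\Xi(\btheta)=\sum_{m\in\cM_n}\gamma_{n,m}\left(-\int\log\sq_n(\sT_n(\btheta),\Y^{(n)})\,\d\Xi_{n,m}(\btheta)\right),
\end{equation*}
and for the KL term, integrating the density ratio above against $\Xi$ yields
\begin{equation*}
\kl(\Xi,\Pi_n)=\sum_{m\in\cM_n}\gamma_{n,m}\log\frac{\gamma_{n,m}}{\alpha_{n,m}}+\sum_{m\in\cM_n}\gamma_{n,m}\kl(\Xi_{n,m},\Pi_{n,m})=\kl(\bgamma_n,\balpha_n)+\sum_{m\in\cM_n}\gamma_{n,m}\kl(\Xi_{n,m},\Pi_{n,m}).
\end{equation*}
Combining these identities gives the key decomposition
\begin{equation*}
\scE(\Xi,\Pi_n,\sq_n,\Y^{(n)})=\sum_{m\in\cM_n}\gamma_{n,m}\,\scE(\Xi_{n,m},\Pi_{n,m},\sq_n,\Y^{(n)})+\kl(\bgamma_n,\balpha_n),
\end{equation*}
in which the inner $\Xi_{n,m}$'s are decoupled across $m$ and enter linearly with nonnegative weights.

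The optimization then factors in two clean steps. For any fixed $\bgamma_n$, the objective is minimized term-by-term by choosing $\Xi_{n,m}=\hXi_{n,m}$ as in \labelcref{eq:vpost_comp} whenever $\gamma_{n,m}>0$ (with an arbitrary choice if $\gamma_{n,m}=0$). Substituting back leaves the scalar problem
\begin{equation*}
\min_{\bgamma_n\in\Delta_{|\cM_n|}}\left\{\sum_{m\in\cM_n}\gamma_{n,m}\,\hat e_{n,m}+\kl(\bgamma_n,\balpha_n)\right\},\qquad \hat e_{n,m}:=\scE(\hXi_{n,m},\Pi_{n,m},\sq_n,\Y^{(n)}),
\end{equation*}
which is the classical Gibbs variational problem on the simplex; its unique minimizer is the tilted distribution $\hgamma_{n,m}\propto\alpha_{n,m}\exp(-\hat e_{n,m})$, exactly the formula \labelcref{eq:vpost_mprob}. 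This yields the claimed representation \labelcref{eq:vposterior}.

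The only non-routine step is the KL decomposition, and the disjointness hypothesis is precisely what is needed there: without disjoint supports, the Radon--Nikodym derivative of $\Xi$ with respect to $\Pi_n$ does not factor, cross terms appear, and the clean split into $\kl(\bgamma_n,\balpha_n)$ plus per-model KL terms fails. I would therefore be careful to justify the density-ratio computation on each $\Theta_{n,m}$ and to handle the $\gamma_{n,m}=0$ and $\Xi_{n,m}\not\ll\Pi_{n,m}$ boundary cases using the usual convention $0\log 0=0$ and the fact that both sides of the identity are $+\infty$ simultaneously when some component KL is infinite with positive weight; the solution of the simplex problem via the standard Gibbs identity is then immediate.
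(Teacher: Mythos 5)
Your proposal is correct and follows essentially the same route as the paper's proof: the disjointness-based decomposition $\scE(\Xi,\Pi_n,\sq_n,\Y^{(n)})=\kl(\bgamma_n,\balpha_n)+\sum_{m}\gamma_{n,m}\scE(\Xi_{n,m},\Pi_{n,m},\sq_n,\Y^{(n)})$, followed by per-model minimization and the Gibbs/tilted-distribution solution of the simplex problem, is exactly the argument given there. Your extra care with the $\gamma_{n,m}=0$ and non-absolute-continuity boundary cases is a harmless refinement of the same proof.
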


Thanks to the above theorem, if a computation algorithm for the variational posterior over each individual model is tractable and the number of individual models is not much large (which is the case for nested model spaces), the adaptive variational posterior distribution is also easily computable. Furthermore, since each individual variational posterior is computed separately, the adaptive variational Bayes procedure is easily parallelizable. We summarize a general procedure for computation in \cref{alg:general}.

\begin{algorithm}[!htp]
  \caption{Adaptive variational Bayes}
  \label{alg:general}
    \KwIn{Sample $\Y^{(n)}$, prior distribution $\Pi_n=\sum_{m\in\cM_n}\alpha_{n,m}\Pi_{n,m}$, variational families $\{\cQ_{n,m}\}_{m\in\cM_n}$.}
    \KwOut{The variational posterior distribution $\hvQ_n$.}
    \For {$m\in\cM_n$} {
       Find  $\hvQ_{n,m}\in\argmin_{\vQ\in\cQ_{n,m}}\scE_n(\vQ,\Pi_{n,m},\sp_n)$.\\
       Compute $\tgamma_{n,m}=\alpha_{n,m}\exp\del[0]{-\scE_n(\hvQ_{n,m},\Pi_{n,m},\sp_n)}$.
    }
    Compute $ \hgamma_{n,m}=\tgamma_{n,m}/(\sum_{m'\in\cM_n}\tgamma_{n,m'})$ for $m\in\cM_n$. \\
    \Return{ $\hvQ_n:=\sum_{m\in\cM_n}\hgamma_{n,m}\hvQ_{n,m}$}
\end{algorithm}

\begin{remark}
The vector of the variational posterior model probabilities $\hbgamma_{n}:=(\hgamma_{n,m})_{m\in\cM_n}$ can be viewed as the output of the softmax function $(z_m)_{m\in\cM_n}\mapsto(\e^{z_m}/\sum_{m'\in \cM_n}\e^{z_{m'}})_{m\in \cM_n}$ with the input $z_m =\log \alpha_{n,m}-\scE_n(\hvQ_{n,m},\Pi_{n,m},\sp_n)$, and its computation can be numerically unstable. The overflow and underflow issues are well known for the softmax function, which arise when $z_m$ is a too-large positive value so that the numerator $\e^{z_m}$ is considered as infinity or every $z_{m'}$ is a too-large negative value so that the denominator $\sum_{m'\in \cM_n}\e^{z_{m'}}$ is considered as zero. A conventional solution is to shift all inputs by their maximum value $z_{\max}:=\max_{m'\in\cM_n} z_{m'}$, that is, to compute $\e^{z_m-z_{\max}}/\sum_{m'\in \cM_n}\e^{z_{m'}-z_{\max}}$ which is the same as the value before the shift. This prevents the overflow issue since $z_m-z_{\max}\le 0$ for every $m$ as well as the underflow issue since there is at least one $m^*$ such that $\e^{z_{m^*}-z_{\max}}=\exp(0)=1$ and so dividing by zero is avoided. In our numerical studies,  we use this technique to stabilize the softmax function.
\end{remark}

\subsection{Practicalities}

In this subsection, we provide a remark on the practical implementation of the adaptive variational Bayes. For a successful application, the proposed framework requires two ingredients. The first is a prior distribution on multiple parameter spaces, which will lead to the theoretically and/or empirically well-behaved posterior, and the second is a pair of a variational family and a computation algorithm for each individual model, with which the variational optimization can be efficiently solved. Then we use \cref{alg:general} to get the variational approximation of the posterior on the entire parameter space. The adaptive variational posterior is expected to work reasonably well given a good prior, as well as is tractable when an efficient computation algorithm is used and the number of individual models is moderate. An user may refer to some previous work to choose both a prior and a computation algorithm. We illustrate this with examples.

\begin{example}[Gaussian mixture model]
Consider the Gaussian mixture model in \cref{example:mixture}.  A Bayesian mixture model with a varying number of components has been widely used in applications, see \citet{miller2018mixture} and references therein. \citet{miller2018mixture} shows, in their numerical studies, that the posterior performs well when they impose the $\Geo(p)$ prior on the number of components $m\in\bN$, i.e., set $\alpha_{n,m}=(1-p)^{m-1}p$ for a given hyperparameter $p\in(0,1)$, and conditional on $m$, a product of Dirichlet, Gaussian and inverse gamma prior on the mixture parameter $\btheta\in\Theta_{n,m}$. Following this, we may consider an exponentially decaying prior such as $\alpha_{n,m}\propto(1-p)^{m-1}\propto \e^{-m\log\del[0]{1-p}}$ for $m\in[m_{\max}]$.  When we consider the mean-field variational family for each the mixture model with fixed $m$, the coordinate ascent algorithm \citep[e.g., Section 10.2 of][]{bishop2006pattern} can be used to obtain the variational posterior for each $m\in[m_{\max}]$. Then aggregating them with \cref{alg:general}, we get the variational posterior that approximates the original posterior over the Gaussian mixture model with a varying number of components.
\end{example}

\begin{example}[Stochastic block model]
\citet{geng2019probabilistic} studied Bayesian estimation of the stochastic block model with the varying number of communities, which we considered in \cref{example:sbm}.  The authors used the $\Pois(1)$ prior on the number of communities $m$ and conditional on $m$, beta prior on each $U_{k,h}$ and Dirichlet-multinomial prior on each $\z_i$ for their theoretical and empirical studies. Following this suggestion, we may consider a truncated Poisson prior on $m\in[m_{\max}]$, and the same conditional prior on the parameter. For a variational optimization algorithm for fixed $m$, we can use the coordinate ascent algorithm for the mean-field variational family, given in \citet{zhang2020theoretical}. By applying the adaptive variational Bayes, we get the variational posterior over the stochastic block model with a varying number of communities.
\end{example}

\subsection{Comparison with model selection variational Bayes} 
\label{sec:method:compare}

The model selection  variational Bayes procedure considered in \cite{zhang2020convergence} proposes to solve the minimization program
    \begin{equation}
    \label{eq:compare:msvb_minimization}
        \min_{m\in\cM_n}\min_{\vQ\in\cQ_{n,m}}
        \cbr{\scE_n(\vQ,\Pi_{n,m},\sp_n)-\log\alpha_{n,m}}.
    \end{equation}
Note that the distribution $\hvQ_{n,\hat{m}_n}$ with $\hat{m}_n\in\argmax_{m\in\cM_n}\hgamma_{n,m}$ is a solution to \labelcref{eq:compare:msvb_minimization} since
    \begin{align*}
       -\log\hgamma_{n,m}&\propto -\log \alpha_{n,m}+ \scE_n(\hvQ_{n,m},\Pi_{n,m},\sp_n)\\
       &=\min_{\vQ_{n,m}\in\cQ_{n,m}} \cbr{\scE_n(\vQ_{n,m},\Pi_{n,m},\sp_n)-\log\alpha_{n,m}}.
    \end{align*}
We call $\hvQ_{n,\hat{m}_n}$ the \textit{model selection variational posterior}. Note that every individual variational posterior needs to be obtained to optimize the selection criterion for the model selection variational Bayes. Therefore, the computational costs of the two variational Bayes approaches are exactly the same. But we note that the adaptive variational posterior is supported on ``the entire parameter space'' $\Theta_{n,\cM_n}$, while the model selection variational posterior is supported on ``the best individual parameter space''.

Since the adaptive variational posterior $\hvQ_n$ is the optimal solution of the variational optimization problem in \labelcref{eq:vbayes}, it is clear that $\hvQ_n$  this is a better approximation to the original posterior $\Pi_n(\cdot|\Y^{(n)})$ than the model selection variational posterior $\hvQ_{n,\hat{m}_n}$. Thus, if we aim to recover the original posterior distribution as precisely as possible, the adaptive variational posterior is always preferable to the model selection one.

\begin{proposition}[Comparison of variational approximation gaps]
\label{prop:inferior_msvb}
Let $\Y^{(n)}$ be a sample generated from the distribution $\P_\star^{(n)}$. Then
    \begin{equation}
       \kl\del[1]{\hvQ_n, \Pi_n(\cdot|\Y^{(n)})}
       \le \kl\del[1]{\hvQ_{n,\hat{m}_n}, \Pi_n(\cdot|\Y^{(n)})}
    \end{equation}
with $\P_\star^{(n)}$-probability 1, where $\hat{m}_n\in\argmax_{m\in\cM_n}\hgamma_{n,m}$.
\end{proposition}

\begin{proof}
The result follows from the fact that $\cup_{m\in\cM_n}\cQ_{n,m}\subset\cQ_n$, which in particular implies that $\hvQ_{n,\hat{m}_n}\in\cQ_n.$
\end{proof}

In \cref{appen:numeric_msvb} in the Supplementary Material, we provide some numerical examples to show the superiority of the adaptive variational Bayes over the model selection variational Bayes.

\section{Concentration properties of adaptive variational posterior}
\label{sec:theory}

In this section, we provide adaptive posterior contraction rates and model selection consistency of the adaptive variational posterior distribution  \labelcref{eq:vposterior} in general situations. Our strategy is to derive a contraction rate $\epsilon_n$ of the original posterior $\Pi_n(\cdot|\Y^{(n)})$ to the true parameter $\blambda^\star$ first and then bound the \textit{variational approximation gap} by an upper bound of an appropriate order as
    \begin{align}
    \label{eq:theory:vgap}
        \P_{\blambda^\star}^{(n)} \sbr{\kl\del[1]{\hvQ_n, \Pi_n(\cdot|\Y^{(n)})}}\lesssim n\epsilon_n^2.
    \end{align}
Then we have the same contraction rate $\epsilon_n$ of the adaptive variational posterior with the help of the classical change-of-measure lemma, which we provide in \cref{lemma:variational_ineq} in the Supplementary Material for the reader's convenience.

\subsection{Prelude: Contraction of individual variational posteriors} 

In this subsection, we first study the concentration property of each individual variational posterior $\hvQ_{n,m}$ supported on the parameter space $\Theta_{n,m}$ for $m\in\cM_n$. The contraction rates of variational posteriors were thoroughly investigated in \cite{zhang2020convergence}, but we provide the result under a more concise and weaker set of assumptions, which we will explain later. Moreover, our description is in a slightly more delicate fashion in the sense that a contraction rate of $\hvQ_{n,m}$ is decomposed into two terms, the approximation and estimation errors of the model $m\in\cM_n$. It turns out that identifying this decomposition is useful to select prior model probabilities $\balpha_n$ suitably, see \cref{lemma:model_prob_choice}.

Let $(\Lambda_n^\star)_{n\in\bN}$ be a sequence of natural parameter spaces such that $\Lambda_n^\star\subset \Lambda_n$ for each $n\in\bN$. Throughout this paper, we assume that the true distribution of the sample $\Y^{(n)}$ belongs to a \textit{true model} $\cP^\star_n:=\cbr[0]{\P_{\blambda^\star}^{(n)}:\blambda^\star\in\Lambda_n^\star}$. Note that the true model does not need to be contained in the model $\cup_{m\in\cM_n}\cP_{n,m}$ used for estimation. Let $\scd_n:\Lambda_n\times \Lambda_n\mapsto \R_{\ge0}$ be a metric that will be used to measure a degree of contraction.

\begin{assumption}[Individual model, prior and variational family] 
\label{assume:individual}
There exist absolute constants  $J_0>0$, $\fc_1>0$ and $\fc_2>0$ such that the following hold for any  $\blambda^\star\in\Lambda_n^\star$,  any $m\in\cM_n$ and any sufficiently large $n\in\bN$.
\begin{enumerate}[label=\assumnum{assumption}\textsf{\arabic*}]
    \item \label{assume:individual:testing} 
    (Testing) There exist a test function $\varphi_{n,m}:\bY_n\mapsto [0,1]$ such that
        \begin{align}
            \max\cbr{\P_{\blambda^\star}^{(n)}[\varphi_{n,m}], \sup_{\btheta\in\Theta_{n,m}:\scd_n(\sT(\btheta),\blambda^\star)\ge J_0\zeta}
            \P_{\sT(\btheta)}^{(n)} [1-\varphi_{n,m}]}
            \le \exp\del[1]{-\fc_1n\zeta^2}
         \end{align}
    for any $\zeta>\zeta_{n,m}\ge n^{-1/2}$.

    \item \label{assume:individual:variational} 
    (Prior and variational family) There exists a distribution $\vQ_{n,m}^*\in\cQ_{n,m}$ such that
        \begin{equation}
           \kl\del[1]{\vQ_{n,m}^*, \Pi_{n,m}}  
            + \vQ_{n,m}^*\sbr{\kl\del[1]{\P_{\blambda^\star}^{(n)}, \P_{\sT(\btheta)}^{(n)}}} 
            \le \fc_2n (\eta_{n,m}+\zeta_{n,m})^2.
    \end{equation}
\end{enumerate}
\end{assumption}

We can interpret $\zeta_{n,m}>0$ and $\eta_{n,m}>0$ are \textit{estimation and approximation errors} of the model  $m\in\cM_n$, respectively. Suppose that the log of the (local) covering number of the parameter space $\Theta_{n,m}$ is proportional to $n\zeta_{n,m}^2$ then a standard argument for testing construction leads to a test function $\varphi_{n,m}$ of which the type-\rom{1} error is bounded by $\e^{\fc_1'n\zeta_{n,m}^2}\e^{-\fc_2'n(J_0\zeta)^2}$ for some $\fc_1',\fc_2'>0$, see for example, Theorem 7.1 of \citet{ghosal2000convergence}. Thus to make this error decrease exponentially, the ``distance'' $J_0\zeta$ between the alternative hypothesis set and the true parameter $\blambda^\star$ should be larger than $\zeta_{n,m}$. Meanwhile, the prior and variational family condition of \cref{assume:individual:variational} involves the true distribution, so a certain term representing an approximation error should be included in the upper bound, which we denote by $\eta_{n,m}$. 

\cref{assume:individual:testing} is a very standard assumption for deriving contraction rates of original posterior distributions \citep{ghosal2000convergence,ghosal2007convergence}. In \cref{appen:theory:testing} in the Supplementary Material, we develop a new sufficient condition of the testing condition, which we will frequently use in our applications. \cref{assume:individual:variational} is made to control the variational approximation gap $\P_{\blambda^\star}^{(n)}\sbr[0]{\kl\del[0]{\hvQ_{n,m},\Pi_{n,m}(\cdot|\Y^{(n)})}}$ to the ``individual'' original posterior $\Pi_{n,m}(\cdot|\Y^{(n)})\propto \sp_n(\sT(\btheta),\Y^{(n)})\d\Pi_{n,m}(\btheta)$. The same condition of \cref{assume:individual:variational} is assumed by \citet{zhang2020convergence}, which is named as (C4) therein.  \citet{alquier2020concentration} showed that only \cref{assume:individual:variational} is enough to derive optimal contraction of variational fractional posteriors, thanks to the nice theoretical property of fractional likelihoods, which was illustrated in a number of papers \citep{walker2001bayesian,zhang2006epsilon,bhattacharya2019bayesian}.

Our technical proofs largely follow several existing works, particularly, \citet{ghosal2000convergence} on posterior contraction, \citet{ghosal2008nonparametric} and \citet{han2021oracle} on Bayesian adaptation and \citet{zhang2020convergence} on the convergence of variational posteriors. But we make some technical simplifications of the theoretical conditions in \citet{zhang2020convergence}. Concretely, we show that the ``prior mass condition'' can be ``hidden'', and that a condition associated with a stronger $\rho$-R\'enyi divergence with $\rho>1$ than the KL divergence can be relaxed. We refer to \cref{appen:theory:simplifications} in the Supplementary Material for a detailed explanation. Although these simplifications may lead to succinct proofs for applications of our theory, we do not insist that they are substantial improvements because we do not find any example that satisfies our conditions while violating those of \cite{zhang2020convergence}.

The next theorem derives contraction rates of the individual variational posteriors. The proof is almost similar to that of our main theorem \cref{thm:conv}, which appears in the next subsection, so we omit it. 

\begin{theorem}
\label{prop:conv_indiv}
Under \cref{assume:individual}, we have
    \begin{equation}
        \sup_{\blambda^\star\in\Lambda_n^\star}\sup_{m\in\cM_n}\P_{\blambda^\star}^{(n)}\sbr{\hvQ_{n,m}\del{\scd_n(\sT(\btheta),\blambda^\star)\ge A_n(\eta_{n,m}+\zeta_{n,m})}}=\sco(1)
    \end{equation} 
for any diverging sequence $(A_n)_{n\in\bN}\to\infty$. 
\end{theorem}

\subsection{Adaptive contraction rates}
\label{sec:theory:rate}

In the previous subsection, we showed that $\hvQ_{n,m}$ contracts at the rate $\eta_{n,m}+\zeta_{n,m}$. Clearly, if the ``best'' model $m^*_n\in\cM_n$ such that $\eta_{n,m^*_n}+\zeta_{n,m^*_n}\asymp\min_{m\in\cM_n}(\eta_{n,m}+\zeta_{n,m})$ is known, one can utilize the best individual variational posterior $\hvQ_{n,m^*_n}$ for inference, which contracts at an \textit{oracle rate} defined as
    \begin{equation}
    \label{eq:oracle_rate}
        \epsilon_n:=\epsilon_n(\cM_n):=\inf_{m\in\cM_n}(\eta_{n,m}+\zeta_{n,m}).
    \end{equation}
But this is, in general, not the case in practice. We will show that our adaptive variational posterior can attain the oracle rate without any information about the best model when we use appropriate prior model probabilities $\balpha_n$ that impose sufficient mass on the best model $m_n^*$ while very small mass on unnecessarily complex models.

\begin{assumption}[Aggregation] 
\label{assume:aggregation} 
There exist absolute constants $H_0>1$ and $\fc_3,\dots,\fc_6>0$  such that the following hold for any sufficiently large $n\in\bN$.

\begin{enumerate}[label=\assumnum{assumption}\textsf{\arabic*}]
    \item \label{assume:aggregation:model_set} 
    (Model space) 
    The cardinality of the model space $\cM_n$ is bounded as
    \begin{align}
    \label{eq:model_cardinality}
        |\cM_n|\le \exp\del[1]{\fc_3n\epsilon_n^2}.
    \end{align}

    \item \label{assume:aggregation:model_prior_pen}
    (Prior model probabilities: regularization)
    The prior model probabilities $\balpha_n:=(\alpha_{n,m})_{m\in\cM_n}\in\Delta_{|\cM_n|}$ satisfies
        \begin{align}
        \label{eq:assume:aggregation:largemodels}
            \sum_{m\in\cM_n:\zeta_{n,m}\ge H\epsilon_n}\alpha_{n,m}
            &\le \exp\del[1]{-\fc_4 n(H\epsilon_n)^2}
        \end{align}
    for any $H>H_0$.
    \item \label{assume:aggregation:model_prior_mass}
    (Prior model probabilities: concentration)
    There exists a model $ m_n^*\in\cM_n$ such that $\eta_{n,m_n^*}+\zeta_{n,m_n^*}\le (1+\fc_5)\epsilon_n$ and 
        \begin{align}
         \label{eq:assume:aggregation:best}
            \alpha_{n,m_n^*} \ge\exp\del[1]{-\fc_6n\epsilon_n^2}.
        \end{align} 
\end{enumerate}
\end{assumption}

\cref{assume:aggregation:model_set} is in general very mild from a practical perspective. The upper bound in \labelcref{eq:model_cardinality} allows any large constant bounds. Moreover, when $n\epsilon_n^2\gtrsim \log n$, which is satisfied in most of applications, \cref{assume:aggregation:model_set} is met by any polynomial bounds of $n$. This condition is also theoretically satisfactory, as we will discuss in \cref{remark:model_construction} at the end of this subsection. 

A similar requirement on prior model probabilities to \cref{assume:aggregation:model_prior_pen,assume:aggregation:model_prior_mass} has been commonly assumed for adaptivity of original posteriors \citep{lember2007universal,ghosal2008nonparametric,arbel2013bayesian,gao2020general,han2021oracle}. In every application considered in the mentioned studies, prior model probabilities can be chosen without knowing any aspects of the true distribution. We can adhere to their recommendations. Moreover, from the decomposition of approximation and estimation errors in \cref{assume:individual}, in \cref{lemma:model_prob_choice} below, we suggest a general method to construct prior model probabilities satisfying both  \labelcref{eq:assume:aggregation:largemodels} and \labelcref{eq:assume:aggregation:best}. This method only requires knowing the estimation errors that are independent of $\Lambda_n^\star$ in most cases. Thus, this is fully adaptive to the true parameter. In every example we shall examine, the choice of the prior model probabilities does not depend on the true parameter.

\begin{lemma}[Adaptive choice of prior model probabilities]
\label{lemma:model_prob_choice}
Suppose that prior model probabilities $\balpha_n:=(\alpha_{n,m})_{m\in\cM_n}\in\Delta_{|\cM_n|}$  are given by 
        \begin{align}
            \alpha_{n,m} =\frac{1}{Z_{\alpha,n}}\e^{-\fa_0n\zeta_{n,m}^2}, \quad m\in \cM_n
        \end{align}
for $\fa_0>0$, where $Z_{\alpha,n}:=\sum_{m\in\cM_n}\e^{-\fa_0n\zeta_{n,m}^2}$ is the normalizing constant. Then under \cref{assume:aggregation:model_set},  \cref{assume:aggregation:model_prior_pen,assume:aggregation:model_prior_mass} holds.
\end{lemma}

Under \cref{assume:individual,assume:aggregation} together, the original posterior $\Pi_n(\cdot|\Y^{(n)})$ contracts at the oracle rate $\epsilon_n$, see \cref{thm:post_conv} in the Supplementary Material. When the variational approximation gap is bounded as in \labelcref{eq:theory:vgap}, the same contraction rate $\epsilon_n$ is attained by the adaptive variational posterior. Because a variational approximation gap is usually proportional to the complexity of a parameter space on which a variational posterior lives, it is seemingly impossible to attain \labelcref{eq:theory:vgap} due to the very large complexity of the entire parameter space $\Theta_{n,\cM_n}$. However, the following theorem allows us to circumvent this issue.

\begin{theorem}[Variational approximation gap]
\label{thm:vgap}
For any $\blambda^\star\in\Lambda_n^\star$, 
    \begin{equations}
    \label{eq:vgap_ineq}
       \P_{\blambda^\star}^{(n)}&\sbr{\kl\del[1]{\hvQ_{n},\Pi_{n}(\cdot|\Y^{(n)})}}\\
        &\le \inf_{\vQ\in\cQ_n} \cbr{\kl(\vQ,\Pi_{n}) 
            + \vQ\sbr{\kl\del[1]{\P_{\blambda^\star}^{(n)}, \P_{\sT(\btheta)}^{(n)}}} }\\
        &\le \inf_{m\in\cM_n}\inf_{\vQ_{m}\in\cQ_{n,m}}\cbr{-\log (\alpha_{n,m}) + \kl\del[1]{\vQ_{m}, \Pi_{n,m}}  
            +  \vQ_m\sbr{\kl\del[1]{\P_{\blambda^\star}^{(n)}, \P_{\sT(\btheta)}^{(n)}}} }.
    \end{equations}
Further suppose that \cref{assume:individual:variational} and \cref{assume:aggregation:model_prior_mass} hold. Then \labelcref{eq:theory:vgap} holds with $\epsilon_n=\epsilon_n(\cM_n)$.
\end{theorem}

We now arrive at our main result on adaptive contraction.

\begin{theorem}[Adaptive contraction rate]
\label{thm:conv}
Under \cref{assume:individual,assume:aggregation}, we have
        \begin{equation}
        \label{eq:conv}
        \sup_{\blambda^\star\in\Lambda_n^\star}\P_{\blambda^\star}^{(n)}\sbr{\hvQ_n\del{\scd_n(\sT(\btheta),\blambda^\star)\ge A_n\epsilon_{n}}}=\sco(1) 
    \end{equation} 
for any diverging sequence $(A_n)_{n\in\bN}\to\infty$. 
\end{theorem}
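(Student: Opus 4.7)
The plan is to combine three ingredients: (i) an expected KL bound between the adaptive variational posterior $\hXi_n$ and the true posterior $\Pi_n(\cdot\vert\Y^{(n)})$, provided directly by \cref{assume:variational}; (ii) an exponential contraction statement for the true posterior on the bad set $B_n(\blambda^\star):=\{\btheta\in\Theta_{n,\cM_n}:\scd_n(\sT_n(\btheta),\blambda^\star)\ge A_n\sqrt{L_n}\epsilon_n\}$; and (iii) a data-processing/change-of-measure inequality that transfers concentration from $\Pi_n(\cdot\vert\Y^{(n)})$ to $\hXi_n$. For Step (i), since $\hXi_n$ minimizes $\scE(\cdot,\Pi_n,\sq_n,\Y^{(n)})$ over $\bV_n$, and minimizing $\scE$ is equivalent to minimizing $\kl(\cdot,\Pi_n(\cdot\vert\Y^{(n)}))$, \cref{assume:variational} yields $\P_{\blambda^\star}^{(n)}\kl(\hXi_n,\Pi_n(\cdot\vert\Y^{(n)}))\lesssim L_n n\epsilon_n^2$ at once.

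The technical heart is Step (ii): showing that with $\P_{\blambda^\star}^{(n)}$-probability tending to one, $\Pi_n(B_n(\blambda^\star)\vert\Y^{(n)})\le \exp(-\fc\, A_n^2 L_n n\epsilon_n^2)$ for some $\fc>0$. I would follow the familiar Ghosal--van der Vaart machinery adapted to the multi-model setting as in Han--Yang (2021). First, I would lower bound the marginal likelihood via an oracle model $m^\star\in\argmin_{m\in\cM_n}(\eta_{n,m}+\zeta_{n,m})$, using \cref{assume:prior:model,assume:prior:truth} (prior mass) together with the upper Rényi bound $\sD_{\rho_\blacklozenge}\le \fa_2+\fc_2 n\scd_n^2$ from \cref{assume:exprmt:tail} and a standard Markov/chi-square argument, to obtain $\sq_{n,\Pi_n}(\Y^{(n)})\ge \exp(-\fc'L_n n\epsilon_n^2-\log(1/\alpha_{n,m^\star}))$ with high probability. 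Second, on each model $m$ I would stratify $\{\btheta\in\Theta_{n,m}:\scd_n(\sT_n(\btheta),\blambda^\star)\in[r,2r]\}$ into balls via \cref{assume:model:complex}, construct Rényi tests on each ball using the lower tail $\sD_{\rho_\circ}\ge \fa_1+\fc_1 n\scd_n^2$ from \cref{assume:exprmt:tail}, and form a single test by unionizing over radii and models. Third, I would sum the resulting contributions weighted by $\alpha_{n,m}=Z_{\alpha,n}^{-1}\exp(-b_0 L_n n\zeta_{n,m}^2)$: the models with $\zeta_{n,m}$ large are killed by the prior weight (and the first inequality of \cref{assume:rate:expsum}), while those with $\zeta_{n,m}$ small are killed by the distance cutoff $A_n\sqrt{L_n}\epsilon_n\gg \zeta_{n,m}$ in the test exponent (combined with the second inequality of \cref{assume:rate:expsum}). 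The choice $b_0$ large relative to the constants from \crefrange{assume:exprmt}{assume:model} then makes the posterior mass bound tight enough, and the rate degrades by $\sqrt{L_n}$ precisely because the effective exponent in the prior weight is scaled by $L_n$.

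Finally, Step (iii) is a clean conversion step: for any probability measures $Q\ll P$ and any event $B$, the data-processing inequality for KL on the two-event partition $\{B,B^{\complement}\}$ gives $Q(B)\le (\kl(Q,P)+\log 2+1/\e)/\log(1/P(B))$ whenever $P(B)\le 1/2$. Applying this with $Q=\hXi_n$, $P=\Pi_n(\cdot\vert\Y^{(n)})$, and $B=B_n(\blambda^\star)$ on the high-probability event from Step (ii), and bounding the complement trivially by $1$, I take $\P_{\blambda^\star}^{(n)}$-expectation and use the KL bound from Step (i) to obtain
\[
 \P_{\blambda^\star}^{(n)}\hXi_n(B_n(\blambda^\star))
 \lesssim \frac{L_n n\epsilon_n^2+\mathcal{O}(1)}{A_n^2 L_n n\epsilon_n^2}+o(1)=\mathcal{O}(A_n^{-2})+o(1)=\sco(1),
\]
uniformly in $\blambda^\star\in\Lambda_n^\star$. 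I expect the main obstacle to be Step (ii)---in particular, the delicate bookkeeping needed to show that the weighted sum over models of (test error)$\times\alpha_{n,m}$ is smaller than the oracle lower bound on the marginal likelihood by a factor $\exp(-\fc A_n^2 L_n n\epsilon_n^2)$; \cref{assume:rate:expsum} and the $L_n$-scaled prior weight in \cref{assume:prior:model} appear tailored exactly for this balance.
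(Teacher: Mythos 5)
Your proposal is correct and follows essentially the same route as the paper: your Step (i) is exactly \cref{assume:variational}; your Step (ii) is the paper's contraction theorem for the original posterior (\cref{thm:post_conv}), whose proof is precisely the plan you sketch --- the marginal-likelihood lower bound is \cref{lem:denominator}, the stratified tests over the low-complexity models are \cref{lem:test_composite}, and the high-complexity models are discarded through the prior weights together with \cref{assume:rate:expsum} (as you intend, the union of tests must be restricted to the models with small $\zeta_{n,m}$, since the type-\rom{1} error carries a factor $\e^{\fc n\zeta_{n,m}^2}$ that is not summable over arbitrarily complex models); and your Step (iii) plays the role of the paper's \cref{lem:kl_ineq}. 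Two small remarks. For the conversion step the paper applies \cref{lem:kl_ineq} with $\upsilon_n=\sqrt{A_n}L_nn\epsilon_n^2$ and takes expectations directly, so no high-probability event is needed, whereas your two-point data-processing bound $Q(B)\le(\kl(Q,P)+\log 2)/\log(1/P(B))$ first requires upgrading the expectation bound on $\Pi_n(B_n(\blambda^\star)\,|\,\Y^{(n)})$ to a high-probability statement (e.g.\ by Markov); both versions yield an $\mathcal{O}(A_n^{-2})+\sco(1)$ bound, uniformly over $\Lambda_n^\star$. Also, you cannot ``choose $b_0$ large'': $b_0$ is fixed (and arbitrary) in \cref{assume:prior:model}; the paper instead absorbs the competing constants into the radius thresholds ($T_0$ and the constant $A$ in the proof of \cref{thm:post_conv}), which is harmless here because $A_n\to\infty$, so your bookkeeping goes through verbatim with that substitution.
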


\begin{remark}
The contraction rate $\epsilon_n$ of the adaptive variational posterior, as we saw in its definition in \labelcref{eq:oracle_rate}, is monotonically decreasing in the model space $\cM_n$ in the sense that $\epsilon_n(\cM_n')\ge \epsilon_n(\cM_n)$ when $\cM_n'\subset \cM_n$. 
If we choose a smaller number of individual models than necessary, a contraction rate of the adaptive variational posterior would be sub-optimal. However, as we will explain in the following remark, it is possible to find a model space that results in an optimal rate.
\end{remark}

\begin{remark}
\label{remark:model_construction}
In order for our contraction rate $\epsilon_n(\cM_n)$ to be identical to the optimal rate of the problem of interest, the model space should contain ``sufficiently many'' models, but ``not too many'' for computational efficiency. For a nested model space in which the individual models can be sorted consistently by their complexities $\{\zeta_{n,m}\}_{m\in\cM_n}$, we can construct such a model space.  To be concrete, suppose that the optimal rate is given as $n^{-\kappa}$ for some $\kappa\in(0,1/2]$. Consider a model space $\cM_n$ such that
    \begin{align}
    \label{eq:modelspace:nested}
        \cbr{\zeta_{n,m}:m\in\cM_n}=\cbr{n^{-k/\log n}:k\in\bN, k\le \frac{\log n}{2}},
    \end{align}
which has the $\scO(\log n)$ cardinality. Then for an integer $k_n^*\le \log n/2$ such that $n^{-k_n^*/\log n}\le n^{-\kappa}\le n^{-(k_n^*-1)/\log n}=n^{1/\log n}n^{k_n^*/\log n}$, we have $n^{-k_n^*/\log n}\asymp n^{-\kappa}$ since $n^{1/\log n}=\e$. Thus if the model $m_n^*\in\cM_n$ with $\zeta_{n,m_n^*}=n^{-k_n^*/\log n}$ has the approximation error not larger than the estimation error, i.e., $\eta_{n,m_n^*}\lesssim\zeta_{n,m_n^*}$, then we get the optimal contraction rate $\epsilon_n(\cM_n)\asymp n^{-\kappa}$. For example, for deep neural network models, we will study in \cref{sec:dnn}, the model space is constructed similar to \labelcref{eq:modelspace:nested}, see \cref{assume:dnn}.
\end{remark}

\begin{remark}
As seen in \cref{remark:model_construction}, for adaptation, we need to include very large models with nearly $\scO(n^{1/2})$ complexities in our model space, when we cannot rule out the possibility of a very complex true model, for which the optimal rate is typically quite slow. In that case, the proposed adaptation method can be somewhat time-consuming, although we employ a scalable variational Bayes algorithm. This is a limitation of the proposed adaptive variational Bayes. One may circumvent this issue by using an alternative variational Bayes algorithm that efficiently finds a suitably complex model, for example, the spike-and-slab deep learning method by \cite{cherief2020convergence,bai2020efficient}. It is an interesting direction for further work to develop a computational strategy in the adaptive variational Bayes framework to avoid computation for unnecessarily complex models.
\end{remark}

\subsection{Model selection consistency}
\label{sec:theory:selection}

In this subsection, we study the concentration of the adaptive variational posterior over the model space. We write $\hvQ_n(\cM'):=\hvQ_n(\cup_{m\in\cM'}\Theta_{n,m})$ for any subset $\cM'\subset\cM_n$ for brevity.

We first show that the adaptive variational posterior puts a small mass on too complex models to estimate the true distribution. We define 
    \begin{align}
        \cM_{n}^{\textup{over}}(H):=\cbr{m\in\cM_n:\zeta_{n,m}\ge H\epsilon_n}
    \end{align}
for $H>1$, which is the set of models whose estimation errors are somewhat larger than the oracle rate, that is, the models that are overly complex. The next theorem shows that the expected variational posterior probability of $\cM_{n}^{\textup{over}}(H)$ tends to zero, provided that $H$ is sufficiently large.

\begin{theorem}[Selection, no severe overestimation]
\label{thm:over}
Assume that $n\epsilon_n^2\to \infty$. Then under Assumptions \labelcref{assume:individual:variational} and \labelcref{assume:aggregation}, there exists an absolute constant $H_1>0$ such that
    \begin{equation}
        \label{eq:over}
        \sup_{\blambda^\star\in\Lambda_n^\star}\P_{\blambda^\star}^{(n)}
        \sbr{\hvQ_n\del{\cM_{n}^{\textup{over}}(H_1)}}
       =\sco(1).
    \end{equation}
\end{theorem}

We now consider an underestimation problem. Define  
    \begin{align}
         \cM_{n}^{\textup{under}}(\eta^*; \blambda^\star)
         :=\cbr[2]{m\in\cM_n:\inf_{\btheta\in\Theta_{n,m}}\scd_n(\sT(\btheta),\blambda^\star)\ge\eta^*}
    \end{align}
for $\eta^*>0$ and $\blambda^\star\in\Lambda_n^\star$, which is the set of less expressive models that cannot approximate the true parameter $\blambda^\star$ accurately. The expected variational posterior probability of the models in $\cM_{n}^{\textup{under}}(\eta^*, \blambda^\star)$ vanishes asymptotically if the \textit{expressibility gap} $\eta^*$ is large enough to detect. In order to do that, one may introduce a ``beta-min'' condition, which assumes all the elements of the true parameter are sufficiently large to detect. See, for example, \citet{castillo2015bayesian} for sparse linear regression and \citet{ohn2023optimal} for mixture models. Then less complex models with smaller dimensions than the true parameter cannot mimic the true distribution. For example, in the sparse factor model example in \cref{sec:sparse:factor}, the beta-min condition on the nonzero rows of the true loading matrix provided in \labelcref{eq:sparse:factor:true:detect} creates an expressibility gap between sparser models and the true loading matrix since the loading matrices in the sparser models cannot recover the true loading matrix exactly. For technical details, see the proof of \cref{thm:sparse:factor:nfac} provided in \cref{appen:sparse:factor:proof:nfac} in the Supplementary Material.

\begin{theorem}[Selection, no underestimation]
\label{thm:under}
Under \cref{assume:individual,assume:aggregation}, we have
        \begin{equation}
        \sup_{\blambda^\star\in\Lambda_n^\star}\P_{\blambda^\star}^{(n)}
        \sbr[3]{\hvQ_n\del{\cM_{n}^{\textup{under}}(\uA_n\epsilon_n; \blambda^\star)}}
       =\sco(1)
    \end{equation}
for any diverging sequence $(\uA_n)_{n\in\bN}$ such that $\uA_n\to\infty$.
\end{theorem}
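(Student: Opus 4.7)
The plan is to obtain this no-underestimation statement as a direct corollary of the adaptive contraction rate in \cref{thm:conv}. The key observation is that under \cref{assume:true}, every true parameter is exactly representable in the model indexed by $m_n^\star$, which will pin down the oracle rate in terms of $\zeta_{n,m_n^\star}$ alone.

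First, I would argue that the approximation error of the true model can be taken to be zero. Since \cref{assume:true} gives $\Lambda_n^\star\subset\sT_n(\Theta_{n,m_n^\star})$, for any $\blambda^\star\in\Lambda_n^\star$ we can select $\btheta_{n,m_n^\star}^*(\blambda^\star)\in\Theta_{n,m_n^\star}$ with $\sT_n(\btheta_{n,m_n^\star}^*(\blambda^\star))=\blambda^\star$, so \cref{assume:model:express} holds with $\eta_{n,m_n^\star}=0$. Combined with the definition of the oracle rate in \labelcref{eq:oracle_rate}, this yields $\epsilon_n\le\eta_{n,m_n^\star}+\zeta_{n,m_n^\star}=\zeta_{n,m_n^\star}$.

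Next, I would translate the ``underestimation'' event into a distance event. By the definition of $\cM_n^{\textup{under}}(\uA_n\zeta_{n,m_n^\star})$, for every $m$ in this index set, every $\btheta\in\Theta_{n,m}$, and every $\blambda^\star\in\Lambda_n^\star$,
\begin{equation*}
    \scd_n(\sT_n(\btheta),\blambda^\star)\ge \uA_n\zeta_{n,m_n^\star}\ge \uA_n\epsilon_n.
\end{equation*}
Therefore the inclusion
\begin{equation*}
    \bigcup_{m\in \cM_{n}^{\textup{under}}(\uA_n\zeta_{n,m_n^\star})}\Theta_{n,m}\subset \cbr{\btheta\in\Theta_{n,\cM_n}:\scd_n(\sT_n(\btheta),\blambda^\star)\ge \uA_n\epsilon_n}
\end{equation*}
holds for every $\blambda^\star\in\Lambda_n^\star$, so the variational posterior mass of the underestimation set is dominated by the variational posterior mass of the corresponding distance tail.

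Finally, I would apply \cref{thm:conv} with the diverging sequence $A_n:=\uA_n/\sqrt{L_n}\to\infty$, which is permissible by the hypothesis on $(\uA_n)_{n\in\bN}$. Since $A_n\sqrt{L_n}\epsilon_n=\uA_n\epsilon_n$, the conclusion of \cref{thm:conv} gives
\begin{equation*}
    \sup_{\blambda^\star\in\Lambda_n^\star}\P_{\blambda^\star}^{(n)}\sbr{\hXi_n\del{\scd_n(\sT_n(\btheta),\blambda^\star)\ge \uA_n\epsilon_n}}=\sco(1),
\end{equation*}
and monotonicity of $\hXi_n$ together with the inclusion above yields the claim. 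There is no serious obstacle here; the argument is essentially ``contraction implies no gross underestimation,'' and the only subtlety is noticing that truth being in the model allows $\eta_{n,m_n^\star}=0$, which converts $\epsilon_n$ into a bound purely in terms of $\zeta_{n,m_n^\star}$ so that the assumed detection threshold $\uA_n\zeta_{n,m_n^\star}$ can be directly compared against the contraction radius.
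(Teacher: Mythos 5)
Your proposal is correct and follows essentially the same route as the paper's own proof: invoke \cref{assume:true} to get $\eta_{n,m_n^\star}=0$ and hence $\epsilon_n\le\zeta_{n,m_n^\star}$, note that the underestimation set lies in the distance tail $\{\scd_n(\sT_n(\btheta),\blambda^\star)\ge\uA_n\zeta_{n,m_n^\star}\ge\uA_n\epsilon_n\}$, and apply \cref{thm:conv} with $A_n=\uA_n/\sqrt{L_n}\to\infty$. No gaps.
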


\section{Application: Adaptive variational deep learning}
\label{sec:dnn}

In this section, we propose a novel Bayesian deep learning procedure, called \textit{adaptive variational deep learning}, based on the proposed adaptive variational Bayes. Applying our general result in \cref{sec:theory}, we show that the adaptive variational deep learning adaptively attains minimax optimality for a number of statistical problems including regression function estimation, conditional class probability function estimation for binary classification and intensity estimation for Poisson point processes. We give the first example in this section but defer the last two examples to \cref{appen:dnn:additional:logistic} and \cref{appen:dnn:additional:ppp}, respectively, in the Supplementary Material. 

Deep learning is a statistical inference method that uses (deep) neural networks to model a ``target'' function, such as a regression function, which we desire to estimate. In this section, we assume that a target function is a real-valued function supported on the $d$-dimensional unit cube $[0,1]^d$. But our analysis can be easily generalized to any compact input domain and multivariate output. 

We introduce some notation used throughout this section. Let $\cF^d$ be the set of all measurable real-valued functions supported on $[0,1]^d$. For $f\in\cF^d$ and $q\in\bN$, let $\|f\|_q:=(\int_{[0,1]^d}|f(\x)|^q\d\x)^{1/q}$ denote the usual $\scL^q$ norm and $\|f\|_\infty:=\sup_{\x\in[0,1]^d}|f(\x)|$ the $\scL^\infty$ norm.

\subsection{Adaptive variational posterior over neural networks}
\label{sec:dnn:dnn}


We introduce neural network models for function estimation. For a positive integer $K\in\bN_{\ge2}$ larger than 1 and a $(K+1)$-dimensional vector of positive integers $\M_{1:(K+1)}:=(M_1,\dots, M_{K+1})\in\bN^{K+1}$, we denote $\hTheta_{\M_{1:(K+1)}}:=\bigotimes_{k=1}^{K}(\R^{M_{k+1}\times M_k}\times \R^{M_{k+1}})$. For a \textit{network parameter} $\btheta=((\W_k, \b_k))_{k\in[K]}\in\hTheta_{\M_{1:(K+1)}}$, we define the \textit{neural network (function)} $\net(\btheta):\R^{M_1}\mapsto \R^{M_{K+1}}$ induced by the network parameter $\btheta$ as
    \begin{align*}
         \net&(\btheta):\x\mapsto[\W_{K},\b_{K}]\circ\text{ReLU}\circ[\W_{K-1},\b_{K-1}]\circ\cdots\circ\text{ReLU}\circ[\W_{1},\b_{1}]\x,
    \end{align*}
where $[\W_{k},\b_{k}]:\x'\mapsto \W_k\x'+\b_k$ denotes the affine transformation represented as a multiplication by the \textit{weight matrix} $\W_k$ and an addition of the \textit{bias vector} $\b_k$ and $\text{ReLU}:\x'\mapsto \x'\vee\zero, $ does the elementwise ReLU (rectified linear unit) activation function.  Since we focus on the estimation of real-valued functions supported on $[0,1]^d$, we only consider network parameters in $\Theta^d:=\cup_{K=2}^\infty\cup_{\M_{1:(K+1)}\in\bN^{K+1}:M_1=d, M_{K+1}=1}\hTheta_{\M_{1:(K+1)}}$, the set of network parameters with input and output dimensions being $d$ and $1$, respectively.

In the application of adaptive variational Bayes to deep learning, we consider multiple disjoint parameter spaces indexed by the number of hidden layers $K\in\bN_{\ge2}$ and the number of hidden nodes of each layer $M\in\bN$ such as
    \begin{equation}
    \label{eq:nn_parameter_space}
        \Theta_{(K,M)}:=\Theta^d_{(K,M)}:=\hTheta_{(d,M_2,\dots,M_{K},1)} \mbox{ with } M_2=\cdots=M_{K}=M.
    \end{equation}
We refer to $K$ and $M$ as (network) \textit{depth} and \textit{width}, respectively, and further, a pair $(K,M)$ as a \textit{network architecture}. We denote by $J_{(K,M)}$ the dimension of the space $\Theta_{(K,M)}$, i.e.,
    \begin{align*}
        J_{(K,M)}:=(d+1)M+(K-2)(M^2+M)+(M+1).
    \end{align*}
Since $\Theta_{(K,M)}\cong \R^{J_{(K,M)}}$, we can deal with each member in the parameter space $\Theta_{(K,M)}$ as a $J_{(K,M)}$-dimensional real vector. For a technical reason (see \cref{remark:dnn:magnitude} below), we restrict network parameters to be bounded. For a given \textit{magnitude bound} $B>0$, define
    \begin{align}
    \label{eq:dnn:bounded_space}
       \Theta^{\le B}_{(K,M)}:=\Theta^{d,\le B}_{(K,M)}:=\cbr{\btheta\in\Theta_{(K,M)}^d:|\btheta|_\infty\le B}.
    \end{align}
In our collection of neural network models, we let each parameter space be of the form $\Theta^{\le B_n}_{(K,M)}$ for each $n\in\bN$, where the sequence of magnitude bounds $(B_n)_{n\in\bN}\subset\R_{\ge0}$ is a positive sequence that diverges at a suitable rate.

We describe our prior and variational family for neural networks. Let $\cM_n\subset \bN_{\ge2}\times \bN$ be a set of some network architectures to be considered in consequent inference tasks. We impose the prior distribution 
    \begin{equations}
    \label{eq:dnn_prior}
        \Pi_n&=\sum_{(K,M)\in\cM_n}\alpha_{n,(K,M)}\Pi_{n,(K,M)},
        \mbox{ where }\\
        &\Pi_{n,(K,M)}:=\Unif(-B_n, B_n)^{\otimes J_{(K,M)}}
        \mbox{ and }\alpha_{n,(K,M)}:= \frac{\e^{-\fa_0(KM)^2\log n}}{\sum_{(K',M')\in\cM_n}\e^{-\fa_0(K'M')^2\log n}}
    \end{equations}
for $\fa_0>0$. Note that the choice of the prior model probabilities $\balpha_n$ is fully adaptive because it only depends on the network depth and width, not on anything about the true regression function. For each network architecture $(K,M)\in\cM_n$, we consider the variational family given by
    \begin{equation}
    \label{eq:dnn_variational_family}
        \cQ_{n,(K,M)}:=\cbr{\bigotimes_{j=1}^{J_{(K,M)}}\Unif(-\psi_{1,j}, \psi_{2,j}):-B_n\le \psi_{1,j}<\psi_{2,j}\le B_n}.
    \end{equation}
Since the parameter spaces $\cbr[0]{\Theta^{\le B_n}_{(K,M)}}_{(K,M)\in\cM_n}$ are disjoint, by \cref{thm:compute}, the adaptive variational posterior is given by $\hvQ_n=\sum_{(K,M)\in\cM_n}\hgamma_{n,(K,M)}\hvQ_{n,(K,M)}$ with $\hvQ_{n,(K,M)}\in\argmin_{\vQ\in\cQ_{n,(K,M)}} \scE_n\del[1]{\vQ, \Pi_{n,(K,M)},\sp_n} $ and $\hgamma_{n,(K,M)}\propto\alpha_{n,(K,M)} \exp(-\scE_n\del[0]{\hvQ_{n,(K,M)}, \Pi_{n,(K,M)},\sp_n})$.

\begin{remark}
\label{remark:dnn:magnitude}
In order to facilitate theoretical analysis, we consider the uniform prior and variational families, which are rarely used in practice. It is necessary for the prior distribution to give sufficient mass near a network parameter used to approximate a target function, and our approximation analysis requires this network parameter to contain some elements diverging at a certain rate as the sample size grows. Therefore, to satisfy this requirement, the prior distribution should be sufficiently diffused, and our uniform prior with a diverging magnitude bound $B_n$ is one such distribution. For the same reason, a recent study \citep{kong2023masked} used a heavy-tailed prior with polynomially decreasing tails, such as Cauchy and Student-t distributions, and derived the optimal contraction rate of the original posterior over non-sparse neural networks. We expect that heavy-tailed priors can be shown to be optimal for variational deep learning also, but leave the detailed derivation for future work. To extend our theory to more practical prior distributions, including a product of standard Gaussian distributions, which is the most commonly used despite its inaccurate uncertainty quantification \citep{foong2020expressiveness}, we need a refined approximation analysis with a ``small'' magnitude bound, such as a constant bound, on network parameters. This is an interesting direction for future work. We refer to \citet{fortuin2022priors} for a comprehensive survey of practically used prior distributions for Bayesian deep learning.
\end{remark}

\subsection{Nonparametric regression with Gaussian errors}
\label{sec:dnn:reg}

In this subsection, we consider a nonparametric regression experiment with standard Gaussian errors. In this case, we observe $n$ independent outputs $\Y^{(n)}:=(Y_1,\dots, Y_n)\in \bY_n:=\R^n$, where each $Y_i$ is associated with a fixed $d$-dimensional input $\x_i$ that has been rescaled so that $\x_i\in[0,1]^d$.  We model the sample $\Y^{(n)}$ using a probability model $\cP(\R^n;\sp_n^{\text{Ga}},\cF^d)$  with a likelihood function $\sp_n^{\text{Ga}}:\cF^{d}\times \R^n\mapsto\R_{\ge0}$ defined as
    \begin{equation}
        \sp_n^{\text{Ga}}(f, \Y^{(n)})=\sp_n^{\text{Ga}}(f, \Y^{(n)};(\x_i)_{i\in[n]})
        =\prod_{i=1}^ng_{\N(0,1)}(Y_i-f(\x_i))
        \mbox{ for $f\in\cF^{d},$}
    \end{equation}
where $g_{\N(0,1)}$ stands for the density function of the standard Gaussian distribution. Here, we assume that the variance of the output distribution is fixed to be 1 for technical simplicity, however, we can easily extend this to an unknown variance case. We denote by $\P_f^{(n)}$ the distribution that has the density function $\sp_n(f, \cdot)$.  Assuming that $\Y^{(n)}\sim\P_{f^\star}^{(n)}$ for some \textit{true regression function} $f^\star\in\cF^d$, our aim is to accurately estimate $f^\star$ in terms of the empirical $\scL^2$ distance  defined as 
    \begin{equation*}
        \nnorm{f_0-f_1}=\del[3]{\frac{1}{n}\sum_{i=1}^n(f_0(\x_i)-f_1(\x_i))^2}^{1/2} \mbox{ for $f_0,f_1\in\cF^d$}.
    \end{equation*}

We provide an oracle contraction result, with which we can obtain adaptive optimal rates for a number of regression problems without difficulty.

\begin{theorem}[Oracle contraction rate, regression function]
\label{thm:dnn:oracle}
Let $\cF^\star\subset\cF^d$. For each $n\in\bN$, let $\cM_n\subset\bN_{\ge2}\times \bN$ be a set of some network architectures such that $\log|\cM_n|\lesssim \log n$ and $\max_{(K,M)\in\cM_n}(K\vee M)\lesssim n$. Assume that $1\le B_n\lesssim n^{\iota_0}$ for some absolute constant $\iota_0>0.$ Then 
        \begin{equation}
        \sup_{f^\star\in\cF^\star}\P_{f^\star}^{(n)}\sbr{\hvQ_n\del{\nnorm[1]{\net(\btheta)- f^\star}\ge A_n\epsilon_n(\cF^\star)}}=\sco(1)
    \end{equation} 
for any diverging sequence $(A_n)_{n\in\bN}\to\infty$, where 
    \begin{equation}
    \label{eq:dnn_oracle}
        \epsilon_n(\cF^\star):=\epsilon_n(\cF^\star;\cM_n):=\inf_{(K,M)\in\cM_n}\cbr{\sup_{f^*\in\cF^\star}\inf_{\btheta\in\Theta^{\le B_n}_{(K,M)}}\norm{\net(\btheta)-f^*}_\infty+KM\sqrt{\frac{\log n}{n}}}.
    \end{equation}
\end{theorem}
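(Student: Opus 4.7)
The plan is to invoke \cref{thm:conv} with true natural parameter space $\Lambda_n^\star:=\cF^\star$, natural parametrization $\sT_n:=\net$, approximation errors $\eta_{n,(K,M)}:=\sup_{f^\star\in\cF^\star}\inf_{\btheta\in\Theta^{\le B_n}_{(K,M)}}\|\net(\btheta)-f^\star\|_\infty$, estimation errors $\zeta_{n,(K,M)}:=\fc\, KM\sqrt{\log n/n}$ for a sufficiently large absolute constant $\fc>0$, and prior exponent sequence $L_n\equiv 1$. Since $\nnorm{\cdot}\le \|\cdot\|_\infty$ for functions on $[0,1]^d$, these $\eta_{n,(K,M)}$ dominate the $\scd_n$-approximation errors, and the oracle rate $\epsilon_n:=\inf_{(K,M)\in\cM_n}(\eta_{n,(K,M)}+\zeta_{n,(K,M)})$ is within a constant of $\epsilon_n(\cF^\star)$ in \labelcref{eq:dnn_oracle}; so the conclusion of \cref{thm:conv} with $L_n\equiv 1$ immediately yields the claim.

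The verification of the general assumptions rests almost entirely on one classical fact: ReLU networks are Lipschitz in their parameters, with $\|\net(\btheta)-\net(\btheta')\|_\infty\le \sL_{n,(K,M)}|\btheta-\btheta'|_\infty$ for $\btheta,\btheta'\in \Theta^{\le B_n}_{(K,M)}$, where $\log \sL_{n,(K,M)}\lesssim K\log(B_n M)\lesssim K\log n$ under $B_n\lesssim n^\nu$ and $M\le n$. With this estimate, \cref{assume:exprmt} is exactly \cref{lem:gaussreg}; \cref{assume:model:express} is immediate; and \cref{assume:model:complex} follows because any $\tau$-$\ell^\infty$-net of $[-B_n,B_n]^{p_{(K,M)}}$ pushes forward to an $(\sL_{n,(K,M)}\tau)$-$\scd_n$-net of $\Theta^{\le B_n}_{(K,M)}$, giving
$$
\log\scN(u\zeta,\Theta^{\le B_n}_{(K,M)},\scd_n)\le p_{(K,M)}\log\del{2B_n\sL_{n,(K,M)}/(u\zeta)}\lesssim (KM)^2\log n\lesssim n\zeta_{n,(K,M)}^2
$$
for $\zeta\ge \zeta_{n,(K,M)}$. \cref{assume:rate:oracle} is trivial since $\epsilon_n\gtrsim \sqrt{\log n/n}$, and \cref{assume:rate:expsum} follows from \cref{lem:rate:nmodel} because $|\cM_n|\lesssim n^\iota \le \exp(\iota\log n)$.

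For the prior, \cref{assume:prior:model} holds with $L_n\equiv 1$ because $n\zeta_{n,(K,M)}^2\asymp (KM)^2\log n$. For \cref{assume:prior:truth}, I let $\btheta^\bullet\in\Theta^{\le B_n}_{(K,M)}$ realize the defining infimum of $\eta_{n,(K,M)}$; then any $\btheta$ with $|\btheta-\btheta^\bullet|_\infty\le \zeta_{n,(K,M)}/\sL_{n,(K,M)}$ satisfies $\scd_n(\net(\btheta),f^\star)\le \eta_{n,(K,M)}+\zeta_{n,(K,M)}$, and the uniform prior assigns this $\ell^\infty$-ball a mass whose logarithm is of order $-p_{(K,M)}\log(B_n\sL_{n,(K,M)}/\zeta_{n,(K,M)})\gtrsim -(KM)^2\log n$, as required. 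For \cref{assume:variational}, I take the variational distribution $\tXi_{n,(K,M)}$ to be the product-uniform law on that same $\ell^\infty$-ball and apply \cref{lem:verror}: estimate \labelcref{eq:vapprox_bound3} gives $\tXi_{n,(K,M)}[n\scd_n^2(\net(\btheta),f^\star)]\lesssim n(\eta_{n,(K,M)}+\zeta_{n,(K,M)})^2$, while $\kl(\tXi_{n,(K,M)},\Pi_{n,(K,M)})=p_{(K,M)}\log(B_n\sL_{n,(K,M)}/\zeta_{n,(K,M)})\lesssim (KM)^2\log n$ and $\log(1/\alpha_{n,(K,M)})\lesssim (KM)^2\log n$; summing and then taking the infimum over $(K,M)\in\cM_n$ yields the required bound $\lesssim n\epsilon_n^2$.

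The chief technical hurdle is making all of the above uniform over the broad range $K\vee M\lesssim n$, $B_n\lesssim n^\nu$; this only succeeds because the Lipschitz constant $\sL_{n,(K,M)}$, although exponential in $K$, enters every bound through its logarithm, which is harmlessly absorbed by the $\log n$ factor in $n\zeta_{n,(K,M)}^2$. Once \crefrange{assume:exprmt}{assume:variational} are in force with $L_n\equiv 1$, \cref{thm:conv} delivers exactly $\sup_{f^\star\in\cF^\star}\P^{(n)}_{f^\star}[\hXi_n(\nnorm{\net(\btheta)-f^\star}\ge A_n\epsilon_n)]=\sco(1)$ for any $A_n\to\infty$, which is the claim.
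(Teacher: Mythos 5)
Your proposal is correct and follows essentially the same route as the paper's proof: the same choices $\eta_{n,(K,M)}=\sup_{f^\star}\inf_{\btheta}\|\net(\btheta)-f^\star\|_\infty$ (the paper adds a harmless $n^{-1}$), $\zeta_{n,(K,M)}\asymp KM\sqrt{\log n/n}$, $L_n\equiv1$, verification of \crefrange{assume:exprmt}{assume:variational} via \cref{lem:gaussreg}, the parameter-Lipschitz bound of \cref{lem:dnn_lip}, \cref{lem:rate:nmodel}, and a product-uniform variational witness on a shrunken $\ell^\infty$-ball around the best approximator fed into \cref{lem:verror}, then \cref{thm:conv}. The only cosmetic difference is that you re-derive the covering bound of \cref{assume:model:complex} from the Lipschitz estimate rather than quoting the paper's \cref{lem:dnn_complexity}, which amounts to the same computation.
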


In the next subsection, we first focus on the case that the true regression function is assumed to be H\"older $\beta$-smooth.  This assumption is the most standard and classical assumption for nonparametric regression. We then consider a different class of regression functions, which exhibits a certain ``compositional structure'' to avoid the curse of dimensionality  \citep{schmidt2020nonparametric,kohler2021rate}, and show that the adaptive variational deep learning attains an optimal contraction rate adaptively. The detailed result is deferred to \cref{appen:dnn:additional:comp} in the Supplementary Material.

\subsubsection{Estimation of H\"older smooth regression functions}

Let $\cC^{p,d}$ denote the class of $p$-times differentiable functions supported on $[0,1]^d$ for $p\in\bN$ and $\cC^{0,d}$ the class of continuous functions on $[0,1]^d$. The \textit{H\"older space} of smoothness $\beta>0$ with domain $[0,1]^d$ and radius $F_0>0$ is defined as 
	\begin{equation*}
	\cH^{\beta, d, F_0}:=\cbr{f\in\cC^{\ceil{\beta-1},d}:\|f\|_{\cH^{\beta,d}}\le F_0},
	\end{equation*}
where $\|\cdot\|_{\cH^{\beta,d}}$ denotes the \textit{H\"older norm} defined by
    \begin{align*}
    \|f\|_{\cH^{\beta,d}}
        :=\max\cbr{\max_{\a\in\bN_0^d:|\a|_1<\beta}\|\partial^{\a}f\|_{\infty}
        ,\max_{\a\in\bN_0^d:|\a|_1= \ceil{\beta-1}}\sup_{\x_1,\x_2\in [0,1]^d, \x_1\neq\x_2 }\frac{|\partial^{\a}f(\x_1)-\partial^{\a}f(\x_2)|}{|\x_1-\x_2|_\infty^{\beta- \ceil{\beta-1}}}}.
    \end{align*}
Here, $\partial^{\a}f$ denotes the partial derivative of $f$ of order $\a:=(a_j)_{j\in[d]}$, that is, $\partial^{\a}f:=\frac{\partial^{|\a|_1}f}{\partial x_1^{a_1}\cdots \partial x_d^{a_d}}$.

We will show that the adaptive variational deep learning attains the minimax optimal contraction rate $n^{-\beta/(2\beta+d)}$  \citep{stone1982optimal}, up to a logarithmic factor, when we suitably select a set $\cM_n$ of network architectures and a magnitude bound $B_n$. We need to consider sufficiently many and diverse network architectures in order to make the oracle rate $\epsilon_n(\cF^\star;\cM_n)$ defined in \labelcref{eq:dnn_oracle} be the same as the optimal one. Also, the magnitude bound $B_n$ should be sufficiently large to control the approximation errors. For details, see our approximation results in \cref{appen:dnn:approx} in the Supplementary Material. In our several applications of adaptive variational deep learning, we assume the following conditions on $\cM_n$ and $B_n$. 

\begin{assumption}[Neural network model]
\label{assume:dnn}
For each $n\in\bN$, the set of network architectures is given by
    \begin{align}
    \label{eq:architecture_set}
        \cM_n=\cbr{\del{\floor{k_1\log n}\vee 2,\floor{n^{k_2/\log n}}}:k_1\in\sbr{\ceil{\log\log n}\vee1},k_2\in\sbr{0: \ceil{(\log n)/2}}}
    \end{align}
and the magnitude bound is given by $B_n\asymp n^{\iota_0}$ for an arbitrary constant $\iota_0\ge1/2$.
\end{assumption}

Under the above assumption, we establish adaptive optimal contraction.

\begin{corollary}[H\"older smooth regression function]
\label{col:dnn:holder}
Let $\beta>0$, $d\in\bN$ and $F_0>0$. Then under \cref{assume:dnn}, we have
    \begin{equation}
        \sup_{f^\star\in\cH^{\beta, d, F_0}}\P_{f^\star}^{(n)}\sbr{\hvQ_n\del{\nnorm[1]{\net(\btheta)- f^\star}\ge n^{-\frac{\beta}{2\beta+d}}\log^2n}}=\sco(1).
    \end{equation} 
\end{corollary}

We discuss the implication of the above results and the related work on estimating H\"older smooth regression functions with deep neural networks in \cref{appen:dnn:additional:review} in the Supplementary Material.

\section{Combinatorial model spaces}
\label{sec:sparse}

As we mentioned in \cref{sec:method}, the computational burden of the adaptive variational Bayes is proportional to the number of individual models. So far, we have focused on nested model spaces, where the individual models can be sorted by their complexity. The cardinalities of nested model spaces are usually not much large, for example, at most  $\scO(\log n)$ cardinality for the neural network model we considered (see \cref{assume:dnn}). Whereas, as we explained in \cref{subsec:modelspace}, combinatorial model spaces such as sparse models, have exponentially increasing cardinalities. Thus, a naive application of the adaptive variational Bayes to combinatorial model spaces is computationally intractable.

As a remedy, we propose employing ``sparsity-inducing'' priors and variational families in our framework to avoid dividing the entire parameter space by a combinatorial structure. By doing so, for a model space that has a combinatorial structure only, we do not need the adaptive variational Bayes algorithm that aggregates multiple individual posteriors. For example, this strategy leads to the same variational Bayes method for sparse linear regression as that proposed by \citet{ray2021variational}, while our general theory that will be developed in this section cannot be used to prove its optimality, see \cref{remark:sparse_regression} below. However, if a model space has an additional nested structure, multiple parameter spaces indexed by the nested structure continue to exist. For such cases, the adaptive variational Bayes can be implemented and shown to be adaptive to both the sparsity and complexity such as the smoothness of the model as shown in the next subsection. There are many interesting examples where the model spaces are both combinatorial and nested, for instance, sparse mixture models \citep{yao2022bayesian}, sparse row-rank matrix estimation \citep{babacan2012sparse}, sparse linear regression with an unknown smooth error distribution \citep{chae-semi, lee2021bayesian}, sparse factor models with the unknown factor dimensionality \citep{pati2014posterior,ohn2021posterior} and sparse nonparametric regression \citep{yang2015minimax,jiang2021variable}. We will investigate the last two examples to illustrate our approach and leave the other examples as future work.

\subsection{Adaptive contraction rates for combinatorial model spaces}
\label{sec:sparse:main}

In this subsection, we establish general results on adaptive contraction and model selection consistency of the adaptive variational posterior for model spaces with both nested and combinatorial structures. Let $\cM_n$ and $\cS_n$ be nested and combinatorial model spaces, respectively, and let $\Theta_{n,m,S}$ be an individual parameter space indexed by  $m\in\cM_n$  and $S\in\cS_n$. If computationally tractable, we can conduct variational optimization over a ``merged'' parameter space $\Theta_{n,m}:=\cup_{S\in\cS_n}\Theta_{n,m,S}$ for each $m\in\cM_n$, instead of every $\Theta_{n,m,S}$. By doing so, we have the adaptive variational posterior of the same form as in \labelcref{eq:vposterior}.   Recently, variational optimization algorithms have been developed for a number of sparse models, for example, sparse linear regression  \citep{huang2016variational,ray2021variational}, sparse factor models with ``fixed'' factor dimensionality \citep{ning2021spike} and sparse deep neural network models \citep{bai2020efficient}. We can adopt these existing algorithms for variational optimization over $\Theta_{n,m}$.

A potential problem of this strategy from a theoretical perspective is that the complexity of the merged parameter space $\Theta_{n,m}$ may be excessively large in high-dimensional settings, so we may fail to obtain a good contraction rate. We  overcome this by letting a prior $\Pi_{n,m}\in\cP(\Theta_{n,m})$  and a variational family $\cQ_{n,m}\subset\cP(\Theta_{n,m})$ control the complexity of $\Theta_{n,m}$ appropriately. We now provide a slightly modified assumption to formally describe the regularization effects of  $\{\Pi_{n,m}\}_{m\in\cM_n}$ and $\{\cQ_{n,m}\}_{m\in\cM_n}$. For ease of description, we write
\begin{align*}
    \Pi_{n,m}&=\sum_{S\in\cS_n}\alpha_{n,S|m}\Pi_{n,m,S} \\
    &\mbox{ with } \alpha_{n,S|m}:=\Pi_{n,m}(\Theta_{n,m,S}) 
    \mbox{ and } \Pi_{n,m,S}(\cdot):=\Pi_{n,m}(\cdot|\Theta_{n,m,S}). 
\end{align*}
for every $m\in\cM_n$. For example, if we impose a spike-and-slab prior  with slab probability $\omega\in[0,1]$ independently on each of $d$ parameters, then $\alpha_{n,S|m}=(\omega)^{|S|}(1-w)^{d-|S|}$. We expect that the conditional prior model probabilities $(\alpha_{n,S|m})_{m\in\cM_n, S\in \cS_n}$ penalize overly dense models with large complexities.

\begin{assumption}[Combinatorial model, sparsity-inducing prior and variational family] 
\label{assume:sparse}
There exist absolute constants  $J_0>0$, $H_0>1$ and $\fc_1,\dots,\fc_6>0$  such that the following hold for  any  $\blambda^\star\in\Lambda_n^\star$,  any $m\in\cM_n$, $S\in\cS_n$ and any sufficiently large $n\in\bN$.
\begin{enumerate}[label=\assumnum{assumption}\textsf{\arabic*}]
    \item \label{assume:sparse:testing} 
    (Testing) There exist a test function $\varphi_{n,m, S}:\bY_n\mapsto [0,1]$  such that
        \begin{align}
            \max\cbr{\P_{\blambda^\star}^{(n)}[\varphi_{n,m,S}], \sup_{\btheta\in\Theta_{n,m, S}:\scd_n(\sT(\btheta),\blambda^\star)\ge J_0\zeta}
            \P_{\sT(\btheta)}^{(n)}[1-\varphi_{n,m,S}]}
            \le \exp\del[1]{-\fc_1n\zeta^2}
         \end{align}
    for any $\zeta>\zeta_{n,m, S}\ge n^{-1/2}$.

    \item \label{assume:sparse:variational} 
    (Prior and variational family) There exists a distribution $\vQ_{n,m}^*\in\cQ_{n,m}$ such that
        \begin{equation}
      \kl\del[1]{\vQ_{n,m}^*, \Pi_{n,m}}  
       +  \vQ_{n,m}^*\sbr{\kl\del[1]{\P_{\blambda^\star}^{(n)}, \P_{\sT(\btheta)}^{(n)}}} 
                \le \fc_2n \inf_{S\in\cS_n}(\eta_{n,m, S}+\zeta_{n,m,S})^2.
    \end{equation}
  \end{enumerate}
Now define
        \begin{align}
             \label{eq:oracle2}
            \epsilon_n:=\epsilon_n(\cM_n\times \cS_n)
            :=\inf_{(m,S)\in\cM_n\times \cS_n}(\eta_{n,m,S}+\zeta_{n,m,S}).
        \end{align}

\begin{enumerate}[label=\assumnum{assumption}\textsf{\arabic*}, resume]
    \item \label{assume:sparse:aggregation} 
    (Model space and prior model probabilities) 
        The model space $\cM_n\times \cS_n$, prior model probabilities $(\alpha_{n,m})_{m\in\cM_n}$ and conditional prior model probabilities $(\alpha_{n,S|m})_{m\in\cM_n, S\in\cS_n}$ satisfy
        \begin{align}
            \abs{\cbr{(m,S)\in\cM_n\times \cS_n:\zeta_{n,m,S}\le H\epsilon_n}}&\le  \exp\del[1]{ \fc_3n(H\epsilon_n)^2} 
            \label{eq:assume:sparse:cardinality},\\
            \sum_{(m,S)\in\cM_n\times \cS_n:\zeta_{n,m,S}\ge H\epsilon_n}\alpha_{n,m}\alpha_{n,S|m}
            &\le \exp\del[1]{-\fc_4n(H\epsilon_n)^2} \label{eq:assume:sparse:largemodels}
        \end{align}
    for any $H>H_0$. 
    Moreover, there exists a model $(m_n^*, S_n^*)\in\cM_n\times \cS_n$ such that $\eta_{n,m_n^*, S_n^*}+\zeta_{n,m_n^*, S_n^*}\le (1+\fc_5)\epsilon_n$ and
        \begin{align}
         \label{eq:assume:sparse:best}
            \alpha_{n,m_n^*}\alpha_{n,S_n^*|m_n^*} &\ge\exp\del[1]{-\fc_6n\epsilon_n^2}.
        \end{align}
    \end{enumerate}
\end{assumption}

In the above, we assume the condition \labelcref{eq:assume:sparse:cardinality} which is a weaker version of \labelcref{eq:model_cardinality}, because  \labelcref{eq:model_cardinality} may fail for combinatorial model spaces, for example, we have that $|\cM_n\times \cS_n|\gtrsim \e^{d_n}\gg \e^{n\epsilon_n^2}$ when $d_n\gg n.$ But even in that case, the number of not much complex models can be bounded as in \labelcref{eq:assume:sparse:cardinality}.

The next theorem provides adaptive contraction and model selection consistency of the adaptive variational posterior applied to a combinatorial model space.

\begin{theorem}[Adaptive contraction rate and model selection consistency, combinatorial model spaces]
\label{thm:sparse:conv}
Under \cref{assume:sparse}, we have
    \begin{align}
    \label{eq:sparse:conv}        \sup_{\blambda^\star\in\Lambda_n^\star}\P_{\blambda^\star}^{(n)}\sbr{\hvQ_n\del{\scd_n(\sT(\btheta),\blambda^\star)\ge A_n\epsilon_{n}}}
        =\sco(1) 
    \end{align}
for any diverging sequence $(A_n)_{n\in\bN}\to\infty$, where $\epsilon_n$ is defined in \labelcref{eq:oracle2}, and hence
    \begin{align}
    \label{eq:sparse:under}
        \sup_{\blambda^\star\in\Lambda_n^\star}\P_{\blambda^\star}^{(n)}
        \sbr[3]{\hvQ_n\del[2]{\cbr[2]{(m,S)\in\cM_n\times \cS_n:\inf_{\btheta\in\Theta_{n,m,S}}\scd_n(\sT(\btheta),\blambda^\star)\ge A_n\epsilon_n}}}
       =\sco(1).
    \end{align}
On the other hand, under \cref{assume:sparse:variational,assume:sparse:aggregation}, there exists an absolute constant  $H_1>0$ such that
    \begin{align}
    \label{eq:sparse:over}
        \sup_{\blambda^\star\in\Lambda_n^\star}\P_{\blambda^\star}^{(n)}
        \sbr{\hvQ_n\del{\cbr{(m,S)\in\cM_n\times \cS_n:\zeta_{n,m,S}\ge H_1\epsilon_n }}}
         =\sco(1).
    \end{align}
\end{theorem}

\begin{remark}
\label{remark:sparse_regression}
\cref{thm:sparse:conv} cannot be directly applied to the sparse linear regression model to conclude the same results of \citet{ray2020spike}. In that paper, the authors used specific theoretical techniques developed in \citet{castillo2015bayesian} for the original posterior distribution with a spike and slab prior, which our theoretical conditions do not embrace.
\end{remark}

We give a sparse factor model example in the next subsection and a high-dimensional regression example with deep neural networks in \cref{appen:sparse:dnn} in the Supplementary Material. The adaptive variational Bayes can be successfully applied to these examples even though there exists a combinatorial structure in the model space.

\subsection{Application to sparse factor models}
\label{sec:sparse:factor}

In this subsection, we illustrate the adaptive variational Bayes in the high-dimensional sparse factor model described in \cref{example:sparsefactor}.

\subsubsection{Prior and variational family}
To appropriately address the sparse structure of the loading matrix, we impose a spike-and-slab prior distribution on the loading matrix  $\L\in\R^{d_n\times m}$ conditional on $m\in\cM_n$.  Concretely, we assume
\begin{equations}
    &\Pi_{n}=\sum_{m\in\cM_n}\alpha_{n,m}\Pi_{n,m}, \quad \mbox{ where } \alpha_{n,m}:=1/|\cM_n|\\
    &\mbox{ and }\Pi_{n,m}:= \cbr{(1- \omega_{n,m})\delta(\cdot;\zero_m) + \omega_{n,m} \N(\zero_m, \tau_0\I_m )}^{\otimes d_n}
    \mbox{ with } \omega_{n,m}:=d_n^{-(1+\fa_0)m}\\
    \end{equations}
for given constants $\tau_0>0$ and $\fa_0>0$. Note that our choice of the slab probability  $\omega_{n,m}$ is sufficient to prevent overestimation of the factor dimensionality, so we can choose the uniform prior for $\cM_n$ or any other mildly distributed priors. Moreover, this choice is independent to the true distribution.

For each model index $m\in\cM_n$, we consider a variational family of spike-and-slab distributions such as
    \begin{equation*}
        \cQ_{n,m}:=\cbr{\bigotimes_{j=1}^{d_n}\cbr{(1- \nu_{j})\delta(\cdot;\zero_m) + \nu_{j}\N(\bpsi_{j}, \bPhi_{j})}:\bpsi_{j}\in\R^{m},\bPhi_{j}\in\bS_{++}^m, \nu_{j}\in[0,1] }.
    \end{equation*}
The computation of each individual variational posterior $\hvQ_{n,m}\in\argmin_{\vQ \in\cQ_{n,m}}\scE_n(\vQ, \Pi_{n,m},\sp_n)$ can be efficiently done by the optimization algorithm developed by \citet{ning2021spike}, which, for the sake of completeness, are provided in  \cref{appen:sparse:factor:algorithm} in the Supplementary Material. Consequently, the adaptive variational Bayes method is computationally tractable. We provide a simulation study to examine the performance of the proposed methodology in \cref{appen:sparse:factor:simulation} in the Supplementary Material.

\subsubsection{Covariance matrix estimation and model selection}

We first recall some notation for matrices. For a $d\times m$-dimensional matrix $\L$, we denote the operator norm of the matrix $\L$ by $\opnorm{\L}$, that is, $\opnorm{\L}:=\sup_{\x\in\R^m:|\x|_2=1}|\L\x|_2$. Let $\sigma_1(\L)\ge \cdots \ge \sigma_{d\wedge m}(\L)$ be the ordered singular values of $\L$. For a square matrix $\bSigma$, we denote by $|\bSigma|$ its determinant.

Let $r_n\in\bN$ be the true factor dimensionality and $s_n\in[d_n]$ be the row sparsity of the true loading matrix. We assume that $s_n\ge r_n$ throughout this section because, if not,  the rank of the true loading matrix is less than $r_n$ and the estimation of the factor dimensionality is not meaningful anymore. We assume that the true covariance matrix belongs to a set defined as
    \begin{align*}
        \Lambda_n^\star:=\Big\{\sT(\L)\in\bS_{++}^{d_n}:\L\in&\R^{d_n\times r_n}, |\supp(\L)|\le s_n,\sigma_{1}(\L\L^\top)\le \basigma \Big\}
    \end{align*}
with some fixed $\basigma>0$. The following theorem derives a contraction rate of the adaptive variational posterior for sparse and spiked covariance matrix estimation, which is the same as the rates in the existing literature \citep{xie2018bayesian, ning2021spike}. 

\begin{theorem}[Covariance matrix]
\label{thm:sparse:factor:cov}
Assume that $s_nr_n\log d_n=\sco(n)$, $s_n\ge r_n$ and $\log d_n\gtrsim \log n$. Then if $\cM_n=[m_{\max,n}]$ with $r_n\le m_{\max,n}\le n$,
    \begin{equation}
        \sup_{\bSigma^\star\in\Lambda_n^\star}
        \P_{\bSigma^\star}^{(n)}\sbr[3]{\hvQ_n\del[3]{\opnorm{\sT(\L)-\bSigma^\star}\ge A_n\sqrt{\frac{s_nr_n\log d_n}{n}}}}=\sco(1)
    \end{equation} 
for any diverging sequence $(A_n)_{n\in\bN}\to\infty$.
\end{theorem}

Under the following additional conditions on the true covariance matrix,
    \begin{align}
    \label{eq:sparse:factor:true:detect}
        \Lambda_n^\star(\eta^*):=\cbr{\sT(\L)\in\Lambda_n^\star:\min\cbr[2]{\sigma_{r_n}(\L\L^\top),\min_{j\in\supp(\L)}\abs{\L_{j,:}}_2^2}\ge \eta^*},
    \end{align}
for $\eta^*>0$, the adaptive variational posterior can nearly consistently estimate the true factor dimensionality $r_n$ and sparsity $s_n$. The lower bound of the $r_n$-th eigenvalue of the row rank matrix $\L\L^\top$, which can be viewed as the ``eigengap'' between the spike and noise eigenvalues, is introduced to avoid underestimation of the factor dimensionality. Similarly,  the nonzero rows of the loading matrix are assumed to be large enough to not underestimate the sparsity. 

\begin{theorem}[Factor dimensionality and sparsity]
\label{thm:sparse:factor:nfac}
Suppose that the same assumptions as in \cref{thm:sparse:factor:cov} hold. Furthermore assume that $\eta_n^*\ge \uA_n\sqrt{s_nr_n\log d_n/n}$ for some diverging sequence  $(\uA_n)_{n\in\bN}\to\infty$. Then there exist absolute constants $H_1>1$ and $H_2>1$ such that
    \begin{align}
        \inf_{\bSigma^\star\in \Lambda_n^\star(\eta_n^*)}\P_{\bSigma^\star}^{(n)}
            \sbr{\hvQ_n\del{\cbr{m\in\cM_n:r_n\le m \le H_1 r_n}}}&\to 1,\label{eq:sparse:factor:nfac}\\
        \inf_{\bSigma^\star\in \Lambda_n^\star(\eta_n^*)}\P_{\bSigma^\star}^{(n)}
            \sbr{\hvQ_n\del{s_n\le |\supp(\L)| \le H_2 s_n}}&\to 1. \label{eq:sparse:factor:sparsity}
    \end{align}
\end{theorem}

Unfortunately, the lower bound $\sqrt{s_nr_n\log d_n/n}$ of the expressibility gap in the above theorem is in general $\sqrt{r_n}$ times larger than the optimal bound $\sqrt{s_n\log d_n/n}$ \citep{cai2015optimal}. When $r_n$ is bounded, they are the same.

\section{Regularization via variational approximation}
\label{sec:regularization}

The theoretical approach in \cref{sec:theory} utilizes the adaptive contraction property of the original posterior, which heavily relies on the regularization effect of the prior model probabilities to overly complex models. In this section, we provide a general situation in which adaptive inference can be made due to regularization from variational families, not prior model probabilities. To put it concretely, we show that the adaptive variational  Bayes procedure regularizes a model  $m\in\cM_n$ for which the quantity $\Psi_{n,m}$ defined below is large,
    \begin{align}
        \Psi_{n,m}:= \sup_{\tPi\in \cP(\Theta_{n,m})} \inf_{\vQ\in \cQ_{n,m}}\cbr{\kl(\vQ, \Pi_{n,m})-\kl(\vQ, \tPi)}.
     \end{align}   
This,  we call the \textit{implicit variational Bayes penalty (ivB penalty)} for a model $m\in\cM_n$, can be viewed as a penalty that is ``implicitly'' imposed by the variational Bayes procedure to that model, in the sense that this is not explicitly specified by an user unlike the prior penalty $-\log \del[0]{\alpha_{n,m}}$ used in the previous sections. Clearly, the ivB penalty depends on the prior and variational family but not on the likelihood, and we found that this is usually proportional to the complexity of a model, e.g., the number of parameters as illustrated in \cref{example:regularization:dnn} below.

The choice of a variational family is crucial for ivB regularization.  By definition, if the variational family $\cQ_{n,m}$ becomes ``broader'', then the ivB penalty $\Psi_{n,m}$ will be smaller and vice versa. This means that using ``narrow'' variational families is required to obtain a sufficiently powerful ivB penalty. In the opposite extreme case of using the broadest variational family $\cQ_{n,m}=\cP(\Theta_{n,m})$, we have $\Psi_{n,m}=0$, since the supremum is attained at $\tPi=\Pi_{n,m}$. Therefore, ivB regularization does not occur for original posteriors. 

The aim of this section is to show that the adaptive variational posterior can attain a near-optimal contraction rate adaptively through ivB regularization without tuning the prior model probabilities. Toward this aim, we first show that the adaptive variational posterior gives a negligible mass to the set defined as
    \begin{align}
        \cM_{n}^{\textup{ivB,over}}(A):=\cbr{m\in\cM_n:\Psi_{n,m}\ge An\epsilon_n^2},
    \end{align}
which is a set of models with substantially large ivB penalties.

\begin{theorem}[ivB regularization]
\label{thm:regularization:over}
Under Assumptions \labelcref{assume:individual:variational}, \labelcref{assume:aggregation:model_set} and \labelcref{assume:aggregation:model_prior_mass}, we have
    \begin{equation}
        \label{eq:regularization:over}
        \sup_{\blambda^\star\in\Lambda_n^\star}\P_{\blambda^\star}^{(n)}
        \sbr{\hvQ_n\del{    \cM_{n}^{\textup{ivB,over}}(A_n)}}
       =\sco(1).
    \end{equation}
for any diverging sequence $(A_n)_{n\in\bN}\to\infty$. 
\end{theorem}

\begin{remark}
The proof of the above result does not use the change-of-measure lemma (\cref{lemma:variational_ineq} in the Supplementary Material), unlike the proofs of the concentration results given in \cref{sec:theory,sec:sparse}. Instead, we directly upper bound the variational posterior model probabilities, of which the closed forms are provided in \labelcref{eq:vpost_mprob}.
\end{remark}

\cref{thm:regularization:over} enables us to focus on a ``sieve'' $\cM_{n}^{\textup{ivB,regular}}(A_n):=\cM_n\setminus\cM_{n}^{\textup{ivB,over}}(A_n)$ of appropriate complexity when we analyze the contraction behavior. The contraction rate is then determined by the maximal complexity $\zeta_n^\ddag$ of the models in the sieve $\cM_{n}^{\textup{ivB,regular}}(A_n)$, which is given as
        \begin{align}
        \label{eq:complexity_not_ivb}
          \zeta_n^\ddag:=  \max_{m\in \cM_{n}^{\textup{ivB,regular}}(A_n)}\zeta_{n,m}=\max_{m\in  \cM_n:\Psi_{n,m}<A_nn\epsilon_n^2 }\zeta_{n,m},
        \end{align}
as shown in the next theorem.

\begin{theorem}[Adaptive contraction rate through ivB regularization]
\label{thm:regularization:rate}
Under Assumptions \labelcref{assume:individual}, \labelcref{assume:aggregation:model_set}  and \labelcref{assume:aggregation:model_prior_mass}, we have
    \begin{align}
    \label{eq:regularization:rate}    \sup_{\blambda^\star\in\Lambda_n^\star}\P_{\blambda^\star}^{(n)}\sbr{\hvQ_n\del{\scd_n(\sT(\btheta),\blambda^\star)\ge (A_n\epsilon_{n})\vee \zeta_n^\ddag }}=\sco(1) 
    \end{align}
for any diverging sequence $(A_n)_{n\in\bN}\to\infty$, where $\zeta_n^\ddag$ is defined in \labelcref{eq:complexity_not_ivb}.
\end{theorem}

We say that the collection of the ivB penalties $\{\Psi_{n,m}:m\in \cM_n\}$ is \textit{ideal} if $\Psi_{n,m}\asymp n\zeta_{n,m}^2$ for any $m\in\cM_n$, because in that case, ivB regularization exactly recovers the oracle rate since $ \zeta_n^\ddag\lesssim \sqrt{A_n}\epsilon_n$, that is, the ivB penalties induce sufficient regularization. But in our examples given below and in the Supplementary Material (\cref{appen:example:gaussian_sequence}), we are only able to get non-ideal ivB penalties, which yield an extra logarithmic term in the contraction rate. That is, ivB regularization is not as effective as prior regularization in the current theoretical status.

\begin{example}[Neural networks]
\label{example:regularization:dnn}
Consider the adaptive variational deep learning procedure considered in \cref{sec:dnn}. To get a non-ignorable ivB penalty $\Psi_{n,(K,M)}$, we expand our parameter space as $\Theta^{\le B_n^2}_{(K,M)}$ by replacing the previous magnitude bound $B_n$ with $B_n^2$, which is endowed with the uniform prior $\Pi_{n,(K,M)}^*=\Unif(-B_n^2, B_n^2)^{\otimes J_{(K,M)}}$, while the variational family remains the same as   \labelcref{eq:dnn_variational_family}. Despite this expansion, it is easy to see that \cref{assume:individual} is still satisfied. Now, we let $\Pi_{n,(K,M)}^*=\Unif(-B_n, B_n)^{\otimes J_{(K,M)}}$. Then the ivB penalty for a model $(K,M)$ is lower bounded by
    \begin{align*}
        \Psi_{n,(K,M)}
        &\ge 
        \inf_{Q\in \cQ_{n,(K,M)}}\cbr{\kl(\vQ, \Pi_{n,(K,M)})- \kl(\vQ, \Pi_{n,(K,M)}^*)}\\       
        &=J_{(K,M)} \log\del{\frac{B_n^2}{B_n}}     
        \gtrsim KM^2\log n.
    \end{align*}
Note that the ivB penalty is not ideal, since $n\zeta_{n,(K,M)}^2=K^2M^2\log n>KM^2\log n$. So under \cref{assume:dnn} on the model space $\cM_n$, for any diverging sequence $(A_n)_{n\in\bN}$, the contraction rate $(A_n\epsilon_{n})\vee \zeta_n^\ddag$ is dominated by
    \begin{align*}
       \zeta_n^\ddag   
         &=\max_{(K,M)\in  \cM_n: KM^2\log n <A_nn\epsilon_n^2}KM\sqrt{\frac{\log n}{n}}\\
         &\le \max_{(K,M)\in  \cM_n}\sqrt{K}\sqrt{A_n}\epsilon_n
         \le \sqrt{A_n}\epsilon_n \log n \sqrt{\log \log n},
    \end{align*}
which is $\log n \sqrt{\log \log n}$ times slower than the oracle rate $\epsilon_n:=\epsilon_n(\cF^\star)$ in \labelcref{eq:dnn_oracle}.
\end{example}

\begin{remark}
\label{remark:reverse_prior}
In fact, even though penalizing prior model probabilities are not used, the original posterior can be adaptively optimal when the ``reverse'' prior mass condition is satisfied \citep{ghosal2008nonparametric, yang2017bayesian}. Adopting the notation of this paper, this condition is written as follows:   the individual prior $\Pi_{n,m}$  satisfies $\Pi_{n,m}(\scd_n(\sT(\btheta), \blambda^\star)\le \fc_1' \zeta_{n,m})\le \exp\del[0]{-\fc_2'n\zeta_{n,m}^2}$ for any overly complex model $m\in \cM_n$ with $\zeta_{n,m}\ge \fc_3'\epsilon_n$  for some sufficiently large positive constants $\fc_1'$, $\fc_2'$ and $\fc_3'$. In \citet{rousseau2017asymptotic}, a similar condition was assumed to derive an adaptive contraction rate of their empirical Bayes posterior with the maximum marginal likelihood estimator.
But the reverse prior mass condition does not hold or at least is hard to verify for ``non-identifiable'' models where characterizing the inverse of the natural parameterization map is almost impossible. For example, in nonparametric regression with neural networks, it is hard to identify ``every'' network parameter yielding a neural network close to a true regression function $f^\star$. Therefore, a tractable upper bound of the prior mass $\Pi_{n,(K,M)}(\nnorm{\net(\btheta)-f^\star}\le \fc_1' \zeta_{n,(K,M)})$ is not easy to be established, while ivB regularization is feasible as shown in \cref{example:regularization:dnn}. That is, we still have a case that the original posterior may not behave well, but its variational approximation by the adaptive variational Bayes contracts at a near-optimal rate. 
\end{remark}

\section{Adaptive variational quasi-posteriors}
\label{sec:quasi}

In this section, we analyze a variational approximation of a \textit{quasi-posterior} that is updated from a prior distribution through an alternative quasi-likelihood rather than of a standard likelihood. Quasi-posteriors have been used in a number of applications for various purposes, and a selective survey of related work is provided in \cref{appen:quasi:related} in the Supplementary Material.

\subsection{Adaptive variational Bayes with quasi-likelihood}

For a given \textit{quasi-likelihood function} $\sp_n^\natural:\Lambda_n\times \bY_n\mapsto\R_{\ge0}$, the quasi-posterior given the sample $\Y^{(n)}\in\bY_n$ is defined as
    \begin{equation}
        \d\Pi_n^\natural(\btheta|\Y^{(n)}):=\frac{ \sp_n^\natural(\sT(\btheta), \Y^{(n)})\d\Pi_n(\btheta)}{\int \sp_n^\natural(\sT(\btheta), \Y^{(n)})\d\Pi_n(\btheta) }.
    \end{equation}
Given a variation family $\cQ_n\subset\cP(\Theta_{n,\cM_n})$, we define an \textit{adaptive variational quasi-posterior} $\hvQ_n^\natural$ by
    \begin{equations}
    \label{eq:kl_min_quasi}
    \hvQ_n^\natural&\in\argmin_{\vQ\in\cQ_n}\kl(\vQ,\Pi_n^\natural(\cdot|\Y^{(n)}))\\
        &=\argmin_{\vQ\in\cQ_n}\cbr{ \scE_n\del[1]{\vQ, \Pi_{n,m}, \sp_n^{\natural}}:= -\int \log \sp_n^{\natural}(\sT(\btheta), \Y^{(n)}) \d\vQ(\btheta) + \kl(\vQ, \Pi_n)}.
    \end{equations}
By a similar argument to \cref{thm:compute}, the adaptive variational quasi-posterior can be equivalently written as 
    \begin{equations}
    \label{eq:quasi_variational_posterior}
        \hvQ_n^{\natural}&=\sum_{m\in\cM_n}\hgamma_{n,m}^{\natural}\hvQ_{n,m}^{\natural},
        \end{equations}
      where 
      \begin{equations}
     \hvQ_{n,m}^{\natural}\in\argmin_{\vQ\in\cQ_{n,m}}  \scE_n\del[1]{\vQ, \Pi_{n,m}, \sp_n^{\natural}}
        &\mbox { and } \hgamma_{n,m}^{\natural}:=\frac{ \alpha_{n,m} \e^{-\scE_n\del[1]{\hvQ_{n,m}^{\natural}, \Pi_{n,m},\sp_n^{\natural}}}}
        {\sum_{m'\in\cM_n}\alpha_{n,m'} \e^{-\scE_n\del[1]{\hvQ_{n,m'}^{\natural},\Pi_{n,m'},\sp_n^{\natural}}}},
    \end{equations}
which allows us to decompose the minimization in \labelcref{eq:kl_min_quasi} into computationally tractable minimization problems over individual models.

\subsection{Adaptive contraction rates of variational quasi-posteriors}
\label{sec:quasi:contraction}

In this subsection, we derive adaptive optimal contraction rates of the adaptive variational quasi-posterior. We take a similar approach as in \cref{sec:theory}, that is, we first prove that the original quasi-posterior contracts at a rate $\epsilon_n$ and bound the variational approximation gap as
    \begin{align}
    \label{eq:quasi:vgap}
        \P_{\blambda^\star}^{(n)}\sbr[2]{\kl(\hvQ^\natural_n, \Pi_n^\natural(\cdot|\Y^{(n)}))}
            \lesssim n\epsilon_n^2
    \end{align}
in order to derive the same contraction rate $\epsilon_n$ of the adaptive variational quasi-posterior $\hvQ^\natural_n$.

\begin{assumption}[Variational quasi-posterior]
\label{assume:quasi}
There exist absolute constants $\fc_1,\dots,\fc_5>0$ and $\rho>0$ such that the followings hold for any  $\blambda^\star\in\Lambda_n^\star$,  any $m\in\cM_n$ and any sufficiently large $n\in\bN$.
     \begin{enumerate}[label=\assumnum{assumption}\textsf{\arabic*}]
          \item  \label{assume:quasi:quasi}
        (Quasi-likelihood)   For any $\blambda\in\Lambda_n$, the quasi-likelihood $\sp_n^\natural:\Lambda_n\times \bY_n\mapsto\R_{\ge0}$ satisfies
            \begin{align}
            \label{eq:quasi:learning}
                 \P_{\blambda^\star}^{(n)}\sbr{\frac{\sp_n^\natural(\blambda,\Y^{(n)})}{\sp_n^\natural(\blambda^\star,\Y^{(n)})}}
                 &\le \e^{-\fc_1n\scd_n^2(\blambda,\blambda^\star)},\\
             \label{eq:quasi:bound}
                 \P_{\blambda^\star}^{(n)}\sbr[4]{\del{\frac{\sp_n^\natural(\blambda^\star,\Y^{(n)})}{\sp_n^\natural(\blambda,\Y^{(n)})}}^{\rho}}
                  &\le \e^{\fc_2n\scd_n^2(\blambda,\blambda^\star)}.
            \end{align}

    \item \label{assume:quasi:variational} 
    (Prior and variational family)  There exists a distribution $\vQ_{n,m}^*\in\cQ_{n,m}$ such that
        \begin{equation}
          \kl\del[1]{\vQ_{n,m}^*, \Pi_{n,m}}  
            + \vQ_{n,m}^*\sbr{n\scd_n^2(\sT(\btheta), \blambda^\star)}
        \le \fc_3n (\eta_{n,m}+\zeta_{n,m})^2.
        \end{equation}
    
    \item   \label{assume:quasi:model_prior}
    (Prior model probabilities) 
    There exists a model $m_n^*\in\cM_n$ such that $\eta_{n,m_n^*}+\zeta_{n,m_n^*}\le (1+\fc_4)\epsilon_n$  and $  \alpha_{n,m_n^*} \ge\exp\del[1]{-\fc_5n\epsilon_n^2},$  where $\epsilon_n$ is the oracle rate defined in \labelcref{eq:oracle_rate}.
    \end{enumerate}
\end{assumption}

For the adaptive variational quasi-posterior, it is unnecessary to penalize overly complex individual models through prior model probabilities, unlike  \cref{assume:aggregation:model_prior_pen} for the adaptive variational posterior. Thus we have more flexibility in choosing prior model probabilities. For example, we can consider the uniform distribution on the model space unless it is too large. Moreover, the cardinality of the model space is not restricted. Technically, both are consequences of  \labelcref{eq:quasi:learning} in \cref{assume:quasi:quasi}, by which we do not need to construct a suitable test function that requires controlling the complexity of the entire parameter space $\Theta_{n,\cM_n}$.

\cref{assume:quasi:quasi} is  similar to the ``sub-exponential loss'' assumption of \cite{syring2020gibbs} and the ``Bernstein'' assumption of \citet{alquier2016properties}, which investigated theoretical properties of quasi-posteriors. Basically, all these conditions require that the ratio of the log quasi-likelihoods of a parameter $\blambda$ and a  true one $\blambda^\star$ is a sub-exponential random variable and its variance is bounded by the expectation in a certain way. A number of interesting statistical problems can be dealt with under such assumptions, see  \cite{syring2020gibbs,alquier2016properties} and the examples therein. In addition, fractional likelihoods are encompassed in our quasi-likelihood framework, see \cref{lemma:quasi:fractional} in the Supplementary Material. However, any likelihood function $\sp_n$ does not satisfy \cref{assume:quasi:quasi} since $\P_{\blambda^\star}^{(n)}[\sp_n(\blambda,\Y^{(n)})/\sp_n(\blambda^\star,\Y^{(n)})]=1$ for any $\blambda^\star,\blambda\in\Lambda_n$, thus we needed to separately deal with the variational ``non-quasi'' posteriors.

Since our adaptive variational quasi-posterior is on the very complex parameter space $\Theta_{n,\cM_n}$, to verify \labelcref{eq:quasi:vgap} is not trivial. But the next theorem, the variational quasi-posterior counterpart of \cref{thm:vgap}, is constructive.

\begin{theorem}[Variational approximation gap to quasi-posterior]
\label{thm:quasi:vgap}
Suppose that \cref{assume:quasi:quasi} holds. 
Then for any $\blambda^\star\in\Lambda_n$, 
    \begin{equations}
    \label{eq:quasi:vgap_ineq}
        \P_{\blambda^\star}^{(n)}&\sbr{ \kl(\hvQ_n^\natural, \Pi_n^\natural(\cdot|\Y^{(n)}))}      \\
        &\le\inf_{m\in\cM_n}\inf_{\vQ_m\in\cQ_{n,m}}\cbr{-\log(\alpha_{n,m})+\kl(\vQ_m,\Pi_{n,m}) +\frac{\fc_2}{\rho}\vQ_m\sbr{n\scd_n^2(\sT(\btheta),\blambda^\star)}}.
    \end{equations}
Further suppose that \cref{assume:quasi:variational,assume:quasi:model_prior} holds additionally. Then \labelcref{eq:quasi:vgap} holds with $\epsilon_n=\epsilon_n(\cM_n)$.
\end{theorem}

The adaptive variational quasi-posterior enjoys the oracle contraction rate.

\begin{theorem}[Adaptive contraction rate of variational quasi-posterior]
\label{thm:quasi:conv}
Under \cref{assume:quasi}, we have
        \begin{equation}
        \label{eq:conv_quasi}
        \sup_{\blambda^\star\in\Lambda_n^\star}\P_{\blambda^\star}^{(n)}\sbr{\hvQ_n^\natural\del{\scd_n(\sT(\btheta),\blambda^\star)\ge A_n\epsilon_{n}}}=\sco(1)
    \end{equation} 
for any diverging sequence $(A_n)_{n\in\bN}\to\infty$.
\end{theorem}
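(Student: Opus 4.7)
The plan is to mimic the proof of \cref{thm:conv}, now with the quasi-likelihood $\sq_n^\natural$ in place of $\sq_n$ and \cref{assume:quasi} substituting for \cref{assume:exprmt,assume:variational}. The overall target is to bound $\P_{\blambda^\star}^{(n)}\sbr[0]{\hXi_n^\natural\sbr[0]{n\scd_n^2(\sT_n(\btheta),\blambda^\star)}}$ by $C L_n n\epsilon_n^2$, after which Markov's inequality will immediately yield the stated contraction.

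First, I would exploit variational optimality together with \cref{assume:quasi:approximation}. Pointwise in $\Y^{(n)}$, the KL decomposition gives
\begin{equation*}
\kl(\hXi_n^\natural,\Pi_n)+\hXi_n^\natural\sbr[1]{\log\frac{\sq_n^\natural(\blambda^\star,\Y^{(n)})}{\sq_n^\natural(\sT_n(\btheta),\Y^{(n)})}}
= \kl\del[1]{\hXi_n^\natural,\Pi_n^\natural(\cdot|\Y^{(n)})} + \log\frac{\sq_n^\natural(\blambda^\star,\Y^{(n)})}{\sq_{n,\Pi_n}^\natural(\Y^{(n)})},
\end{equation*}
so on taking $\P_{\blambda^\star}^{(n)}$-expectation the left side is at most $\fc_5 L_n n\epsilon_n^2 + \P_{\blambda^\star}^{(n)}\sbr[0]{\log(\sq_n^\natural(\blambda^\star,\Y^{(n)})/\sq_{n,\Pi_n}^\natural(\Y^{(n)}))}$. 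To control the remaining term I would pick $m_n^*\in\argmin_{m\in\cM_n}(\eta_{n,m}+\zeta_{n,m})$ and the prior-concentration set $B_n:=\cbr{\btheta\in\Theta_{n,m_n^*}:\scd_n(\sT_n(\btheta),\blambda^\star)\le \eta_{n,m_n^*}+\zeta_{n,m_n^*}}$; applying Jensen over $B_n$ lower-bounds $\log\sq_{n,\Pi_n}^\natural(\Y^{(n)})$ by $\log\Pi_n(B_n) + \Pi_n(B_n)^{-1}\int_{B_n}\log\sq_n^\natural(\sT_n(\btheta),\Y^{(n)})\d\Pi_n(\btheta)$, after which \cref{assume:quasi:bound} (via Jensen on the $\barho$-th moment, as in \cref{lem:quasi:verror}) combined with \cref{assume:prior:model,assume:prior:truth} turns this into $\P_{\blambda^\star}^{(n)}\sbr[0]{\log(\sq_n^\natural(\blambda^\star,\Y^{(n)})/\sq_{n,\Pi_n}^\natural(\Y^{(n)}))} \lesssim L_n n\epsilon_n^2$.

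Second, I would convert the resulting expected log-likelihood-ratio bound into a distance bound via a Donsker--Varadhan argument. Set $g(\btheta,\Y^{(n)}):=\fc_2 n\scd_n^2(\sT_n(\btheta),\blambda^\star)+\log(\sq_n^\natural(\sT_n(\btheta),\Y^{(n)})/\sq_n^\natural(\blambda^\star,\Y^{(n)}))$; then \cref{assume:quasi:learning} yields $\P_{\blambda^\star}^{(n)}[\e^{g(\btheta,\Y^{(n)})}]\le \fc_1$ for every fixed $\btheta$. Applying Donsker--Varadhan pointwise in $\Y^{(n)}$ gives $\hXi_n^\natural[g]\le \log\Pi_n[\e^g]+\kl(\hXi_n^\natural,\Pi_n)$; taking $\P_{\blambda^\star}^{(n)}$-expectation and then invoking Jensen together with Fubini (permissible since $\Pi_n$ is data-independent by \cref{assume:prior}) collapses $\P_{\blambda^\star}^{(n)}[\log\Pi_n[\e^g]]\le \log\Pi_n[\P_{\blambda^\star}^{(n)}[\e^g]]\le \log\fc_1$. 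Unwinding $g$ and combining with the first step yields $\P_{\blambda^\star}^{(n)}\sbr[0]{\hXi_n^\natural[n\scd_n^2(\sT_n(\btheta),\blambda^\star)]}\lesssim L_n n\epsilon_n^2$, and Markov's inequality closes the argument: $\P_{\blambda^\star}^{(n)}\sbr[0]{\hXi_n^\natural(\scd_n(\sT_n(\btheta),\blambda^\star)\ge A_n\sqrt{L_n}\epsilon_n)}\lesssim 1/A_n^2 = \sco(1)$.

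The main technical hurdle is the entanglement between $\hXi_n^\natural$ and $\Y^{(n)}$: the pointwise exponential bound furnished by \cref{assume:quasi:learning} cannot be integrated against $\hXi_n^\natural$ by a naive Fubini, because $\hXi_n^\natural$ depends on the sample through its defining optimization. The Donsker--Varadhan trick resolves this by shifting the integrand from the data-dependent measure $\hXi_n^\natural$ to the data-independent prior $\Pi_n$, at the cost of an additive $\kl(\hXi_n^\natural,\Pi_n)$ term which is precisely the quantity already controlled in the first step. This is why \cref{assume:quasi:learning} must be phrased as an exponential moment bound on the quasi-likelihood ratio rather than as a bound on the log-ratio itself.
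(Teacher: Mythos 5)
Your proposal is correct, but it takes a genuinely different route from the paper. The paper first establishes an exponential tail bound for the quasi-posterior itself: it lower-bounds the integrated quasi-likelihood ratio over a $\scd_n$-ball with high probability via a Markov-inequality argument on the $\barho$-th moment from \cref{assume:quasi:bound} together with the prior-mass condition \labelcref{assume:prior:truth}, upper-bounds the numerator over the bad set by Fubini and \cref{assume:quasi:learning}, obtaining $\P_{\blambda^\star}^{(n)}\sbr[0]{\Pi_n^\natural(\scd_n(\sT_n(\btheta),\blambda^\star)\ge A_n\sqrt{L_n}\epsilon_n\,|\,\Y^{(n)})}\lesssim \e^{-cA_n^2L_nn\epsilon_n^2}$, and then transfers this to $\hXi_n^\natural$ through \cref{lem:kl_ineq} with $\upsilon_n\asymp\sqrt{A_n}L_nn\epsilon_n^2$ and \cref{assume:quasi:approximation}. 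You instead prove the in-expectation risk bound $\P_{\blambda^\star}^{(n)}\sbr[0]{\hXi_n^\natural\sbr[0]{n\scd_n^2(\sT_n(\btheta),\blambda^\star)}}\lesssim L_nn\epsilon_n^2$ by a PAC-Bayes/Donsker--Varadhan argument: the ELBO identity plus \cref{assume:quasi:approximation}, a Jensen/prior-mass bound (via \cref{assume:quasi:bound} and \cref{assume:prior}) for the expected log ratio of $\sq_n^\natural(\blambda^\star,\cdot)$ to the quasi-marginal, and the exponential-moment reading of \cref{assume:quasi:learning} integrated against the data-independent prior after the change of measure; Markov then finishes. Both routes use the same hypotheses — including, implicitly, \cref{assume:rate} through the bound on $Z_{\alpha,n}$ and the definition of $\epsilon_n$, which the paper's own proof also invokes despite the statement listing only \cref{assume:prior,assume:quasi}. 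What each buys: the paper's argument yields exponentially small quasi-posterior mass outside the ball (of independent interest, e.g.\ for selection-type statements), while yours is shorter, avoids the numerator/denominator high-probability event, and delivers an explicit risk bound in the spirit of \cref{prop:compare:risk}, at the cost of only a polynomial $1/A_n^2$ decay of the outer expectation. Minor points: take $m_n^*$ as a near-minimizer since the infimum in \labelcref{eq:oracle_rate} need not be attained; the Donsker--Varadhan step does not require the integrand to be nonnegative (consistent with the proof of \cref{lem:kl_ineq}), which you need since your $g$ can be negative; the Fubini swaps on the negative parts of log quasi-likelihood ratios deserve the same (routine) care the paper itself glosses over; and avoid reusing $B_n$ for the prior-concentration set, as it denotes the magnitude bound elsewhere in the paper.
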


\begin{remark}
\citet{alquier2016properties} showed that under a similar condition on the quasi-likelihood to  \cref{assume:quasi:quasi}, the variational quasi-posterior can attain optimal theoretical properties and illustrated their results in several interesting applications. But unlike ours, a general recipe for adaptation of (variational) quasi-posteriors is missing in their paper. 
\end{remark}

We illustrate our general result on adaptive contraction rates of variational quasi-posteriors in two specific examples, the stochastic block model in \cref{appen:quasi:sbm} and nonparametric regression with sub-Gaussian error in \cref{appen:quasi:subgauss} in the Supplementary Material.

\begin{acks}[Acknowledgments]
We are very grateful to the Editor, the Associate Editor and three reviewers for their valuable comments which have led to substantial improvement in our paper. We would like to thank Minwoo Chae, Kyoungjae Lee, Cheng Li and Ryan Martin for their helpful comments and suggestions.
\end{acks}

\begin{funding}
We acknowledge the generous support of NSF grants DMS CAREER 1654579 and DMS 2113642. The first author was supported by the National Research Foundation of Korea(NRF) grant funded by the Korea government(MSIT) (NRF-2022R1F1A1069695) and INHA UNIVERSITY Research Grant.
\end{funding}

\begin{supplement}
\stitle{Supplement to "Adaptive variational Bayes: Optimality, computation and applications''}
\sdescription{
In the Supplementary Material, we provide proofs for all the results presented in the main text, as well as some additional materials.
}
\end{supplement}

\bibliographystyle{plainnat}
\bibliography{_references}

\newpage

\setcounter{page}{1}
\renewcommand{\thepage}{S-\arabic{page}}

\begin{appendices}
\crefalias{section}{appendix}
\crefalias{subsection}{appendix}
\crefalias{subsubsection}{appendix}

	\begin{center}
		{\large \textsc{Supplement to "Adaptive variational Bayes: Optimality, computation and applications''}} \\
		\medskip 
		{Ilsang Ohn and Lizhen Lin}
		\medskip
	\end{center}
	
In this supplement, we provide a table of contents and proofs for all the results presented in the main text, as well as some additional materials.
		 
\tableofcontents
\bigskip

\section{Numerical studies for comparison with model selection variational Bayes}
\label{appen:numeric_msvb}

In this section, we provide some numerical examples to illustrate the superiority of the adaptive variational Bayes (AVB) over the model selection variational Bayes (MSVB). 

\subsection{Gaussian mixtures}

Consider the Gaussian mixture model for a bivariate sample, where $\Y_1,\dots, \Y_n\iidsim \sum_{k=1}^m\varpi_k\N(\bvartheta_k, \bSigma_k)$ with $(\varpi_1,\dots, \varpi_m)\in\Delta_m$, $\bvartheta_k\in\R^2$ and $\bSigma_k\in\bS_{++}^2$ for $k\in[m]$. We  generate a sample of size $n=200$ from the true model
    \begin{align*}
        \sum_{k=1}^4&\varpi_k^\star \N(\bvartheta_k^\star, \bSigma_k^\star),
            \mbox{ where } (\varpi_1^\star,\dots, \varpi_4^\star)=\frac{1}{10}(3,3,2,2), \\
             & \vartheta_1^\star=(0,0), \vartheta_2^\star=(-4,-4), \vartheta_3^\star=(4,4), \vartheta_4^\star=(0,4) 
             \mbox{ and }  \bSigma_1^\star=\cdots=\bSigma_4^\star=\I_2.
    \end{align*}
We apply the two variational Bayes methods to estimate the Gaussian location-scale mixture model based on the generated sample. We consider mixture models with the number of components $m$ being at most 6, that is, we set $\cM_n=[6]$. For the prior model probabilities, we use $\alpha_{n,m}\propto \exp(-m\log m)$ for  $m\in[6]$.  Given the number of components $m,$ we impose the prior as $(\varpi_k)_{k\in[m]}\sim\Dir(m^{-1}\one_m)$,  and for each $k\in[m]$, $\bvartheta_k|\bSigma_k\sim  \N((0,0)^\top, \bSigma_k)$ and $\bSigma_k^{-1}\sim\texttt{Wishart}(10, 0.1\I)$ independently. For each $m$, we consider the mean-field variational family given by
    \begin{align*}
    \cQ_{n,m}:=\cbr{\vQ_{\text{weight}}\times\bigotimes_{k=1}^m\vQ_{\text{comp},k}:\vQ_{\text{weight}}\in \cP(\Delta_m), \vQ_{\text{comp},k}\in \cP(\R^2\times \bS_{++}^2)}.
    \end{align*}
We then use the coordinate ascent algorithm given in Section 10.2 of \citetS{bishop2006pattern} for solving each variational optimization problem.

The result is presented in \cref{fig:mixture}. The model selection variation Bayes chooses the mixture model with 3 components, which cannot capture the true model. But the adaptive variational Bayes smooths the two mixture models with 3 and 4 components, so this provides a better estimate.

\begin{figure}[t]
    \centering
    \includegraphics[scale=0.10]{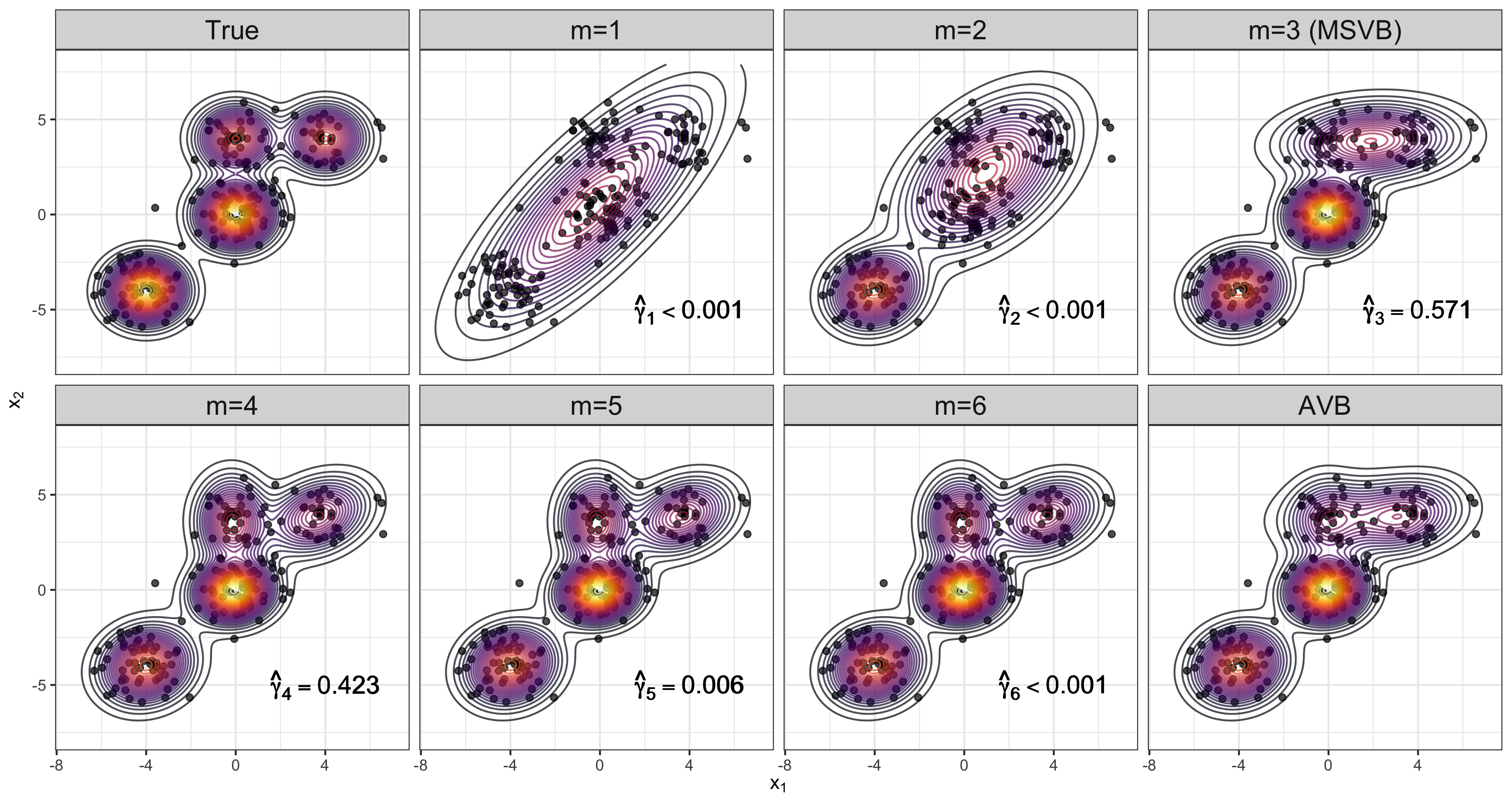}
    \caption{ Contour plots for the true and predictive densities.}
    \label{fig:mixture}
\end{figure}

\subsection{Neural network regression} 

We here compare the predictive abilities of the two variational posteriors over neural networks, computed by the adaptive variational Bayes and the model selection variational Bayes, respectively. We consider the set of network architectures given as $\cM_n=\{2,3\}\times\{25, 50, 100\}$. In this experiment, we consider the shallow architectures of depth 2 or 3 considering the following facts.  First, the required order for the depth in our theory is $\scO(\log n)$, which is far smaller than the order of width $\scO(n^{d/(4\beta+2d)})$. Moreover, several previous studies \citepS{liu2016stein, knoblauch2022optimization} chose shallow networks for their numerical experiments with the same data sets we use.

We let the prior probability of the model with depth $K$ and width $M$ be proportional to $\e^{-\fa_0K^2M^2\log n}$ with $\fa_0=10^{-4}$.  We train each individual variational posteriors for 2,000 epochs, using the ADAM optimizer \citepS{kingma2014adam} with a learning rate of $10^{-2}$. 

The performances of the two variational posteriors are evaluated on eight UCI data sets. For each data set, we split the data set into training (90\%) and test (10\%) sets randomly 50 times. We compute the posterior mean of the neural network based on 100 network parameters generated from the variational posterior and predictions are made based on the approximately computed posterior mean. We calculate the root mean square error (RMSE) on the test set for each of the 50 splits. \cref{tab:deep_regression} presents these RMSE values of the two variational posteriors and we see that the proposed adaptive variational posterior outperforms the model selection variational posterior.

\begin{table}[]
\caption{The average of the RMSEs, with its standard error in the parenthesis, over 50 runs. The best performance is indicated with a bold font for each of the eight benchmark UCI data sets.}
\label{tab:deep_regression}
\begin{tabular}{lcccc}
\hline
Data  & $n $    & $d$  & AVB                 & MSVB                \\\hline
Boston   & 506   & 13 & \textbf{3.807 $\pm$ 1.146}       & 4.008 $\pm$ 1.171      \\
Concrete & 1030  & 8  & \textbf{6.06 $\pm$ 0.512}        & 6.325 $\pm$ 0.562        \\
Energy   & 768   & 9  & \textbf{2.039 $\pm$ 0.188}       & 2.139 $\pm$ 0.251        \\
Kin8mn   & 8192  & 8  & \textbf{0.093 $\pm$ 0.004}       & \textbf{0.093 $\pm$ 0.004}        \\
Naval    & 11934 & 17 & \textbf{0.009 $\pm$ 0.003}       & \textbf{0.009 $\pm$ 0.003}       \\
Power    & 9568  & 4  & \textbf{4.308 $\pm$ 0.233}       & 4.314 $\pm$ 0.228        \\
Wine     & 1599  & 11 & \textbf{0.636 $\pm$ 0.035}       & 0.641 $\pm$ 0.036        \\
Yacht    & 308   & 6  & \textbf{2.424 $\pm$ 0.856}      & 2.853 $\pm$ 1.418       \\\hline
\end{tabular}
\end{table}

\section{A general approach for testing construction}
\label{appen:theory:testing}

In this section, we develop a new method to check the testing condition of \cref{assume:individual:testing}, which might be of independent interest. We will frequently use the next lemma in our applications, where we denote by $\scN(\zeta, \cB,\scd)$ the $\zeta$-covering number of a set $\cB$ with respect to a metric $\scd$.

\begin{lemma}[Sufficient condition of the testing condition]
\label{lemma:testing:renyi}
Suppose that there exist absolute constants $\rho_\circ\in(0,1)$, $\rho_\blacklozenge>1$,  and $\fc_2\ge\fc_1>0$ such that
        \begin{align}
            \fc_1n\scd_n^2(\blambda_0, \blambda_1)
                &\le \sD_{\rho_\circ}\del[1]{\P^{(n)}_{\blambda_0}, \P^{(n)}_{\blambda_1}}, \label{eq:tail1}\\
            \sD_{\rho_\blacklozenge}\del[1]{\P^{(n)}_{\blambda_0}, \P^{(n)}_{\blambda_1}}
                &\le \fc_2n\scd_n^2(\blambda_0, \blambda_1)\label{eq:tail2}
        \end{align}
for  any $\blambda_0, \blambda_1\in\Lambda_n$ and any sufficiently large $n\in\bN$. Moreover, assume that for any $u>0$, there exists an absolute constant $\fc_3(u)>0$ depending on $u$ such that
            \begin{align}
            \label{eq:covering}
                \sup_{\zeta>\zeta_{n,m}} \log\scN\del[1]{u\zeta,\cbr{\btheta\in\Theta_{n,m}:\scd_n(\sT(\btheta),\blambda^\star)\le 2\zeta}, \scd_n}
                \le \fc_3(u) n\zeta_{n,m}^2
            \end{align}
for any $\blambda^\star\in\Lambda_n^\star$, any $m\in\cM_n$ and any sufficiently large $n\in\bN$. 
Then \cref{assume:individual:testing}   holds.
\end{lemma}

\begin{proof}
The proof is deferred to \cref{appen:proof:testing:renyi}.
\end{proof}

In words, the assumptions \labelcref{eq:tail1,eq:tail2} require that the two divergences $\sD_{\rho_\circ}(\cdot, \cdot)$ and $\sD_{\rho_\blacklozenge}(\cdot, \cdot)$ are equivalent, while in  general, $\sD_{\rho_\circ}(\cdot, \cdot)\le \kl(\cdot, \cdot)\le\sD_{\rho_\blacklozenge}(\cdot, \cdot)$ always. This equivalence can hold for a statistical experiment consisting of light-tailed distributions. An useful sufficient condition of \labelcref{eq:tail1,eq:tail2} is that the log likelihood ratio is a sub-Gamma random variable, which was assumed by \citetS{han2021oracle} under the name of a ``local Gaussianity condition''. 

\begin{lemma}[Local Gaussian likelihood ratio]
\label{lemma:testing:local_gauss}
Suppose that there exist absolute constants $\fc_1'>1$, $\fc_2'>0$ and $\fc_3'>0$ such that
    \begin{align*}
          \frac{1}{\fc_1'}n\scd_n^2(\blambda_0,\blambda_1)
            \le \kl\del[1]{\P^{(n)}_{\blambda_0}, \P^{(n)}_{\blambda_1}}\le \fc_1'n\scd_n^2(\blambda_0,\blambda_1)
     \end{align*}
and that
    \begin{align*}
        \P_{\blambda_0}^{(n)}\sbr[3]{\e^{t\cbr[2]{\log\del{\frac{\sp_n(\blambda_0,\Y^{(n)})}{\sp_n(\blambda_1,\Y^{(n)})}} - \kl\del[1]{\P^{(n)}_{\blambda_0}, \P^{(n)}_{\blambda_1}}}} }
        \le \e^{\fc_2't^2n\scd_n^2(\blambda_0,\blambda_1)}
     \end{align*}
for any $t\in[-\fc_3',\fc_3'],$ any $\blambda_0, \blambda_1\in\Lambda_n$ and any sufficiently large $n\in\bN$. Then \labelcref{eq:tail1,eq:tail2}  hold.
\end{lemma}

\begin{proof}
The proof is deferred to \cref{appen:proof:local_gauss}.
\end{proof}
    
\begin{remark}
With \cref{lemma:testing:renyi}, we are able to derive a contraction rate of the adaptive variational posterior with respect to the metric $\scd_n$ that is equivalent to the KL divergence. Though the condition of the lemma is satisfied by a number of examples, such as experiments in \cref{sec:dnn} and \cref{appen:dnn:additional}, and many other examples in \citetS{han2021oracle}, this is limited for dealing with contraction with respect to weaker metrics. For example, the operator norm $\opnorm[0]{\bSigma}:=\sup_{\x\in\R^d:|\x|_2=1}|\bSigma\x|_2$ which is frequently used for evaluating estimation of a covariance matrix $\bSigma^\star\in\bS_{++}^{d}$ does not satisfy \labelcref{eq:tail2}. Manual construction of a test function is required in such a case. Meanwhile, for the Hellinger or total variation distances that are weaker than the KL divergence, we can directly check the testing condition of \cref{assume:individual:testing} by using some well-known results on testing construction. We refer to Appendix D of \citetS{ghosal2017fundamentals} for details.
\end{remark}

\section{Remarks on simplifications of the theoretical conditions of \citet{zhang2020convergence}}
\label{appen:theory:simplifications}

In this section, we give a detailed comparison of our \cref{assume:individual} with the assumptions of \citetS{zhang2020convergence}. Recall that the original posterior $\Pi_{n,m}(\cdot|\Y^{(n)})$ is written as
    \begin{align}
    \label{eq:theory:indv_post}
        \d\Pi_{n,m}(\btheta|\Y^{(n)}):= \frac{\sr_n(\sT(\btheta), \blambda^\star)}{D_{n,m}}\d\Pi_{n,m}(\btheta)
        \mbox{ with } D_{n,m}:=\int \sr_n(\sT(\btheta), \blambda^\star)\d\Pi_{n,m}(\btheta),
    \end{align}
where we use the shorthand  $\sr_n(\sT(\btheta),\blambda^\star):=\sp_n(\sT(\btheta),\Y^{(n)})/\sp_n(\blambda^\star,\Y^{(n)})$.

First, we remove the ``prior mass'' condition typically assumed in posterior contraction analysis. For a detailed explanation, we recall a ``traditional'' high probability lower bound of the denominator  $D_{n,m}$  of the original posterior in \labelcref{eq:theory:indv_post} such that
    \begin{align}
    \label{eq:denom:bound:traditional}
        \P_{\blambda^\star}^{(n)}&\del{D_{n,m}\ge \e^{-T} \Pi_{n,m}\del[1]{\cB_{\kl}(\blambda^\star, n\epsilon^2) }}\ge 1-\frac{2n\epsilon^2+1}{T}\\
        &\mbox{ with }\cB_{\kl}(\blambda^\star, n\epsilon^2)        :=\cbr{\btheta\in\Theta_{n,m}:\kl\del[1]{\P^{(n)}_{\blambda^\star},\P^{(n)}_{\sT(\btheta)}}\le n\epsilon^2} \nonumber
    \end{align}
for any $T>0$ and $\epsilon>0$ \citep[Lemma 6.26 of][]{ghosal2017fundamentals}. In view of this bound, the prior mass on the KL neighborhood of $\blambda^\star$ should be sufficiently large to yield a vanishing bound of the posterior probability that we want to control. In this paper, we develop a novel lower bound of $D_{n,m}$, which relies not on the prior mass but on the two quantities in \cref{assume:individual:variational}.  Indeed, in \cref{lemma:denominator} in  \cref{appen:proof:denom}, we show that for any $\vQ\in\cP(\Theta_{n,m})$ and $T>0$, 
    \begin{align}
    \label{eq:denom:bound:new}
       \P_{\blambda^\star}^{(n)}\del{ D_{n,m}\ge \e^{-T-\kl(\vQ,\Pi_{n,m})}}
       \ge  1- \frac{1}{T}\del[2]{2\vQ\sbr[1]{\kl\del[1]{\P_{\blambda^\star}^{(n)}, \P_{\sT(\btheta)}^{(n)}}} +1}.
    \end{align}
Thus, under \cref{assume:individual:variational}, we get a high probability lower bound of the denominator $D_{n,m}$ with $\vQ=\vQ_{n,m}^*$ and large $T>0$ such that $n (\eta_{n,m}+\zeta_{n,m})^2/T\to0$. It is easy to see that the traditional bound in \labelcref{eq:denom:bound:traditional} is recovered when we let $\vQ(\cdot)=\Pi_{n,m}(\cdot|\cB_{\kl}(\blambda^\star, n\epsilon^2))$. 

Second, we weaken the unnecessarily strong condition on the prior distribution. \citet{zhang2020convergence} employed an ``overwhelming'' probability lower bound of $D_{n,m}$ such that
    \begin{align*}
       \P_{\blambda^\star}^{(n)}&\del{ D_{n,m}\ge \e^{-T}\Pi_{n,m}\del[1]{\cB_{\rho}(\blambda^\star, n\epsilon^2)  }}
       \ge  1- \exp\del[0]{-\rho(T-n\epsilon^2)},\\
       &\mbox{ with }\cB_{\rho}(\blambda^\star, n\epsilon^2)        :=\cbr{\btheta\in\Theta_{n,m}:\sD_\rho\del[1]{\P^{(n)}_{\blambda^\star},\P^{(n)}_{\sT(\btheta)}}\le n\epsilon^2} 
    \end{align*}
for some $\rho>1$, which converges to 1 exponentially in $T>0$, in order to ensure that their PAC-Bayes oracle bound vanishes asymptotically. For this purpose, they assumed that the  prior mass on the ``smaller'' neighborhood $\cB_{\rho}(\blambda^\star, n\epsilon^2)\subset\cB_{\kl}(\blambda^\star, n\epsilon^2)$ is sufficiently large. We avoid requiring this stronger prior mass condition by slightly modifying the proof so that the polynomially converging probability bound in \labelcref{eq:denom:bound:new} can be used.

\section{Additional remarks and results on adaptive variational deep learning}
\label{appen:dnn:additional}

\subsection{Existing results on nonparametric regression with deep neural networks}
\label{appen:dnn:additional:review}

In this subsection, we discuss the previous results on the estimation of H\"older smooth regression function with deep neural networks and compare them with our result. Our \cref{col:dnn:holder} is the first adaptive optimal one for the non-sparse neural network model. This directly follows from \cref{thm:dnn:oracle} and the function approximation analysis by non-sparse neural networks given in \cref{appen:dnn:approx} which is largely borrowed from \citetS{kohler2021rate} and \citetS{lu2020deep}. Therefore, applying the Bayesian adaptation theory developed by \citetS{lember2007universal,ghosal2008nonparametric,han2021oracle}, one can expect adaptive optimality of an original posterior over non-sparse neural networks with varying depth and width, although this has not been done before. However, posterior computation is not easy since depth and width should be dealt with as random variables. On the frequentist side, \citetS{kohler2021rate} and \citetS{jiao2023deep} showed the optimality of an empirical risk minimizer over non-sparse neural networks. Even though their results are basically non-adaptive since network width and/or depth should be chosen based on the smoothness $\beta$, using a theoretically guaranteed model selection technique such as the training-validation approach \citepS[e.g.,][]{hamm2021adaptive}, the frequentist neural network estimator can be made adaptive without much difficulty.

For sparse neural network models, many studies have constructed optimal estimators, but most of them are non-adaptive in the sense that the network sparsity depends on the smoothness of the true regression function \citepS{schmidt2020nonparametric, suzuki2018adaptivity, imaizumi2020advantage, kim2021fast}. Certain penalization \citepS{ohn2022nonconvex} and hierarchical Bayes \citepS{polson2018posterior} methods can be used to attain sparse neural network estimators that are adaptive to the unknown smoothness. \citetS{cherief2020convergence} proposed an adaptive sparse deep learning method based on the model selection variational Bayes with fractional likelihoods of \citeS{cherief2019consistency}.

\subsection{Estimation of composition structured regression functions}
\label{appen:dnn:additional:comp}

To avoid the curse of dimensionality that exists in the convergence rate of estimation of H\"older smooth  regression function, one can make a certain structural assumption on the regression function. \citetS{schmidt2020nonparametric} and \citetS{kohler2021rate} considered so-called composition structured regression functions which include a single-index model \citepS{gaiffas2007optimal}, an additive model \citepS{stone1985additive, buja1989linear} and a generalized additive model with an unknown link function \citepS{horowitz2007rate} as particular instances. Under this structure assumption, they derived the improved convergence rates of sparse and non-sparse neural networks respectively, but note again, that their estimators are nonadaptive.

In this subsection, we show that the adaptive variational deep learning enjoys the optimal contraction rate under the composition structure assumption. Recall the definition of the class of composition structured functions considered in \citetS{schmidt2020nonparametric}
    \begin{equations}
    \label{eq:compose}
        &\cF^{\textsc{comp}}\del[1]{r, (\beta_\ell)_{\ell\in[r]}, (s_\ell)_{\ell\in[r]}, (d_\ell)_{\ell\in[r]}, F_0}\\
        &:=\cbr{f_r\circ(f_{r-1,j})_{j\in[d_{r}]}\cdots\circ (f_{1,j})_{j\in[d_{2}]}\in\cF^{d_1}: f_{\ell,j}\in\cH^{\beta_\ell,s_\ell,F_0}, 0\le f_{\ell,j}\le 1}
    \end{equations}
for the number of compositions $r\in\bN$ and the number of intermediate functions $d_\ell\in\bN$ (except $d_1$ being the input dimension), smoothness $\beta_\ell\in\R_+$, and \textit{intrinsic dimension} $s_\ell\in[d_\ell]$ at the $\ell$-th stage for $\ell\in[r]$.
We here assume that the outputs of the  functions $\{f_{\ell,j}:j\in[d_{\ell+1}],\ell\in[r-1])\}$ can be easily generalized to an arbitrary compact subset of the real line, but we do not consider this generalization for technical simplicity.

\begin{corollary}[Composition structured regression function]
\label{col:dnn:composite}
Let $r\in\bN$, $\mathbf{d}=(d_\ell)_{\ell\in[r]}$, $\bbeta:=(\beta_\ell)_{\ell\in[r]}\in\R_+^r$,  $\s:=(s_\ell)_{\ell\in[r]}\in\otimes_{\ell=1}^r[d_\ell]$ and $F_0>0$. Then under \cref{assume:dnn}, we have
    \begin{equation}
        \sup_{f^\star\in\cF^{\textsc{comp}}\del{r,\mathbf{d}, \bbeta,\s, F_0}} \P_{f^\star}^{(n)}\sbr{\hvQ_n\del{\nnorm{\net(\btheta)- f^\star}\ge \max_{\ell\in[r]}n^{-\frac{\beta_{\ge \ell}}{2\beta_{\ge \ell}+s_\ell}}\log^2n}}=\sco(1),
    \end{equation} 
where we let $\beta_{\ge \ell}:=\beta_\ell\prod_{h=\ell+1}^r(\beta_h\wedge 1)$ for $\ell\in[r-1]$ and $\beta_{\ge r}:=\beta_r$.
\end{corollary}

\begin{proof}
The proof is deferred to \cref{appen:dnn:proof:colloraries}.
\end{proof}

If the intrinsic dimensions $s_1, \dots, s_r$ are much smaller than the input dimension $d_1$, the convergence rate improves substantially compared to the rate for the H\"older smooth functions. Also, the contraction rate of the above corollary is minimax optimal \citepS[Theorem 3 of][]{schmidt2020nonparametric}.

Very recently, it was proven that no standard Gaussian process (GP) can attain the minimax optimal contraction rate \citepS{giordano2022inability}, while a deep GP that ``stacks'' multiple GPs can do \citepS{finocchio2023posterior} for estimating composition structured regression functions. Thus, theoretically, the proposed variational deep learning performs better than any GP and is comparable to the deep GP in this setup.  Also, any wavelet estimator only attains a sub-optimal convergence rate \citepS{schmidt2020nonparametric}.

\subsection{Binary classification}
\label{appen:dnn:additional:logistic}

We consider a statistical experiment of binary classification. Let $\Y:=(\X,Y)\in\bY:=[0,1]^d\times \{0,1\}$ be a pair of input-label variables such that $\X\sim \Unif([0,1]^d)$ and $Y|\X=\x\sim \Ber(p(\x))$ for some \textit{conditional class probability function} $p\in\cF^{d}_{[\varkappa,1-\varkappa]}:=\cbr[0]{p\in\cF^d:\varkappa\le p\le 1-\varkappa}$ for $\varkappa\in(0,1/2)$. Here, we assume that the conditional class probability is bounded away from 0 and 1. This is in order to avoid the diverging behavior of the likelihood near 0 or 1. Such an assumption has been commonly made in analyzing logistic models \citepS[e.g.,][]{van2014asymptotically,ohn2022nonconvex}. For each $n\in\bN$, we observe $n$ iid input-label pairs $\Y^{(n)}:=((\X_i,Y_i))_{i\in[n]}$ which are independently generated from the above probability model, i.e., $\Y^{(n)}\sim \P_p^{(n)}$ for $p\in\cF^d_{[\varkappa,1-\varkappa]}$, where $\P_p^{(n)}$ is a distribution with the likelihood $\sp_n^{\textup{Ber}}(p,\cdot)$ defined as
    \begin{equation}
        \sp_n^{\textup{Ber}}(p,\Y^{(n)})
        =\prod_{i=1}^n\cbr{p(\X_i)^{Y_i}\del{1-p(\X_i)}^{1-Y_i}}.
    \end{equation}

\begin{lemma}
\label{lemma:model:logistic}
Let $\varkappa\in(0,1/2)$ be an arbitrary small positive number. A sequence of the binary classification experiments  $\del[1]{\del[0]{\bY^{\otimes n},\cP(\bY^{\otimes n};\sp_n^{\textup{Ber}},\cF^d_{[\varkappa,1-\varkappa]})}}_{n\in\bN}$ satisfies  \labelcref{eq:tail1,eq:tail2}  with the metric $\scd_n:\cF^d_{[\varkappa,1-\varkappa]}\times\cF^d_{[\varkappa,1-\varkappa]}\mapsto \R_{\ge0}$ defined as $\scd_n(p_0,p_1)=\|p_0-p_1\|_2$ for $p_0,p_1\in\cF^d_{[\varkappa,1-\varkappa]}$.
\end{lemma}

\begin{proof}
The proof is deferred to \cref{appen:dnn:proof:logistic}.
\end{proof}

We model $p$ itself by a neural network, i.e., the likelihood of a network parameter $\btheta$ is given by $\prod_{i=1}^n(\net(\btheta)(\X_i))^{Y_i}(1-\net(\btheta)(\X_i))^{1-Y_i}$. To ensure that neural network estimates are bounded away from 0 and 1, we use a truncation operator as $\net_{[\varkappa,1-\varkappa]}(\btheta):=(\net(\btheta)\vee\varkappa)\wedge(1-\varkappa)$
for a network parameter $\btheta$.

The following corollary shows that if the true conditional probability function is H\"older $\beta$-smooth, our adaptive variational deep learning is able to estimate it with the optimal rate. The proof is almost similar to the proof of \cref{col:dnn:holder}, so we omit it.

\begin{corollary}[H\"older smooth conditional class probability]
\label{col:dnn:logistic}
Let $\beta>0$, $d\in\bN$,  $F_0>0$ and $\varkappa\in(0,1/2)$.
Then under \cref{assume:dnn}, we have
    \begin{equation}
        \sup_{p^\star\in\cH^{\beta, d, F_0}_{[\varkappa,1-\varkappa]}} \P_{p^\star}^{(n)}\sbr{\hvQ_n\del{\Norm{\net_{[\varkappa,1-\varkappa]}(\btheta)- p^\star}_2\ge n^{-\frac{\beta}{2\beta+d}}\log^2n}}=\sco(1),
    \end{equation} 
where  $\cH^{\beta, d, F_0}_{[\varkappa,1-\varkappa]}:=\cbr[1]{p\in\cH^{\beta, d, F_0}:\varkappa\le p\le 1-\varkappa}.$
\end{corollary}

\subsection{Intensity estimation for Poisson point processes}
\label{appen:dnn:additional:ppp}

Let  $\cX$ be the Borel $\sigma$-field of $[0,1]^d$. For each $n\in\bN$, we consider a Poisson point process $Y$ on $[0,1]^d$ with intensity $\lambda:[0,1]^d\mapsto\R_{\ge0}$, that is, $Y$ is an integer-valued random measure such that 
    \begin{enumerate}
        \item  for any $k\in\bN$ and any disjoint $X_1,\dots, X_k\in\cX $, the random variables $Y(X_1),\dots ,Y(X_k)$ are independent;
        \item for any $X\in\cX$, $Y(X)\sim \Pois\del{\int_X\lambda(\x)\d\x}$.
    \end{enumerate}
Let $\bar\bY$ be a set of realizations of the Poisson point process $Y$, i.e.,
    \begin{equation*}
        \bar\bY:=\cbr{\sum_{j=1}^{N}\delta(\cdot;\x_{j}'):N\in\bN,\x_{j}'\in[0,1]^d}.
    \end{equation*}
The aim is to recover the intensity function $\lambda$ of the Poisson point process $Y$ based on $n$ independent realizations $\Y^{(n)}:=(Y_i)_{i\in[n]}\in\bar\bY^{\otimes n}$ of $Y$. With a reference measure being the Poisson point process with constant intensity $\lambda\equiv1$, we can consider a likelihood function $\sp_n^{\textup{PPP}}:\cF_{\ge0}^d\times \bar\bY^{\otimes n}\mapsto\R_{\ge0}$ given by
    \begin{equation}
        \sp_n^{\textup{PPP}}(\lambda, \Y^{(n)})=\prod_{j=1}^n\exp\del{\int\log \lambda(\x)\d Y_i(\x)-\int_{[0,1]^d}(\lambda(\x)-1)\d\x}
        \mbox{ for }\lambda\in\cF_{\ge0}^d,
    \end{equation}
where $\cF_{\ge0}^d$ denotes the set of measurable nonnegative functions supported on $[0,1]^d$. Let $\P_{\lambda}^{(n)}$ be a distribution of the Poisson point process $Y$ with the likelihood function $\sp_n(\lambda,\cdot)$.

We first check the conditions in \cref{lemma:testing:renyi}. For this, we assume that every intensity function is bounded away from zero and infinity for technical simplicity. That is, we assume that every intensity belongs to a class $\cF^d_{[\varkappa_{\min},\varkappa_{\max}]}:=\cbr[0]{\lambda\in\cF^d:\varkappa_{\min}\le \lambda\le \varkappa_{\max}}$
for given $\varkappa_{\max}> \varkappa_{\min}>0$.

\begin{lemma}
\label{lemma:model:ppp}
For any $\varkappa_{\max}> \varkappa_{\min}>0$, a sequence of the Poisson point process experiments $\del[1]{\del[0]{\bar\bY^{\otimes n},\cP(\bar\bY^{\otimes n};\sp_n^{\textup{PPP}},\cF^d_{[\varkappa_{\min},\varkappa_{\max}]})}}_{n\in\bN}$ satisfies  \labelcref{eq:tail1,eq:tail2}   with the metric $\scd_n:\cF^d_{[\varkappa_{\min},\varkappa_{\max}]}\times\cF^d_{[\varkappa_{\min},\varkappa_{\max}]}\mapsto\R_{\ge0}$ defined as $\scd_n(\lambda_0,\lambda_1)=\|\lambda_0-\lambda_1\|_2$ for $\lambda_0,\lambda_1\in\cF^d_{[\varkappa_{\min},\varkappa_{\max}]}$.
\end{lemma}

\begin{proof}
The proof is deferred to \cref{appen:dnn:proof:ppp}.
\end{proof}

We directly model $\lambda$ by a neural network in a sense that the likelihood of a network parameter $\btheta$ is given by $\sp_n^{\textup{PPP}}(\net(\btheta),\Y^{(n)})$. To ensure boundedness of neural network intensity estimates, we apply a truncation operator as 
$\net_{[\varkappa_{\min},\varkappa_{\max}]}(\btheta):=(\net(\btheta)\vee\varkappa_{\min})\wedge\varkappa_{\max}$
for a network parameter $\btheta$. In the next corollary, we show that the adaptive variational deep learning attains the optimal rate for estimating H\"older $\beta$-smooth intensity functions \citepS{kutoyants2012statistical}. The proof is almost similar to the proof of \cref{col:dnn:holder}, thus we omit it.

\begin{corollary}[H\"older smooth intensity]
\label{col:dnn:ppp}
Let $\beta>0$,  $d\in\bN$, $F_0>0$ and $\varkappa_{\max}>\varkappa_{\min}>0$.
Then under \cref{assume:dnn}, we have
    \begin{equation}
        \sup_{\lambda^\star\in\cH^{\beta, d, F_0}_{[\varkappa_{\min},\varkappa_{\max}]}} \P_{\lambda^\star}^{(n)}\sbr[3]{\hvQ_n\del{\Norm{\net_{[\varkappa_{\min},\varkappa_{\max}]}(\btheta)- \lambda^\star}_2\ge n^{-\frac{\beta}{2\beta+d}}\log^2n}}=\sco(1),
    \end{equation} 
where $\cH^{\beta, d, F_0}_{[\varkappa_{\min},\varkappa_{\max}]}:=\cbr[1]{\lambda\in\cH^{\beta, d, F_0}:\varkappa_{\min}\le \lambda\le \varkappa_{\max}}.$
\end{corollary}

\section{Computation algorithm and simulation study for sparse factor models}

In this section, we carry out a simulation study to evaluate the performance of the adaptive variational Bayes method on sparse factor models, which was considered in \cref{sec:sparse:factor}.

\subsection{The PX-CAVI algorithm}
\label{appen:sparse:factor:algorithm}

We present the parameter expansion coordinate-ascent variational inference (PX-CAVI) algorithm proposed by \citetS{ning2021spike} for computing the variational posterior over each individual factor model with fixed factor dimensionality, say $m$, based on the spike-and-slab prior distribution
    \begin{equation*}
    \Pi_{n,m}:= \cbr{(1- \omega_{m})\delta(\cdot;\zero_m) + \omega_{m} \N(\zero_m, \tau_0\I_m )}^{\otimes d} 
    \text{ with } \omega_{m}:=d^{-(1+\fa_0)m}
    \end{equation*}
and the spike-and-slab variational family
    \begin{equation*}
        \cQ_{n,m}:=\cbr{\bigotimes_{j=1}^{d}\cbr{(1- \nu_{j})\delta(\cdot;\zero_m) + \nu_{j}\N(\bpsi_{j}, \bPhi_{j})}:\bpsi_{j}\in\R^{m},\bPhi_{j}\in\bS_{++}^m, \nu_{j}\in[0,1] }.
    \end{equation*}
Here and in the rest of the section, we write $d=d_n$ and $\omega_{m}=\omega_{n,m}$ for convenience.

Following \citetS{ning2021spike}, we introduce latent variables $(\x_i)_{i\in[n]}\in(\R^{m})^{\otimes n}$ and employ the parameter expansion technique. Then we can write the sparse factor model as
    \begin{align*}
        \Y_i|\x_i\indsim \N(\L \K\x_i, \I_{d}),
        \quad \x_i\iidsim \N\del[1]{\zero_m, (\K\K^{\top})^{-1}}
    \end{align*}
for $i\in[n]$, where $\K\in\R^{m\times m}$ is a lower triangular matrix that will be updated in the algorithm. We consider a  normal variational distribution $\bigotimes_{i=1}^n\N(\b_i, \V)$ for the latent variables $(\x_i)_{i\in[n]}$, and we estimate $(\b_i)_{i\in[n]}\in(\R^{m})^{\otimes n}$ and $\V\in\bS_{++}^m$. At every iteration of the algorithm, the following component-wise updates are conducted.

\begin{itemize}
    
    \item (Update $\hbpsi_j$ and $\hbPhi_j$) Calculate
    \begin{align*}
        \bar\H^{[t+1]}:=\frac{1}{n}\sum_{i=1}^n \H_i^{[t+1]} \text{ with }\H_i^{[t+1]}:=\K^{[t]}\cbr[1]{\b_i^{[t]}(\b_i^{[t]})^\top + \V^{[t]}}(\K^{[t]})^{\top}
    \end{align*}
    and then calculate
    \begin{align*}
        \bPhi_j^{[t+1]} &:=\bPhi^{[t+1]}:= \del[1]{n \bar\H^{[t+1]} + \tau_0\I_m}^{-1},\\
        \bpsi_j^{[t+1]} &:=\bPhi^{[t+1]}\sum_{i=1}^n Y_{ij}\K^{[t]}\b_i^{[t]}
    \end{align*}
    for each $j\in[d]$, where $Y_{ij}$ denotes the $j$-th element of $\Y_i$ for $i\in[n]$.
    
    \item (Update $\nu_j$) Let $\text{lgst}(z)=\e^z/(1+\e^z)$ for $z\in\R$. For each $j\in[d]$, calculate
        \begin{align*}
            \nu_j^{[t+1]}:= \text{lgst}\del[3]{\log\frac{\omega_{m}}{1-\omega_{m}}-B_{j}^{[t+1]}-\frac{1}{2}\sum_{i=1}^n E_{ij}^{[t+1]}  },
        \end{align*}
    where we define
    \begin{align*}
        E_{ij}^{[t+1]} :=-2Y_{ij} (\K^{[t]}\b_i^{[t]})^\top\bpsi_j^{[t+1]} + (\bpsi_j^{[t+1]})^\top\H_i^{[t+1]}\bpsi_j^{[t+1]} + \Tr\del[1]{\bPhi^{[t+1]}\H_i^{[t+1]}}
    \end{align*}
    and we denote by $B_{j}^{[t+1]}$ the KL divergence from the prior $\N(\zero_m, \tau_0\I_m)$ to the variational distribution $\N(\bpsi_j^{[t+1]},\bPhi_j^{[t+1]})$ for the parameter $\L_{j,:}$, that is,
    \begin{align*}
         B_{j}^{[t+1]}:= -\frac{m}{2} +\frac{1}{2}\log\del[3]{\frac{|\tau_0\I_m|}{\abs[0]{\bPhi^{[t+1]}}}} -\frac{1}{2\tau_0}\cbr[2]{(\bpsi_j^{[t+1]})^\top\bpsi_j^{[t+1]} +\Tr\del[1]{\bPhi^{[t+1]}}}.
    \end{align*}
    
    \item (Update $\b_i$ and $\V$) Calculate
        \begin{align*}
            \V^{[t+1]} &:=\del[3]{\sum_{j=1}^d\nu_j^{[t+1]}(\K^{[t]})^\top \cbr[1]{\bpsi_j^{[t+1]}(\bpsi_j^{[t+1]})^\top + \bPhi^{[t+1]}} \K^{[t]} + \I_m}^{-1},\\
            \b_i^{[t+1]}&:=\V^{[t+1]}\sum_{j=1}^d Y_{ij} \nu_j^{[t]}(\bpsi_j^{[t+1]})^\top\K^{[t]}
        \end{align*}
    for each $i\in[n]$.
    
    \item (Update $\K$) Let $\K^{[t+1]}$ be a lower triangular matrix satisfying
        \begin{align*}
            \K^{[t+1]}(\K^{[t+1]})^\top=\del[3]{\frac{1}{n}\sum_{i=1}^n\b_i^{[t+1]}(\b_i^{[t+1]})^\top + \V^{[t+1]} }^{-1},
        \end{align*}
    i.e., $\K^{[t+1]}$ is  the lower Cholesky factor of the matrix in the right-hand side of the preceding display.
    
\end{itemize}

\subsection{Simulation study}  
\label{appen:sparse:factor:simulation}

We empirically compare the performance of the adaptive variational Bayes (AVB) and other existing methods for covariance matrix estimation. We fix the hyperparameters of the prior as $\fa_0=0.01$ and $\tau_0=100$ and consider the model space $\cM=[10]$ for the factor dimensionality $m$. We use the variational posterior mean of the covariance matrix as a point estimator produced by the adaptive variational Bayes. For competitors, we consider the principal orthogonal complement thresholding method (POET, \citepS{fan2013large}), and two maximum a posteriori estimators that employ the multi-scale generalized double Pareto prior (MDP, \citepS{srivastava2017expandable}) and the spike-and-slab lasso with Indian buffet process prior (SSL-IBP, \citepS{rovckova2016fast}), respectively. For the POET method, the factor dimensionality must be chosen before its estimation, and we use the true factor dimensionality for this.  

A synthetic sample of size $n=200$ is generated from the $d=1000$-dimensional normal distribution with mean $\zero$ and covariance $\bSigma^\star:=\L^\star(\L^\star)^\top+\I$, where the true loading matrix $\L^\star\in\R^{d\times r}$ has $s$ many nonzero rows. We generate the true loading matrix $\L^\star$ as follows: we first select positions of $s$ nonzero rows and sample the elements in each nonzero row from the normal distribution with mean $\zero_r$ and variance $(5/\sqrt{s})^2\I_r$. We consider various values of the row sparsity $s\in\{20, 50, 100\}$ and various values of the factor dimensionality $r\in\{1,3,6\}$. We generate  100 synthetic data sets and we report the averages of the scaled operator norm losses $\opnorm{\hat\bSigma-\bSigma^\star}/\opnorm{\bSigma^\star}$ for an estimate $\hat\bSigma$ and the true covariance matrix $\bSigma^\star$ computed on the generated synthetic data sets. The result is presented in \cref{tab:factor:cov} and we can see that the proposed method performs well compared to the other methods.

\begin{table}[t]
\centering
\caption{The average of the scaled operator norm losses, and its standard error in the parenthesis, computed on 100 synthetic data sets with $n=200$ and $d=1000$. The best performance is indicated with a bold font for each of the nine considered settings.}
\label{tab:factor:cov}
\begin{tabular}{cccccc}
    \hline
 $s$ & $r$ & POET & MDP & SSL-IBP & AVB \\ 
  \hline
\multirow{3}{*}{20} & 1 & 0.563 (0.173) & 0.534 (0.163) & 0.296 (0.088) & \textbf{0.144 (0.062)} \\ 
   & 3 & 0.492 (0.151) & 0.473 (0.145) & 0.285 (0.083) & \textbf{0.161 (0.072)} \\ 
   & 6 & 0.409 (0.119) & 0.397 (0.115) & 0.281 (0.077) & \textbf{0.155 (0.061)} \\ 
   \hline
\multirow{3}{*}{50} & 1 & 0.549 (0.205) & 0.52 (0.193) & 0.286 (0.105) & \textbf{0.2 (0.077)} \\ 
   & 3 & 0.496 (0.186) & 0.473 (0.177) & 0.297 (0.119) & \textbf{0.206 (0.086)} \\ 
   & 6 & 0.468 (0.177) & 0.451 (0.171) & 0.323 (0.117) & \textbf{0.216 (0.091)} \\ 
   \hline
\multirow{3}{*}{100} & 1 & 0.558 (0.203) & 0.526 (0.192) & 0.32 (0.117) & \textbf{0.301 (0.112)} \\ 
   & 3 & 0.533 (0.195) & 0.508 (0.186) & \textbf{0.319 (0.123)} & 0.323 (0.124) \\ 
   & 6 & 0.523 (0.196) & 0.501 (0.188) & 0.362 (0.132) & \textbf{0.36 (0.144)} \\ 
   \hline
\end{tabular}
\end{table}

\section{Application to high-dimensional nonparametric regression using neural networks}
\label{appen:sparse:dnn}

In this section, we consider the nonparametric regression experiment considered in \cref{sec:dnn:reg}, but we allow the input dimension to diverge, which we denoted by $d_n$. We assume that the true regression function is a H\"older smooth function that only depends on $s_0$ elements in the $d_n$-dimensional input, that is, for any $\x:=(x_j)_{j\in[d_n]}\in[0,1]^{d_n}$, $f^\star(\x)=f_0^\star((x_j)_{j\in S^\star})$  for some $S^\star\subset[d_n]$ with $|S^\star|=s_0$ and  $f_0^\star\in\cH^{\beta, s_0, F_0}$. We let $\cF^{\textup{sparse}}\del{d_n,\beta,s_0, F_0}$ be a set of these $s_0$-sparse H\"older $\beta$-smooth functions on $[0,1]^{d_n}$. 

Our aim is to show that the adaptive deep learning achieves the optimal contraction rate to the true regression function $f^\star$, and this rate is adaptive simultaneously to the smoothness $\beta$ and the sparsity $s_0$. As we did in \cref{sec:dnn:reg}, we consider multiple network parameter spaces $\{\Theta_{(K,M)}^{d_n, \le B_n}\}_{(K,M)\in\cM_n}$ with various network architectures to adapt the smoothness, where $\Theta_{(K,M)}^{d_n, \le B_n}$ is the set of network parameters with input dimension $d_n$, depth $K$, width $M$ and a magnitude bound $B_n$ as defined in \labelcref{eq:dnn:bounded_space}. Let $J_{n,(K,M)}:=(d_n+1)M+(K-2)(M^2+M)+(M+1)$ be the dimension of the parameter space $\Theta_{(K,M)}^{d_n, \le B_n}$.

To appropriately address the sparse structure of the true regression function, we impose a spike-and-slab prior on the weight matrix at the first layer. For a network parameter $\btheta:=((\W_k,\b_k))_{k\in[K]}\in\Theta_{(K,M)}^{d_n, \le B_n}$, let $\W_{1,j,:}\in\R^{M}$  be the $j$-th row of the weight matrix $\W_1\in\R^{M\times d_n}$ at the first layer. We impose the prior distribution given as
    \begin{equations}
        \label{eq:sparse_prior_dnn}
        \SS_{n,(K,M)}:= (1-\omega_{n,(K,M)}&)\delta(\cdot;\zero_{M})+ \omega_{n,(K,M)}\Unif(-B_n, B_n)^{\otimes M}\in\cP(\R^M)\\
        &\mbox{ with }  \omega_{n,(K,M)}:=\exp\del{-\fa_1KM\log n-(1+\fa_2)\log d_n}
    \end{equations}
on each $\W_{1,j,:}$, for some arbitrary constants $\fa_1>0$ and $\fa_2>0$. In words, under the above prior, the event $\{\W_{1,j,:}=\zero\}$ happens with probability $1- \omega_{n,(K,M)}$, and given this event, $x_j$ does not affect the output of the neural network. A novel feature of our prior is that the degree of sparse regularization depends on the neural network's depth and width, enabling us to attain optimality adaptive to both the unknown smoothness and sparsity. We impose an uniform prior distribution on the network parameter's other elements. To sum up, the prior distribution we assume is 
    \begin{equations}
        \Pi_n&=\sum_{(K,M)\in\cM_n}\alpha_{n,(K,M)}\Pi_{n,(K,M)},\\
        &\mbox{ where }\Pi_{n,(K,M)}:=\del[1]{\SS_{n,(K,M)}}^{\otimes d_n}\times\Unif(-B_n, B_n)^{\otimes (J_{n,(K,M)}-d_nM)}\\
        &\mbox{ and }\alpha_{n,(K,M)}:= \frac{\e^{-\fa_3(KM)^2\log n}}{\sum_{(K',M')\in\cM_n}\e^{-\fa_3(K'M')^2\log n}}
    \end{equations}
for some arbitrary constant $\fa_3>0$ such that $\fa_3>\fa_1\wedge\fa_2$, where $\fa_1>0$ and $\fa_2>0$ are the hyperparameters of the spike-and-slab prior in \labelcref{eq:sparse_prior_dnn}. For each network architecture $(K,M)\in\cM_n$, we consider a variational family such as
    \begin{align*}
        \cQ_{n,(K,M)}:=\bigg\{\bigotimes_{h=1}^{d_n}&\cbr[2]{(1-\nu_h)\delta(\cdot;\zero_{M}) + 
        \nu_h\mathlarger{\mathlarger{\otimes}}_{j=(h-1)M+1}^{hM}\Unif(-\psi_{1,j}, \psi_{2,j})} \\
        &\times\bigotimes_{j=d_nM+1}^{J_{n,(K,M)}}\Unif(-\psi_{1,j}, \psi_{2,j})
        :-B_n\le \psi_{1,j}<\psi_{2,j}\le B_n, \nu_h\in[0,1]\bigg\}.
    \end{align*}
    
The adaptive variational posterior with the above prior and variational families achieves the optimal rate.

\begin{theorem}[Sparse H\"older smooth regression function] 
\label{thm:sparse:dnn}
Let $(d_n)_{n\in\bN}\subset\bN$, $\beta>0$, $s_0\in\bN$ and $F_0>0$. Assume $s_0\le d_n$ eventually. Then under \cref{assume:dnn}, we have
    \begin{equation}
        \sup_{f^\star\in\cF^{\textup{sparse}}\del{d_n,\beta,s_0, F_0}} 
        \P_{f^\star}^{(n)}\sbr{\hvQ_n\del{\nnorm{\net(\btheta)- f^\star}\ge n^{-\frac{\beta}{2\beta+s_0}}\log^2n+ A_n\sqrt{\frac{\log d_n}{n}}}}=\sco(1),
    \end{equation}
for any diverging sequence $(A_n)_{n\in\bN}\to\infty$.
\end{theorem}

\begin{proof}
The proof is deferred to \cref{appen:sparse:dnn:proof}.
\end{proof}

The contraction rate in the above theorem is optimal up to a logarithmic factor \citep{yang2015minimax}. \citet{yang2015minimax} and \citet{jiang2021variable} showed that Gaussian processes combined with sparsity-inducing priors attain the optimal contraction rate. Another optimal Bayesian approach is Bayesian trees (and their ensembles) with the ``spike-and-tree'' prior developed by \citet{rovckova2020posterior}, but their theoretical analysis is limited to the cases that $0<\beta\le 1$. 

If $\log d_n/n=\sco(1)$, that is, the dimension diverges slower than the exponential of the sample size, then the contraction rate is the same as the minimax rate $n^{-\beta/({2\beta+s_0})}$ for ``low''-dimensional nonparametric regression with $s_0$-dimensional inputs.

\section{Application to a Gaussian sequence model}
\label{appen:example:gaussian_sequence}

As a fully nonparametric example, we consider the Gaussian sequence model
    \begin{align*}
        Y_i = \theta_i + \frac{1}{\sqrt{n}}Z_i, \quad Z_i\iidsim \N(0,1), \quad \text{ for }i\in\bN,
    \end{align*}
which is equivalent to the prototypical white noise model $\d Y(t) = \theta(t) + n^{-1/2} \d W(t)$ for $t\in[0,1]$ with $\theta:[0,1]\mapsto \R $ being $\scL^2$-integrable and $W$ a standard Brownian motion \citepS{le1986asymptotic}.
We let $\P_{\btheta}^{(n)}:=\otimes_{i=1}^\infty \N(\theta_i,n^{-1})$ denote the distribution above. We assume that the true parameter $\btheta^\star\in \R^{\infty}$ belongs to the $\beta$-Sobolev ball
    \begin{align*}
        \Theta^\star = \Theta_\beta(B):=\cbr{\btheta=(\theta_i)_{i\in\bN}:\sum_{i=1}^\infty i^{2\beta}\theta_i^2\le B^2}
     \end{align*}
for given smoothness $\beta>0$ and radius $B>0$. We will show that the adaptive variational Bayes can attain the minimax optimal rate $n^{-\beta/(2\beta+1)}$ (up to a logarithmic factor) for estimating the true parameter $\btheta^\star\in  \Theta_\beta(B)$ adaptively in terms of the $\scL^2$ distance $\scd_n(\btheta,\btheta^\star) = \abs[0]{\btheta-\btheta^\star}_2$. We consider the set of models $\{\Theta_{n,m}:=\R^m:m\in \cM_n\}$, where the model space is given by 
    \begin{align*}
        \cM_n := \cbr{\ceil{\del{\frac{n}{\log n}}^{k/\log n}}:k\in\sbr{\ceil{\log n}}}.
    \end{align*}
On each model $\Theta_{n,m}$ we impose a standard Gaussian prior $\Pi_{n,m}:=\otimes_{i=1}^m \N(0,1)$. Let $\sT$ be a natural parametrization map such that $\sT(\btheta)=(\btheta^\top,\zero_\infty^\top)^\top\in \R^{\infty}$, which maps any finite sequence to an infinite one by padding zeros.

In the next two examples, we see how prior regularization, which is a standard method, and ivB regularization, which is proposed in \cref{sec:regularization}, are applied in this setup, respectively.

\subsection{Prior regularization}

Here we consider a variational family
    \begin{align*}
        \cQ_{n,m}:=\cbr{\bigotimes_{i=1}^m \N(\psi_i,\Phi_i):\psi_i\in \R, \Phi_i>0}
    \end{align*}
for each $m\in \cM_n$.  \cref{assume:individual:variational} can be verified with $\zeta_{n,m}=m\sqrt{\log n/n}$ and $\eta_{n,m}=Bm^{-\beta}$ as follows: letting $Q_{n,m}^*:=\otimes_{i=1}^m \N(\theta_i^\star,n^{-1})\in \cQ_{n,m}$, we have
    \begin{align*}
        \int \kl(\P_{\btheta^\star}^{(n)}, \P_{\sT(\btheta)}^{(n)})\d Q_{n,m}^*(\btheta)
        &=\frac{n}{2}\int\abs{\sT(\btheta)-\btheta^\star}_2^2\d Q_{n,m}^*(\btheta)\\
        &= \frac{m}{2} + \frac{n}{2}\sum_{i=m+1}^\infty (\theta^\star_i)^2\le \frac{1}{2}\del{m+ n m^{-2\beta}B^2}
    \end{align*}
and
    \begin{align*}
         \kl(Q_{n,m}^*, \Pi_{n,m})
        &= \frac{1}{2}\cbr{m\log n+\frac{m}{n} + \sum_{i=1}^m  (\theta^\star_i)^2 -m}\lesssim m \log n.
    \end{align*}
Moreover, we set the prior model probabilities as $\alpha_{n,m}\propto\exp(- \fa_0m \log n)$ for $\fa_0>0$ for any $m\in \cM_n$ which satisfies \cref{assume:aggregation:model_prior_pen,assume:aggregation:model_prior_mass} by \cref{lemma:model_prob_choice}. Since \cref{assume:individual:testing} is met by Lemma B.7 of \citetS{zhang2020convergence}, we get a contraction rate $\epsilon_n$ given by
    \begin{align*}
        \epsilon_n=\min_{m\in \cM_n}\cbr{\sqrt{\frac{m\log n}{n}}+ m^{-\beta}}\asymp \del{\frac{\log n}{n}}^{\beta/(2\beta+1)}.
    \end{align*}

\subsection{ivB regularization}
In this subsection,  we do not use the complexity prior $\alpha_{n,m}\propto\exp(- \fa_0m \log n)$ considered above for regularizing overly complex models. Instead, we consider a ``constrained'' variational family given below to employ the regularization effect of variational approximation. Concretely, we consider
    \begin{align}
    \label{eq:gaussian_sequence:rvf}
        \cQ_{n,m}^\dag:=\cbr{\bigotimes_{i=1}^m \N(\psi_i,\Phi_i)\in \cQ_{n,m}: \sum_{i=1}^m(\psi_i^2+\Phi_i)\le B^2+1}.
    \end{align}
But the variational distribution $Q_{n,m}^*:=\otimes_{i=1}^m \N(\theta_i^\star,n^{-1})$ we used before for checking  \cref{assume:individual:variational} is still in $\cQ_{n,m}^\dag$ since $\sum_{i=1}^m\{(\theta_i^\star)^2 + n^{-1}\}\le \sum_{i=1}^m i^{2\beta}(\theta_i^\star)^2 +1\le B^2+1 $. This implies that  \cref{assume:individual:variational} holds also for the constrained variational family $ \cQ_{n,m}^\dag$. On the other hand, letting $\Pi_{n,m}^*=\N(0,1/2)^{\otimes m}$, we have that each ivB penalty is lower bounded as
    \begin{align*}
        \Psi_{n,m}   
        &\ge \inf_{\vQ\in \cQ_{n,m}^\dag}\cbr{\kl(\vQ, \Pi_{n,m})-\kl(\vQ, \Pi_{n,m}^*)}\\
        &=\inf_{(\psi_i,\Phi_i)_{i\in[m]}\in(\R\times \R_+)^{\otimes m}:\sum_{i=1}^m(\psi_i^2+\Phi_i)\le B^2+1}\cbr{\frac{m}{2}\log 2- \frac{1}{2}\sum_{i=1}^m(\psi_i^2+\Phi_i)}\\
        &\ge \frac{m}{2}\log 2-\frac{1}{2} (B^2+1).
  \end{align*}
Therefore, for any diverging sequence $(A_n)_{n\in\bN}$, a contraction rate is given by
    \begin{align*}
        \zeta_n^\ddag = \max_{m\in\cM_n: \Psi_{n,m}    < A_nn\epsilon_n^2} \zeta_{n,m}
        &\le  \max_{m\in\cM_n: m  <2 n\epsilon_n^2 + (B^2+1)/2 } \zeta_{n,m}\\
        &\lesssim \sqrt{(A_nn\epsilon_n^2 +B^2)\frac{\log n}{n}}\\
        &\lesssim \sqrt{A_n}\epsilon_n \sqrt{\log n}=\sqrt{A_n}n^{-\beta/(2\beta+1)}(\log n)^{(2\beta+1/2)/(2\beta+1)},
    \end{align*}
which is $\sqrt{\log n}$ times larger than the rate $A_n\epsilon_n$ we obtained using prior regularization.

\subsection{Simulation study}

We conduct a simulation study to illustrate the performance of the adaptive variational Bayes for the mean estimation of the Gaussian sequence model. For given smoothness $\beta>0$, we generate the true sequence $\btheta^\star$ as $\theta_i^\star=5 s_i i^{-\beta-0.6}$ with each $s_i$ randomly chosen as $-1$ or 1 with equal probabilities. Then the generated sequence belongs to $\Theta_\beta(B)$ with $B=5(\sum_{i=1}^\infty i^{-1.2})^{1/2}\le12.$ In the simulation, we consider three values 0.5, 1, and 1.5 of $\beta$. For the sample size $n$, we consider $10^2$, $10^3$, and $10^4$.

We consider two adaptive variational Bayes methods, each of which was considered in the previous two sections, respectively. For the prior regularized method (AVBp), we use the prior model probabilities such that $\alpha_{n,m}\propto\exp(- \fa_0m \log n)$ with $\fa_0=0.1$. For the ivB regularized method (AVBi), we use the uniform prior model probabilities, $\alpha_{n,m}=1/|\cM_n|$ but restricted variational families $ \cQ_{n,m}^\dag$ given in \labelcref{eq:gaussian_sequence:rvf} with $B= 12$ for every $m\in\cM_n$. For competitors, we first consider the Bayesian method using the rescaled Gaussian prior (RGP) $\theta_i\indsim \N(0, i^{-2\beta_0-1})$ with the oracle choice $\beta_0=\beta$. This is a non-adaptive but minimax optimal estimator \citepS{zhao2000bayesian}. Another competitor we consider is the mean-field variational Bayes (MFVB) estimator with prior on the truncation level $m$, which was developed and shown to be adaptively optimal by \citetS{zhang2020convergence}. For the MFVB method, we impose the prior distribution $\Pi(m)\propto \exp(-0.1 m \log n)$ and $\theta_1,\dots, \theta_m|m\iidsim \N(0,1)$, which is the same as the prior we use for the adaptive variational Bayes method with prior regularization. 

We run the simulation 100 times, and, for each method, we compute the square of the $\scL^2$ distance between the (variational) posterior mean of $\btheta$ and the true value $\btheta^\star$. The result is presented in \cref{tab:gaussian_sequence:simul} and we can see that two adaptive variational Bayes methods are comparable to the oracle method RGP and superior to the MFVB. Interestingly, ivB regularization works well in this setting, which is in contrast to the $\log n$ sub-optimality in the theory.

\begin{table}[t]
\centering
\caption{The average of the square of the $\scL^2$ distances to the true parameter, and its standard error in the parenthesis, computed on 100 synthetic data sets. The best performance is indicated with a bold font for each of the nine considered settings.}
\label{tab:gaussian_sequence:simul}
\begin{tabular}{cccccc}\hline
 $\beta$ & $n$ & RGP & MFVB & AVBp & AVBi \\ 
  \hline
\multirow{3}{*}{0.5} & 100 & 1.032 (0.088) & 1.238 (0.194) & 0.81 (0.1) & \textbf{0.742 (0.152)} \\ 
   & 1000 & \textbf{0.282 (0.014)} & 0.451 (0.048) & 0.3 (0.008) & 0.3 (0.009) \\ 
   & 10000 & \textbf{0.073 (0.002)} & 0.15 (0.011) & 0.097 (0.001) & 0.097 (0.001) \\ 
   \hline
\multirow{3}{*}{1} & 100 & 0.349 (0.062) & 0.36 (0.096) & 0.214 (0.087) & \textbf{0.195 (0.07)} \\ 
   & 1000 & 0.068 (0.008) & 0.098 (0.021) & 0.063 (0.025) & \textbf{0.051 (0.021)} \\ 
   & 10000 & \textbf{0.013 (0.001)} & 0.024 (0.004) & 0.015 (0.001) & 0.015 (0.001) \\ 
   \hline
\multirow{3}{*}{1.5} & 100 & 0.19 (0.038) & 0.189 (0.086) & 0.105 (0.046) & \textbf{0.101 (0.042)} \\ 
   & 1000 & 0.031 (0.005) & 0.041 (0.013) & 0.028 (0.01) & \textbf{0.023 (0.01)} \\ 
   & 10000 & \textbf{0.005 (0.001)} & 0.008 (0.002) & 0.005 (0) & 0.005 (0) \\ 
   \hline
\end{tabular}
\end{table}

\bigskip

\section{Related work on quasi-posteriors}
\label{appen:quasi:related}

In this section, we discuss some examples of the quasi-posterior and the related literature.

    \begin{itemize}
        \item \textit{Fractional posterior}.  A fractional posterior is a quasi-posterior where a fractional likelihood $(\sp_n)^{\kappa}$ for some exponent $\kappa\in(0,1)$ is used as a quasi-likelihood. This replacement gives a theoretical advantage that its concentration properties can be established without a testing construction \citepS{walker2001bayesian, zhang2006epsilon, bhattacharya2019bayesian, martin2017empirical,lee2019minimax}.  Moreover, there is empirical and theoretical evidence that the fractional posterior improves robustness to model misspecification \citepS{grunwald2011safe,grunwald2017inconsistency,miller2018robust,medina2021robustness}. A recent paper \citepS{l2023semiparametric} extensively studied the asymptotic properties of the fractional posterior with a sample-size dependent choice of the exponent $\kappa=\kappa_n$, and characterized the explicit dependence of the contraction rate on $\kappa_n$.
        
        \item \textit{Gibbs posteriors}. A loss function based quasi-likelihood, which measures the ``difference'' between the parameter of interest and the sample, is used. This is particularly useful when the specification of a likelihood function is not straightforward and/or is too restrictive for defining the true model. Quantile regression is a representative example. Several quasi-likelihood functions, including the check loss based quasi-likelihood (also known as the asymmetric Laplace likelihood) \citepS{yang2016posterior} and an empirical likelihood \citepS{yang2012bayesian} have been proposed. General theories for the contraction properties of Gibbs posteriors were established by \citetS{atchade2017contraction, atchade2018approach} and \citetS{syring2020gibbs}. From a PAC-Bayes perspective, a Gibbs posterior is theoretically appealing since it minimizes a PAC-Bayes upper bound of risk over all probability distributions \citepS{catoni2004statistical,zhang2006information}. Based on this fact, \citetS{alquier2016properties} provided theoretical conditions under which variational approximations of Gibbs posteriors have optimal contraction properties. They also studied interesting applications such as classification with the hinge loss and ranking with the pairwise ranking loss.
        
        \item \textit{Robust posteriors}. It is known that original posteriors are not robust to model misspecification. Some robust surrogates for original posteriors have been proposed.   \citetS{baraud2020robust} proposed a robust quasi-posterior where the logarithmic function in the log-likelihood ratios is replaced by some bounded function. \citetS{cherief2020mmd} and \citetS{matsubara2021robust} considered a quasi-likelihood function based on the maximum mean discrepancy and Stein discrepancy measures, respectively, which are more robust to model misspecification.
    \end{itemize}

Our theoretical framework for variational quasi-posteriors can be applied to fractional posteriors as we showed in \cref{lemma:quasi:fractional}, as well as Gibbs posteriors with ``sub-exponential'' loss functions satisfying \cref{assume:quasi:quasi}. But for robust posteriors, there is a technical difficulty in relating a robust quasi-likelihood function and the measure of contraction $\scd_n$. So \cref{assume:quasi:quasi,assume:quasi:variational} are not directly established and even violated. Developing an unified framework for robust posteriors is an interesting avenue for future work.

\section{Applications of adaptive variational quasi-posterior}
\label{appen:quasi:application}

\subsection{Application to stochastic block models}
\label{appen:quasi:sbm}

In this subsection, we consider the stochastic block model described in \cref{example:sbm}, to illustrate a situation where the use of a quasi-likelihood provides some theoretical advantages.

\subsubsection{Random graph experiment}

We first introduce some notation. For $n\in\bN$, let $\bL_n:=\{(i,j)\in [n]^2:i>j\}$ be the set of 2-dimensional indices with the first larger than the second. For a set $I$, let $I^{\bL_n}:=\cbr{(b_{i,j})_{(i,j)\in\bL_n}:b_{i,j}\in I}$  be the set of lower triangular arrays of $I$-valued elements. Here and afterward, we index all the notation by the number of connections $\ban:=|\bL_n|=n(n-1)/2$ instead of the number of nodes $n$, to make the notation consistently. 

Suppose that we observe a triangular array $\Y^{(\ban)}:=(Y_{i,j})_{(i,j)\in\bL_n}\in \bY_{\ban}:=\{0,1\}^{\bL_n}$ which represents the existence of connections between $n$-nodes. The $(i,j)$-th element $Y_{i,j}$ is encoded to be 1 if the $i$-th and $j$-th nodes are connected and to be 0 otherwise. We model the sample using a set of distributions
    \begin{equation*}
        \cP_{\ban}:=\cbr{\P^{(\ban)}_{\bOmega}:=\bigotimes_{(i,j)\in\bL_n}\Ber(\Omega_{i,j}):\bOmega\in[0,1]^{\bL_n}},
    \end{equation*}
i.e., $\cP_{\ban}:=\cP(\{0,1\}^{\bL_n};\sp_{\ban},[0,1]^{\bL_n})$ where $\sp_{\ban}:\{0,1\}^{\bL_n}\times[0,1]^{\bL_n}\mapsto \R_{\ge0}$ is a Bernoulli likelihood function  defined as  $\sp_{\ban}(\bOmega, \Y^{(n)})=\prod_{(i,j)\in\bL_n}\Omega_{i,j}^{Y_{i,j}}(1-\Omega_{i,j})^{1-Y_{i,j}}$. We call the parameter $\bOmega\in[0,1]^{\bL_n}$ the \textit{connectivity probability array}. A general goal is to provide a good estimate of the true connectivity probability array in terms of the (scaled) $\scL^2$ error defined as
    \begin{equation*}
        \scd_{\ban,2}(\bOmega_0,\bOmega_1)=\del[3]{\frac{1}{\ban}\sum_{(i,j)\in\bL_n}(\Omega_{0,i,j}-\Omega_{1,i,j})^2}^{1/2}
        \mbox{ for $\bOmega_0,\bOmega_1\in[0,1]^{\bL_n}$.}
    \end{equation*}

\subsubsection{Gaussian quasi-likelihood}

Here we propose to use a \textit{Gaussian quasi-likelihood}
    \begin{equation}
        \sp_{\ban}^{\natural}(\bOmega, \Y^{(\ban)})=\prod_{(i,j)\in\bL_n}\e^{-(Y_{i,j}-\Omega_{i,j})^2}
    \end{equation}
instead of the Bernoulli likelihood $\sp_{\ban}$. This is because we want to remove the unpleasant feature of the Bernoulli likelihood that the KL divergence $\kl(\P^{(\ban)}_{\bOmega_0},\P^{(\ban)}_{\bOmega_1})$ can not be bounded by the $\scL^2$ error between $\bOmega_1$ and $\bOmega_0$ due to its boundary behavior. To be specific, suppose that there are only $2$ nodes, i.e., 1 connectivity. Then the KL divergence
    \begin{align*}
        \kl(\P^{(1)}_{\Omega_0},\P^{(1)}_{\Omega_1})
        &=\Omega_0\log\del{\frac{\Omega_0}{\Omega_1}}+(1-\Omega_0)\log\del{\frac{1-\Omega_0}{1-\Omega_1}}
    \end{align*}
is  larger than $C(\Omega_0-\Omega_1)^2$ for any large $C>0$ when $\Omega_1$ is very close to 0 or 1. Thus related Bayesian literature restricts the parameter space to be $[\varkappa,1-\varkappa]^{\bL_n}$ for some sufficiently small $\varkappa\in(0,1/2)$ to avoid such a problem \citepS{ghosh2020posterior, jiang2021consistent}. Here, we show that the use of the Gaussian  quasi-likelihood detours this issue without the parameter restriction. Similarly to ours, \citetS{gao2020general} showed theoretical optimality of a quasi-posterior distribution with the Gaussian quasi-likelihood but by using a completely different proof technique. Also, they did not study its variational approximation.

In the next lemma, we show that the Gaussian quasi-likelihood satisfies \cref{assume:quasi:quasi}.

\begin{lemma}
\label{lemma:sbm:quasi}
Let $\rho>0$. Then for any $\bOmega_0\in[0,1]^{\bL_n}$ and  $\bOmega_1\in[0,1]^{\bL_n}$,
    \begin{align}
        \P_{\bOmega_0}^{(\ban)}\sbr{\frac{\sp_{\ban}^{\natural}(\bOmega_1,\Y^{(\ban)})}{\sp_{\ban}^{\natural}(\bOmega_0,\Y^{(\ban)})}}
        & \le \e^{-\frac{1}{2}\ban\scd_{\ban,2}^2(\bOmega_0,\bOmega_1)}, 
    \label{eq:sbm_quasi_learning}\\
        \P_{\bOmega_0}^{(\ban)}\sbr[4]{\del{\frac{\sp_{\ban}^{\natural}(\bOmega_0,\Y^{(\ban)})}{\sp_{\ban}^{\natural}(\bOmega_1,\Y^{(\ban)})}}^{\rho}}
        &\le \e^{\frac{\rho(\rho+2)}{2}\ban\scd_{\ban,2}^2(\bOmega_0,\bOmega_1)}.      
    \label{eq:sbm_quasi_bound}
    \end{align}
\end{lemma}

\begin{proof}
    The proof is deferred to \cref{appen:quasi:proof:sbm:cond}.
\end{proof}

\subsubsection{Model, prior and variational posterior}

The stochastic block model assumes that the nodes are grouped into several \textit{communities} and the connectivity probabilities between nodes depend exclusively on their community membership. Recall the definitions $\cU_m:=\cbr[0]{\U\in [0,1]^{m\times m}:\U=\U^\top}$ and $\cZ_{n,m}:=\cbr[0]{\Z=(\z_1,\dots,\z_n)^\top\in\{0,1\}^{n\times m}: |\z_i|_1=1}$ given in \Cref{eq:def_sm,eq:def_znm}, respectively, and let $\Theta_{\ban,m}:=\cU_m\times \cZ_{n,m}$ for $m\in[n]$. The connectivity probability  array $\bOmega$ is assumed to be an output of a map $\sT:=\sT_{\ban}:\cup_{m=1}^n\Theta_{\ban,m}\to [0,1]^{\bL_n}$ defined by
    \begin{equation*}
        \sT(\U, \Z)=(\z_i^\top\U\z_j)_{(i,j)\in\bL_n}\in[0,1]^{\bL_n}
    \end{equation*}
for $\U\in\cU_m$ and $\Z=(\z_1,\dots,\z_n)^\top\in\cZ_{n,m}$ for every $m\in[n]$.

We consider multiple disjoint parameter spaces $\cbr[0]{\Theta_{\ban,m}}_{m\in\cM_{\ban}}$ with a model space $\cM_{\ban}\subset [n]$, a set of some numbers of communities. We impose a prior distribution given by
    \begin{equations}
    \Pi_{\ban}=&\sum_{m\in\cM_{\ban}}\alpha_{\ban,m}\Pi_{\ban,m},  
    \mbox{ with }
    \Pi_{\ban,m}:=\bigotimes_{(k,h)\in[m]^2:k\le h}\Unif(0,1)\times\bigotimes_{i=1}^{n}\Cat(m^{-1}\one_m),
    \end{equations}
where the uniform distributions are imposed on the lower triangular entries of $\U\in\cU_m$, and the entries above the diagonal are determined to be symmetric. For prior model probabilities  $(\alpha_{\ban,m})_{m\in\cM_{\ban}}$, we can choose, for example, a distribution with  exponential tail such that $\alpha_{\ban,m}\propto \e^{-\fa_0m}$ for $\fa_0>0$ or the uniform distribution such that $(\alpha_{\ban,m})_{m\in\cM_{\ban}}$ or any other mild one. Note that both the choices are independent to the true distribution and satisfy \cref{assume:quasi:model_prior}. 
For each model $m\in\cM_n$, we consider a variational family given by
    \begin{equation*}
        \cQ_{\ban,m}:=\cbr{\bigotimes_{(k,h)\in[m]^2:k\le h}\Unif(\psi_{1,k,h},\psi_{2,k,h})
        \times\bigotimes_{i=1}^{n}\Cat(\bnu_i):0\le\psi_{1,k,h}\le\psi_{2,k,h}\le1 ,\bnu_i\in\Delta_m}.
    \end{equation*}
Then due to \labelcref{eq:quasi_variational_posterior}, the adaptive variational quasi-posterior is given by $\hvQ_{\ban}^\natural=\sum_{m\in\cM_{\ban}}\hgamma_{\ban,m}^\natural\hvQ_{\ban,m}^\natural$  where  $\hvQ_{\ban,m}^\natural\in\argmin_{\vQ\in\cQ_{\ban,m}} \scE_n\del[1]{\vQ, \Pi_{\ban, m}, \sp_{\ban}^{\natural}}$ and $\hgamma_{\ban,m}^\natural\propto\alpha_{\ban,m}\exp\del[0]{-\scE_n\del[0]{\hvQ_{\ban,m}^\natural, \Pi_{\ban,m}, \sp_{\ban}^{\natural}}}$.

\subsubsection{Estimation of connectivity probability array}

The following oracle result provides a convenient theoretical tool to derive adaptive optimal contraction rates in related problems such as stochastic block model and graphon estimations, which we provide in \cref{appen:quasi:sbm}.

\begin{theorem}[Oracle contraction rate, connectivity probability array]
\label{thm:sbm:oracle}
Let $(\Lambda_{\ban}^\star)_{n\in\bN}$ be a sequence of sets such that $\Lambda_{\ban}^\star\subset[0,1]^{\bL_n}$.
Then if the prior model probabilities  $(\alpha_{\ban,m})_{m\in\cM_{\ban}}$ satisfy \cref{assume:quasi:model_prior},
        \begin{equation}
        \sup_{\bOmega^\star\in\Lambda_{\ban}^\star}\P_{\bOmega^\star}^{(\ban)}\sbr{\hvQ_{\ban}^
        \natural\del{\scd_{\ban,2}(\sT(\U,\Z),\bOmega^\star)\ge A_n\epsilon_n(\Lambda_{\ban}^\star)}}=\sco(1)
    \end{equation} 
for any diverging sequence $(A_n)_{n\in\bN}\to\infty$,
where
    \begin{equation}            \epsilon_n(\Lambda_{\ban}^\star):=\inf_{m\in\cM_{\ban}}\cbr{\sup_{\bOmega^*\in\Lambda_{\ban}^\star}\inf_{(\U,\Z)\in\Theta_{\ban,m}}\scd_{\ban,2}(\sT(\U,\Z),\bOmega^*)+\sqrt{\frac{m^2}{n^2}\log n+\frac{\log m}{n}}}.
    \end{equation}
\end{theorem}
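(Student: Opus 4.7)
The plan is to apply the variational quasi-posterior contraction result \cref{thm:quasi:conv} with prior exponent $L_n=1$, so the work is to verify the relevant assumptions for the stochastic block model with the Gaussian quasi-likelihood. Define the approximation and estimation errors
\begin{equation*}
    \eta_{\ban,m}:=\sup_{\bOmega^*\in\Lambda_{\ban}^\star}\inf_{(\U,\Z)\in\Theta_{\ban,m}}\scd_{\ban,2}(\sT_{\ban}(\U,\Z),\bOmega^*),\qquad
    \zeta_{\ban,m}^2:=\frac{m^2\log n}{n^2}+\frac{\log m}{n},
\end{equation*}
so that $\inf_{m\in\cM_{\ban}}(\eta_{\ban,m}+\zeta_{\ban,m})$ matches the rate in the statement and $\ban\zeta_{\ban,m}^2\asymp m^2\log n+n\log m$.

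First I would check \cref{assume:model} together with \cref{assume:rate}. Expressibility \labelcref{assume:model:express} is built into the definition of $\eta_{\ban,m}$. For the local complexity \labelcref{assume:model:complex}, for any $\bOmega^*$ and any $\zeta>0$, I would cover the discrete $\cZ_{n,m}$ part by at most $m^n$ singletons and, for each fixed $\Z$, cover $\cS^m$ in Frobenius norm by $(\fc/(u\zeta))^{m(m+1)/2}$ balls; since $\scd_{\ban,2}(\sT_{\ban}(\U,\Z),\sT_{\ban}(\U',\Z))\le \fnorm{\U-\U'}$ whenever $\Z$ is shared, this yields $\log \scN\lesssim n\log m+m^2\log(1/(u\zeta))\lesssim \fc(u)\ban\zeta_{\ban,m}^2$. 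The oracle-rate growth condition \labelcref{assume:rate:oracle} follows from $\ban\zeta_{\ban,m}^2\gtrsim \log n\to\infty$, and the exponential-sum condition \labelcref{assume:rate:expsum} follows from \cref{lem:rate:nmodel} because $|\cM_{\ban}|\le n\le\exp(\fc\,\ban\epsilon_n^2)$.

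Next I would verify \cref{assume:prior}. The model-probability form \labelcref{assume:prior:model} with $L_n=1$ follows because $\ban\zeta_{\ban,m}^2\asymp m^2\log n+n\log m$, so the prior weights $\alpha_{\ban,m}\propto\e^{-b_0(m^2\log n+n\log m)}$ agree with the required exponents up to absolute constants. For the prior mass near truth \labelcref{assume:prior:truth}, fix a near-optimal approximator $(\U^*,\Z^*)\in\Theta_{\ban,m}$ with $\scd_{\ban,2}(\sT_{\ban}(\U^*,\Z^*),\bOmega^\star)\le\eta_{\ban,m}$; the $\Pi_{\ban,m}$-probability of the set $\{(\U,\Z):\Z=\Z^*,\ \max_{k\le h}|U_{k,h}-U^*_{k,h}|\le\zeta_{\ban,m}\}$ equals $m^{-n}(2\zeta_{\ban,m}\wedge 1)^{m(m+1)/2}$, whose logarithm is $-n\log m-(m^2/2)\log(1/\zeta_{\ban,m})\gtrsim -\fc_1(n\log m+m^2\log n)$, giving the required lower bound. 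Note that on this set we have $\scd_{\ban,2}(\sT_{\ban}(\U,\Z^*),\bOmega^\star)\le \eta_{\ban,m}+\zeta_{\ban,m}$ thanks to the one-hot structure of $\Z^*$.

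For the quasi-likelihood assumptions, \labelcref{assume:quasi:learning} and \labelcref{assume:quasi:bound} are exactly \cref{lem:sbm:quasi}. The remaining task---and the main technical obstacle---is the variational approximation condition \labelcref{assume:quasi:approximation}. I would pick an arbitrary $m\in\cM_{\ban}$ and construct $\Xi_m\in\bV_{\ban,m}$ by taking $\Cat(\bnu_i)$ with $\bnu_i=\mathbf{e}_{z^*_i}$ (a point mass) for each $i\in[n]$, and, for each $k\le h$, the uniform distribution on the intersection of $[U^*_{k,h}-\zeta_{\ban,m},U^*_{k,h}+\zeta_{\ban,m}]$ with $[0,1]$. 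Then
\begin{equation*}
    \kl(\Xi_m,\Pi_{\ban,m})\le n\log m+\frac{m(m+1)}{2}\log(1/(2\zeta_{\ban,m}))\lesssim \ban\zeta_{\ban,m}^2,
\end{equation*}
and $\Xi_m[\ban\scd_{\ban,2}^2(\sT_{\ban}(\U,\Z),\bOmega^\star)]\lesssim \ban(\eta_{\ban,m}+\zeta_{\ban,m})^2$ by the same one-hot argument. Plugging this into \cref{lem:quasi:verror} applied with $\Pi_n=\Pi_{\ban}$ and with the extra term $\log(1/\alpha_{\ban,m})\lesssim \ban\zeta_{\ban,m}^2+\log|\cM_{\ban}|$ gives $\E[\kl(\Xi_m,\Pi_{\ban}^\natural(\cdot|\Y^{(\ban)}))]\lesssim \ban(\eta_{\ban,m}+\zeta_{\ban,m})^2$. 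Minimizing over $m\in\cM_{\ban}$ yields \labelcref{assume:quasi:approximation} with $L_n=1$. With all assumptions verified, \cref{thm:quasi:conv} delivers the claimed rate.
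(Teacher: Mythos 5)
Your proposal is correct and takes essentially the same route as the paper: verify the prior-mass condition via the max-entry bound on $\scd_{\ban,2}$ for parameters sharing a one-hot $\Z$, invoke \cref{lem:sbm:quasi} for the quasi-likelihood conditions, establish the variational-approximation condition with an explicit product distribution (point-mass community labels and small uniform intervals around a best approximator $\U^*$) plugged into \cref{lem:quasi:verror}, and conclude by \cref{thm:quasi:conv}. The differences are cosmetic only---the paper fixes a near-optimal $m^*$ and uses interval half-width $m/n$ where you keep $m$ generic with half-width $\zeta_{\ban,m}$, and your verification of the covering-number condition \labelcref{assume:model:complex} is superfluous, since the quasi-posterior contraction theorem avoids the testing/covering argument altogether.
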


\begin{proof}
    The proof is deferred to \cref{appen:quasi:proof:sbm:orcale}.
\end{proof}


In the next two subsections, we derive contraction rates of the adaptive variational quasi-posterior over the stochastic block models for specific examples.

\subsubsection{Estimation of stochastic block models}

The first example is about a situation where the sample is generated by the stochastic block model itself. The next corollary shows that the adaptive variational quasi-posterior is near optimal in this situation. This is a trivial consequence of \cref{thm:sbm:oracle}, and so we omit the proof.

\begin{corollary}[Stochastic block model]
Let $(m^\star_n)_{n\in\bN}$ be an arbitrary sequence such that $m^\star_n\in[n]$. Then with the model space $\cM_{\ban}=[n]$ and  prior model probabilities  $(\alpha_{\ban,m})_{m\in\cM_{\ban}}$ satisfying \cref{assume:quasi:model_prior}, we have
    \begin{equation}
        \sup_{(\U^\star,\Z^\star)\in\Theta^\star_{\ban,m^\star_n}}\P_{\sT(\U^\star,\Z^\star)}^{(\ban)}\sbr{\hvQ_{\ban}\del{\scd_{\ban,2}(\sT(\U,\Z),\sT(\U^\star,\Z^\star))\ge A_n  \epsilon_n^{\textup{SBM}}(m_n^\star)}}=\sco(1)
    \end{equation} 
for any diverging sequence $(A_n)_{n\in\bN}\to\infty$, where
    \begin{equation*}
        \epsilon_n^{\textup{SBM}}(m_n^\star):=\sqrt{\frac{(m^\star_n)^2}{n^2}\log n+\frac{\log m^\star_n}{n}}.
    \end{equation*}
\end{corollary}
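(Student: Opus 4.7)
The plan is to obtain the result as a direct specialization of \cref{thm:sbm:oracle} by choosing the sequence of true natural parameter spaces $(\Lambda_{\ban}^\star)_{n\in\bN}$ to be exactly the image of the $m_n^\star$-community stochastic block model, namely
\begin{equation*}
  \Lambda_{\ban}^\star := \sT_{\ban}\del{\Theta_{\ban,m_n^\star}^\star} = \cbr[1]{\sT_{\ban}(\U^\star,\Z^\star):(\U^\star,\Z^\star)\in\Theta_{\ban,m_n^\star}^\star}.
\end{equation*}
With this choice the ``true distribution'' under \cref{thm:sbm:oracle} is indeed $\P_{\sT_{\ban}(\U^\star,\Z^\star)}^{(\ban)}$ for some $(\U^\star,\Z^\star)\in\Theta_{\ban,m_n^\star}^\star$, matching the probability appearing in the statement.

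Next I would control the oracle rate $\epsilon_n(\Lambda_{\ban}^\star)$. Restricting the infimum over $m\in\cM_{\ban}=[n]$ to the single index $m=m_n^\star$, the approximation term
\begin{equation*}
  \sup_{\bOmega^*\in\Lambda_{\ban}^\star}\inf_{(\U,\Z)\in\Theta_{\ban,m_n^\star}}\scd_{\ban,2}(\sT_{\ban}(\U,\Z),\bOmega^*)
\end{equation*}
vanishes, since for every $\bOmega^*\in\Lambda_{\ban}^\star$ there exists $(\U,\Z)\in\Theta_{\ban,m_n^\star}^\star\subset\Theta_{\ban,m_n^\star}$ with $\sT_{\ban}(\U,\Z)=\bOmega^*$. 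Hence
\begin{equation*}
  \epsilon_n(\Lambda_{\ban}^\star)\le \sqrt{\frac{(m_n^\star)^2}{n^2}\log n+\frac{\log m_n^\star}{n}} = \epsilon_n^{\textup{SBM}}(m_n^\star).
\end{equation*}

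Finally, \cref{thm:sbm:oracle} applied to this $\Lambda_{\ban}^\star$ yields
\begin{equation*}
  \sup_{\bOmega^\star\in\Lambda_{\ban}^\star}\P_{\bOmega^\star}^{(\ban)}\sbr{\hXi_{\ban}\del{\scd_{\ban,2}(\sT_{\ban}(\U,\Z),\bOmega^\star)\ge A_n\epsilon_n^{\textup{SBM}}(m_n^\star)}}=\sco(1)
\end{equation*}
for any diverging $(A_n)_{n\in\bN}$, which is exactly the stated conclusion after re-parameterizing $\bOmega^\star=\sT_{\ban}(\U^\star,\Z^\star)$. There is essentially no obstacle: the entire argument is a bookkeeping exercise that packages the general oracle bound with the observation that the well-specified choice $m=m_n^\star$ kills the bias term.
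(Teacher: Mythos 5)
Your proposal is correct and is exactly the argument the paper has in mind: the authors omit the proof precisely because, as you show, taking $\Lambda_{\ban}^\star=\sT_{\ban}(\Theta_{\ban,m_n^\star}^\star)$ makes the approximation term in the oracle rate of \cref{thm:sbm:oracle} vanish at $m=m_n^\star$, leaving $\epsilon_n(\Lambda_{\ban}^\star)\le\epsilon_n^{\textup{SBM}}(m_n^\star)$. Your write-up simply spells out this specialization, so it matches the paper's (omitted) proof.
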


The contraction rate $\epsilon_n^{\textup{SBM}}(m_n^\star)$ is $\log^{1/2}n$ times slower than the minimax optimal rate $\sqrt{(m_n^\star/n)^2 + \log(m_n^\star)/n}$ \citepS{gao2015rate}. Such a  $\log n$ sub-optimality is common in the Bayesian nonparametric literature, and we refer to \citetS{gao2016rate,hoffmann2015adaptive} for the related discussion. Similarly, in \citetS{ghosh2020posterior}, the same additional $\log^{1/2}n$ term appeared in the contraction rate of their original posterior distribution over stochastic block models. If we assume that $m_n^\star\lesssim \sqrt{n}$, then since $(m_n^\star)^2\log n\lesssim n\log (m_n^\star)$, our rate becomes the same as the optimal rate. \citetS{jiang2021consistent} used such an assumption and could avoid the  $\log n$ sub-optimality problem.

\subsubsection{Estimation of smooth graphons}

The second example is on  the estimation of smooth \textit{graphton}s. For a function $f:[0,1]^2\mapsto [0,1]$ and $n$-dimensional $[0,1]$-valued vector $\x:=(x_i)_{i\in[n]}\in[0,1]^n$, we define the corresponding graphon as
    \begin{equation*}
        \textup{Graphon}_n(f,\x):=\del[1]{f(x_i,x_j)}_{(i,j)\in\bL_n}\in[0,1]^{\bL_n}.
    \end{equation*}
For a given class $\widetilde\cF$ of some $[0,1]$-valued functions supported on $[0,1]^2$, we denote
    \begin{equation}
        \Lambda_{\ban}^{\textsc{G}}(\widetilde\cF):=\cbr{\textup{Graphon}_n(f,\x)\in[0,1]^{\bL_n}:f\in\widetilde\cF, \x\in[0,1]^n}.
    \end{equation}
We denote by $\cH^{\beta,2, F_0}_{[0,1]}$ the set of $[0,1]$-valued H\"older $\beta$-smooth functions supported on $[0,1]^2$. The next corollary provides the optimality of our adaptive variational quasi-posterior.

\begin{corollary}[Smooth graphon]
\label{col:graphon}
Let $\beta>0$ and $F_0>0$. Then with the model space $\cM_{\ban}=\cbr[1]{\lfloor n^{k/\log n}\rfloor:k\in\sbr[1]{0:\ceil{\log n/2}} }$ and prior model probabilities $(\alpha_{\ban,m})_{m\in\cM_{\ban}}$ satisfying \cref{assume:quasi:model_prior}
Then
    \begin{equation}
        \sup_{\bOmega^\star\in  \Lambda_{\ban}^{\textup{\textsc{G}}}\del[1]{\cH^{\beta,2, F_0}_{[0,1]}}}\P_{\bOmega^\star}^{(\ban)}\sbr[3]{\hvQ_{\ban}\del[2]{\scd_{\ban,2}(\sT(\U,\Z),\bOmega^\star)\ge A_nn^{-\del{\frac{\beta}{\beta+1}}\wedge\frac{1}{2}}\log^{\frac{1}{2}} n}}=\sco(1)
    \end{equation}
for any diverging sequence $(A_n)_{n\in\bN}\to\infty$.
\end{corollary}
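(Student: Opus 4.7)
The plan is to invoke the oracle contraction result \cref{thm:sbm:oracle} with $\Lambda_{\ban}^\star = \Lambda_{\ban}^{\textup{\textsc{G}}}\del[1]{\cH^{\beta,2,F_0}_{[0,1]}}$ and upper bound the oracle rate $\epsilon_n(\Lambda_{\ban}^\star)$ by the claimed quantity $n^{-((\beta/(\beta+1))\wedge 1/2)}\log^{1/2} n$. Since the estimation-error contribution $\sqrt{m^2\log n/n^2+\log m/n}$ is already explicit in the theorem, the essential work is to control the approximation error
\begin{equation*}
A_{n,m}:=\sup_{\bOmega^* \in \Lambda_{\ban}^\star}\inf_{(\U,\Z)\in\Theta_{\ban,m}}\scd_{\ban,2}\del[1]{\sT_{\ban}(\U,\Z),\bOmega^*}
\end{equation*}
for each $m\in\cM_{\ban}$ and then to minimize the sum over admissible $m$.

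For the approximation step I will use a piecewise-constant quantization. Partition $[0,1]$ into $m$ equal-length intervals $I_1,\dots,I_m$ with midpoints $\xi_1,\dots,\xi_m$. Given $\bOmega^* = \textup{Graphon}_n(f^*,\x^*)$ with $f^*\in\cH^{\beta,2,F_0}_{[0,1]}$ and $\x^*\in[0,1]^n$, let $k(i)$ denote the unique index with $x_i^*\in I_{k(i)}$, set $z_{i,h}^*=\ind(h=k(i))$ for $h\in[m]$, and set $U_{k,h}^*=f^*(\xi_k,\xi_h)$. Then $\del[0]{\sT_{\ban}(\U^*,\Z^*)}_{i,j}=f^*(\xi_{k(i)},\xi_{k(j)})$, and since $\max_i|\xi_{k(i)}-x_i^*|\le 1/(2m)$, the H\"older condition on $f^*$ yields
\begin{equation*}
\abs{f^*(\xi_{k(i)},\xi_{k(j)}) - f^*(x_i^*,x_j^*)}\lesssim F_0\, m^{-(\beta\wedge 1)}
\end{equation*}
uniformly in $(i,j)$. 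Consequently $A_{n,m}\lesssim m^{-(\beta\wedge 1)}$ uniformly over $\Lambda_{\ban}^\star$.

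It remains to optimize over $m\in\cM_{\ban}$. The set $\cM_{\ban}$ is a near-geometric grid with ratio $\e$ spanning $[1,\sqrt{n}]$, so for any real target $m_n^\star\in[1,\sqrt{n}]$ there exists $m\in\cM_{\ban}$ within a constant factor of $m_n^\star$. Choosing $m_n^\star$ to balance the bound $m^{-(\beta\wedge 1)}$ against the estimation error $\sqrt{m^2\log n/n^2+\log m/n}$, and simplifying separately in the regimes $\beta\le 1$ and $\beta\ge 1$, yields $\epsilon_n(\Lambda_{\ban}^\star)\lesssim n^{-((\beta/(\beta+1))\wedge 1/2)}\log^{1/2} n$, which together with \cref{thm:sbm:oracle} delivers the claim. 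The main obstacle I foresee is the bookkeeping of the logarithmic factors across the two regimes, in particular making sure that the discretization of $\cM_{\ban}$ and the two additive pieces of the estimation-error bound together contribute at most a single $\log^{1/2}n$; the H\"older approximation step itself is routine once the quantization is in place.
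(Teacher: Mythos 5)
Your route is the same as the paper's: invoke \cref{thm:sbm:oracle} with $\Lambda_{\ban}^\star$ the graphon class, bound the $m$-block approximation error by a constant multiple of $m^{-(\beta\wedge 1)}$, and then select an index in the geometric grid $\cM_{\ban}$ near the balancing point. The only substantive difference is that the paper cites Lemma 2.1 of Gao, Lu and Zhou (\cref{lem:graphon_approx}) for the approximation step, whereas you reprove it by midpoint quantization of $[0,1]$; your quantization argument is correct (for $\beta\ge 1$ the bounded first partials in the H\"older norm give the Lipschitz bound, so the error correctly saturates at $m^{-1}$), and it buys only self-containedness.

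One caveat, which you inherit from the paper's own proof rather than introduce: for $\beta<1$ the balancing value is $m\asymp n^{1/(\beta+1)}$, which exceeds the largest element of the stated grid (about $\e\sqrt{n}$, since $k\le\ceil{\log n/2}$), so your claim that the balancing target lies in $[1,\sqrt{n}]$ fails in that regime; over this grid the oracle bound only gives the rate $n^{-\beta/2}$ (attained at $m\asymp\sqrt n$), which is strictly slower than the claimed $n^{-\beta/(\beta+1)}\log^{1/2}n$ when $\beta<1$. The paper's proof has exactly the same problem: the integer $k_n^*$ satisfying its sandwich inequality does not exist in $[0:\ceil{\tfrac12\log n}]$ once $1/(\beta\wedge1+1)>1/2$. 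For $\beta\ge1$ the target is $\asymp\sqrt n$, it lies in the grid, and both arguments deliver $n^{-1/2}\log^{1/2}n$. To cover $\beta<1$ one needs the index set to extend to $k\le\ceil{\log n}$ (block numbers up to order $n$); with that repair your balancing goes through, and the logarithmic bookkeeping you flag is harmless, since at $m\asymp n^{1/(\beta\wedge1+1)}$ the term $m^{2}\log n/n^{2}$ equals $n^{-2(\beta\wedge1)/(\beta\wedge1+1)}\log n$ and $\log m/n\le \log n/n$ is never the dominant piece.
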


\begin{proof}
The proof is deferred to \cref{appen:quasi:proof:graphon}.
\end{proof}

The minimax optimal rate for the smooth graphon estimation is $n^{-\beta/(\beta+1)}+\sqrt{\log n/n}$ \citepS[Theorem 2.4 of][]{gao2015rate}. Our contraction rate in \cref{col:graphon} is the exactly same as the optimal rate for sufficiently smooth graphon with $\beta\ge1$ but is slightly slower  with the extra $\log^{1/2} n$ term than the optimal rate when $\beta\in(0,1)$.

\subsection{Application to nonparametric regression with sub-Gaussian errors}
\label{appen:quasi:subgauss}


A real-valued random variable $Y\sim \P$ or its distribution $\P$ is said to be \textit{sub-Gaussian} with mean $\vartheta\in\R$ and variance proxy $\varsigma^2>0$, and  denoted by $\P\in\text{subG}(\vartheta,\varsigma^2)$, if $\P Y=\vartheta$ and $\P\e^{t(Y-\vartheta)}\le \e^{\varsigma^2t^2/2}$ for any $t\in\R$. We here consider a nonparametric regression experiment with sub-Gaussian errors, where each real-valued output $Y_i$ is a sub-Gaussian random variable with mean $f(\x_i)$ and variance proxy $\varsigma^2>0$ for some regression function $f\in\cF^d$ and given inputs $(\x_i)_{i\in[n]}\in([0,1]^d)^{\otimes n}$. In other words, we assume that the distribution of $(Y_i)_{i\in[n]}$ belongs to a set 
    \begin{equation*}
        \text{subG}_n(f^\star,\varsigma^2):=\text{subG}_n(f^\star,\varsigma^2;(\x_i)_{i\in[n]})
        :=\cbr{\bigotimes_{i=1}^n\P_i:\P_i\in\text{subG}(f^\star(\x_i),\varsigma^2)}.
    \end{equation*}
This is a more general situation than the regression experiment with Gaussian errors considered in \cref{sec:dnn:reg}. Also, we do not require that the errors are identically distributed.

A difficulty of Bayesian inference for the regression experiment with sub-Gaussian errors is that we cannot construct a likelihood function of an explicit form. Fortunately, we can obtain optimal contraction of the adaptive variational posterior when we use a Gaussian quasi-likelihood, which is given as
    \begin{equation}
        \sp_n^{\natural,\kappa}(f, \Y^{(n)})=\prod_{i=1}^n\exp\del[2]{-\frac{\kappa}{2}(Y_i-f(\x_i))^2}
        \mbox{ for }f\in\cF^d,
    \end{equation}
where we call the constant $\kappa>0$ the \textit{learning rate} following the related literature \citepS{grunwald2011safe, syring2020gibbs, bhattacharya2020gibbs}, because it determines how much the quasi-posterior learns from the current sample. The next lemma  proves that if the learning rate is sufficiently small such that $\kappa\in(0,1/\varsigma^2)$, \cref{assume:quasi:quasi} is met.

\begin{lemma}
\label{lemma:subgauss}
Let $\varsigma>0$, $\kappa>0$ and $\rho>0$. Then for any $f,f^\star\in\cF^d$ and $\P_{\star}^{(n)}\in\textup{subG}_n(f^\star,\varsigma)$,
    \begin{align}
        \P_{\star}^{(n)}\sbr{\frac{\sp_n^{\natural,\kappa}(f, \Y^{(n)})}{\sp_n^{\natural,\kappa}(f^\star, \Y^{(n)})}}
        &\le \e^{-\frac{1}{2}\kappa(1-\kappa\varsigma^2)n\nnorm{f-f^\star}^2}, \label{eq:subg_quasi_learning}\\
        \P_{\star}^{(n)}\sbr[4]{\del{\frac{\sp_n^{\natural,\kappa}(f^\star,\Y^{(n)})}{\sp_n^{\natural,\kappa}(f,\Y^{(n)})}}^{\rho}}
        &\le \e^{\frac{1}{2}\rho\kappa(1+\rho\kappa\varsigma^2)n\nnorm{f-f^\star}^2}.  \label{eq:subg_quasi_bound}
    \end{align}
\end{lemma}

\begin{proof}
The proof is deferred to \cref{appen:quasi:proof:subgauss}.
\end{proof}

By \cref{thm:quasi:conv} together with \cref{lemma:subgauss}, an adaptive optimality result can be obtained. For example, it is possible for the adaptive variational deep learning. Consider the adaptive variational quasi-posterior over neural networks with the Gaussian quasi-likelihood, 
    \begin{align*}
        \hvQ_n^{\natural,\kappa}&=\sum_{(K,M)\in\cM_n}\hgamma_{n,(K,M)}^{\natural,\kappa}\hvQ_{n,(K,M)}^{\natural,\kappa}\\
        &\mbox{ with }\hvQ_{n,(K,M)}^{\natural,\kappa}\in\argmin_{\vQ\in\cQ_{n,(K,M)}}\scE_n\del[0]{\vQ, \Pi_{n,(K,M)}, \sp_n^{\natural,\kappa}}\\
        &\mbox{ and }\hgamma_{n,(K,M)}^{\natural,\kappa}\propto \alpha_{n,(K,M)} \exp\del[0]{-\scE_n\del[0]{\hvQ_{n,(K,M)}^{\natural,\kappa}, \Pi_{n,(K,M)},\sp_n^{\natural,\kappa}}},
    \end{align*}
where the prior and variational families are the same as \labelcref{eq:dnn_prior} and \labelcref{eq:dnn_variational_family}, respectively. Then it can achieve the same oracle contraction rate as that for the Gaussian regression.

\begin{corollary}[Oracle contraction rate, sub-Gaussian regression]
Suppose that the same assumptions of \cref{thm:dnn:oracle} hold. Then if we use the learning rate $\kappa\in(0,1/\varsigma^2)$, 
        \begin{equation}
        \sup_{f^\star\in\cF^\star}\sup_{\P_{\star}^{(n)}\in\textup{subG}_n(f^\star,\varsigma^2)}\P_{\star}^{(n)}\sbr{\hvQ_n^{\natural,\kappa}\del{\nnorm{\net(\btheta)- f^\star}\ge A_n\epsilon_n(\cF^\star)}}=\sco(1)
    \end{equation} 
for any diverging sequence $(A_n)_{n\in\bN}\to\infty$, where $\epsilon_n(\cF^\star)$ is defined in \labelcref{eq:dnn_oracle}.
\end{corollary}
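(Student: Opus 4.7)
The plan is to apply \cref{thm:quasi:conv} with the Gaussian quasi-likelihood $\sq_n^{\natural,\kappa}$, the natural parameter space $\cF^d$, and the metric $\scd_n(f_0,f_1) = \nnorm{f_0-f_1}$. Because the prior \labelcref{eq:dnn_prior}, the variational family \labelcref{eq:dnn_variational_family}, the architecture set $\cM_n$, and the magnitude bound $B_n$ are identical to those used in \cref{thm:dnn:oracle}, the verification of \cref{assume:model,assume:rate,assume:prior} together with the identification of the approximation error $\eta_{n,(K,M)} = \sup_{f\in\cF^\star}\inf_{\btheta\in\Theta^{\le B_n}_{(K,M)}}\|\net(\btheta)-f\|_\infty$, the estimation error $\zeta_{n,(K,M)}=KM\sqrt{\log n/n}$, and the oracle rate $\epsilon_n = \epsilon_n(\cF^\star)$ all carry over unchanged from that proof. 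Hence I would only need to verify \cref{assume:quasi}.

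Conditions \cref{assume:quasi:learning,assume:quasi:bound} follow immediately from \cref{lem:subgauss}: since $\kappa(1-\kappa\varsigma^2)/2 > 0$ when $\kappa\in(0,1/\varsigma^2)$, \labelcref{eq:subg_quasi_learning} yields the learning-ability inequality, and \labelcref{eq:subg_quasi_bound} (taking, e.g., $\barho = 1$) yields the boundedness inequality. Both hold uniformly over $\P_\star^{(n)}\in \textup{subG}_n(f^\star,\varsigma^2)$ because the right-hand sides depend on $f^\star$ only through $\nnorm{\cdot-f^\star}$.

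The substantive step is \cref{assume:quasi:approximation}, which I would handle by mirroring the variational-family construction used to verify \cref{assume:variational} inside the proof of \cref{thm:dnn:oracle}. Concretely, for each $f^\star \in \cF^\star$, pick an oracle pair $(K_n^*, M_n^*) \in \cM_n$ realizing the infimum defining $\epsilon_n(\cF^\star)$, a best approximator $\btheta^* \in \Theta_{(K_n^*,M_n^*)}^{\le B_n}$ with $\|\net(\btheta^*)-f^\star\|_\infty \lesssim \eta_{n,(K_n^*,M_n^*)}$, and a trial variational distribution $\Xi^*$ consisting of a product of narrow uniform distributions centered at $\btheta^*$ with half-widths of order $\zeta_{n,(K_n^*,M_n^*)}/C_{n,(K_n^*,M_n^*)}$, where $C_{n,(K,M)}$ is the Lipschitz constant from \cref{lem:dnn_lip}. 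Standard calculations then give $\Xi^*[n\nnorm{\net(\btheta)-f^\star}^2] \lesssim n\epsilon_n^2(\cF^\star)$, $\kl(\Xi^*,\Pi_{n,(K_n^*,M_n^*)}) \lesssim n\epsilon_n^2(\cF^\star)$, and, from the prior construction, $\log(1/\alpha_{n,(K_n^*,M_n^*)}) \lesssim (K_n^*M_n^*)^2\log n \lesssim n\epsilon_n^2(\cF^\star)$. Feeding these into \cref{lem:quasi:verror} applied to $\Xi^*$, and using the identity $\kl(\Xi^*,\Pi_n) = \kl(\Xi^*,\Pi_{n,(K_n^*,M_n^*)}) + \log(1/\alpha_{n,(K_n^*,M_n^*)})$ that holds whenever $\Xi^*$ is supported on a single parameter space, establishes \cref{assume:quasi:approximation} with a bounded $L_n$. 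With all assumptions verified, \cref{thm:quasi:conv} delivers the stated conclusion.

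The only step demanding real care is the bookkeeping behind \cref{assume:quasi:approximation}; everything else is either an invocation of \cref{lem:subgauss} or a direct inheritance from \cref{thm:dnn:oracle}. The sub-Gaussian tails are absorbed entirely through the constants in \cref{lem:subgauss}, so the sub-Gaussian regression structure introduces no genuinely new difficulty beyond the Gaussian-regression argument, other than requiring the learning rate $\kappa$ to be strictly smaller than $1/\varsigma^2$ so that both the learning-ability and boundedness constants remain positive and finite.
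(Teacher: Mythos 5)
Your proposal is correct and follows essentially the same route the paper intends: the corollary is obtained by invoking \cref{thm:quasi:conv}, using \cref{lem:subgauss} (with $\kappa\in(0,1/\varsigma^2)$ and any fixed $\barho$) to verify \cref{assume:quasi:learning,assume:quasi:bound}, inheriting the model/rate/prior verifications and the error sequences from the proof of \cref{thm:dnn:oracle}, and checking \cref{assume:quasi:approximation} via \cref{lem:quasi:verror} with the same narrow-uniform trial distribution used there. No gaps.
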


\section{Proofs of general results in \cref{sec:method,sec:theory,sec:regularization} and \cref{appen:theory:testing}}

\subsection{Proof for \cref{sec:method}}

\subsubsection{Proof of \cref{thm:compute}}

\begin{proof}
For any $\vQ_n:=\sum_{m\in\cM_n}\gamma_{n,m}\vQ_{n,m}\in\cQ_n$ with $\vQ_{n,m}\in\cQ_{n,m}$, we have
    \begin{align*}
        \kl(\vQ_n, \Pi_n)
        &=\sum_{m\in\cM_n}\gamma_{n,m}\int_{\Theta_{n,m}} \log\del{\frac{\sum_{m\in\cM_n}\gamma_{n,m}\d\vQ_{n,m}(\btheta)}{\sum_{m\in\cM_n}\alpha_{n,m}\d\Pi_{n,m}(\btheta)}}\d\vQ_{n,m}(\btheta)\\
        &=\sum_{m\in\cM_n}\gamma_{n,m}\int_{\Theta_{n,m}} \log\del{\frac{\gamma_{n,m}\d\vQ_{n,m}(\btheta)}{\alpha_{n,m}\d\Pi_{n,m}(\btheta)}}\d\vQ_{n,m}(\btheta)\\
        &=\sum_{m\in\cM_n}\gamma_{n,m}\log\del{\frac{\gamma_{n,m}}{\alpha_{n,m}}} +
        \sum_{m\in\cM_n}\gamma_{n,m} \int_{\Theta_{n,m}} \log\del{\frac{\d\vQ_{n,m}(\btheta)}{\d\Pi_{n,m}(\btheta)}}\d\vQ_{n,m}(\btheta)\\
        &=\kl(\bgamma_n, \balpha_n)+ \sum_{m\in\cM_n}\gamma_{n,m} \kl(\vQ_{n,m}, \Pi_{n,m}),
    \end{align*}
where the second inequality holds because $\{\Theta_{n,m}\}_{m\in\cM_n}$ are disjoint. Here we denote $\bgamma_n:=(\gamma_{n,m})_{m\in\cM_n}$. Hence, we have the following identities 
    \begin{equations}
    \label{eq:elbo_decompose}
       \scE_n(\vQ_n,\Pi_n,\sp_n):=&-\int_{\Theta_{n,\cM_n}}\log\del[1]{\sp_n(\sT(\btheta), \Y^{(n)})}\d\vQ_n(\btheta)+\kl(\vQ_n, \Pi_n)\\
        =&-\sum_{m\in\cM_n}\gamma_{n,m}\int_{\Theta_{n,m}} \log\del[1]{\sp_n(\sT(\btheta), \Y^{(n)})}\d \vQ_{n,m}(\btheta)\\
        &\qquad\qquad+\kl(\bgamma_n, \balpha_n)+ \sum_{m\in\cM_n}\gamma_{n,m} \kl(\vQ_{n,m}, \Pi_{n,m})\\
        =&\kl(\bgamma_n, \balpha_n) +\sum_{m\in\cM_n}\gamma_{n,m}\scE_n(\vQ_{n,m},\Pi_{n,m},\sp_n)
    \end{equations}
for any $\vQ_n\in\cQ_n.$ This implies that minimizing the negative ELBO $\scE_n(\vQ_n,\Pi_n,\sp_n)$ with respect to $\vQ_{n,m}$ is independent to $\bgamma_n$ and the other variational distributions $\vQ_{n,m'}$, $m'\neq m$. This implies \labelcref{eq:vpost_comp}. \Cref{eq:vpost_mprob} follows from
    \begin{align*}
        \scE_n\del[3]{\sum_{m\in\cM_n}\gamma_{n,m}\hvQ_{n,m},\Pi,\sp_n}
        &=\sum_{m\in\cM_n}\gamma_{n,m}\log \del{\frac{\gamma_{n,m}}{\alpha_{n,m}\e^{-\scE_n(\hvQ_{n,m},\Pi_{n,m},\sp_n)}}}
        \end{align*}
is minimized at $\hgamma_{n,m}\propto \alpha_{n,m}\e^{-\scE_n(\hvQ_{n,m},\Pi_{n,m},\sp_n)}$.
\end{proof}

\subsection{Proofs for \cref{sec:theory:rate}}

\subsubsection{Proof of \cref{lemma:model_prob_choice}}

\begin{proof}
Note that
    \begin{align*}
        Z_{\alpha,n}=\sum_{m\in\cM_n}\e^{-\fa_0n\zeta_{n,m}^2}
        &\ge   \e^{-\fa_0n\inf_{m\in\cM_n}\zeta_{n,m}^2}
        \ge 
        \e^{-\fa_0n\epsilon_n^2}.
    \end{align*}
Thus, we have
    \begin{align*}
         \sum_{m\in\cM_n:\zeta_{n,m}\ge H\epsilon_n}\alpha_{n,m}
         &\le \frac{1}{Z_{\alpha,n}}\sum_{m\in\cM_n:\zeta_{n,m}\ge H\epsilon_n}\e^{-\fa_0n\zeta_{n,m}^2}\\
         &\le \e^{\fa_0n\epsilon_n^2}|\cM_n|\e^{-\fa_0H^2n\epsilon_n^2}\\
         &\le \e^{-(\fa_0H^2-(\fa_0+\fc_3))n\epsilon_n^2},
    \end{align*}
which implies \labelcref{eq:assume:aggregation:largemodels} with $H_0>((\fa_0+\fc_3)/\fa_0)^{1/2}$. 
Next, since $ Z_{\alpha,n}=\sum_{m\in\cM_n}\e^{-\fa_0n\zeta_{n,m}^2}\le |\cM_n|\le \e^{\fc_3n\epsilon_n^2},$ we have
    \begin{align*}
        \alpha_{n,m_n^*}\ge\e^{-\fc_3n\epsilon_n^2}\e^{-\fa_0n\zeta_{n,m_n^*}^2}
        \ge \e^{-(\fc_3+\fa_0(1+\fc_5)^2)n\epsilon_n^2},
    \end{align*}
which proves \labelcref{eq:assume:aggregation:best}.
\end{proof}

\subsubsection{Proof of \cref{thm:vgap}}

\begin{proof}
We first prove the assertion \labelcref{eq:vgap_ineq}. For any distribution $\vQ\in\cQ_n$, we have the following series of equalities
    \begin{align*}
         \P_{\blambda^\star}^{(n)}& \sbr{\kl(\vQ, \Pi_n(\cdot|\Y^{(n)})) } \\
         &= \int \P_{\blambda^\star}^{(n)}\sbr{\log\del{\frac{\sp_{n,\Pi_n}(\Y^{(n)})\d\vQ(\btheta)}{\sp_n(\sT(\btheta), \Y^{(n)})\d\Pi_n(\btheta)}}} \d\vQ(\btheta) \\
         &=\kl(\vQ, \Pi_n) + \int\P_{\blambda^\star}^{(n)}\sbr{\log\del{\frac{\sp_n(\blambda^\star, \Y^{(n)})}{\sp_n(\sT(\btheta), \Y^{(n)})}}}\d\vQ(\btheta)  +\P_{\blambda^\star}^{(n)}\sbr{\log\del{\frac{\sp_{n,\Pi_n}(\Y^{(n)})}{\sp_n(\blambda^\star, \Y^{(n)})}}}\\
         &= \kl(\vQ,\Pi_n)+ \vQ\sbr{\kl\del{\P_{\blambda^\star}^{(n)},\P_{\sT(\btheta)}^{(n)}}}
         +\P_{\blambda^\star}^{(n)}\sbr{\log\del{\frac{\sp_{n,\Pi_n}(\Y^{(n)})}{\sp_n(\blambda^\star, \Y^{(n)})}}}.
    \end{align*}
But by Jensen's inequality, we have
    \begin{align*}
     \P_{\blambda^\star}^{(n)}\sbr{\log\del{\frac{\sp_{n,\Pi_n}(\Y^{(n)})}{\sp_n(\blambda^\star, \Y^{(n)})}}}
     \le \log \del{ \P_{\blambda^\star}^{(n)}\sbr{\frac{\sp_{n,\Pi_n}(\Y^{(n)})}{\sp_n(\blambda^\star, \Y^{(n)})}}}=0.
    \end{align*}
Thus, by the definition of $\hvQ_n$,
    \begin{align*}
         \P_{\blambda^\star}^{(n)} \sbr{\kl(\hvQ_n, \Pi_n(\cdot|\Y^{(n)})) } 
         &= \P_{\blambda^\star}^{(n)}\sbr{\inf_{\vQ\in\cQ_n}\kl(\vQ, \Pi_n(\cdot|\Y^{(n)})) } \\
         &\le \inf_{\vQ\in\cQ_n}\P_{\blambda^\star}^{(n)}\sbr{\kl(\vQ, \Pi_n(\cdot|\Y^{(n)})) }\\
         &\le \inf_{\vQ\in\cQ_n}\cbr{\kl(\vQ,\Pi_n)+ \vQ\sbr{\kl\del{\P_{\blambda^\star}^{(n)},\P_{\sT(\btheta)}^{(n)}}}},
    \end{align*}
which proves the first inequality in \labelcref{eq:vgap_ineq}. The second inequality follows from
    \begin{align*}
        \kl(\vQ_m,\Pi_n)=-\log(\alpha_{n,m}) + \kl(\vQ_m,\Pi_{n,m})
    \end{align*}
for any $\vQ_m\in\cQ_{n,m}$ and $m\in\cM_n$, combining with the fact that $\cup_{m\in\cM_n}\cQ_{n,m}\subset\cQ_{n}$.

Now suppose that \cref{assume:individual:variational} holds. Then by  \labelcref{eq:vgap_ineq} we have proven above, we have for any $m\in\cM_n$,
\begin{align*}
        \P_{\blambda^\star}^{(n)}&\sbr{\kl(\hvQ_n, \Pi_n(\cdot|\Y^{(n)}) )}\\
        &\le -\log\alpha_{n,m}
         + \inf_{\vQ_{m}\in\cQ_{n,m}}\sbr{\kl\del[1]{\vQ_{m}, \Pi_{n,m}} 
            + \vQ_m\sbr{\kl\del[1]{\P_{\blambda^\star}^{(n)}, \P_{\sT(\btheta)}^{(n)}}} }\\
        &\le -\log\alpha_{n,m} + \fc_2n(\eta_{n,m}+\zeta_{n,m})^2.
    \end{align*}
Thus, for $m_n^*$ satisfying $\eta_{n,m_n^*}+\zeta_{n,m_n^*}\le (1+\fc_5)\epsilon_n$ and \labelcref{eq:assume:aggregation:best}, we finally have
\begin{align*}
        \P_{\blambda^\star}^{(n)}\sbr{\kl(\hvQ_n, \Pi_n(\cdot|\Y^{(n)}))}
        &\le \fc_6n\epsilon_n^2 + \fc_2n(\eta_{n,m_n^*}+\zeta_{n,m_n^*})^2\\
        &\le (\fc_6 + \fc_2(1+\fc_5)^2)n\epsilon_n^2
    \end{align*}
which proves the second assertion of \labelcref{eq:theory:vgap}.
\end{proof}

\subsection{Proofs of \cref{thm:conv,thm:over,thm:under}}

In this subsection, we give the proofs of the contraction properties of the adaptive variational posterior. We divide the proofs into several steps. 

\subsubsection{Change-of-measure lemma}
\label{appen:proof:variational_ineq}

The following lemma enables us to control the variational posterior probability of any measurable event using the corresponding probability under the original posterior. It has been used in the related literature, e.g., Lemma B.1 of \citetS{zhang2020convergence} and Theorem 5 of \citetS{ray2021variational}. We further employ this lemma to find a suitable lower bound of the denominator of the original posterior. For details, see \cref{appen:proof:denom}.

\begin{lemma}
\label{lemma:variational_ineq}
Let $\Theta$ be a measurable space. Then for any two distributions $\vQ_0,\Pi_0\in\cP(\Theta)$ and any measurable function $\sF:\Theta\mapsto \R$,
    \begin{equation}
    \label{eq:kl_ineq_fn}
        \vQ_0\sbr{\sF}\le \kl(\vQ_0, \Pi_0) +\log\del[1]{\Pi_0\sbr[1]{\e^{\sF}}}.
    \end{equation}
In particular, for any measurable subset $\Theta'\subset\Theta$ and positive constant $\upsilon>0$,
    \begin{equation}
    \label{eq:kl_ineq_prob}
        \vQ_0(\Theta')\le \frac{1}{\upsilon}\cbr{\kl(\vQ_0, \Pi_0)+ \e^\upsilon\Pi_0(\Theta')}.
    \end{equation}
\end{lemma}

\begin{proof}
If $\vQ_0$ is not absolutely continuous with respect to $\Pi_0$ then $\kl(\vQ_0,\Pi_0)=\infty$, so the result trivially holds. Now assume otherwise. Recall the following well-known duality formula (e.g., Lemma 2.2 of \citetS{alquier2020concentration}),
    \begin{align*}
        \log\del[1]{\Pi_0\sbr[1]{\e^{\sF}}}=\sup_{\vQ'\ll \Pi_0}\sbr{\vQ'\sbr{\sF}-\kl(\vQ', \Pi_0)},
    \end{align*}
from which \labelcref{eq:kl_ineq_fn} directly follows. For the proof of \labelcref{eq:kl_ineq_prob}, we let $\sF(\btheta):=\upsilon\ind(\btheta\in\Theta')$. Then we have
    \begin{align*}
        \e^\upsilon\Pi_0(\Theta')
        \ge \log(1+ \e^\upsilon\Pi_0(\Theta'))
        \ge\log\del[2]{\int\e^{\sF(\btheta)}\d\Pi_0(\btheta)},
    \end{align*}
which completes the proof.
\end{proof}

\subsubsection{Lower bound of the denominator of the original posterior}
\label{appen:proof:denom}

In this subsection, we provide a high probability lower bound of the the denominator $\int \frac{\sp_n(\sT(\btheta), \Y^{(n)})}{\sp_n(\blambda^\star, \Y^{(n)})}\d\Pi(\btheta)$ of the original posterior. Given a parameter $\blambda^\star\in\Lambda_n$, a positive value $T>0$ and distributions $\bXi,\Pi\in\cP(\Theta_{n,\cM_n})$, we define the event
    \begin{align}
    \label{eq:denom:event}
        \bA_n(T,\bXi,\Pi,\blambda^\star):=\cbr{\Y^{(n)}\in\bY_n:\int \frac{\sp_n(\sT(\btheta), \Y^{(n)})}{\sp_n(\blambda^\star, \Y^{(n)})}\d\Pi(\btheta) \ge\exp\del[1]{-T-\kl(\vQ, \Pi)}}.
    \end{align}
    
\begin{lemma}
\label{lemma:denominator}
For the event $\bA_n(T,\bXi,\Pi,\blambda^\star)$ defined in \labelcref{eq:denom:event}, we have
    \begin{equation}
    \label{eq:denom_bound}
       \P^{(n)}_{\blambda^\star}\del{ \bA_n(T,\bXi,\Pi,\blambda^\star)}
        \ge 1- \frac{1}{T}\del{2\vQ\sbr{\kl\del[1]{\P_{\blambda^\star}^{(n)}, \P_{\sT(\btheta)}^{(n)}}}+1}.
    \end{equation}
\end{lemma}

\begin{proof}
We start with applying \labelcref{eq:kl_ineq_fn} in \cref{lemma:variational_ineq}  with $\sF=\log\frac{\sp_n(\sT(\btheta),\Y^{(n)})}{\sp_n(\blambda^\star, \Y^{(n)})}$, $\vQ_0=\vQ$ and $\Pi_0=\Pi$ to obtain
    \begin{align*}
        \log \int\frac{\sp_n(\sT(\btheta),\Y^{(n)})}{\sp_n(\blambda^\star, \Y^{(n)})}\d\Pi(\btheta)
        &\ge \int \log\del{\frac{\sp_n(\sT(\btheta), \Y^{(n)})}{\sp_n(\blambda^\star, \Y^{(n)})}}\d\vQ(\d\btheta)-\kl(\vQ,\Pi).
    \end{align*}
Hence, we have
    \begin{align*}
        \P^{(n)}_{\blambda^\star}\del{ \bA_n^\complement(t,\bXi,\Pi,\blambda^\star)}
        &= \P^{(n)}_{\blambda^\star}\del{\log\int \frac{\sp_n(\sT(\btheta), \Y^{(n)})}{\sp_n(\blambda^\star, \Y^{(n)})}\d\Pi(\btheta)<-T-\kl(\vQ, \Pi)}\\
        &\le \P^{(n)}_{\blambda^\star}\del{-\int \log\del{\frac{\sp_n(\blambda^\star, \Y^{(n)})}{\sp_n(\sT(\btheta), \Y^{(n)})}}\d\vQ(\btheta)\le -T}\\
        &\le \P^{(n)}_{\blambda^\star}\del{\int 0\vee\log\del{\frac{\sp_n(\blambda^\star, \Y^{(n)})}{\sp_n(\sT(\btheta), \Y^{(n)})}}\d\vQ(\btheta)\ge T}\\
        &\le \frac{1}{T} \P^{(n)}_{\blambda^\star}\sbr[3]{\int0\vee\log\del{\frac{\sp_n(\blambda^\star, \Y^{(n)})}{\sp_n(\sT(\btheta), \Y^{(n)})}}\d\vQ(\btheta)}\\
        &\le\frac{1}{T} \vQ\sbr{\kl\del[1]{\P_{\blambda^\star}^{(n)}, \P_{\sT(\btheta)}^{(n)}}+\sqrt{\frac{1}{2}\kl\del[1]{\P_{\blambda^\star}^{(n)}, \P_{\sT(\btheta)}^{(n)}}}}\\
        &\le \frac{1}{T}\del[2]{2\vQ\sbr{\kl\del[1]{\P_{\blambda^\star}^{(n)}, \P_{\sT(\btheta)}^{(n)}}}+1}.
    \end{align*}
Here we use Markov's inequality for the fourth line and use Fubini's theorem and Lemma B.13 of \citetS{ghosal2017fundamentals} for the fifth line. For the last line, we use a simple inequality $z+\sqrt{z/2}\le z+ (z\vee1)\le 2z+1$ for $z\ge0$.  We complete the proof.
\end{proof}

\subsubsection{Contraction of the original posterior}
\label{appen:proof:post_conv}

In this subsection, we derive the adaptive contraction of the original posterior distribution as an intermediate result.

\begin{theorem}[Contraction of original posterior]
\label{thm:post_conv}
Under \cref{assume:individual,assume:aggregation}, there exist absolute constants $A_0>0$ and $\fc_0'>0$ such that
        \begin{equations}
        \label{eq:regular_conv}
        \sup_{\blambda^\star\in\Lambda_n^\star}\P_{\blambda^\star}^{(n)}\sbr{\Pi_n\del{\scd_n(\sT(\btheta),\blambda^\star)\ge A_0\baepsilon|\Y^{(n)}}\ind\del{\bA_n(n\baepsilon^2,\vQ,\Pi_n,\blambda^\star)}}
        \le 3\e^{\kl(\vQ,\Pi_n)-\fc_0'n\baepsilon^2}
    \end{equations} 
for any $\vQ\in\cP(\Theta_{n,\cM_n})$ and $\baepsilon\ge \epsilon_n$, where $\bA_n(n\baepsilon^2,\vQ,\Pi_n,\blambda^\star)$ is the event defined using \labelcref{eq:denom:event} with $T=n\baepsilon^2$.
\end{theorem}

\def\likratio{\frac{\sp_n(\sT(\btheta),\Y^{(n)})}{\sp_n(\blambda^\star,\Y^{(n)})}}

\begin{proof}
Fix $\blambda^\star\in\Lambda_n^\star$, $\vQ\in\cP(\Theta_{n,\cM_n})$ and $\baepsilon\ge \epsilon_n$. Let
    \begin{equation*}
        \cK_n(A_0\baepsilon):=\cK_n(A_0\baepsilon;\blambda^\star):=\cbr{\btheta\in\Theta_{n,\cM_n}:\scd_n(\sT(\btheta),\blambda^\star)\ge A_0\baepsilon}
    \end{equation*}
and $\bA_n:=\bA_n(n\baepsilon^2,\vQ,\Pi_n,\blambda^\star)$ for ease of notation. We note first that, on $\bA_n$, by definition, 
    \begin{align}
    \label{eq:denom:lowerbdd}
        D_n:=\int \frac{\sp_n(\sT(\btheta), \Y^{(n)})}{\sp_n(\blambda^\star, \Y^{(n)})}\d\Pi_n(\btheta)
        \ge \e^{-n\baepsilon^2-\kl(\vQ,\Pi_n)}.
    \end{align}
Next, we divide the model space $\cM_n$ as $\cM_n=\cM_n^{+}\cup\cM_n^{-}$ with
    \begin{align*}
        \cM_n^{+}&:=\cbr{m\in\cM_n:\zeta_{n,m}\ge H_1\baepsilon},\\
         \cM_n^{-}&:=\cbr{m\in\cM_n:\zeta_{n,m}<H_1\baepsilon},
    \end{align*}
where $H_1>H_0$ is the constant which will be specified later. We denote $\Theta_{n,\cM_n^{+}}:=\cup_{m\in\cM_n^{+}}\Theta_{n,m}$ and $\Theta_{n,\cM_n^{-}}:=\cup_{m\in\cM_n^{-}}\Theta_{n,m}$. We will bound each term of the second line of the next display separately:
    \begin{align}
        \P_{\blambda^\star}^{(n)}&\sbr{\Pi_n\del[1]{\cK_n(A_0\baepsilon)|\Y^{(n)}}\ind(\bA_n)}\nonumber\\
         &  \le  \P_{\blambda^\star}^{(n)}\sbr{\Pi_n\del[1]{\Theta_{n,\cM_n^{+}}|\Y^{(n)}}\ind(\bA_n)}
        +\P_{\blambda^\star}^{(n)}\sbr{\Pi_n\del[1]{\cK_n(A_0\baepsilon)\cap\Theta_{n,\cM_n^{-}}|\Y^{(n)}}\ind(\bA_n)}  
            \label{eq:pp_decomp}.
    \end{align}
For the first term of \labelcref{eq:pp_decomp}, by \cref{eq:denom:lowerbdd}, we have
    \begin{equations}
    \label{eq:post_bound_large_models}
        \P_{\blambda^\star}^{(n)}\sbr{\Pi_n\del[1]{\Theta_{n,\cM_n^{+}}|\Y^{(n)}}\ind(\bA_n)}
        &=\P_{\blambda^\star}^{(n)}\sbr{\frac{\int_{\Theta_{n,\cM_n^{+}}} \likratio\d\Pi_n(\btheta)}{\int \likratio \d\Pi_n(\btheta)}\ind(\bA_n)}\\
        &\le \e^{n\baepsilon^2+\kl(\vQ,\Pi_n)}\Pi_n(\Theta_{n,\cM_n^{+}})\\
        &=  \e^{n\baepsilon^2+\kl(\vQ,\Pi_n)}\sum_{m\in\cM_n^{+}}\alpha_{n,m}\\
        &\le  \e^{n\baepsilon^2+\kl(\vQ,\Pi_n)}\e^{-\fc_4n(H_1\baepsilon)^2},
    \end{equations}
where the last line follows from \cref{assume:aggregation:model_prior_pen} since $H_1\baepsilon>H_0\epsilon_n$ by assumption.  Thus, taking $H_1$ such that $H_1^2>1/\fc_4$, the preceding display is further bounded by $\e^{\kl(\vQ,\Pi_n)-\fc_1'n\baepsilon^2}$ for some absolute constant $\fc_1'>0$. We next investigate the second term of  \labelcref{eq:pp_decomp}. For any measurable function $\varphi:\bY_n\mapsto[0,1]$, we have 
    \begin{align*}
         &\P_{\blambda^\star}^{(n)}\sbr{\Pi_n\del[1]{\cK_n(A_0\baepsilon)\cap\Theta_{n,\cM_n^{-}}|\Y^{(n)}}\ind(\bA_n)}\\
         &\le\P_{\blambda^\star}^{(n)}[\varphi \ind(\bA_n)] + \P_{\blambda^\star}^{(n)}\sbr{\Pi_n\del[1]{\cK_n(A_0\baepsilon)\cap\Theta_{n,\cM_n^{-}}|\Y^{(n)}}(1-\varphi)\ind(\bA_n)}.
    \end{align*}
To bound the two terms in the preceding display exponentially, we now aggregate the tests $\varphi_{n,m}$ over $\cM_n^{-}$ as $\varphi_n:=\max_{m\in\cM_n^{-}}\varphi_{n,m}$. If we assume $A_0\ge J_0H_1$ so that $(A_0/J_0)\baepsilon\ge H_1\epsilon_n>\max_{m\in\cM_n^{-}}\zeta_{n,m}$, then from \cref{assume:individual:testing}, we get exponentially decaying type-\rom{1} and -\rom{2} bounds as
    \begin{equations}
    \label{eq:aggtest_type1}
       \P_{\blambda^\star}^{(n)}[\varphi_n\ind(\bA_n)] 
       \le\P_{\blambda^\star}^{(n)}[\varphi_n] 
        &\le \sum_{m\in\cM_n^{-}}\P_{\blambda^\star}^{(n)}[\varphi_{n,m}]\\
        &\le |\cM_n|\e^{-\fc_1n(A_0/J_0)^2\baepsilon^2}\\
        &\le \e^{\fc_3n\epsilon_n^2-\fc_1n(A_0/J_0)^2\baepsilon^2}
    \end{equations}
and 
    \begin{equations}
     \label{eq:aggtest_type2}
       \sup_{\btheta\in\cK_n(A_0\baepsilon)\cap\Theta_{n,\cM_n^{-}}}\P_{\blambda}^{(n)}[1-\varphi_n]
        &\le \max_{m\in\cM_n^{-}} \sup_{\btheta\in\Theta_{n,m}:\scd_n(\sT(\btheta), \blambda^\star)>J_0(A_0/J_0)\baepsilon}\P_{\blambda^\star}^{(n)}[1-\varphi_{n,m}]\\
        &\le \e^{-\fc_1n(A_0/J_0)^2\baepsilon^2}.
    \end{equations}
Here the last inequality of \labelcref{eq:aggtest_type1} follows from \cref{assume:aggregation:model_set}. Moreover, by \labelcref{eq:denom:lowerbdd,eq:aggtest_type2}, we have
    \begin{equations}
    \label{eq:posterior_type2_error}
          \P_{\blambda^\star}^{(n)}&\sbr{\Pi_n\del[1]{\cK_n(A_0\baepsilon)\cap\Theta_{n,\cM_n^{-}}|\Y^{(n)}}(1-\varphi_n)\ind(\bA_n)}\\
          &=\int_{\cK_n(A_0\baepsilon)\cap\Theta_{n,\cM_n^-}}\P_{\blambda^\star}^{(n)}\sbr{\frac{\sp_n(\sT(\btheta),\Y^{(n)})}{\sp_n(\blambda^\star,\Y^{(n)})}(1-\varphi_n)\frac{1}{D_n}\ind(\bA_n)}\d\Pi_n(\btheta)  \\
        &\le  \e^{n\baepsilon^2+\kl(\vQ,\Pi_n)}
         \sup_{\btheta\in\Theta_{n,m}:\scd_n(\sT(\btheta),\blambda^\star)\ge A_0\baepsilon} \P_{\sT(\btheta)}^{(n)}[1-\varphi_n]\\
        &\le \e^{\kl(\vQ,\Pi_n)-(\fc_1(A_0/J_0)^2-1)n\baepsilon^2},
    \end{equations}
where we use Fubini's theorem in the equality. Thus if we choose $A_0$ such that $A_0^2>J_0^2(H_1^2\vee \fc_3^2\vee \fc_1^{-1})$,  the second term of  \labelcref{eq:pp_decomp} is further bounded by $\e^{-\fc_{2}'n\baepsilon^2}$ for some absolute constant $\fc_{2}'>0$, which completes the proof.
\end{proof}

\subsubsection{Proof of \cref{thm:conv}}

\begin{proof}
Let $\cK_n:=\cbr{\btheta\in\Theta_{n,\cM_n}:\scd_n(\sT(\btheta),\blambda^\star)\ge A_n\epsilon_{n}}$ be the event of interest.
Moreover, let $m_n^*\in\cM_n$ be a model index  $\eta_{n,m_n^*}+\zeta_{n,m_n^*}\le (1+\fc_5)\epsilon_n$ and $\alpha_{n,m_n^*} \ge\exp\del[1]{-\fc_6n\epsilon_n^2}$ and let $\vQ_{n,m_n^*}^*\in\cQ_{n,m_n^*}$ be a distribution that satisfies \cref{assume:individual:variational}. We set
    \begin{align*}
        \bA_n:=\bA_n\del{(A_n/A_0)^2n\epsilon_n^2,\vQ_{n,m_n^*}^*,\Pi_n,\blambda^\star},
    \end{align*}
that is, $\bA_n$ is the event defined using \labelcref{eq:denom:event} with $T=(A_n/A_0)^2n\epsilon_n^2$ and $\vQ=\vQ_{n,m_n^*}^*$, where $A_0$ is the constant appearing in \cref{thm:post_conv}. We start with 
    \begin{align*}
        \P_{\blambda^\star}^{(n)}[\hvQ_n(\cK_n)]
        \le \P_{\blambda^\star}^{(n)}[\hvQ_n(\cK_n)\ind(\bA_n)] 
        +  \P_{\blambda^\star}^{(n)}(\bA_n^\complement).
    \end{align*}
By \cref{lemma:denominator}, we have
    \begin{align*}
         \P_{\blambda^\star}^{(n)}(\bA_n^\complement)\le \frac{1}{(A_n/A_0)^2n\epsilon_n^2}(2n((1+\fc_5)\epsilon_n)^2+1)=\sco(1)
    \end{align*}
since $A_n\to\infty$. We now focus on the  term $\P_{\blambda^\star}^{(n)}[\hvQ_n(\cK_n)\ind(\bA_n)].$ For this, we employ \cref{eq:kl_ineq_prob} in  \cref{lemma:variational_ineq} with $\vQ_0=\hvQ_n$, $\Pi_0=\Pi_n(\cdot|\Y^{(n)})$ and $\upsilon=\upsilon_n:=A_n n\epsilon_{n}^2$ to obtain
    \begin{equation*}
        \P_{\blambda^\star}^{(n)}[\hvQ_n(\cK_n)\ind(\bA_n)]
        \le \frac{1}{\upsilon_n}\P_{\blambda^\star}^{(n)}[\kl(\hvQ_n, \Pi_n(\cdot|\Y^{(n)}) )]+ \frac{1}{\upsilon_n}\e^{\upsilon_n}\P_{\blambda^\star}^{(n)}[\Pi_n(\cK_n|\Y^{(n)})\ind(\bA_n)].
    \end{equation*}
The first term of the right-hand side goes to zero since the variational approximation gap is bounded above by $\fc_1'n\epsilon_n^2$ for some $\fc_1'>0$ by \cref{thm:vgap}. On the other hand, by \cref{thm:post_conv} together with  \cref{assume:individual:variational} and the assumption \labelcref{eq:assume:aggregation:best},  we have
    \begin{align*}
        \e^{\upsilon_n}\P_{\blambda^\star}^{(n)}[\Pi_n(\cK_n|\Y^{(n)})\ind(\bA_n)]
        &\le 3 \exp\del{v_n + \kl(\vQ_{n,m_n^*}^*,\Pi_n)-\fc_0'A_n^2n\epsilon_n^2}\\
        &= 3 \exp\del{v_n -\log \alpha_{n,m_n^*}+ \kl(\vQ_{n,m_n^*}^*,\Pi_{n,m_n^*})-\fc_0'A_n^2n\epsilon_n^2}\\
        &\le3\exp\del{(\fc_6+\fc_2(1+\fc_5)^2)n\epsilon_n^2+ A_nn\epsilon_n^2-\fc_0'A_n^2n\epsilon_n^2},
    \end{align*}
which goes to zero since $A_n\to\infty$  and $n\epsilon_n^2\ge1$ by assumption. This completes the proof.
\end{proof}

\subsubsection{Proof of \cref{thm:over}}

\begin{proof}
By using a similar argument as  the proof of  \cref{thm:conv}, which is based on \cref{thm:vgap} and \cref{lemma:variational_ineq}, it is enough to prove that the expected original posterior probability of the event of interest is exponentially bounded. This can be easily done by using the derivation in \labelcref{eq:post_bound_large_models} and choosing a sufficiently large $H_1>0$.
\end{proof}

\subsubsection{Proof of \cref{thm:under}}

\begin{proof}
This is a direct consequence of \cref{thm:conv} because it follows that  $\scd_n(\sT(\btheta),\blambda^\star)>\uA_n\epsilon_n$ for any $\btheta\in\Theta_{n,m}$ for $m\in \cM_{n}^{\textup{under}}(\uA_n\epsilon_n; \blambda^\star)$.
\end{proof}

\subsection{Proofs for \cref{sec:regularization}}

\subsubsection{Proof of \cref{thm:regularization:over}}

\begin{proof}
Let $m_n^*\in\cM_n$ be a model index  $\eta_{n,m_n^*}+\zeta_{n,m_n^*}\le (1+\fc_5)\epsilon_n$ and $\alpha_{n,m_n^*} \ge\exp\del[1]{-\fc_6n\epsilon_n^2}$, which is assumed to be exist in \cref{assume:aggregation:model_prior_mass}, and $\vQ_{n,m_n^*}^*\in\cQ_{n,m_n^*}$ be a distribution that satisfies \cref{assume:individual:variational}. We start with observing that, for any $m\in \cM_n$,
\begin{align*}
    \hgamma_{n,m}
    &=\frac{\alpha_{n,m}\exp\del[1]{-\scE_n\del[0]{\hvQ_{n,m},\Pi_{n,m},\sp_n}}}{\sum_{m'\in \cM_n}\alpha_{n,m'}\exp\del[1]{-\scE_n\del[0]{\hvQ_{n,m'},\Pi_{n,m'},\sp_n}}}\\
    &\le \frac{\exp\del[1]{-\scE_n\del[0]{\hvQ_{n,m},\Pi_{n,m},\sp_n}}}{\alpha_{n,m_n^*}\exp\del[1]{-\scE_n\del[0]{\hvQ_{n,m_n^*},\Pi_{n,m_n^*},\sp_n}}}\\
    &\le \frac{\exp\del[1]{-\scE_n\del[0]{\hvQ_{n,m},\Pi_{n,m},\sp_n}}}{\alpha_{n,m_n^*}\exp\del[1]{-\scE_n\del[0]{\vQ_{n,m_n^*}^*,\Pi_{n,m_n^*},\sp_n}}}\\
     &\le \frac{\exp\del[1]{-\scE_n\del[0]{\hvQ_{n,m},\Pi_{n,m},\sp_n}}}{\exp\del[1]{-\fc_6n\epsilon_n^2-\scE_n\del[0]{\vQ_{n,m_n^*}^*,\Pi_{n,m_n^*},\sp_n}}},
    \end{align*}
where we use the fact that $\alpha_{n,m}\le 1$ for the first inequality,  and the optimality of $\hvQ_{n,m_n^*}^*$ for the second inequality. By dividing both the numerator and denominator of the last line of the above display by the likelihood $\sp_n(\blambda^\star,\Y^{(n)})$ of the true $\blambda^\star$, we have
    \begin{align}
    \label{eq:vb_post_model_prob_bound}
    \hgamma_{n,m}
    \le \frac{\exp\del[1]{\int \log \sr_n(\sT(\btheta), \blambda^\star) \d \hvQ_{n,m}(\btheta) -\kl( \hvQ_{n,m}, \Pi_{n,m})} }{\exp\del[1]{-\fc_6n\epsilon_n^2+\int \log \sr_n(\sT(\btheta), \blambda^\star) \d \vQ_{n,m_n^*}(\btheta) -\kl( \vQ_{n,m_n^*}^*, \Pi_{n,m_n^*})}}
    \end{align}
where we let  $\sr_n(\sT(\btheta), \blambda^\star):=\sp_n(\sT(\btheta),\Y^{(n)})/\sp_n(\blambda^\star,\Y^{(n)})$. Let
    \begin{align*}
        \Pi_{n,m}^*\in\argmax_{\tPi\in  \cP(\Theta_{n,m})}\inf_{\vQ\in \cQ_{n,m}}\cbr{\kl(\vQ, \Pi_{n,m})-\kl(\vQ, \tPi)}.
    \end{align*}    
For $m\in\cM_n$ such that $\Psi_{n,m}>0$, it should be that $\kl(\vQ, \Pi_{n,m}^*)<\infty$ for any $\vQ\in\cQ_{n,m}$. Then for such a model $m\in\cM_n$, we have 
    \begin{align*}
        \int& \log \sr_n(\sT(\btheta), \blambda^\star) \d \hvQ_{n,m}(\btheta) -\kl( \hvQ_{n,m}, \Pi_{n,m})\\
        &= \int \log \sr_n(\sT(\btheta), \blambda^\star) \d \hvQ_{n,m}(\btheta) -\kl( \hvQ_{n,m}, \Pi_{n,m}^*) + \kl( \hvQ_{n,m}, \Pi_{n,m}^*)-\kl( \hvQ_{n,m}, \Pi_{n,m})\\
        &\le\log \del{ \int \sr_n(\sT(\btheta), \blambda^\star) \d \Pi_{n,m}^*(\btheta) }-\cbr{\kl( \hvQ_{n,m}, \Pi_{n,m})-\kl(\hvQ_{n,m}, \Pi_{n,m}^*)}\\
        &\le \log \del{ \int \sr_n(\sT(\btheta), \blambda^\star) \d \Pi_{n,m}^*(\btheta) }-\inf_{\vQ\in \cQ_{n,m}}\cbr{\kl( \vQ, \Pi_{n,m})-\kl(\vQ, \Pi_{n,m}^*)}\\
        &= \log \del{ \int \sr_n(\sT(\btheta), \blambda^\star) \d \Pi_{n,m}^*(\btheta) }-        \Psi_{n,m},
    \end{align*}
where the first inequality is obtained by applying \cref{lemma:variational_ineq} with  $\sF=\log \sr_n(\sT(\btheta), \blambda^\star)$, $\vQ_0=\hvQ_{n,m}$ and $\Pi_0=\Pi_{n,m}^*$. On the other hand, for lower bounding the denominator of \labelcref{eq:vb_post_model_prob_bound}, we define the set
    \begin{align*}
        \bA_n  :=\cbr{\Y^{(n)}\in\bY_n:\int \log  \sr_n(\sT(\btheta), \blambda^\star)\d \vQ_{n,m_n^*}^*(\btheta)\ge -\sqrt{A_n} n\epsilon_n^2}.
    \end{align*}
Then on the event $\bA_n$, the denominator is bounded below as
    \begin{align*}
    \exp &\del{-\fc_6n\epsilon_n^2+\int \log \sr_n(\sT(\btheta), \blambda^\star) \d \vQ_{n,m_n^*}(\btheta) -\kl( \vQ_{n,m_n^*}^*, \Pi_{n,m_n^*})}\\
        &\ge \exp\del{-\fc_6n\epsilon_n^2 - \sqrt{A_n}n\epsilon_n^2-\kl(\vQ_{n,m_n^*}^*, \Pi_{n,m_n^*})}\ge \exp\del[1]{-\fc_1'\sqrt{A_n}n\epsilon_n^2}
    \end{align*}
for some absolute constant $\fc_1'>0$, where the last inequality holds due to \cref{assume:individual:variational}.  Furthermore, we have 
    \begin{align*}
        \P_{\blambda^\star}^{(n)}( \bA_n^\complement)
        &= \P^{(n)}_{\blambda^\star}\del{\int \log\frac{\sp_n(\sT(\btheta), \Y^{(n)})}{\sp_n(\blambda^\star, \Y^{(n)})}\d \vQ_{n,m_n^*}^*(\btheta)<-\sqrt{A_n} n\epsilon_n^2}\\
        &\le \P^{(n)}_{\blambda^\star}\del{\int 0\vee\log\del{\frac{\sp_n(\blambda^\star, \Y^{(n)})}{\sp_n(\sT(\btheta), \Y^{(n)})}}\d\vQ_{n,m_n^*}^*(\btheta)\ge \sqrt{A_n} n\epsilon_n^2}\\
        &\le \frac{1}{\sqrt{A_n} n\epsilon_n^2} \P^{(n)}_{\blambda^\star}\sbr[3]{\int0\vee\log\del{\frac{\sp_n(\blambda^\star, \Y^{(n)})}{\sp_n(\sT(\btheta), \Y^{(n)})}}\d\vQ(\btheta)}\\
        &\le \frac{1}{\sqrt{A_n} n\epsilon_n^2}\del[2]{2\vQ\sbr{\kl\del[1]{\P_{\blambda^\star}^{(n)}, \P_{\sT(\btheta)}^{(n)}}}+1}
        \le \frac{\fc_2(1+\fc_5)^2}{\sqrt{A_n}} 
    \end{align*}
where we use Markov's inequality for the third line and use Fubini's theorem and Lemma B.13 of \citetS{ghosal2017fundamentals} for the last line. Hence,
    \begin{align*}
        &\P_{\blambda^\star}^{(n)}\sbr{\hvQ_n\del[1]{ \cM_{n}^{\textup{ivB,over}}(A_n)}}\\
         &\le  \P_{\blambda^\star}^{(n)}\sbr[3]{\sum_{m\in\cM_n:\Psi_{n,m}\ge A_nn\epsilon_n^2}\hgamma_{n,m}\ind(\bA_n)} +  \P_{\blambda^\star}^{(n)}( \bA_n ^\complement)\\
        &\le\sum_{m\in\cM_n:\Psi_{n,m}\ge A_nn\epsilon_n^2} \e^{\fc_1'\sqrt{A_n} n\epsilon_n^2 -\Psi_{n,m}} \P_{\blambda^\star}^{(n)}\sbr[2]{\int \sr_n(\sT(\btheta), \blambda^\star) \d \Pi_{n,m}^*(\btheta) } +\frac{\fc_2(1+\fc_5)^2}{\sqrt{A_n}} \\
         &\le |\cM_n|\e^{\fc_1'\sqrt{A_n} n\epsilon_n^2-A_nn\epsilon_n^2 } \int \P_{\blambda^\star}^{(n)}[\sr_n(\sT(\btheta), \blambda^\star)] \d \Pi_{n,m}^*(\btheta)  +\frac{\fc_2(1+\fc_5)^2}{\sqrt{A_n}} \\
        &\le \e^{\fc_3n\epsilon_n^2+ \fc_1'\sqrt{A_n} n\epsilon_n^2-A_nn\epsilon_n^2 }  +\frac{\fc_2(1+\fc_5)^2}{\sqrt{A_n}}   =\sco(1)
    \end{align*}
as $A_n\to\infty$, where we use Fubini's theorem in the fourth line, and the last inequality follows from \cref{assume:aggregation:model_set}.
\end{proof}

\subsubsection{Proof of \cref{thm:regularization:rate}}

\begin{proof}

Let $\cK_n:=\cbr[0]{\btheta\in\Theta_{n,\cM_n}:\scd_n(\sT(\btheta),\blambda^\star)\ge (A_n\epsilon_{n})\vee \zeta_n^\ddag}$ be the event of interest. Moreover, we define 
    \begin{align*}
       \Theta_{n}^{\textup{ivB,over}}&:=\bigcup_{m\in  \cM_{n}^{\textup{ivB,over}}(A_n)}\Theta_{n,m}\\
        \Theta_{n}^{\textup{ivB,regular}}&:=\Theta_{n,\cM_n}\setminus     \Theta_{n}^{\textup{ivB,over}}=\bigcup_{m\in  \cM_{n}^{\textup{ivB,regular}}(A_n)}\Theta_{n,m}.
    \end{align*}
As we did in the proof of \cref{thm:conv}, we denote by
    \begin{align*}
        \bA_n:=\bA_n\del{A_nn\epsilon_n^2,\vQ_{n,m_n^*}^*,\Pi_n,\blambda^\star},
    \end{align*}
the event \labelcref{eq:denom:event} with $T=A_nn\epsilon_n^2$ and $\vQ=\vQ_{n,m_n^*}^*$. Then we decompose the variational probability of $\cK_n$ as
    \begin{align*}
        \P_{\blambda^\star}^{(n)}[\hvQ_n(\cK_n)]
        &\le \P_{\blambda^\star}^{(n)}[\hvQ_n(\Theta_{n}^{\textup{ivB,over}})] 
        +  \P_{\blambda^\star}^{(n)}[\hvQ_n(\cK_n\cap\Theta_{n}^{\textup{ivB,regular}})] \\
        &\le  \P_{\blambda^\star}^{(n)}[\hvQ_n(\Theta_{n}^{\textup{ivB,over}})] 
        +  \P_{\blambda^\star}^{(n)}[\hvQ_n(\cK_n\cap\Theta_{n}^{\textup{ivB,regular}})\ind(\bA_n)] +  \P_{\blambda^\star}^{(n)}(\bA_n^\complement).
    \end{align*}
By \cref{thm:regularization:over} and \cref{lemma:denominator} respectively, the first and third terms in the above display converge to zero as $n\to\infty$. Thus it suffices to bound the second term. For convenience let
    \begin{align*}
     \cK_n^{\textup{regular}} := \cK_n\cap\Theta_{n}^{\textup{ivB,regular}}
     =\cbr{\btheta\in\Theta_{n}^{\textup{ivB,regular}}:\scd_n(\sT(\btheta),\blambda^\star)\ge (A_n\epsilon_{n})\vee \zeta_n^\ddag}.
    \end{align*}
We use \cref{eq:kl_ineq_prob} in  \cref{lemma:variational_ineq} with $\vQ_0=\hvQ_n$, $\Pi_0=\Pi_n(\cdot|\Y^{(n)})$ and $\upsilon=\upsilon_n:=A_n n\epsilon_{n}^2$ to obtain
    \begin{align*}
        \P_{\blambda^\star}^{(n)}[\hvQ_n(\cK_n^{\textup{regular}})\ind(\bA_n)]
        \le \frac{1}{\upsilon_n}\P_{\blambda^\star}^{(n)}[\kl(\hvQ_n, \Pi_n(\cdot|\Y^{(n)}) )]+ \frac{1}{\upsilon_n}\e^{\upsilon_n}\P_{\blambda^\star}^{(n)}[\Pi_n(\cK_n^{\textup{regular}}|\Y^{(n)})\ind(\bA_n)],
    \end{align*}
where the first term  goes to zero by \cref{thm:vgap}. Thus, it remains to show that the second term goes to zero. Note that for any measurable function $\varphi:\bY_n\mapsto[0,1]$, we have 
    \begin{align*}
         \P_{\blambda^\star}^{(n)}\sbr{\Pi_n\del[1]{ \cK_n^{\textup{regular}}|\Y^{(n)}}\ind(\bA_n)}
         \le\P_{\blambda^\star}^{(n)}[\varphi \ind(\bA_n)] + \P_{\blambda^\star}^{(n)}\sbr{\Pi_n\del[1]{ \cK_n^{\textup{regular}}|\Y^{(n)}}(1-\varphi)\ind(\bA_n)}.
    \end{align*}
Let $\varphi_n:=\max_{m\in\cM_{n}^{\textup{ivB,regular}}(A_n)}\varphi_{n,m}$. Then by definition, $ A_n\epsilon_n\vee \zeta_n^\ddag\ge  \zeta_n^\ddag\ge \zeta_{n,m}$ for any $m\in \cM_{n}^{\textup{ivB,regular}}(A_n)$. Hence by \cref{assume:individual:testing}, we have
    \begin{align*}
       \P_{\blambda^\star}^{(n)}[\varphi_n\ind(\bA_n)] 
       \le\P_{\blambda^\star}^{(n)}[\varphi_n] 
        &\le \sum_{m\in\cM_{n}^{\textup{ivB,regular}}(A_n)}\P_{\blambda^\star}^{(n)}[\varphi_{n,m}]\\
        &\le |\cM_n|\e^{-\fc_1n((A_n\epsilon_n)\vee \zeta_n^\ddag)^2}\\
        &\le \e^{\fc_3n\epsilon_n^2-\fc_1n((A_n\epsilon_n)\vee \zeta_n^\ddag)^2}.
    \end{align*}
Moreover, by \cref{assume:individual:testing} again,
    \begin{align*}
       \sup_{\btheta\in\cK_n^{\textup{regular}}}\P_{\blambda}^{(n)}[1-\varphi_n]       
        &\le \e^{-\fc_1n((A_n\epsilon_n)\vee \zeta_n^\ddag)^2}.
    \end{align*}
Using a similar argument to that in \labelcref{eq:posterior_type2_error} in the proof of \cref{thm:post_conv}, we have
    \begin{align*}
        \P_{\blambda^\star}^{(n)}\sbr{\Pi_n\del[1]{ \cK_n^{\textup{regular}}|\Y^{(n)}}(1-\varphi_n)\ind(\bA_n)}
        &\le \e^{A_nn\epsilon_n^2 -\log\del[0]{\alpha_{n,m_n^*}} +  \kl(\vQ_{n,m_n^*}^*, \Pi_{n,m_n^*})-\fc_1n((A_n\epsilon_n)\vee \zeta_n^\ddag)^2}\\
        &\le\e^{(A_n+\fc_6+\fc_2(1+\fc_5)^2)n\epsilon_n^2-\fc_1n((A_n\epsilon_n)\vee \zeta_n^\ddag)^2}\\
        &\lesssim\e^{-\fc_1'n((A_n\epsilon_n)\vee \zeta_n^\ddag)^2}
    \end{align*}
for some absolute constant $\fc_1'>0$, which completes the proof.
\end{proof}

\subsection{Proofs for \cref{appen:theory:testing}}
\label{appen:main_proof:testing}

\subsubsection{Proof of \cref{lemma:testing:renyi}}
\label{appen:proof:testing:renyi}

We divide the proof into two steps. The first step is to show that when the two conditions \labelcref{eq:tail1,eq:tail2} are met, there is a good test for a convex alternative.

\begin{lemma}[Testing,  convex alternatives]
\label{lemma:testing:convex}
Suppose that \labelcref{eq:tail1,eq:tail2} holds. Then there exist absolute constants $\fc_1'>0$ and $\fc_2'\in(0,1)$ such that for any sufficiently large $n\in\bN$ and any $\blambda_0, \blambda_1\in\Lambda_n$, there exist a test function $\tvarphi_n:\bY_n\mapsto [0,1]$ satisfying
    \begin{equation}
        \max\cbr{\P^{(n)}_{\blambda_0}[\tvarphi_n],
        \sup_{\blambda\in\cB_{\Lambda_n,\scd_n}(\blambda_1,\zeta)}\P^{(n)}_{\blambda}[1-\tvarphi_n]}
        \le 2\e^{-\fc_1'n\zeta^2},
    \end{equation}
whenever $\zeta< \fc_2'\scd_n(\blambda_0, \blambda_1)$, where $\cB_{\Lambda_n,\scd_n}(\blambda_1,\zeta) :=\cbr{\blambda\in\Lambda_n:\scd_n(\blambda,\blambda_1)\le \zeta}.$
\end{lemma}

\begin{proof}
Let $\zeta>0$ be  such that $\zeta< \sqrt{\fc_1''}\scd_n(\blambda_0, \blambda_1)$, where $\fc_1''>0$ will be specified later. For notational convenience, we denote the likelihood ratio function by
    \begin{equation*}
        \sr_n(\blambda, \blambda')
        :=\sr_n(\blambda, \blambda';\Y^{(n)}):=\frac{\sp_n(\blambda,\Y^{(n)})}{\sp_n(\blambda',\Y^{(n)})}
    \end{equation*}
for $\blambda, \blambda'\in\Lambda$. We will show that the test function defined by
    \begin{equation*}
        \tvarphi_n:=\tvarphi_n(\Y^{(n)}):=\ind\del{\sr_n(\blambda_0, \blambda_1)\le 1}
    \end{equation*}
satisfies the desired error bound.  For the type \rom{1} error,  by using  Markov's inequality and  \labelcref{eq:tail1}, we get
    \begin{align*}
         \P^{(n)}_{\blambda_0}[\tvarphi_n]
         &= \P^{(n)}_{\blambda_0}\del{(\sr_n(\blambda_0, \blambda_1))^{\rho_\circ-1}\ge 1 }\\
         &\le  \exp\del{(\rho_\circ-1) \sD_{\rho_\circ}(\P^{(n)}_{\blambda_0}, \P^{(n)}_{\blambda_1})}\\
         &\le  \exp\del{-(1-\rho_\circ)\fc_1 n\scd_n^2(\blambda_0,\blambda_1)}\\
         &\le \exp\del[2]{-\frac{(1-\rho_\circ)\fc_1}{\fc_1''}n\zeta^2}.
    \end{align*}
For the type \rom{2} error, we consider the event
    \begin{equation*}
        \bB_{n,\blambda}:=\cbr{\Y^{(n)}\in\bY_n:\log(\sr_n(\blambda, \blambda_1))<\fc_2''n\zeta^2}
    \end{equation*}
for each $\blambda\in\cB_{\Lambda_n,\scd_n}(\blambda_1,\zeta)$, where $\fc_2''>0$ will be specified later. Then, since $\rho_\blacklozenge>1$ and $\scd_n(\blambda, \blambda_1)\le\zeta$, by Markov's inequality and  \labelcref{eq:tail2}.
    \begin{align*}
         \P^{(n)}_{\blambda}\del[1]{\bB_{n,\blambda}^{\complement}}
         &\le \P^{(n)}_{\blambda}\del{(\sr_n(\blambda, \blambda_1))^{\rho_\blacklozenge-1}>\e^{\fc_1''(\rho_\blacklozenge-1)n\zeta^2}}\\
         &\le\e^{-\fc_2''(\rho_\blacklozenge-1)n\zeta^2}\P^{(n)}_{\blambda}\sbr{(\sr_n(\blambda, \blambda_1))^{\rho_\blacklozenge-1}}\\
         &= \e^{-\fc_2''(\rho_\blacklozenge-1)n\zeta^2} \exp\del[1]{(\rho_\blacklozenge-1) \sD_{\rho_\blacklozenge}(\P^{(n)}_{\blambda}, \P^{(n)}_{\blambda_1})}\\
        &= \e^{-\fc_2''(\rho_\blacklozenge-1)n\zeta^2+(\rho_\blacklozenge-1)\fc_2n\scd_n^2(\blambda, \blambda_1)}\\
        & \le \exp\del[1]{-\del{\fc_2''-\fc_2}(\rho_\blacklozenge-1)n\zeta^2}.
    \end{align*}
By the definition of the set $ \bB_{n,\blambda}$, we have
    \begin{align*}
        \P^{(n)}_{\blambda}\sbr{(1-\tvarphi_n)\ind(\bB_{n,\blambda})}
        &=\P^{(n)}_{\blambda_1}\sbr{(1-\tvarphi_n) \sr_n(\blambda,\blambda_1)\ind(\bB_{\blambda})}
        \le \e^{\fc_2''n\zeta^2}\P^{(n)}_{\blambda_1}\sbr[1]{1-\tvarphi_n},
    \end{align*}
where, by using Markov's inequality and  \labelcref{eq:tail1} again, the expectation is further bounded as
    \begin{align*}
\P^{(n)}_{\blambda_1}\sbr{1-\tvarphi_n}
        &=\P^{(n)}_{\blambda_1}\del{(\sr_n(\blambda_1, \blambda_0))^{\rho_\circ-1}\ge 1}\\
        &\le \exp\del{(\rho_\circ-1) \sD_{\rho_\circ}(\P^{(n)}_{\blambda_1}, \P^{(n)}_{\blambda_0})}\\
        &\le  \exp\del{-(1-\rho_\circ)\fc_1n\scd_n^2(\blambda_0,\blambda_1)}\\
        &\le \exp\del[2]{-\frac{(1-\rho_\circ)\fc_1}{\fc_1''}n\zeta^2}.
    \end{align*}
Hence if we choose $\fc_2''=2\fc_2$ and $\fc_1''=\fc_1(1-\rho_\circ)/(4\fc_2)\in(0,1)$, we have 
    \begin{align*}
          \max&\cbr{\P^{(n)}_{\blambda_0}[\tvarphi_n], \sup_{\blambda\in\cB_{\Lambda_n,\scd_n}(\blambda_1,\zeta)}\P^{(n)}_{\blambda}[1-\tvarphi_n]}\\
        &\le\max\cbr{\e^{-2\fc_2''n\zeta^2}, \sup_{\blambda\in\cB_{\Lambda_n,\scd_n}(\blambda_1,\zeta)}
        \cbr{\P^{(n)}_{\blambda}\sbr{(1-\tvarphi_n)\ind(\bB_{\blambda})} +\P^{(n)}_{\blambda}(\bB_{\blambda}^{\complement})}}\\
        &\le 2\e^{-((\rho_\blacklozenge-1)\wedge 2)\fc_2n\zeta^2},
    \end{align*}
 which completes the proof.
\end{proof}

Using the above lemma, we can prove \cref{lemma:testing:renyi} under the additional assumption \labelcref{eq:covering}.

\begin{proof}[Proof of \cref{lemma:testing:renyi}]
Fix  $\blambda^\star\in\Lambda_n^\star$ and  $\zeta>\zeta_{n,m}$.
Define
    \begin{equation*}
        \Theta_{n,m,j}:=\cbr{\btheta\in\Theta_{n,m}:j\zeta<\scd_n(\sT(\btheta),\blambda^\star)\le (j+1)\zeta}.
    \end{equation*}
For a small $u>0$ which will be specified later, there exists a subset $\cbr{\blambda_{n,m,j,\ell}}_{\ell\in[N_{n,m,j}]}\subset\sT(\Theta_{n,m,j})$ with $N_{n,m,j}:=\scN(uj\zeta, \Theta_{n,m,j},\scd_n)$ such that
    \begin{equation*}
        \Theta_{n,m,j}\subset\bigcup_{\ell=1}^{N_{n,m,j}} \Theta_{n,m,j,\ell}
        \mbox{ where } \Theta_{n,m,j,\ell}:=\cbr{\btheta\in\Theta_{n,m}:\scd_n(\sT(\btheta),\blambda_{n,m,j,\ell})\le uj\zeta}.
    \end{equation*}
Then by \cref{lemma:testing:convex}, there exist  absolute constants $\fc_1''>0$, and $\fc_2''>0$ such that for any $n\in\bN$, $j>J_0$ and $\ell\in[N_{n,m,j}]$,  we have a test function $\varphi_{n,m,j,\ell}:\bY_n\mapsto[0,1]$ that satisfies
    \begin{equation*}
        \max\cbr{\P^{(n)}_{\blambda^\star}[\varphi_{n,m,j,\ell}] , \sup_{\btheta\in\Theta_{n,m,j,\ell}}\P^{(n)}_{\sT(\btheta)}[1-\varphi_{n,m,j,\ell}]}
        \le 2\e^{-\fc_2''nj^2\zeta^2},
    \end{equation*}
whenever $u<\fc_2''$, because $\scd_n(\blambda^\star,\blambda_{n,m,j,\ell} )\ge j\zeta$. Then by \labelcref{eq:covering} with a suitably chosen $u$, e.g.,  $u=\fc_2''/2$, and the assumption $\zeta>\zeta_{n,m}$, there exists an absolute constant $\fc_3''>0$ such that
    \begin{equation*}
        N_{n,m,j}
        \le \scN(u(j\epsilon),\cbr{\btheta\in\Theta_{n,m}:\scd_n(\sT(\btheta),\blambda^\star)\le 2j\epsilon},\scd_n)\le \e^{\fc_3''n\zeta_{n,m}^2}.
    \end{equation*}
We now define the combined test function $\varphi_{n,m}:=\sup_{j\in\bN:j>J_0}\sup_{\ell\in[N_{n,m,j}]}\varphi_{n,m,j,\ell} $. Then the type-\rom{1} error of the test $\varphi_n$ is bounded as
    \begin{align*}
        \P_{\blambda^\star}^{(n)}[\varphi_{n,m}]
        &\le \sum_{j\in\bN:j>J_0}\sum_{\ell=1}^{N_{n,m,j}} \P^{(n)}_{\blambda^\star}[\varphi_{n,m,j,\ell}]\\
        &\le \sum_{j\in\bN:j>J_0}N_{n,m,j}\del{2\e^{-\fc_1''nj^2\zeta^2}}\\
        &\le \frac{2}{1-\e^{-\fc_1''n\zeta^2}}\e^{\fc_3''n\zeta_{n,m}^2}\e^{-\fc_1''nJ_0^2\zeta^2}
    \end{align*}
which can be further bounded by $\e^{-\fc_4''n\zeta^2}$ up to a constant factor for some $\fc_4''>0$ if $J_0>\sqrt{\fc_1''/\fc_3''}$. The type \rom{2} error is bounded as
    \begin{align*}
        \sup_{\btheta\in\Theta_{n,m}:\scd_n(\sT(\btheta),\blambda^\star)\ge \epsilon}\P_{\sT(\btheta)}^{(n)}[1-\varphi_{n,m}]
        &\le\sup_{j\in\bN:j>J_0}\sup_{\ell\in[N_{n,m,j}]}\sup_{\btheta\in\Theta_{n,m,j,\ell}} \P^{(n)}_{\sT(\btheta)}[1-\varphi_{n,m,j,\ell}]\\
        &\le 2\e^{-\fc_2''J_0^2n\zeta^2}.
    \end{align*}
Since $n\zeta^2\ge1$ by assumption, the constant multiplicative factors $2/(1-\e^{-\fc_1''n\zeta^2})$ and $2$ of the testing error bounds can be absorbed into the exponents when $J_0$ is large. The proof is done.
\end{proof}

\subsubsection{Proof of \cref{lemma:testing:local_gauss}}
\label{appen:proof:local_gauss}

\begin{proof}
By the second assumption of the lemma, if $\rho-1\in[-\fc_3',\fc_3']$, we have that
  \begin{align*}
        \e^{(\rho-1)  \sD_{\rho}(\P^{(n)}_{\blambda_0}, \P^{(n)}_{\blambda_1})}
        &=\P_{\blambda_0}^{(n)}\del{\frac{\sp_n(\blambda_0,\Y^{(n)})}{\sp_n(\blambda_1,\Y^{(n)})}}^{\rho-1}\\
        &=\P_{\blambda_0}^{(n)}\e^{(\rho-1)\del{\log\del{\frac{\sp_n(\blambda_0,\Y^{(n)})}{\sp_n(\blambda_1,\Y^{(n)})}}}}\\
        &\le  \e^{(\rho-1)\kl\del[0]{\P^{(n)}_{\blambda_0}, \P^{(n)}_{\blambda_1}}+\fc_2'(\rho-1)^2n\scd_n^2(\blambda_0,\blambda_1)}.
     \end{align*}
From the preceding display and the assumption that $(\fc_1')^{-1}n\scd_n^2(\lambda_0,\lambda_1)\le \kl\del[0]{\P^{(n)}_{\blambda_0}, \P^{(n)}_{\blambda_1}}\le \fc_1'n\scd_n^2(\lambda_0,\lambda_1)$, for $\rho_\blacklozenge>1$ such that $\rho_\blacklozenge-1<\fc_3'$, we have
    \begin{align*}
    \sD_{\rho_\blacklozenge}(\P^{(n)}_{\blambda_0}, \P^{(n)}_{\blambda_1})
    \le 
    \cbr{\fc_1' + \fc_2'(\rho_\blacklozenge-1)}n\scd_n^2(\blambda_0,\blambda_1)
    \end{align*}
and for $\rho_\circ\in(0,1)$ such that $\rho_\circ>(1-\fc_3')\vee0$, we have
    \begin{align*}
        \sD_{\rho_\circ}(\P^{(n)}_{\blambda_0}, \P^{(n)}_{\blambda_1})
        \ge 
        \cbr{\frac{1}{\fc_1'}+ \fc_2'(1-\rho_\circ)}n\scd_n^2(\blambda_0,\blambda_1).
    \end{align*}
This completes the proof.
\end{proof}

\section{Proofs for deep learning applications in \cref{sec:dnn} and \cref{appen:dnn:additional}}

Throughout this section, for a set of some network parameters $\tilde\Theta\subset\Theta$, we let $\net(\tilde\Theta):=\cbr{\net(\btheta):\btheta\in\tilde\Theta}$, which is a set of neural networks induced by network parameters in $\tilde\Theta$.

\subsection{Preliminaries}

\subsubsection{Function approximation by neural networks}
\label{appen:dnn:approx}

In this subsection, we give results on the approximation ability of neural networks for some smooth function classes. These are important for the proofs of \cref{col:dnn:holder,col:dnn:composite}. 

We first state the lemma on neural network approximation of H\"older smooth functions.

\begin{theorem}[Approximation of H\"older smooth functions]
\label{lemma:dnn:approx:holder}
Let $\beta>0$, $d\in\bN$, and $F_0>0$. Then there exist absolute constants $M_0\in\bN$, $K_0\in\bN$,  $\fc_1>0$ and $\fc_2>0$ such that for any $K\in\bN_{\ge K_0}$, $M\in\bN_{\ge M_0}$, $B>\fc_1M^{\max\{1,2(\beta-1)/d\}}$ and $f^\star\in\cH^{\beta, d, F_0}$, there exists a network parameter $\btheta^*\in \Theta^{\le B}_{(K,M)}$ such that
    \begin{equation}
       \norm{\net(\btheta^*)-f^\star}_\infty\le \fc_2\del{2^{-K}+M^{-2\beta/d}}.
    \end{equation}
\end{theorem}

\begin{proof}
The proof is deferred to \cref{appen:proof_dnn_approx:holder}.
\end{proof}

The conclusion of \cref{lemma:dnn:approx:holder} is almost similar to Theorem 2 a) of \citetS{kohler2021rate}, but the authors of the paper did not explicitly state the magnitude bound $B$ of network parameters, which is important in our application of variational deep learning. Due to this difference, we provide detailed proof for the sake of completeness.

For \cref{col:dnn:composite}, we need the next lemma on neural network approximation of the composition structured functions.

\begin{theorem}[Approximation of composition structured functions]
\label{lemma:dnn:approx:composite}
Let $r\in\bN$, $\mathbf{d}=(d_\ell)_{\ell\in[r]}$, $\bbeta:=(\beta_\ell)_{\ell\in[r]}\in\R_+^r$,  $\s:=(s_\ell)_{\ell\in[r]}\in\otimes_{\ell=1}^r[d_\ell]$ and $F_0>0$. Then there exists absolute constants $M_0\in\bN$, $K_0\in\bN$, $\fc_1>0$, $\fc_2>0$ and $\fc_3>0$ such that for any $K\in\bN_{\ge K_0}$, $M\in\bN_{\ge M_0}$, $B>\fc_1\max_{\ell\in[r]}M^{\max\{1,2(\beta_\ell-1)/s_\ell\}}$  and  $f^\star\in\cF^{\textsc{comp}}\del{r,\mathbf{d}, \bbeta,\s, F_0}$, there exists a network parameter $\btheta^*\in \Theta^{\le B}_{(K,M)}$ such that
    \begin{equation}
       \norm{\net(\btheta^*)-f^\star}_\infty\le \fc_2\del{\e^{-\fc_3K}+M^{-2\max_{\ell\in[r]}(\beta_{\ge \ell}/s_\ell)}},
    \end{equation}
where we let $\beta_{\ge \ell}:=\beta_\ell\prod_{h=\ell+1}^r(\beta_h\wedge 1)$ for $\ell\in[r-1]$ and $\beta_{\ge r}:=\beta_r$.
\end{theorem}

\begin{proof}
The proof is deferred to \cref{appen:proof_dnn_approx:composition}.
\end{proof}

\subsubsection{Auxiliary results}

In this subsection, we state the following two properties of the neural network model, which are used in the proofs.

\begin{lemma}[Lemma 9 of \citetS{ohn2022nonconvex}]
\label{lemma:dnn_lip}
Let $K\in\bN_{\ge2}$, $M\in\bN$ and $B\ge 1$. Then for any $\btheta_1,\btheta_2\in \Theta^{\le B}_{(K,M)}$,
    \begin{equation*}
       \norm{\net(\btheta_1)-\net(\btheta_2)}_{\infty} \le K(B(M+1))^K\abs{\btheta_1-\btheta_2}_\infty.
    \end{equation*}
\end{lemma}

\begin{lemma}[Covering numbers of neural network classes]
\label{lemma:dnn_complexity}
For any $\zeta>0$, $K\in\bN_{\ge2}$, $M\in\bN$ and $B\ge1$,
    \begin{align*}
        \log \scN\del{\zeta, \net\del[1]{\Theta^{\le B}_{(K,M)}},\|\cdot\|_\infty}
        &\le 2KJ_{(K, M)}\log\del{\frac{BK(M+1)}{\zeta}}\\
        &\le 2(d+1)K^2M^2\log\del{\frac{BK(M+1)}{\zeta}}.
    \end{align*}
\end{lemma}

\begin{proof}
Since the dimension of the parameter space $\Theta^{\le B}_{(K,M)}$ is denoted by $J_{(K,M)}$, and is bounded as $J_{(K,M)}\le (d+1)KM^2$, the desired result follows from the very well known result for the covering number bound of neural network classes,  e.g., Lemma 3 of  \citetS{suzuki2018adaptivity} and Proposition A.1 of \citetS{kim2021fast}.
\end{proof}

\subsection{Proofs of \cref{thm:dnn:oracle,col:dnn:holder,col:dnn:composite}}

\subsubsection{Proof of \cref{thm:dnn:oracle}}

We first show that a suitable test function exists for the regression experiment in \cref{sec:dnn:reg}.

\begin{lemma}
\label{lemma:model:gaussreg}
A sequence of the Gaussian regression experiments $\{(\R^n,\cP(\R^n;\sp_n^{\textup{Ga}},\cF^d))\}_{n\in\bN}$ satisfies \labelcref{eq:tail1,eq:tail2} with the metric $\scd_n:\cF^d\times \cF^d\mapsto\R_{\ge0}$ defined as $\scd_n(f_0,f_1)=\nnorm{f_0-f_1}$ for $f_0,f_1\in\cF^{d}$.
\end{lemma}

\begin{proof}
The proof is deferred to \cref{appen:dnn:proof:model:gaussreg}.
\end{proof}

\begin{proof}[Proof of \cref{thm:dnn:oracle}]
We use \cref{lemma:testing:renyi} to check \cref{assume:individual:testing}. First, \cref{lemma:model:gaussreg} verifies \cref{eq:tail1,eq:tail2}. For \cref{eq:covering}, we define
    \begin{equation*}
        \zeta_{n,(K,M)}:=KM\sqrt{\log n/n}.
    \end{equation*}
Then by \cref{lemma:dnn_complexity} and the assumptions that $\max_{(K,M)\in\cM_n}(K\vee M)\lesssim n$ and $1\le B_n\lesssim n^{\iota_0}$, there is an absolute constant $\fc_1>0$ such that
    \begin{align*}
        \sup_{\zeta>\zeta_{n,(K,M)}} &\log\scN\del{u\zeta,\cbr{\btheta\in\Theta^{\le B_n}_{(K,M)}:\nnorm{\net(\btheta)-f^\star}\le 2\zeta}, \|\cdot\|_{n,2}}\\
        &\le \log\scN\del{u\zeta_{n,m},\net\del{\Theta^{\le B_n}_{(K,M)}}, \|\cdot\|_{n,2}}
        \le \log\scN\del{u\zeta_{n,m},\net\del{\Theta^{\le B_n}_{(K,M)}}, \|\cdot\|_{\infty}}\\
        &\le 2(d+1) K^2M^2\log\del{\frac{\fc_1n^{\nu+1/2}}{u}}.
    \end{align*}
By using the inequality $(x+1)y\ge x+y$ for any $x>0$ and $y\ge 1$, the preceding display is further bounded by $c(u)K^2M^2\log n$ with  $c(u):=2(d+1)(\nu+\frac{1}{2})\log(\e\fc_1/u)$ for any $n\ge \e^2$, which verifies \labelcref{eq:covering}. 

We proceed to check \cref{assume:individual:variational}. Let
    \begin{equation*}
        \eta_{n,(K,M)}:=\sup_{\tif\in\cF^\star}\inf_{\btheta\in\Theta^{\le B_n}_{(K,M)}}\norm[1]{\net(\btheta)-\tif}_\infty+n^{-1}.
    \end{equation*}
Then by definition, for any $f^\star\in\cF^\star$, there is a network parameter $\btheta_{n,(K,M)}^*( f^\star)\in\Theta^{\le B_n}_{(K,M)}$ such that 
    \begin{equation*}
        \nnorm[1]{\net(\btheta_{n,(K,M)}^*( f^\star))-f^\star}\le\norm[1]{\net(\btheta_{n,(K,M)}^*( f^\star))-f^\star}_\infty \le \eta_{n,(K,M)}.
    \end{equation*} 
To ease description, for each $(K, M)\in\cM_n$, let  $\btheta^*:=\btheta_{n,(K,M)}^*( f^\star)\in\Theta^{\le B_n}_{(K,M)}$ and let $\btheta^*_j$ be the $j$-th element of $\btheta^*$.  Furthermore, let $\vQ^*\in\cP(\Theta^{\le B_n}_{(K,M)})$ be a distribution such that
    \begin{equation*}
        \vQ^*:=\bigotimes_{j=1}^{J_{(K,M)}}\Unif\del{(\theta_j^*-t_n)\vee(-B_n), (\theta_j^*+t_n)\wedge B_n}
    \end{equation*}
with $t_n:=\zeta_{n,(K, M)}\del[0]{2K(B_n(M+1))^{K}}^{-1}$. Then, we have that
       \begin{equations}
    \label{eq:dnn:variational_kl}
      \vQ^*\sbr{\kl\del{\P_{f^\star}^{(n)},\P_{\net(\btheta)}^{(n)}}}
      &=\frac{n}{2}\int \nnorm{\net(\btheta)-f^\star}^2\d\vQ^*(\btheta)\\
       &\le n \int \nnorm{\net(\btheta)-\net(\btheta^*)}^2\d\vQ^*(\btheta) + n\eta^2_{n,(K,M)} \\
        &\le n\int K^2(B_n(M+1))^{2K}\abs{\btheta-\btheta^*}_\infty^2\d\vQ^*(\btheta) +n\eta^2_{n,(K,M)}\\
        &\le n\zeta^2_{n,(K, M)} +n\eta^2_{n,(K,M)}.
    \end{equations}
Furthermore,  by the assumption that $(K\vee M \vee B_n)\lesssim n^{\iota_0 \vee 1}$ and the inequality $J_{(K,M)}\le (d+1)KM^2$, 
    \begin{align*}
      \kl(\vQ^*, \Pi_n)
      &\le\sum_{j=1}^{J_{(K,M)}}\log\del{\frac{2B_n}{t_n}}
       \lesssim (K(M)^2)K \log n =2 n\zeta_{n,(K,M)}^2,
    \end{align*}
which verifies \cref{assume:individual:variational}.

Lastly, \cref{assume:aggregation} follows from $|\cM_n|\lesssim  \exp(\fc_2\log n)$ for some $\fc_2>0$ and \cref{lemma:model_prob_choice}, so the proof is done.
\end{proof}

\subsubsection{Proof of \cref{col:dnn:holder,col:dnn:composite}}
\label{appen:dnn:proof:colloraries}

\begin{proof}[Proof of \cref{col:dnn:holder}]
By \cref{lemma:dnn:approx:holder}, for any sufficiently large $n$, the convergence rate is given by
    \begin{equation*}
    \epsilon_n\del[1]{\cH^{\beta, d, F_0}}\asymp \min_{(K,M)\in\cM_n}\del{2^{-K}+M^{-2\beta/d}+KM\sqrt{\frac{\log n}{n}} }.
    \end{equation*}
Let $k_n^*\in \sbr[0]{0:\lceil(\log n)/2\rceil}$ be an integer such that
    \begin{equation*}
       \floor{n^{k_n^*/\log n}}\le n^{\frac{d}{4\beta+2d}}\le \floor{n^{(k_n^*+1)/\log n}}\le n^{(k_n^*+1)/\log n},
    \end{equation*}
and choose $K=\floor{\log n\times \log\log n}$ and $M=\lfloor n^{k_n^*/\log n} \rfloor$. Then since $\lfloor n^{k_n^*/\log n}\rfloor\ge \frac{1}{2}n^{k_n^*/\log n}$ for any $n\ge \e^2$, and $n^{-1/\log n}=1/\e$, we have
    \begin{align*}
        \epsilon_n\del[1]{\cH^{\beta, d, F_0}}
        &\lesssim (n^{k_n^*/\log n})^{-2\beta/d} + n^{\frac{d}{4\beta+2d}}\log n(\log\log n)\sqrt{\frac{\log n}{n}}\\
        &\le \del{\frac{1}{2}n^{\frac{d}{4\beta+2d}-1/\log n}}^{-2\beta/d} + n^{-\frac{\beta}{2\beta+d}}\log^{7/4}n\\
         &\lesssim n^{-\frac{\beta}{2\beta+d}}\log^{7/4}n.
    \end{align*}
provided that
    \begin{equation*}
        B_n\ge \sqrt{n}\ge n^{\max\cbr{\frac{d}{4\beta+2d}, \frac{\beta-1}{2\beta+d}}}  \ge M^{\max\{1,2(\beta-1)/d\}}.
    \end{equation*}
Taking $A_n=\log ^{1/4}n$, we complete the proof.
\end{proof}

\begin{proof}[Proof of \cref{col:dnn:composite}]
For simplicity, let $t^*:=\max_{\ell\in[r]}(\beta_{\ge \ell}/s_\ell)$.
Let $k_n^*\in \sbr[0]{0:\lceil(\log n)/2 \rceil}$ be an integer such that
    \begin{equation*}
       \floor{n^{k_n^*/\log n}}\le n^{\frac{1}{4t^*+2}}\le \floor{n^{(k_n^*+1)/\log n}}\le n^{(k_n^*+1)/\log n},
    \end{equation*}
and choose $K=\floor{\log\log n}$ and $M=\lfloor n^{k_n^*/\log n} \rfloor$. Then since $\lfloor n^{k_n^*/\log n}\rfloor\ge \frac{1}{2}n^{k_n^*/\log n}$ for any $n\ge \e^2$, and $n^{-1/\log n}=1/\e$, we have, by \cref{lemma:dnn:approx:composite},
    \begin{align*}
        \epsilon_n\del[1]{\cF^{\textup{DAG}}\del{r,\bbeta,\s, F_0}}
        &\lesssim (n^{k_n^*/\log n})^{-2t^*} + n^{\frac{1}{4t^*+2}}\log n(\log\log n)\sqrt{\frac{\log n}{n}}\\
        &\le \del{\frac{1}{2}n^{\frac{1}{4t^*+2}-1/\log n}}^{-2t^*} + n^{-\frac{t^*}{2t^*+1}}\log^{7/4}n\\
         &\lesssim n^{-\frac{t^*}{2t^*+1}}\log^{7/4}n
         =n^{-\max_{\ell\in[r]}\frac{\beta_{\ge \ell}}{2\beta_{\ge \ell}+s_\ell}}\log^{7/4}n,
    \end{align*}
provided that
    \begin{equation*}
        B_n\ge \sqrt{n}\ge \max_{\ell\in[r]}n^{\max\cbr{\frac{s_\ell}{4\beta_\ell+2s_\ell}, \frac{\beta_\ell-1}{2\beta_\ell+s_\ell}}}  \ge \max_{\ell\in[r]}M^{\max\{1,2(\beta-1)/d\}}.
    \end{equation*}
Taking $A_n=\log ^{1/4}n$, we complete the proof.
\end{proof}

\subsection{Proofs of \cref{lemma:model:gaussreg,lemma:model:ppp,lemma:model:logistic}: Checking \labelcref{eq:tail1,eq:tail2} in \cref{lemma:testing:renyi}}

\subsubsection{Proof of \cref{lemma:model:gaussreg}}
\label{appen:dnn:proof:model:gaussreg}

\begin{proof}
For any $\rho\in(0,1)\cup(1,\infty)$ and any $f_0,f_1\in\cF^d$,
    \begin{align*}
        \sD_\rho\del{\P_{f_0}^{(n)}, \P_{f_1}^{(n)}}
        &=\frac{1}{\rho-1}\log\del{\frac{1}{(2\pi)^{n/2}}\prod_{i=1}^n\int\e^{-\frac{\rho-1}{2}\del{(y_i-f_0(\x_i))^2-(y_i-f_1(\x_i))^2}}\e^{-\frac{1}{2}(y_i-f_0(\x_i))^2}\d y_i}\\
         &=\frac{1}{\rho-1}\log\del{\frac{1}{(2\pi)^{n/2}}\prod_{i=1}^n\int\e^{-\frac{1}{2}\del{\rho(y_i-f_0(\x_i))^2+(1-\rho)(y_i-f_1(\x_i))^2}}\d y_i}\\
         &=\frac{1}{\rho-1}\log\del{\prod_{i=1}^n\e^{\frac{\rho(\rho-1)}{2}(f_0(\x_i)-f_1(\x_i))^2}\frac{1}{2\pi}\int\e^{-\frac{1}{2}\{y_i-(\rho f_0(\x_i)+(1-\rho)f_1(\x_i))\}^2}\d y_i}\\
        &=\frac{1}{\rho-1}\sum_{i=1}^n\log\del{\e^{\frac{\rho(\rho-1)}{2}(f_0(\x_i)-f_1(\x_i))^2}}\\
        &=\frac{\rho n}{2}\nnorm{f_0-f_1}^2=\frac{\rho}{2}  n\scd_n^2(f_0,f_1),
    \end{align*}
which completes the proof
\end{proof}

\subsubsection{Proof of \cref{lemma:model:logistic}}
\label{appen:dnn:proof:logistic}

\begin{proof}
We prove the desired result by verifying the two conditions of \cref{lemma:testing:local_gauss}. For notational convenience, we denote the likelihood ratio by 
    \begin{align*}
        \sr(p_0,p_1)&:= \sr_n(p_0,p_1;\Y^{(n)})
        :=\frac{\sp_n^{\text{Ber}}(p_0,\Y^{(n)})}{\sp_n^{\text{Ber}}(p_1,\Y^{(n)})}\\
        &=\exp\del{\sum_{i=1}^n\cbr{Y_i\log\del{\frac{p_0(\X_i)}{p_1(\X_i)}}+(1-Y_i)\log\del{\frac{1-p_0(\X_i)}{1-p_1(\X_i)}} }}.
    \end{align*}
The KL divergence from $\P_{p_0}^{(n)}$ to $\P_{p_1}^{(n)}$ is given by
    \begin{align*}
        \kl(\P_{p_0}^{(n)},\P_{p_1}^{(n)})
        &=\P_{p_0}^{(n)}\sbr{\log \sr(p_0,p_1)}\\
        &=n\int\cbr{p_0(\x)\log\del{\frac{p_0(\x)}{p_1(\x)}} +(1-p_0(\x))\log\del{\frac{1-p_0(\x)}{1-p_1(\x)}} } \d\x.
    \end{align*}
Let $D_{w_0}:[\varkappa,1-\varkappa]\mapsto\R$ for fixed $w_0\in[\varkappa,1-\varkappa]$ be a function such that $D_{w_0}(w)=w_0 \log(w_0/w)-(1-w_0)\log((1-w_0)/(1-w))$. Then by the Taylor expansion at $w_0$, we have
    \begin{equation*}
        D_{w_0}(w)=\frac{1}{2}D''_{w_0}(\tiw)(w-w_0)^2=\frac{1}{2}\del{\frac{w_0}{\tiw^2}+\frac{1-w_0}{(1-\tiw)^2}}(w-w_0)^2
    \end{equation*}
for some $\tiw$ that lies between $w$ and $w_0$. Since $\varkappa/(1-\varkappa)^2\le D''_{w_0}\le (1-\varkappa)/\varkappa^2$ on $[\varkappa,1-\varkappa]$, the first condition of \cref{lemma:testing:local_gauss}, which assumes the equivalence between the KL divergence and the metric $\scd_n$, holds. For the second condition of \cref{lemma:testing:local_gauss}, we first use Hoeffding's lemma to obtain
    \begin{equation}
    \label{eq:logit_gauss}
        \P_{p_0}^{(n)}\sbr{\e^{t\cbr{\log\sr(p_0,p_1) -\kl(\P_{p_0}^{(n)},\P_{p_1}^{(n)})}}}
        \le \cbr{\int_{[0,1]^d}\e^{(t^2/8)(\text{logit}(p_0(\x))-\text{logit}(p_1(\x)))^2}\d\x}^n,
    \end{equation}
where we write $\text{logit}(w):=\log(w/(1-w))$ for $w\in[0,1]$. Note that for any $p_0,p_1\in[\varkappa,1-\varkappa]$
    \begin{align*}
        (\text{logit}(p_0)-\text{logit}(p_1))^2
        &=\cbr{\log\del{1+\frac{p_0-p_1}{(1-p_0)p_1}}}^2\le \frac{1}{\varkappa^4}(p_0-p_1)^2.
    \end{align*}
Thus, for any $|t|\le \sqrt{ \fc_0(8\varkappa^4)}$ with $\fc_0>0$ arbitrarily chosen, the right-hand side of \labelcref{eq:logit_gauss} is bounded by 
    \begin{align*}
       \cbr{ \int_{[0,1]^d}\e^{t^2/(8\varkappa^4)(p_0(\x)-p_1(\x))^2}\d\x}^n
        &\le  \cbr{ \int_{[0,1]^d}\cbr{1+(\e^{t^2/(8\varkappa^4)}-1)(p_0(\x)-p_1(\x))^2}\d\x}^n\\
        &=\cbr{1+(\e^{t^2/(8\varkappa^4)}-1)\|p_0-p_1\|_2^2}^n\\
        &\le\cbr{1+(\e^{\fc_0}-1)t^2\|p_0-p_1\|_2^2}^n\\
        &\le \e^{(\e^{\fc_0}-1)t^2n\|p_0-p_1\|_2^2},
    \end{align*}
which verifies the second condition of \cref{lemma:testing:local_gauss}.
\end{proof}

\subsubsection{Proof of \cref{lemma:model:ppp}}
\label{appen:dnn:proof:ppp}

\begin{proof}
Let $\scI(\lambda):=\int_{[0,1]^d}\lambda(\x)\d\x$. Note first that
    \begin{align*}
        \sD_{2}\del{\P^{(n)}_{\lambda_0}, \P^{(n)}_{\lambda_1}}
         &=\log\cbr{\e^{\scI(\lambda_1)-\scI(\lambda_0)}\P_{\lambda_0}\sbr{\e^{\int\log(\lambda_0/\lambda_1)\d Y}}}^n,
    \end{align*}
where $Y\sim \P_{\lambda_0}$. But since
    \begin{align*}
         \P_{\lambda_0}\sbr{\e^{\int\log(\lambda_0/\lambda_1)\d Y}}
         &=\sum_{N=0}^\infty \e^{-\scI(\lambda_0)}\frac{1}{N!}\cbr{\int\frac{\lambda_0(\x)}{\lambda_1(\x)}\lambda_0(\x)\d\x}^N\\
         &=\exp\del{-\scI(\lambda_0)+\int\frac{\lambda_0^2(\x)}{\lambda_1(\x)}\d\x},
    \end{align*}
we have
\begin{align*}
         \sD_{2}\del{\P^{(n)}_{\lambda_0}, \P^{(n)}_{\lambda_1}}
         &=n\int\cbr{\frac{\lambda_0^2(\x)}{\lambda_1(\x)}-2\lambda_0(\x)+\lambda_1(\x)}\d\x\\
          &=n\int\cbr{\frac{1}{\lambda_1(\x)}(\lambda_0(\x)-\lambda_1(\x))^2\d\x}\\
          &\le \frac{n}{\varkappa_{\min}}\|\lambda_0-\lambda_1\|_2^2,
    \end{align*}
which proves \labelcref{eq:tail2}. For \labelcref{eq:tail1}, we have that by the similar calculation as before
    \begin{align*}
        \sD_{1/2}\del{\P^{(n)}_{\lambda_0}, \P^{(n)}_{\lambda_1}}
        &=-2\log \P^{(n)}_{\lambda_0}\sbr{\del{\frac{\sp_n^{\text{PPP}}(\lambda_1,\Y^{(n)})}{\sp_n^{\text{PPP}}(\lambda_0,\Y^{(n)})}}^{1/2}}\\
         &=-2\log\cbr{\e^{-\frac{1}{2}\scI(\lambda_1)+\frac{1}{2}\scI(\lambda_0)}\P_{\lambda_0}\sbr{\e^{\int\log(\lambda_1/\lambda_0)^{1/2}\d Y}}}^n\\
         &=n\scI(\lambda_1)-n\scI(\lambda_0)+2n\scI(\lambda_0)+2n\int\lambda_1(\x)^{1/2}\lambda_0(\x)^{1/2}\d\x\\
         &=n\int\del{\lambda_1(\x)^{1/2}-\lambda_0(\x)^{1/2}}^2\d\x\\
         &=n\int\frac{1}{(\lambda_1(\x)^{1/2}+\lambda_0(\x)^{1/2})^2}\del{\lambda_1(\x)-\lambda_0(\x)}^2\d\x\\
         &\ge\frac{n}{4\varkappa_{\max}}\|\lambda_0-\lambda_1\|_2^2,
    \end{align*}
which completes the proof.
\end{proof}

\bigskip

\section{Proofs of results for combinatorial model spaces in \cref{sec:sparse} and \cref{appen:sparse:dnn}}

\subsection{Proof for \cref{sec:sparse:main}}

\subsubsection{Proof of \cref{thm:sparse:conv}}

\begin{proof}
The proof is basically the same as those  of \cref{thm:conv,thm:over,thm:under}.  The only difference, which lies in \cref{eq:sparse:conv} and arises from the weakened assumption \labelcref{eq:assume:sparse:cardinality}, is to bound the type-\rom{1} error of the test function 
    \begin{align*}
        \varphi_n:=\max_{(m,S)\in \cV_n}\varphi_{n,m,S}
    \end{align*}
where $(\varphi_{n,m,S})_{m\in\cM_n, S\in\cS_n}$ are the test functions given in \cref{assume:sparse:testing} and we define, for sufficiently large $H_1>1$ and given $\baepsilon\ge\epsilon_n$,
    \begin{align*}
        \cV_n:=\cbr{(m,S)\in\cM_n\times \cS_n:\zeta_{n,m,S}\le H_1\baepsilon}.
    \end{align*}
Then, by the assumption \labelcref{eq:assume:sparse:cardinality},  \labelcref{eq:aggtest_type1} in the proof of \cref{thm:post_conv} used in the proof of \cref{thm:conv} is replaced with
    \begin{equations}
       \P_{\blambda^\star}^{(n)}[\varphi_n]
        &\le \sum_{(m,S)\in\cV_n}\P_{\blambda^\star}^{(n)}[\varphi_{n,m,S}]\\
        &\le \abs{\cV_n}\e^{-\fc_1n(A_0/J_0)^2\baepsilon^2}\\
        &\le \e^{\fc_3n(H_1\baepsilon)^2-\fc_1n(A_0/J_0)^2\baepsilon^2}.
    \end{equations}
The rest of the proof of \labelcref{eq:sparse:conv} as well as the proofs of \labelcref{eq:sparse:over,eq:sparse:under} are almost similar, so we omit them.
\end{proof}

\subsection{Proofs for \cref{sec:sparse:factor}}

We give a technical lemma used in the proofs. For a matrix $\L$, we let $\fnorm{\L}:=\sqrt{\Tr(\L^\top\L)}$ denote its Frobenius norm.

\begin{lemma}
\label{lemma:sparse:factor:div}
Let $\bSigma^\star:=\sT(\L^\star):=\L^\star(\L^\star)^\top+\I\in\Lambda_n^\star$. Then for any $\bSigma:=\sT(\L):=\L\L^\top+\I$,
    \begin{align}
        \label{eq:sparse:factor:kl}
        \kl(\P_{\bSigma^\star}^{(n)}, \P_{\bSigma}^{(n)})\le \frac{n}{4}\fnorm{\bSigma^\star-\bSigma}^2.
    \end{align}
\end{lemma}

\begin{proof}
Let $\tsigma_1,\dots,\dots \tsigma_{d_n}$ be the eigenvalues of $(\bSigma^\star)^{1/2}\bSigma^{-1} (\bSigma^\star)^{1/2}$. Then
    \begin{align*}
	\kl(\P_{\bSigma^\star}^{(n)}, \P_{\bSigma}^{(n)})
	&=\frac{n}{2}\cbr{\Tr(\bSigma^\star\bSigma^{-1})+\log|\bSigma^\star\bSigma^{-1}|-1}\\
	&=\frac{n}{2}\cbr{\Tr((\bSigma^\star)^{1/2}\bSigma^{-1} (\bSigma^\star)^{1/2})+\log|(\bSigma^\star)^{1/2}\bSigma^{-1} (\bSigma^\star)^{1/2}|-1}\\
	&=\frac{n}{2}\sum_{j=1}^{d_n}\cbr{\tsigma_j-1+\log(\tsigma_j)}	
	\le \frac{n}{4}\sum_{j=1}^{d_n}(\tsigma_j-1)^2,
\end{align*}
where the last inequality follows from the inequality $z-\log(1+z)\le z^2/2$ for any $z\ge0$. We complete the proof by noting that
    \begin{equations}
    \label{eq:sparse:factor:eigensumeq}
    \sum_{j=1}^{d_n}(\tsigma_j-1)^2
        &=\fnorm[1]{(\bSigma^\star)^{1/2}\bSigma^{-1} (\bSigma^\star)^{1/2}-\I}^2
        =\fnorm[1]{\bSigma^\star\bSigma^{-1} -\I}^2\\
        &\le \opnorm[1]{\bSigma^{-1}}^2\fnorm{\bSigma^\star-\bSigma}^2
        \le \fnorm{\bSigma^\star-\bSigma}^2,
    \end{equations}
where the second equality holds due to the fact that the two matrices $(\bSigma^\star)^{1/2}\bSigma^{-1} (\bSigma^\star)^{1/2}$ and $\bSigma^\star\bSigma^{-1}$ have the same set of non-zero eigenvalues by similarity and the last inequality follows from that $\norm[0]{\bSigma^{-1}}_{\text{op}}\le 1$.
\end{proof}

\subsubsection{Proof of \cref{thm:sparse:factor:cov}}

\begin{proof}
Let $\cS_n:=\bP([d_n]):=\{S:S\subset[d_n]\}$. 
Fix $\bSigma^\star=\sT(\L^\star)\in\Lambda_n^\star$. We first verify the testing condition of \cref{assume:sparse:testing} with
    \begin{align*}
        \zeta_{n,m,S}:=\sqrt{(|S|\vee s_n)m\log d_n/n} 
        \mbox{ for } (m,S)\in\cM_n\times \cS_n.
    \end{align*}
Here, we use the information of the true sparsity $s_n$, because the test we shall use depends on $s_n$. Namely, by Lemma 5.7 of \citetS{gao2015pca}, there exists a test function $\varphi_{n,m,S}$ such that
    \begin{align*}
    \max\cbr{\P_{\bSigma^\star}^{(n)}[\varphi_{n,m,S}], \sup_{\bSigma\in\Theta_{n,m, S}:\opnorm{\bSigma-\bSigma^\star}>J_0\zeta}\P_{\bSigma}^{(n)}[1-\varphi_{n,m,S}]}
    &\le \exp\del{\fc_1(|S|+s_n-n\zeta^2)}
    \end{align*}
for any $\zeta>0$ for some absolute constants $\fc_1>0$ and $J_0>0$. Hence, \cref{assume:sparse:testing} is satisfied since the testing error of $\varphi_{n,m,S}$ is exponentially bounded whenever $\zeta>\zeta_{n,m,S}\gtrsim \sqrt{(|S|+s_n)/n}$.

We proceed to verify \cref{assume:sparse:variational}. Although \cref{assume:sparse:variational} requires that every individual model and the prior satisfy the given condition, one can notice that it suffices to prove it only for $m=r_n$ and $S=S^\star:=\supp(\L^\star)$ because the optimal rate can be attained by the model with $m=r_n$ and $S=S^\star$ and we can make \cref{assume:sparse:variational} hold for any other individual models by letting $\eta_{n,m,S}=\infty.$ For $r_n\in\cM_n$ and $S^\star\in\cS_n$, we can set $\eta_{n,r_n, S^\star}=0$ since $\L^\star\in\Theta_{n,r_n, S^\star}$. We then have $\epsilon_n=\eta_{n,r_n, S^\star}+\zeta_{n,r_n, S^\star}=\sqrt{s_nr_n\log d_n/n}$. Let
    \begin{align*}
        \vQ^*:= \bigotimes_{j\notin S^\star}\delta(\cdot;\zero_{r_n})\times 
        \bigotimes_{j\in S^\star}\bigotimes_{k=1}^{r_n}\N(L^\star_{j,k}, \tau_0) \in\cP(\Theta_{n,r_n}).
    \end{align*}
Note that for any $\L\in\Theta_{n,r_n, S^\star}$,
	\begin{equations}
	\label{eq:sparse:factor:fnorm_bound}
	    \fnorm{\bSigma^\star-\sT(\L)}&=
	    \fnorm{\L\L^\top-\L^\star(\L^\star)^\top} \\
	    &\le \fnorm{\L-\L^\star}^2 + 2\fnorm{\L^\star(\L-\L^\star)^\top}\\
	    &\le \fnorm{\L-\L^\star}^2 + 2\opnorm{\L^\star}\fnorm{\L-\L^\star}\\
	     &\le \fnorm{\L-\L^\star}^2 + 2\sqrt{\basigma}\fnorm{\L-\L^\star}.
	\end{equations}
Therefore, by  \cref{lemma:sparse:factor:div} and Jensen's inequality, we have
    \begin{align*}
      \vQ^*\sbr{\kl\del{\P_{\bSigma^\star}^{(n)},\P_{\sT(\L)}^{(n)} }}
        &\le \frac{n}{4} \del[2]{\vQ^*\fnorm{\L-\L^\star}^2 + 2\sqrt{\basigma}\vQ^*\sbr{\fnorm{\L-\L^\star}}}\\
        &\le \frac{n}{4} \del[3]{\vQ^*\sbr{\fnorm{\L-\L^\star}^2} + 2\sqrt{\basigma}\sqrt{\vQ^*\sbr{\fnorm{\L-\L^\star}^2}}}.
    \end{align*}
Since $\vQ^*(L_{j,k})=\N(L_{j,k}^\star,\tau_0)$ for $j\in S^\star$, the above display is further bounded as
        \begin{align*}
      \vQ^*\sbr{\kl\del{\P_{\bSigma^\star}^{(n)},\P_{\sT(\L)}^{(n)} }}
         &\le \frac{n}{4} \del{s_nr_n\tau_0 + 2\sqrt{\basigma}\sqrt{s_nr_n\tau_0}}\\
         &\lesssim ns_nr_n\le n\zeta_{n,r_n, S^\star}^2.
    \end{align*}
We bound the KL divergence from the prior $\Pi_{n,r_n}$  to $\vQ^*$. We use the following equations for the KL divergence between two spike-and-slab distributions,
    \begin{equations}
	\label{eq:sparse:kl_spikeslab}
        \kl((1-\omega_1)&\delta(\cdot;\zero) +\omega_1\vQ_1,(1-\omega_0)\delta(\cdot;\zero)+\omega_0\vQ_0)\\
        &=(1-\omega_1)\log\del{\frac{1-\omega_1}{1-\omega_0}}
        + \omega_1\log\del{\frac{\omega_1}{\omega_0}}
        + \omega_1 \kl(\vQ_1,\vQ_0)\\
        &=\begin{cases}
            \log\del{\frac{1}{1-\omega_0}} & \mbox{ if } \omega_1=0\\
            \log\del{\frac{1}{\omega_0}} + \kl(\vQ_1,\vQ_0)& \mbox{ if } \omega_1=1
        \end{cases}
    \end{equations}
which holds for any $\omega_0\in(0,1)$. By this, we have
    \begin{align*}
      \kl(\vQ^*, \Pi_{n,r_n})
      &\le (d_n-|S^\star|)\log\del{\frac{1}{1-\omega_{n,r_n}}} + |S^\star|\log\del{\frac{1}{\omega_{n,r_n}}}\\
      &\quad\quad + \sum_{j\in S^\star}\sum_{k=1}^{r_n}\kl\del{\N(L_{j,k}^\star,\tau_0), \N(0,\tau_0)}.
    \end{align*}
We separately bound the three terms in the above display.
The first term vanishes since
    \begin{align*}
        (1-\omega_{n,r_n})^{d_n-|S^\star|}&\ge\e^{-\omega_{n,r_n}(d_n-|S^\star|)}\\
        &\ge \exp\del[1]{-\e^{\log d_n-(1+\fa_0)r_n\log d_n}}\to 1,
      \end{align*}
as $n\to \infty$. The second term is bounded as
    \begin{align*}
        |S^\star|\log\del{\frac{1}{\omega_{n,r_n}}}
        \le (1+\fa_0)s_nr_n \log d_n
    \end{align*}
and the third term is as
    \begin{align*}
        \sum_{j\in S^\star}\sum_{k=1}^{r_n}\kl\del{\N(L_{j,k}^\star,\tau_0), \N(0,\tau_0)}
        &\le \sum_{j\in S^\star}\sum_{k=1}^{r_n}\frac{1}{2\tau_0}(L_{j,k}^\star)^2\\
        &=\frac{1}{2\tau_0}\fnorm{\L^\star}^2
        \le\frac{r_n}{2\tau_0}\opnorm{\L^\star}^2\lesssim r_n.
    \end{align*}
Combining all the derived results, we can see that \cref{assume:sparse:variational} is satisfied.

It remains to check \cref{assume:sparse:aggregation}. First, since $\alpha_{n,S|m}= \omega_{n,m}^{|S|}(1-\omega_{n,m})^{d_n-|S|}\le  \omega_{n,m}^{|S|}$, we have
    \begin{equations}
    	\label{eq:sparse:priorbound_largemodels}
        \sum_{(m,S)\in\cM_n\times \cS_n: \zeta_{n,m,S}\ge H\epsilon_n}\alpha_{n,m}\alpha_{n,S|m}
        &\le \sum_{m\in\cM_n}\alpha_{n,m}\sum_{S\in\cS_n: \zeta_{n,m,S}\ge H\epsilon_n}\alpha_{n,S|m}\\
        &\le \sum_{m\in\cM_n:m\ge r_n}\alpha_{n,m}\sum_{S\in\cS_n:|S|\ge H^2s_n}\alpha_{n,S|m}\\
        &\le \sum_{m\in\cM_n:m\ge r_n}\alpha_{n,m}\sum_{s\in[d_n]:s\ge H^2s_n}\binom{d_n}{s}\e^{-(1+\fa_0)sm\log d_n}\\
        &\le \sum_{m\in\cM_n:m\ge r_n}\alpha_{n,m}\e^{-\fa_0H^2s_nm\log d_n}\\
        &\le \e^{\log n}\e^{-\fa_0H^2s_nr_n\log d_n} 
        \le \e^{-\fc_3H^2s_nr_n\log d_n}
    \end{equations}
for some absolute constant $\fc_3>0$, which verifies  \labelcref{eq:assume:sparse:largemodels}. Also, \labelcref{eq:assume:sparse:cardinality} is met because
    \begin{align*}
        \abs{\cbr{(m,S)\in\cM_n\times \cS_n: \zeta_{n,m,S}< H\epsilon_n}}    
        &\le |\cM_n|\times\abs{\cbr{S\in \cS_n: |S|< H^2s_n}} \\
        &\le \e^{\log n} d_n^{H^2 s_n + 1}
        \le \e^{\fc_4H^2s_n\log d_n}
    \end{align*}
for some  absolute constant $\fc_4>0$.
Lastly \labelcref{eq:assume:sparse:best} is verified as
        \begin{align*}
       \alpha_{n,r_n}\alpha_{n,S^\star|r_n} 
       &\ge \frac{1}{n}\omega_{n,r_n}^{|S^\star|}(1-\omega_{n,r_n})^{d_n-|S^\star|}\\
       &\gtrsim \e^{-\log n}\omega_{n,r_n}^{|S^\star|}
       \ge \e^{-\fc_5 s_nr_n\log d_n}
    \end{align*}
for some  absolute constant $\fc_5>0$, which completes the proof.
\end{proof}

\subsubsection{Proof of \cref{thm:sparse:factor:nfac}}
\label{appen:sparse:factor:proof:nfac}

In the following proof, we only prove the first result \labelcref{eq:sparse:factor:nfac} on the factor dimensionality. The second result \labelcref{eq:sparse:factor:sparsity}  can be proved by the exactly same argument.

\begin{proof}
We first consider  underestimation of the true factor dimensionality $r_n$.  For any $\L\in\Theta_{n,m}$ with $m<r_n$,  since $\L^\star$ is of full rank, there is a vector $\x_1\in \R^{d_n}$ with $|\x_1|_2=1$ such that $(\L^\star)^\top\x_1\neq 0$ but $\L^\top\x_1=0$. This implies that
    \begin{align*}
        \inf_{\L\in\Theta_{n,m}}\opnorm{\sT(\L)-\bSigma^\star}\ge \sigma_{r_n}\del{\L^\star(\L^\star)^\top}\ge \eta_n^*.
    \end{align*}
Thus, \labelcref{eq:sparse:under}  in \cref{thm:sparse:conv} leads to the desired.

For the overestimation problem, we start with the bound such as
    \begin{align*}
        \P_{\bSigma^\star}^{(n)}\hvQ_n\del{\cbr{m\in\cM_n:m> H_1r_n}}
        &\le     \P_{\bSigma^\star}^{(n)}\hvQ_n\del{\cbr{(m, S)\in\cM_n\times\cS_n:m> H_1r_n, |S|<s_n}} \\
        &\quad +    \P_{\bSigma^\star}^{(n)}\hvQ_n\del{\cbr{(m, S)\in\cM_n\times\cS_n:m> H_1r_n, |S|\ge s_n}},
    \end{align*}
where we let $\cS_n:=\bP([d_n])$. 
We first bound the first term in the above display. For any $\L\in\Theta_{n,m, S}$ with $|S|<s_n$,  since there exists an index $j^*\in S^\star\setminus S$, we have that for a vector $\x_2$ of 0's except the $j^*$-th element equal to 1, 
    \begin{align*}
        \opnorm{\L\L^\top-\L^\star(\L^\star)^\top }&\ge \abs{\L\L^\top\x_2-\L^\star(\L^\star)^\top \x_2}_2\\
        &\ge  \abs[1]{\L^\star(\L^\star_{j,:})^\top }\ge \abs[1]{\L^\star_{j,:}}_2^2,
    \end{align*}
which is larger than $\eta_n^*$ by assumption, so the first term vanishes. For the second term also converges to zero, since we have
    \begin{align*}
        \Pi_n&\del{\cbr{(m, S)\in\cM_n\in:m> H_1r_n, |S|\ge s_n}}\\
        &= \Pi_n\del{\cbr{(m, S)\in\cM_n\times\cS_n:m|S|> H_1s_nr_n, |S|\ge s_n}}\\
        &\le \Pi_n\del{\cbr{(m, S)\in\cM_n\times\cS_n:\zeta_{n,m,S}> H_1\epsilon_n}}
        \le \e^{-\fc_3 H_1^2 n\epsilon_n^2},
    \end{align*}
where the last inequality was established already in \labelcref{eq:sparse:priorbound_largemodels} in the proof of \cref{thm:sparse:factor:cov}.
\end{proof}

\subsection{Proof for \cref{appen:sparse:dnn}}

\subsubsection{Proof of \cref{thm:sparse:dnn}}
\label{appen:sparse:dnn:proof}

\begin{proof}
Before giving the proof, we introduce additional notation. For $\btheta:=((\W_k,\b_k))_{k\in[K]}\in\Theta_{(K,M)}^{\le B_n}$, we define its ``input'' support
    \begin{align*}
        \supp_1(\btheta):=\cbr{j\in[d_n]: \abs{\W_{1,j,:}}_0>0}.
    \end{align*}
Then the cardinality of $\supp_1(\btheta)$ is equivalent to the number of input variables that affect the output of the neural network $\net(\btheta)$.  We consider a subset of each individual network parameter space with an input sparsity constraint, which is defined as
    \begin{align*}
        \Theta_{(K, M), S}^{d_n,\le B_n}:=\cbr{\btheta\in\Theta_{(K, M)}^{d_n,\le B_n}: \supp_{1}(\btheta)=S }
    \end{align*}
and 
    \begin{align*}
         J_{(K, M),S}:=\abs{\Theta_{(K, M), S}^{d_n,\le B_n}}
         &=(|S|+1)M+(K-2)(M^2+M)+(M+1)\\
         &\le |S|M + 2KM^2
    \end{align*}
for each $S\in\cS_n:=\bP([d_n])$.
    
We first show the conditions in \cref{lemma:testing:renyi} are satisfied to check \cref{assume:sparse:testing}. First \labelcref{eq:tail1,eq:tail2} are met by \cref{lemma:model:gaussreg}. Moreover, by \cref{lemma:dnn_complexity}, we have
    \begin{align*}
        \log \scN\del{n^{-1}, \net\del[1]{\Theta^{\d_n,le B}_{(K,M), S}},\|\cdot\|_\infty}
        &\le 2( |S|KM + 2K^2M^2)\log(nB_nK(M+1))\\
        &\lesssim ( |S|KM + K^2M^2) \log n
    \end{align*}
where the last line follows from the assumption that $\max_{(K,M)\in\cM_n}(K\vee M)\lesssim n$ and $1\le B_n\lesssim n^{\iota_0}$. Therefore, the condition \labelcref{eq:covering} in \cref{lemma:testing:renyi} is met with
     \begin{align*}
        \zeta_{n,(K,M), S}:=\del{\frac{1}{n} \cbr{(|S|KM +K^2M^2)\log n+|S|\log d_n}}^{1/2},
    \end{align*}
where the term $|S|\log d_n$ is additionally introduced to control the complexity arising from the variable selection.

Now we proceed to check  \cref{assume:sparse:variational}. Let  
    \begin{align*}
         \eta_{n,(K,M),S^\star}:=\sup_{\tif\in\cF^{\textup{sparse}}\del{d_n,\beta,s_0, F_0}}\inf_{\btheta\in\Theta^{d_n,\le B_n}_{(K,M),S^\star}}\norm[1]{\net(\btheta)-\tif}_\infty+n^{-1}
    \end{align*}
and let $\eta_{n,(K,M),S}:=\infty$ for $S\neq S^\star$. By this trick, we can simply check  \cref{assume:sparse:variational} with the upper bound $n(\eta_{n,(K,M),S^\star}+\zeta_{n,(K,M),S^\star})^2$. For each $(K, M)\in\cM_n$, by slightly abusing a notation, let $\btheta^*:=\btheta^*_{(K,M), S^\star}\in\Theta^{d_n,\le B_n}_{(K,M), S^\star}$ be a network parameter such that $\norm{\net(\btheta^*)-f^\star}_\infty \le \eta_{n,(K,M),S^\star}$. Also, let $\vQ^*:=\vQ^*_{(K,M), S^\star}\in\cQ_{n,(K,M)}$ be a distribution such that
    \begin{align*}
        \vQ^*:= \bigotimes_{h=1}^{d_n}&\cbr[2]{(1-\nu_h^*)\delta(\cdot;\zero_{M+1})+\nu_h^*\bigotimes_{j=(h-1)M+1}^{hM}\Unif(-(\theta_j^*-t_n)\vee(-B_n), (\theta_j^*+t_n)\wedge B_n)}\\
        &\times\bigotimes_{j=d_nM+1}^{J_{n,(K,M)}}\Unif(-(\theta_j^*-t_n)\vee(-B_n), (\theta_j^*+t_n)\wedge B_n),
    \end{align*}
where $\nu_h^*=1$ if $h\in  S^\star$ and $\nu_h^*=0$ otherwise, $\theta_j^*$ denotes the $j$-th element of $\btheta^*$ and we let $t_n:=\zeta_{n,(K, M), S^\star}\del[0]{2K(B_n(M+1))^{K}}^{-1}$. Then by a similar argument to \labelcref{eq:dnn:variational_kl} in the proof of \cref{thm:dnn:oracle}, we have
    \begin{align*}
      \vQ^*\sbr{\kl\del[1]{\P_{f^\star}^{(n)},\P_{\net(\btheta)}^{(n)} }}
        &\le n\zeta^2_{n,(K, M),S^\star} +n\eta^2_{n,(K,M),S^\star}.
    \end{align*}
But bounding the KL divergence between $\vQ^*$ and the prior $\Pi_{n,(K,M)}$ is more involved. We use \labelcref{eq:sparse:kl_spikeslab} that was given in the proof of \cref{thm:sparse:factor:cov} to obtain
    \begin{align*}
      \kl(\vQ^*, \Pi_{n, (K, M)})
      &\le (d_n-| S^\star|)\log\del{\frac{1}{1-\omega_{n,(K, M)}}} + |S^\star|\log\del{\frac{1}{\omega_{n,(K, M)}}}+ J_{(K,M),S^\star}\log\del{\frac{2B_n}{t_n}}\\
      &\lesssim  -\log\del[1]{(1-\omega_{n,(K, M)})^{d_n-|S^\star|}\omega_{n,(K, M)}^{| S^\star|}}+J_{(K,M),S^\star}\log n\\
      &\lesssim  -|S^\star|\log\del[1]{\omega_{n,(K, M)}}+n\zeta^2_{n,(K, M), S^\star}\\
      &\lesssim |S^\star|(KM\log n + \log d_n) + n\zeta^2_{n,(K, M), S^\star}\\
      &\lesssim n\zeta^2_{n,(K, M), S^\star},
    \end{align*}
where the third inequality follows from
    \begin{align*}
        (1-\omega_{n,(K^*, M^*)})^{d_n-|S^\star|}&\ge\e^{-\omega_{n,(K^*, M^*)}(d_n-|S^\star|)}\\
        &\ge \exp\del[1]{-\e^{\log d_n-\fa_1K^*M^*\log n-(1+\fa_2)\log d_n}}
        \to1.
      \end{align*}
Hence,  \cref{assume:sparse:variational} is satisfied.
    
It remains to check \cref{assume:sparse:aggregation}. Let
    \begin{align*}
        \epsilon_n:= n^{-\frac{\beta}{2\beta+s_0}}\log^{7/4} n +  \sqrt{\frac{\log d_n}{n}}.
    \end{align*}
For \labelcref{eq:assume:sparse:largemodels}, since
    \begin{align*}
        \alpha_{n,S|(K,M)}&=w_{n,(K,M)}^{|S|}(1-w_{n,(K,M)})^{d_n-|S|}\\
        &\le \e^{-\fa_1|S|KM\log n-(1+\fa_2)|S|\log d_n},
    \end{align*}
we have
    \begin{align*}
    &\sum_{((K,M),S)\in\cM_n\times \cS_n: \zeta_{n,(K,M), S}\ge H\epsilon_n}\alpha_{n,(K,M)} \alpha_{n,S|(K,M)}\\
    &\quad \le \sum_{(K,M)\in\cM_n: K^2M^2\log n\ge H^2n\epsilon_n^2}\alpha_{n,(K,M)} + \sum_{(K,M)\in\cM_n: K^2M^2\log n< H^2n\epsilon_n^2}\alpha_{n,(K,M)}\sum_{S\in\cS_n:|S|\ge \us_{n,(K,M)}} \alpha_{n,S|(K,M)}\\
     &\quad \le |\cM_n|\e^{- \fa_3 H^2n\epsilon_n^2}+ \sum_{(K,M)\in\cM_n: K^2M^2\log n< H^2n\epsilon_n^2}\alpha_{n,(K,M)}\sum_{S\in\cS_n:|S|\ge \us_{n,(K,M)}} \alpha_{n,S|(K,M)}.
    \end{align*}
where we let
    \begin{align*}
        \us_{n,(K,M)}:= \frac{H^2n\epsilon_n^2 -K^2M^2\log n}{KM\log n+\log d_n}.
    \end{align*}
We proceed to 
    \begin{align*}
    &\sum_{(K,M)\in\cM_n: K^2M^2\log n< H^2n\epsilon_n^2}\alpha_{n,(K,M)}\sum_{S\in\cS_n:|S|\ge \us_{n,(K,M)}} \alpha_{n,S|(K,M)}\\
    &\le  \sum_{(K,M)\in\cM_n: K^2M^2\log n< H^2n\epsilon_n^2}\alpha_{n,(K,M)}\sum_{s\in[d_n]:s\ge \us_{n,(K,M)}}\binom{d_n}{s} \e^{-\fa_1sKM\log n-(1+\fa_2)s\log d_n}\\
    &\le  \sum_{(K,M)\in\cM_n: K^2M^2\log n< H^2n\epsilon_n^2}\alpha_{n,(K,M)}\sum_{s\in[d_n]:s\ge \us_{n,(K,M)}} \e^{-\fa_1sKM\log n-\fa_2s\log d_n}\\
    &\le  \sum_{(K,M)\in\cM_n: K^2M^2\log n< H^2n\epsilon_n^2}\alpha_{n,(K,M)} \e^{-(\fa_1\wedge\fa_2)(H^2n\epsilon_n^2 -K^2M^2\log n)}\\
    &= \sum_{(K,M)\in\cM_n: K^2M^2\log n< H^2n\epsilon_n^2}\frac{1}{Z_{\alpha,n}}\e^{-\fa_1\wedge\fa_2H^2n\epsilon_n^2- (\fa_3-\fa_1\wedge\fa_2)K^2M^2\log n}\\
    &\lesssim \e^{-\fc_2H^2n\epsilon_n^2}
    \end{align*}
for some constant $\fc_2>0$, where the last line follows from
    \begin{align*}
       Z_{\alpha,n}&:=\sum_{(K,M)\in\cM_n}\e^{-\fa_3K^2M^2\log n}\\
       &\ge \exp\del[1]{-\fa_3 \log n\min_{(K,M)\in\cM_n} (KM)^2}\ge \exp(-2\fa_3\log n)
    \end{align*}
and the assumption $\fa_3>\fa_1\wedge \fa_2$. Next, \labelcref{eq:assume:sparse:cardinality} can be verified as
    \begin{align*}
        &\abs{\cbr{((K,M),S)\in\cM_n\times \cS_n: \zeta_{n,(K,M), S}< H\epsilon_n}}\\
        &\le  \abs{\cbr{((K,M),S)\in\cM_n\times \cS_n: |S|\log d_n<H^2n\epsilon_n^2}} \\
        &\le |\cM_n|d_n^{H^2n\epsilon_n^2/\log d_n+1} \\
        &\lesssim \e^{H^2n\epsilon_n^2 + \log d_n + \log n}
        \le \e^{2H^2n\epsilon_n^2}.
    \end{align*}
Lastly, for \labelcref{eq:assume:sparse:best}, we prove the existence of a ``best'' network architecture $(K^*,M^*)$ that satisfies $\eta_{n,(K^*,M^*),S^\star}+\zeta_{n,(K^*,M^*),S^\star}\lesssim \epsilon_n$. Let $k_n^*\in \sbr[0]{0:\lceil(\log n)/2 \rceil}$ be an integer such that
    \begin{equation*}
       \floor{n^{k_n^*/\log n}}\le n^{\frac{s_0}{4\beta+2s_0}}\le \floor{n^{(k_n^*+1)/\log n}}\le n^{(k_n^*+1)/\log n},
    \end{equation*}
and choose $K^*=\floor{\log n\times \log\log n}$ and $M^*=\lfloor n^{k_n^*/\log n} \rfloor$ so that
    \begin{align*}
        \zeta_{n,(K^*,M^*),S^\star}&=\del{\frac{1}{n} \cbr{(s_0K^*M^* +(K^*M^*)^2)\log n+s_0\log d_n}}^{1/2}\\
        & \lesssim  K^*M^*\sqrt{\frac{\log n}{n}} + \sqrt{\frac{\log d_n}{n}}
        \lesssim \epsilon_n
    \end{align*} 
and that by \cref{lemma:dnn:approx:holder},
    \begin{align*}
        \eta_{n,(K^*,M^*),S^\star}\lesssim n^{-\frac{\beta}{2\beta+s_0}}\le \epsilon_n.
    \end{align*}
For such $K^*$ and $M^*$, we have
    \begin{align*}
        \alpha_{n,(K^*,M^*)} \alpha_{n,S^\star|(K^*,M^*)} 
        &=Z_{\alpha,n}^{-1}\e^{-\fa_3(K^*M^*)^2\log n} (1-\omega_{n,(K^*, M^*)})^{d_n-|S^\star|}\omega_{n,(K^*, M^*)}^{|S^\star|}\\
        &\gtrsim \e^{-\fa_3(K^*M^*)^2\log n-\fa_1s_0K^*M^*\log n-(1+\fa_2)s_0\log d_n}
        \gtrsim \e^{-\fc_3n\epsilon_n^2}
    \end{align*}
for some constant $\fc_3>0$, where the second inequality holds due to $Z_{\alpha,n}\le |\cM_n|\lesssim (\log n)\log\log n$. This completes the proof.
\end{proof}

\section{Additional results and proofs for adaptive variational quasi-posteriors in \cref{sec:quasi} and \cref{appen:quasi:application}}

\subsection{Factional likelihoods in our framework}
\label{appen:quasi:fractional}

In this subsection, we show that fractional likelihoods satisfy \cref{assume:quasi:quasi}.
 
\begin{proposition}
\label{lemma:quasi:fractional}
When the conditions \labelcref{eq:tail1,eq:tail2} in \cref{lemma:testing:renyi} hold for a likelihood function $\sp_n:\Lambda_n\times \bY_n\mapsto\R_{\ge0}$, then its fractional version $(\sp_n)^{\kappa}$ satisfies \cref{assume:quasi:quasi} for any $\kappa\in(0,1)$.
\end{proposition}


\begin{proof}
We first verify \labelcref{eq:quasi:learning}.
Since either $\kappa>\rho_\circ$ or $(1-\kappa)\ge 1-\rho_\circ$ holds,  by \labelcref{eq:tail1}, we have either
    \begin{align*}
         \P_{\blambda^\star}^{(n)}\sbr{\frac{(\sp_n(\blambda,\Y^{(n)}))^{\kappa}}{(\sp_n(\blambda^\star,\Y^{(n)}))^{\kappa}}}
         &=\P_{\blambda}^{(n)}\sbr{\del{\frac{\sp_n(\blambda,\Y^{(n)})}{\sp_n(\blambda^\star,\Y^{(n)})}}^{\kappa-1}}\\
         &=\exp\del{-(1-\kappa)\sD_{\kappa}(\P^{(n)}_{\blambda}, \P^{(n)}_{\blambda^\star})}\\
         &\le \exp\del{-(1-\kappa)\sD_{\rho_\circ}(\P^{(n)}_{\blambda}, \P^{(n)}_{\blambda^\star})}\\
         &\le \exp\del{-(1-\kappa)\fc_1n\scd_n^2(\blambda,\blambda^\star)}
    \end{align*}
or 
    \begin{align*}
         \P_{\blambda^\star}^{(n)}\sbr{\frac{(\sp_n(\blambda,\Y^{(n)}))^{\kappa}}{(\sp_n(\blambda^\star,\Y^{(n)}))^{\kappa}}}
         &=\P_{\blambda^\star}^{(n)}\sbr{\del{\frac{\sp_n(\blambda^\star,\Y^{(n)})}{\sp_n(\blambda,\Y^{(n)})}}^{-\kappa}}\\
         &=\exp\del{-\kappa\sD_{1-\kappa}(\P^{(n)}_{\blambda^\star}, \P^{(n)}_{\blambda})}\\
         &\le \exp\del{-\kappa\sD_{1-\rho_\circ}(\P^{(n)}_{\blambda^\star}, \P^{(n)}_{\blambda})}\\
         &\le \exp\del{-\kappa\frac{1-\rho_\circ}{\rho_\circ}\sD_{\rho_\circ}(\P^{(n)}_{\blambda}, \P^{(n)}_{\blambda^\star})}\\
         &\le\exp\del{-\kappa\frac{1-\rho_\circ}{\rho_\circ}\fc_1n\scd_n^2(\blambda,\blambda^\star)}.
         \end{align*}
This implies \labelcref{eq:quasi:learning}. Moreover, \labelcref{eq:quasi:bound} is met with $\rho=(\rho_\blacklozenge-1)/\kappa>0$, as
    \begin{align*}
         \P_{\blambda^\star}^{(n)}\sbr{\del{\frac{(\sp_n(\blambda^\star,\Y^{(n)}))^\kappa}{(\sp_n(\blambda,\Y^{(n)}))^\kappa}}^{\rho}}
         &= \P_{\blambda^\star}^{(n)}\sbr{\del{\frac{\sp_n(\blambda^\star,\Y^{(n)})}{\sp_n(\blambda,\Y^{(n)})}}^{\rho_\blacklozenge-1}}\\
         &=\exp\del{(\rho_\blacklozenge-1)\sD_{\rho_\blacklozenge}(\P^{(n)}_{\blambda^\star}, \P^{(n)}_{\blambda})}\\
         &\le\exp\del{(\rho_\blacklozenge-1)\fc_2n\scd_n^2(\blambda^\star,\blambda)}
    \end{align*}
where the inequality follows from  \labelcref{eq:tail2}.
\end{proof}

\subsection{Proofs for \cref{sec:quasi:contraction} }

\subsubsection{Proof of \cref{thm:quasi:vgap}}

\begin{proof}
We first prove \labelcref{eq:quasi:vgap_ineq}.
Let $\sp_{n,\Pi_n}^\natural(\Y^{(n)}):=\Pi_n[\sp_n^\natural(\sT(\btheta),\Y^{(n)})]$. For any $\vQ\in\cQ_n$, by Fubini's theorem, we get
    \begin{equations}
    \label{eq:quasi:vgap:klexpansion}
        \P_{\blambda^\star}^{(n)}&\sbr{\kl(\vQ,\Pi_{n}^\natural(\cdot|\Y^{(n)}))}\\
        &=\vQ\sbr[4]{\P_{\blambda^\star}^{(n)}\sbr[4]{\log\del[3]{\frac{\sp_{n,\Pi_n}^\natural(\Y^{(n)})\d\vQ}{\sp_n^\natural(\sT(\btheta),\Y^{(n)})\d\Pi_n}}}}\\
        &= \kl(\vQ,\Pi_{n})+\vQ\sbr[4]{\P_{\blambda^\star}^{(n)}\sbr[4]{\log\del[3]{\frac{\sp_n^\natural(\blambda^\star,\Y^{(n)})}{\sp_n^\natural(\sT(\btheta),\Y^{(n)})}}}}
         +\P_{\blambda^\star}^{(n)}\sbr[4]{\log\del[3]{\frac{\sp_{n,\Pi_n}^\natural(\Y^{(n)})}{\sp_n^\natural(\blambda^\star,\Y^{(n)})}}}.
    \end{equations}   
By Jensen's inequality, Fubini's theorem and the assumption \labelcref{eq:quasi:learning}
    \begin{equations}
    \label{eq:quasi:vgap:third}
        \P_{\blambda^\star}^{(n)}\sbr{\log\del[3]{\frac{\sp_{n,\Pi_n}^\natural(\Y^{(n)})}{\sp_n^\natural(\blambda^\star,\Y^{(n)})}}}
        &\le \log\del[4]{\P_{\blambda^\star}^{(n)}\sbr[3]{\frac{\sp_{n,\Pi_n}^\natural(\Y^{(n)})}{\sp_n^\natural(\blambda^\star,\Y^{(n)})}}}\\
        &=\log\del[4]{\Pi_{n}\sbr[4]{\P_{\blambda^\star}^{(n)}\sbr[3]{\frac{\sp_n^\natural(\sT(\btheta),\Y^{(n)})}{\sp_n^\natural(\blambda^\star,\Y^{(n)})}}}}\\
        &\le \log\del[2]{\Pi_{n}\sbr[1]{\e^{-\fc_1n\scd_n^2(\sT(\btheta),\blambda^\star)}}}
        \le0
    \end{equations}
Moreover, by using Jensen's inequality and the assumption \labelcref{eq:quasi:bound},
    \begin{equations}
    \label{eq:quasi:vgap:second}
        \vQ\sbr[4]{\P_{\blambda^\star}^{(n)}\log\del[3]{\frac{\sp_n^\natural(\blambda^\star,\Y^{(n)})}{\sp_n^\natural(\sT(\btheta),\Y^{(n)})}}}
        &= \frac{1}{\rho} \vQ\sbr[4]{\P_{\blambda^\star}^{(n)}\log\del[3]{\frac{\sp_n^\natural(\blambda^\star,\Y^{(n)})}{\sp_n^\natural(\sT(\btheta),\Y^{(n)})}}^{\rho}}\\
        &\le \frac{1}{\rho}\vQ\sbr[4]{\log \P_{\blambda^\star}^{(n)}\sbr[3]{\del[3]{\frac{\sp_n^\natural(\blambda^\star,\Y^{(n)})}{\sp_n^\natural(\sT(\btheta),\Y^{(n)})}}^{\rho}}}\\
        &\le  \frac{1}{\rho}\vQ\sbr{\fc_2n\scd_n^2(\sT(\btheta),\blambda^\star)}.
    \end{equations}
Substituting   \labelcref{eq:quasi:vgap:third,eq:quasi:vgap:second} in the last line of \labelcref{eq:quasi:vgap:klexpansion}, we have
    \begin{align*}
        \P_{\blambda^\star}^{(n)}\sbr{\kl(\hvQ_n,\Pi_{n}^\natural(\cdot|\Y^{(n)}))}
        &=\P_{\blambda^\star}^{(n)}\sbr{\inf_{\vQ\in\cQ_n}\kl(\vQ,\Pi_{n}^\natural(\cdot|\Y^{(n)}))}\\
        &\le \inf_{\vQ\in\cQ_n}\P_{\blambda^\star}^{(n)}\sbr{\kl(\vQ,\Pi_{n}^\natural(\cdot|\Y^{(n)}))}\\
        &\le \inf_{\vQ\in\cQ_n}\cbr{\kl(\vQ,\Pi_{n})+\frac{1}{\rho}\vQ\sbr{\fc_2n\scd_n^2(\sT(\btheta),\blambda^\star)}}. \end{align*}
Then the proof is complete by the fact that $ \kl(\vQ_m,\Pi_n)=-\log(\alpha_{n,m}) + \kl(\vQ_m,\Pi_{n,m})$ for any $\vQ_m\in\cQ_{n,m}$ and $m\in\cM_n$.

We proceed to prove the second assertion on \labelcref{eq:quasi:vgap}. By \labelcref{eq:quasi:vgap_ineq} we have proven above and \cref{assume:quasi:variational},  for any $m\in\cM_n$, if follows that
\begin{align*}
        \P_{\blambda^\star}^{(n)}&\sbr{\kl(\hvQ_n, \Pi_n(\cdot|\Y^{(n)}) )}\\
        &\le -\log\alpha_{n,m}
         + \inf_{\vQ_{m}\in\cQ_{n,m}}\sbr{\kl\del[1]{\vQ_{m}, \Pi_{n,m}}  
            + \vQ_m\sbr{n\scd_n^2(\sT(\btheta),\blambda^\star)} }\\
        &\le -\log\alpha_{n,m} + \fc_3 n(\eta_{n,m}+\zeta_{n,m})^2.
    \end{align*}
Thus, by \cref{assume:quasi:model_prior}, for $m_n^*\in\cM_n$ therein, it follows that
    \begin{align*}
        \P_{\blambda^\star}^{(n)}\sbr{\kl(\hvQ_n, \Pi_n(\cdot|\Y^{(n)}))}
        \lesssim n\epsilon_n^2 + n(\eta_{n,m_n^*}+\zeta_{n,m_n^*})^2
        \lesssim n\epsilon_n^2,
    \end{align*}
which completes the proof.
\end{proof}

\subsubsection{Proof of \cref{thm:quasi:conv}}

We first provide a technical lemma that plays a similar role to \cref{lemma:denominator}. Similarly to  \labelcref{eq:denom:event},  we define the event
    \begin{align}
    \label{eq:quasi:denom:event}
        \bA_n^\natural(T,\bXi,\Pi,\blambda^\star):=\cbr{\Y^{(n)}\in\bY_n:\int \frac{\sp_n^\natural(\sT(\btheta), \Y^{(n)})}{\sp_n^\natural(\blambda^\star, \Y^{(n)})}\d\Pi(\btheta) \ge\exp\del[1]{-T-\kl(\vQ, \Pi)}}
    \end{align}
for  a parameter $\blambda^\star\in\Lambda_n$, a positive value $T>0$ and distributions $\bXi,\Pi\in\cP(\Theta_{n,\cM_n})$.

\begin{lemma}
\label{lemma:quasi:denominator}
Assume that \labelcref{eq:quasi:bound} holds. Then
for the event $\bA_n(T,\bXi,\Pi,\blambda^\star)$ defined in \labelcref{eq:denom:event}, we have
    \begin{equation}
    \label{eq:denom_bound:quasi}
       \P^{(n)}_{\blambda^\star}\del{ \bA_n^\natural(T,\bXi,\Pi,\blambda^\star)}
        \ge 1-  \frac{1}{\rho T}\del{\fc_2\vQ\sbr{n\scd_n^2(\sT(\btheta),\blambda^\star)}+1}
    \end{equation}
\end{lemma}

\begin{proof}
For notational simplicity,  we denote the ratio of quasi-likelihoods by
    \begin{align*}
       \sr_n^\natural(\blambda,\blambda')
       := \sr_n^\natural(\blambda,\blambda', \Y^{(n)})
       :=\frac{\sp_n^\natural(\blambda, \Y^{(n)})}{\sp_n^\natural(\blambda', \Y^{(n)})}.
    \end{align*}
We start with applying \labelcref{eq:kl_ineq_fn} in \cref{lemma:variational_ineq} with $\sF=\log \sr_n^\natural(\sT(\btheta),\blambda^\star)$, $\vQ_0=\vQ$ and $\Pi_0=\Pi$ to obtain
    \begin{align*}
        \log \int \sr_n^\natural(\sT(\btheta),\blambda^\star)\d\Pi(\btheta)
        &\ge\int \log\del[1]{\sr_n^\natural(\sT(\btheta),\blambda^\star)}\d\vQ(\d\btheta)-\kl(\vQ,\Pi).
    \end{align*}
Therefore, by applying  Markov's inequality, Fubini's theorem and Jensen's inequality in sequence, we have
    \begin{align*}
        \P^{(n)}_{\blambda^\star}\del{ (\bA_n^\natural(t,\bXi,\Pi,\blambda^\star))^{\complement}}
        &= \P^{(n)}_{\blambda^\star}\del{\log\int\sr_n^\natural(\sT(\btheta),\blambda^\star)\d\Pi(\btheta)\le -T-\kl(\vQ, \Pi)}\\
        &\le \P^{(n)}_{\blambda^\star}\del{\int \log\del[1]{\sr_n^\natural(\sT(\btheta),\blambda^\star)}\d\vQ(\btheta)\le -T}\\
        &= \P^{(n)}_{\blambda^\star}\del{\int \log\del[1]{(\sr_n^\natural(\blambda^\star,\sT(\btheta)))^\rho}\d\vQ(\btheta)\ge \rho T}\\
        &\le \P^{(n)}_{\blambda^\star}\del{\int \log\del[1]{1+\del[1]{\sr_n^\natural(\blambda^\star,\sT(\btheta))}^\rho}\d\vQ(\btheta)\ge \rho T}\\
        &\le \frac{1}{\rho T} \P^{(n)}_{\blambda^\star}\sbr{\int\log\del[1]{1+\del[1]{\sr_n^\natural(\blambda^\star,\sT(\btheta))}^\rho}\d\vQ(\btheta)}\\
         &= \frac{1}{\rho T}\int \P^{(n)}_{\blambda^\star}\sbr{\log\del{1+\del[1]{\sr_n^\natural(\blambda^\star,\sT(\btheta))}^\rho}}\d\vQ(\btheta)\\
        &\le \frac{1}{\rho T} \int\log\del[2]{1+\P^{(n)}_{\blambda^\star}\sbr{\del[1]{\sr_n^\natural(\blambda^\star,\sT(\btheta))}^\rho}}\d\vQ(\btheta)
    \end{align*}
By the assumption \labelcref{eq:quasi:bound} and the inequality $\log (1+\e^z)\le z+1$ for $z>0$, the integrand in the last line of the above display is bounded by
    \begin{align*}
        \log\del{1+\P^{(n)}_{\blambda^\star}\sbr{\del{\frac{\sp_n^\natural(\blambda^\star, \Y^{(n)})}{\sp_n^\natural(\sT(\btheta), \Y^{(n)})}}^\rho}}
        &\le \log\del{1+\e^{\fc_2n\scd_n^2(\sT(\btheta),\blambda^\star)}}\\
        &\le 1 + \fc_2n\scd_n^2(\sT(\btheta),\blambda^\star),
    \end{align*}
which completes the proof.
\end{proof}

With \cref{lemma:quasi:denominator} in hand, we complete the proof as follows.

\begin{proof}[Proof of \cref{thm:quasi:conv}]
Fix $\blambda_n^\star\in\Lambda_n^\star$ and let
    \begin{equation*}
        \cK_n:=\cbr{\btheta\in\Theta_{n,\cM_n}:\scd_n(\sT(\btheta),\blambda^\star)\ge A_n\epsilon_n}.
    \end{equation*}
Let  $m^*_n\in\cM_n$ be a model index that satisfies \cref{assume:quasi:model_prior} and $\vQ_{n,m_n^*}^*\in\cQ_{n,m_n^*}$ be a distribution satisfying \cref{assume:quasi:variational}. Now with the expression given in \labelcref{eq:quasi:denom:event}, we define
    \begin{align*}
       \bA_n^\natural:= \bA_n^\natural\del{A_nn\epsilon_n^2,\vQ_{n,m_n^*}^*,\Pi_n,\blambda^\star}.
    \end{align*}
Then we have by \cref{lemma:variational_ineq},
    \begin{align}
          \P^{(n)}_{\blambda^\star}&[\hvQ_n^\natural(\cK_n)] 
         \le\P^{(n)}_{\blambda^\star}((\bA_n^\natural)^\complement) +
         \P^{(n)}_{\blambda^\star}[\hvQ_n^\natural(\cK_n)\ind(\bA_n^\natural)]
         \nonumber\\
         & \le \P^{(n)}_{\blambda^\star}((\bA_n^\natural)^\complement) +\frac{1}{\upsilon_n}\P^{(n)}_{\blambda^\star}[\kl(\hvQ_n^\natural,\Pi_n^\natural(\cdot|\Y^{(n)})]
         +\frac{1}{\upsilon_n}\e^{\upsilon_n}\P^{(n)}_{\blambda^\star}[\Pi_n^\natural(\cK_n|\Y^{(n)})\ind(\bA_n^\natural)], \label{eq:quasi:post:decomp}
    \end{align}
where $\upsilon_n:=A_nn\epsilon_n^2$. We will show that each of the three terms in  \labelcref{eq:quasi:post:decomp} goes to zero as $n\to\infty$. By \cref{lemma:quasi:denominator} together with \cref{assume:quasi:variational}, we have
    \begin{align*}
        \P^{(n)}_{\blambda^\star}((\bA_n^\natural)^\complement)
        &\le \frac{1}{\rho A_nn\epsilon_n^2}\del{\fc_2\vQ_{n,m_n^*}\sbr{n\scd_n^2(\sT(\btheta),\blambda^\star)}+1}\\
        &\le \frac{1}{\rho A_nn\epsilon_n^2}\del{\fc_2 \fc_3 (1+\fc_4)^2n\epsilon_n^2+1}=\sco(1)
    \end{align*}
since $A_n\to\infty$. The second term in  \labelcref{eq:quasi:post:decomp} also converges to zero by employing \cref{thm:quasi:vgap} with \cref{assume:quasi:variational,assume:quasi:model_prior}. Lastly, for the third term in  \labelcref{eq:quasi:post:decomp}, we note that
    \begin{align}
        \P^{(n)}_{\blambda^\star}[\Pi_n^\natural(\cK_n|\Y^{(n)})\ind(\bA_n^\natural)]
        &=\P^{(n)}_{\blambda^\star}\sbr{\frac{\int_{\cK_n} \frac{\sp_n^\natural(\sT(\btheta), \Y^{(n)})}{\sp_n^\natural(\blambda^\star, \Y^{(n)})}\d\Pi_n(\btheta)}{\int \frac{\sp_n^\natural(\sT(\btheta), \Y^{(n)})}{\sp_n^\natural(\blambda^\star, \Y^{(n)})}\d\Pi_n(\btheta) }\ind(\bA_n^\natural)}\nonumber\\
        &\le\e^{A_nn\epsilon_n^2+\kl(\vQ_{n,m_n^*},\Pi_n)} \P_{\blambda^\star}^{(n)}\sbr{\int_{\cK_n} \frac{\sp_n^\natural(\sT(\btheta), \Y^{(n)})}{\sp_n^\natural(\blambda^\star, \Y^{(n)})}\d\Pi_n(\btheta)}. \label{eq:quasi:post:bound}
    \end{align}
By \labelcref{eq:quasi:learning} in \cref{assume:quasi:quasi}, the expectation term of \labelcref {eq:quasi:post:bound} is bounded by
    \begin{equations}
    \label{eq:quasi_numer_bound}
        \P_{\blambda^\star}^{(n)}\sbr{\int_{\cK_n} \frac{\sp_n^\natural(\sT(\btheta), \Y^{(n)})}{\sp_n^\natural(\blambda^\star, \Y^{(n)})}\d\Pi_n(\btheta)}
        &=\int_{\cK_n} \P_{\blambda^\star}^{(n)}\sbr{\frac{\sp_n^\natural(\sT(\btheta), \Y^{(n)})}{\sp_n^\natural(\blambda^\star, \Y^{(n)})}}\d\Pi_n(\btheta)\\
        &\le \int_{\cK_n}\e^{-\fc_1n\scd_n^2(\sT(\btheta),\blambda^\star)}\d\Pi_n(\btheta)\\
        &\le \e^{-\fc_1A_n^2n\epsilon_n^2}
    \end{equations}
for any sufficiently large $n$. In addition, by \cref{assume:quasi:variational,assume:quasi:model_prior},
    \begin{align*}
        \kl(\vQ_{n,m_n^*}^*,\Pi_n)
        = -\log \alpha_{n,m_n^*} +   \kl(\vQ_{n,m_n^*}^*,\Pi_{n,m_n^*})
        \le (\fc_5 + \fc_3(1+\fc_4)^2) n\epsilon_n^2.
    \end{align*}
Thus, since  $A_n\to\infty$ and $n\epsilon_n^2\ge1$ by assumption, the third term in  \labelcref{eq:quasi:post:decomp} goes to zero. We complete the proof.
\end{proof}

\subsection{Proofs for \cref{appen:quasi:sbm}}

\subsubsection{Proof of \cref{lemma:sbm:quasi}}
\label{appen:quasi:proof:sbm:cond}

\begin{proof}
Let $\P_{\Omega}$ denote the expectation operator under the Bernoulli distribution with mean $\Omega\in[0,1]$. Note first that
    \begin{align*}
        \P_{\bOmega_0}^{(\ban)}\sbr{\frac{\sp_{\ban}^\natural(\bOmega_1,\Y^{(\ban)})}{\sp_{\ban}^\natural(\bOmega_0,\Y^{(\ban)})}}
        &=\prod_{(i,j)\in\bL_n}\P_{\Omega_{0,i,j}}\sbr{\e^{-(Y_{i,j}-\Omega_{1,i,j})^2+(Y_{i,j}-\Omega_{0,i,j})^2}}\\
       &=\prod_{(i,j)\in\bL_n}\cbr{\e^{-(\Omega_{1,i,j}-\Omega_{0,i,j})^2}\P_{\Omega_{0,i,j}}\sbr{\e^{-2(\Omega_{0,i,j}-\Omega_{1,i,j})(Y_{i,j}-\Omega_{0,i,j})}}}.
    \end{align*}
By Hoeffding's lemma together with the fact that $\P_{\Ber(\Omega_{0,i,j})}[-2(\Omega_{0,i,j}-\Omega_{1,i,j})(Y_{i,j}-\Omega_{0,i,j})]=0$, we further have
    \begin{align*}
        \P_{\Omega_{0,i,j}}\sbr{\e^{-2(\Omega_{0,i,j}-\Omega_{1,i,j})(Y_{i,j}-\Omega_{0,i,j})}}
        \le \e^{\frac{1}{2}(\Omega_{0,i,j}-\Omega_{1,i,j})^2}.
    \end{align*}
Hence,
    \begin{align*}
        \P_{\bOmega_0}^{(\ban)}\sbr{\frac{\sp_{\ban}^\natural(\bOmega_1,\Y^{(\ban)})}{\sp_{\ban}^\natural(\bOmega_0,\Y^{(\ban)})}}
        &\le \prod_{(i,j)\in\bL_n}\e^{-\frac{1}{2}(\Omega_{0,i,j}-\Omega_{1,i,j})^2}=\e^{-\frac{1}{2}\ban\scd_{\ban,2}^2(\bOmega_0,\bOmega_1)},
    \end{align*}
which proves \labelcref{eq:sbm_quasi_learning}. On the other hand, by the similar calculation
    \begin{align*}
        \P_{\bOmega_0}^{(\ban)}\sbr{\del{\frac{\sp_{\ban}^\natural(\bOmega_0,\Y^{(\ban)})}{\sp_{\ban}^\natural(\bOmega_1,\Y^{(\ban)})}}^{\rho}}
        &=\prod_{(i,j)\in\bL_n}\P_{\Omega_{0,i,j}}\sbr{\e^{-\rho(Y_{i,j}-\Omega_{0,i,j})^2+\rho(Y_{i,j}-\Omega_{1,i,j})^2}}\\
        &=\prod_{(i,j)\in\bL_n}\cbr{\e^{\rho(\Omega_{1,i,j}-\Omega_{0,i,j})^2}\P_{\Omega_{0,i,j}}\sbr{\e^{2\rho(\Omega_{0,i,j}-\Omega_{1,i,j})(Y_{i,j}-\Omega_{0,i,j})}}}\\
        &\le \prod_{(i,j)\in\bL_n}\cbr{\e^{\rho(\Omega_{1,i,j}-\Omega_{0,i,j})^2}\e^{\frac{\rho^2}{2}(\Omega_{1,i,j}-\Omega_{0,i,j})^2}}\\
        &=\e^{\frac{\rho(\rho+2)}{2}\ban\scd_{\ban,2}^2(\bOmega_0,\bOmega_1)},
    \end{align*}
which proves \labelcref{eq:sbm_quasi_bound}.
\end{proof}

\subsubsection{Proof of \cref{thm:sbm:oracle}}
\label{appen:quasi:proof:sbm:orcale}

For the proof, the next lemma is useful.

\begin{lemma}
\label{lemma:sbm_max_bound}
Let $\U_0:=(U_{0,k,h})_{k\in[m],h\in[m]}\in\cU_m$ and $\U_1:=(U_{1,k,h})_{k\in[m],h\in[m]}\in\cU_m$. Then for any $\Z:=(\z_1,\dots,\z_n)^\top:=(z_{ik})_{i\in[n],k\in[m]}\in\cZ_{n,m}$, 
    \begin{align*}
        \scd_{\ban,2}(\sT(\U_0,\Z),\sT(\U_1,\Z))
        \le \max_{(k,h)\in[m]^{\otimes 2}}|U_{0,k,h}-U_{1,k,h}|.
    \end{align*}
\end{lemma}

\begin{proof}
Let $\ban_{k,h}:=\sum_{(i,j)\in\bL_n}\ind(z_{ik}=1,z_{jh}=1)$.  Then the conclusion straightforwardly follows from
    \begin{align*}
        \scd_{\ban,2}^2(\sT(\U_0,\Z),\sT(\U_1,\Z))
        &=\frac{2}{n(n-2)}\sum_{(i,j)\in\bL_n}\del{\z_i\U_0\z_j-\z_i\U_1\z_j}^2\\
        &=\frac{2}{n(n-2)}\sum_{(k,h)\in[m]^{\otimes 2}}\ban_{k,h}(U_{0,k,h}-U_{1,k,h})^2\\
        &\le\max_{(k,h)\in[m]^{\otimes 2}}|U_{0,k,h}-U_{1,k,h}|^2.
    \end{align*}
\end{proof}

The proof is done as follows.

\begin{proof}[Proof of \cref{thm:sbm:oracle}]
\cref{assume:quasi:quasi} is verified by \cref{lemma:sbm:quasi}. Thus it suffices to check \cref{assume:quasi:variational}. Let
    \begin{align*}
        \eta_{\ban,m}&:=\sup_{\bOmega^*\in\Lambda_{\ban}^\star}\inf_{(\U,\Z)\in\Theta_{\ban,m}}\scd_{\ban,2}(\sT(\U,\Z),\bOmega^*)+n^{-1}\\ \zeta_{\ban,m}&:=\sqrt{m^2\log n/n^2+\log m/n}.
    \end{align*}
By definition of infimum, for any $\bOmega^\star\in\Omega_{\ban}^\star$, there exists $(\U_{\ban,m}^*,\Z_{\ban,m}^*):=(\U_{\ban,m}^*(\bOmega^\star),\Z_{\ban,m}^*(\bOmega^\star))\in\Theta_{\ban,m}$ such that $\scd_{\ban,2}(\sT(\U_{\ban,m}^*,\Z_{\ban,m}^*),\bOmega^\star)\le \eta_{\ban,m}$. Moreover,  let
    \begin{equation*}
        \vQ_{\ban,m}^*:=\bigotimes_{(k,h)\in[m]^2:k\le h}\Unif\del{\psi_{\ban,1,k,h}^*, \psi_{\ban,2,k,h}^*}
        \times \bigotimes_{i=1}^{n}\Cat(\Z_{\ban,m,i}^*)\in\cQ_{\ban,m},
    \end{equation*}
where we denote $\psi_{\ban,1,k,h}^*:=(U_{\ban,m,k,h}^*-m/n)\vee0$ and $\psi_{\ban,2,k,h}^*:=(U_{\ban,m,k,h}^*+m/n)\wedge1$ and let $\Z_{\ban,m,i}^*\in\{0,1\}^n$ be the $i$-th row of $\Z_{\ban,m}$. Then we have 
    \begin{align*}
        \kl(\vQ_{\ban,m}^*,\Pi_{\ban,m})
        &=\sum_{(k,h)\in[m]^2:k\le h}\kl\del{\Unif\del{\psi_{\ban,1,k,h}^*, \psi_{\ban,2,k,h}^*},\Unif(0,1)} \\
        &\qquad\qquad + \sum_{i=1}^n\kl\del{\Cat(\Z_{\ban,m,i}^*), \Cat(m^{-1}\one_n)}\\
        &\le\sum_{(k,h)\in[m]^2:k\le h}\log\del{\frac{2 n}{m}} + \sum_{i=1}^n\log m\\
        &\le m^2\log n+n\log m 
        \lesssim \ban\zeta_{\ban,m}^2.
    \end{align*}
Moreover, by  \cref{lemma:sbm_max_bound}, 
    \begin{align*}
        \vQ_{\ban,m}^*&\sbr{\ban\scd_{\ban,2}^2(\sT(\U,\Z),\bOmega^\star)}\\
        &\le \vQ_{\ban,m}^*\sbr{\ban\scd_{\ban,2}^2\del[1]{\sT(\U,\Z),\sT(\U_{\ban,m^*}^*,\Z_{\ban,m}^*)}} +\ban\scd_{\ban,2}^2(\sT(\U_{\ban,m}^*,\Z_{\ban,m}^*), \bOmega^\star)\\
        &\le \ban\vQ_{\ban,m}^*\sbr{\del[2]{\max_{(k,h)\in[m]^{\otimes 2}}|U_{k,h}-U_{\ban,m,k,h}^*|}^2} +\ban\eta_{\ban,m^*}^2\\
        &\le \ban\frac{(m)^2}{n^2}+\ban\eta_{\ban,m^*}^2
      \le  \ban(\zeta_{\ban,m^*}^2+\eta_{\ban,m^*}^2).
    \end{align*}
The proof is done.
\end{proof}

\subsubsection{Proof of \cref{col:graphon}}
\label{appen:quasi:proof:graphon}

For the proof, we employ the following approximation result for smooth graphons.

\begin{lemma}[Lemma 2.1 of \citetS{gao2015rate}]
\label{lemma:graphon_approx}
There exists an absolute constant $\fc_1>0$ such that for any $\bOmega^\star\in\Lambda_{n}^{\textup{\textsc{G}}}\del[0]{\cH^{\beta,2, F_0}_{[0,1]}}$ and $m\in[n]$,
    \begin{equation*}
        \scd_{\ban,2}^2(\sT(\U^*,\Z^*),\bOmega^\star)\le  \fc_1F_0^2\del{\frac{1}{m^2}}^{\beta\wedge1}
    \end{equation*}
for some $\U^*\in\cU_m$ and $\Z^*\in\cZ_{n,m}$.
\end{lemma}

\begin{proof}[Proof of \cref{col:graphon}]
Let $\Lambda_{\ban}^\star:=\Lambda_{\ban}^{\textup{\textsc{G}}}\del[0]{\cH^{\beta,2, F_0}_{[0,1]}}$ for simplicity. Let $k_n^*\in \sbr[0]{0:\lceil\frac{1}{2}\log n\rceil}$ be an integer such that
    \begin{equation*}
       \floor{n^{k_n^*/\log n}}\le n^{\frac{1}{\beta\wedge 1+1}}\le \floor{n^{(k_n^*+1)/\log n}}\le n^{(k_n^*+1)/\log n}
    \end{equation*}
and let $m_n^*:= \lfloor n^{k_n^*/\log n}\rfloor$. Note that $n^{\frac{1}{\beta\wedge 1+1}}\ge m_n^*\gtrsim n^{-1/\log n}n^{\frac{1}{\beta\wedge 1+1}}=\e^{-1} n^{\frac{1}{\beta\wedge 1+1}}$. Therefore, by \cref{lemma:graphon_approx},
    \begin{align*}
        \epsilon_{n}^2(\Lambda_{\ban}^\star)
        &\lesssim \sup_{\bOmega^*\in\Lambda_{\ban}^\star}\inf_{(\U,\Z)\in\Theta_{\ban,m_n^*}}\scd_{\ban,2}^2(\sT(\U,\Z),\bOmega^*) 
        +\del{\frac{(m_n^*)^2}{n^2}\log n+\frac{\log m_n^*}{n}}\\
        &\lesssim  \del{\frac{1}{(m_n^*)^2}}^{\beta\wedge1} + n^{\frac{2}{\beta\wedge 1+1}-2}\log n + \frac{\log n}{n}\\
        &\lesssim  n^{-\frac{2(\beta\wedge 1)}{\beta\wedge 1+1}}\log n=n^{-\del{\frac{2\beta}{\beta+1}}\wedge1}\log n,
    \end{align*}
which completes the proof.
\end{proof}

\subsection{Proof for \cref{appen:quasi:subgauss}}

\subsubsection{Proof of \cref{lemma:subgauss}}
\label{appen:quasi:proof:subgauss}

\begin{proof}
Let $\P_{\star,i}$ be the marginal distribution of $Y_i$ under $\P_{\star}^{(n)}$ for $i\in[n]$. Since $Y_i$ is sub-Gaussian with mean $f^\star(\x_i)$ and variance proxy $\varsigma^2>0$, \labelcref{eq:subg_quasi_learning} follows from
    \begin{align*}
        \P_{\star}^{(n)}\sbr{\frac{\sp_n^{\natural,\kappa}(f, \Y^{(n)})}{\sp_n^{\natural,\kappa}(f^\star, \Y^{(n)})}}
        &=\prod_{i=1}^n\cbr{\e^{-\frac{\kappa}{2}(f(\x_i)-f^\star(\x_i))^2}\P_{\star,i}\sbr{\e^{-\kappa(f^\star(\x_i)-f(\x_i))(Y_i-f^\star(\x_i))}} }\\
        &\le \prod_{i=1}^n\cbr{\e^{-\frac{\kappa}{2}(f(\x_i)-f^\star(\x_i))^2}\e^{(\kappa^2\varsigma^2/2)(f^\star(\x_i)-f(\x_i))^2} }\\
        &= \prod_{i=1}^n\e^{-\frac{\kappa}{2}(1-\kappa\varsigma^2)(f(\x_i)-f^\star(\x_i))^2}.
    \end{align*}
Similarly,  \labelcref{eq:subg_quasi_bound} follows from
        \begin{align*}
         \P_{\star}^{(n)}\sbr{\frac{\sp_n^{\natural,\kappa}(f^\star,\Y^{(n)})}{\sp_n^{\natural,\kappa}(f,\Y^{(n)})}}^{\rho}
        &=\prod_{i=1}^n\cbr{\e^{\frac{\rho\kappa}{2}(f(\x_i)-f^\star(\x_i))^2}\P_{\star,i}\sbr{\e^{\rho\kappa(f^\star(\x_i)-f(\x_i))(Y_i-f^\star(\x_i))}} }\\
        &\le \prod_{i=1}^n\e^{\frac{1}{2}\rho\kappa(1+\rho\kappa\varsigma^2)(f(\x_i)-f^\star(\x_i))^2}.
    \end{align*}
The proof is done.
\end{proof}

\section{Proofs of neural network approximation results in \cref{appen:dnn:approx}}
\label{appen:proof_dnn_approx}

\subsection{Approximation of H\"older smooth functions}
\label{appen:proof_dnn_approx:holder}

In this subsection, we provide the proof of \cref{lemma:dnn:approx:holder} as well as additional lemmas used in the proof.

\subsubsection{Additional notation}

We first introduce some additional definitions and notation. For $T\in\bN$, $\delta\in(0,1/(3T)]$ and $d\in\bN$, we define
    \begin{align*}
        \icubes(T,\delta):=\icubes(T,\delta,d)&:=\cbr{(x_1,\dots, x_d)\in[0,1]^d:x_i\notin \bigcup_{t=1}^{T-1}\del{\frac{t}{T}-\delta, \frac{t}{T}}}\\
        &=\bigcup_{\t\in[0:T-1]^d}\bigotimes_{j=1}^d\sbr{\frac{t_j}{T},\frac{t_j+1}{T}-\delta\ind(t_j\neq T-1)},
    \end{align*}
which is an union of ``$\delta$-margined sub-hypercubes'' that are of the form $\bigotimes_{j=1}^d[t_j/T, (t_j+1)/T-\delta\ind(t_j\neq T-1)]\subset[0,1]^d$. Note that $\icubes(T,\delta) \subsetneq[0,1]^d$, and the region $[0,1]^d\setminus \icubes(T,\delta)$ will not be considered in our first  approximating neural network construction. Moreover, for $\x:=(x_j)_{j\in[d]}\in\icubes(T,\delta)$, let
    \begin{equation}
    \label{eq:grid_allocation}
        \fv(\x):=\fv_{T,\delta,d}(\x):=\del{\frac{\floor{Tx_j}}{T}-\frac{T-1}{T}\ind(x_j=1)}_{j\in[d]},
    \end{equation}
which is the leftmost vertex of the $\delta$-margined sub-hypercube, to which the input $\x$ belongs. For $\beta>0$ and $d\in\bN$, we denote
    \begin{equation*}
        \cA_{d,\beta}:=\cbr{\a\in\bN_0^d:|\a|_1<\beta},
    \end{equation*}
which can be viewed as a set of $d$-dimensional monomial exponents with degree less than $\beta>0$. Lastly, for a continuous function $f\in\cC^{0,d}$, its modulus of continuity $\beth_f$ is a function on $\R_+$ defined as
    \begin{equation*}
        \beth_f(\delta)=\sup_{\x_1,\x_2\in[0,1]^d:|\x_1-\x_2|_2\le \delta}|f(\x_1)-f(\x_2)|
    \end{equation*}
for $\delta>0$. Note that since $|\x_1-\x_2|_\infty\le |\x_1-\x_2|_2$ for any $\x_1,\x_2\in\R^d$, we have that $\beth_f(\delta)\le F_0\delta^{\beta\wedge 1}$  for any $f\in\cH^{\beta, d, F_0}$.

\subsubsection{Proof of \cref{lemma:dnn:approx:holder}}

We first provide two key lemmas and then give the proof of \cref{lemma:dnn:approx:holder}.

\begin{lemma}
\label{lemma:dnn:approx:extend}
Suppose that given $\varepsilon>0$, $T\in\bN$, $\delta\in(0,1/(3T)]$ and $f\in\cC^{0,d}$, there exists a network parameter $\btheta_0\in\Theta_{(K^\dag,M^\dag)}^{\le B^\dag}$ for $K^\dag\in\bN_{\ge2}$, $M^\dag\in\bN$ and $B^\dag>0$ such that
    \begin{equation*}
        \sup_{\x\in\icubes(T,\delta)}\abs{\net(\btheta_0)(\x)-f(\x)}\le \varepsilon.
    \end{equation*}
Then, there exists a network parameter $\btheta\in\Theta_{(K,M)}^{\le B}$ with $K:=K^\dag+4d$, $M:=3^d(M^\dag\vee 14)$ and $B:=(B^\dag\vee 1)+d\delta$ such that
    \begin{equation*}
        \sup_{\x\in[0,1]^d}\abs{\net(\btheta)(\x)-f(\x)}\le \varepsilon+d\beth_f(\delta).
    \end{equation*}
\end{lemma}

\begin{proof}
The proof is deferred to \cref{appen:proof_nn_inner_to_whole}.
\end{proof}

\begin{lemma}
\label{lemma:dnn:approx:inner}
Let $f\in\cH^{\beta, d, F_0}$, $K\in\bN_{\ge2}$, $M\in\bN$ and $\delta\in(0,1/(3T)]$ with $T:=\lfloor M^{1/d}\rfloor^2$. Then there exists a network parameter $\btheta\in\Theta_{(K^\dag, M^\dag)}^{\le B^\dag}$ with 
    \begin{align*}
        K^\dag&:=(K+12)+(K+5)\ceil{\log_2(\beta\vee1)},\\
        M^\dag&:=4dM+2\max\cbr{2dM^{1/d}, 3\ceil{\beta}(d+1)^{\lceil\beta\rceil}},\\
        B^\dag&:=\max\cbr{M+2, 2(\delta M^{2/d})^{-1}, F_0}
    \end{align*}
such that
    \begin{equation*}
       \sup_{\x\in\icubes(T,\delta)}\abs{\net(\btheta)(\x)-f(\x)}
       \le (3\beta^2+2)F_0(d+1)^{\lceil\beta\rceil}2^{-K}+ C_{\beta, d, F_0}M^{-\beta/d},
    \end{equation*}
where $C_{\beta, d, F_0}:=F_0\sum_{\a\in\bN_0^d:|\a|_1=\ceil{\beta-1}}(1/\a!)$.
\end{lemma}

\begin{proof}
The proof is deferred to \cref{appen:proof_nn_inner_approx}.
\end{proof}

\cref{lemma:dnn:approx:holder} is immediate from \cref{lemma:dnn:approx:extend,lemma:dnn:approx:inner}.

\begin{proof}[Proof of \cref{lemma:dnn:approx:holder}]
Recall that $\beth_f(\delta)\le F_0\delta^{\beta\wedge 1}$ for any $f\in\cH^{\beta,d, F_0}$. Thus taking $\delta=(6M^{2/d})^{-\beta/(\beta\wedge1)}\le (3T)^{-1}$, we obtain the approximation error in the conclusion by \cref{lemma:dnn:approx:extend,lemma:dnn:approx:inner}. To specify the magnitude bound $B$, we note that 
    \begin{align*}
         M\vee(\delta M^{2/d})^{-1}&\le 6^{\beta/(\beta\wedge1)}\cbr{M\vee(M^{2/d})^{\beta/(\beta\wedge1)-1}}\\
         &\le 6^{\beta/(\beta\wedge1)}\cbr{ M\vee(M^{2(\beta-1)/d})}.
        \end{align*}
We complete the proof.
\end{proof}

\subsubsection{Basic lemmas}

In this subsection, we give basic technical lemmas related to neural network construction, which are frequently used in the proofs of the lemmas in this section. 

\begin{lemma}
\label{lemma:nn_composite}
Let $K_1, K_2\in\bN_{\ge2}$ and let $\M_1:=(M_{1,1},\dots,M_{1,K_1+1})\in\bN^{K_1+1}$ and $\M_2:=(M_{2,1},\dots,M_{2,K_2+1})\in\bN^{K_2+1}$ with $M_{1,K_1+1}=M_{2,1}$. Then for any two network parameters $\btheta_1\in\hTheta_{\M_1}$ and $\btheta_2\in\hTheta_{\M_2}$ , there exists a network parameter $\btheta\in\Theta_{(K, M)}^{\le B}$ with $K:=K_1+K_2$, $M:=|\M_1|_\infty\vee |\M_2|_\infty\vee (2M_{1,K_1+1})$ and $B:=|\btheta_1|_\infty\vee|\btheta_2|_\infty$ such that $\net(\btheta)\equiv \net(\btheta_2)\circ\net(\btheta_1)$.
\end{lemma}

\begin{proof}
We write $\btheta_1:=((\W_{1,k},\b_{1,k}))_{k\in[K_1]}$ and $\btheta_2:=((\W_{2,k},\b_{2,k}))_{k\in[K_2]}$ for convenience. We construct $\btheta:=((\W_k,\b_k))_{k\in[K]}$ as follows:
    \begin{align*}
        \btheta:=\Big((\W_{1,1}&,\b_{1,1}),\dots,(\W_{1,K_1-1},\b_{1,K_1-1}),(\W^*_{1,K_1},\b_{1,K_1}^*),\\ &(\W^*_{2,1},\b_{2,1}),(\W_{2,2},\b_{2,2}),\dots,(\W_{2,K_2},\b_{2,K_2}) \Big),
    \end{align*}
where   
    \begin{align*}
        \W^*_{1,K_1}:=\del{\begin{array}{c} \W_{1,K_1}\\-\W_{1,K_1-1}\end{array}} \quad
        \b^*_{1,K_1}:=\del{\begin{array}{c} \b_{1,K_1}\\-\b_{1,K_1-1}\end{array}} \quad
        \W^*_{2,1}:=\del{\begin{array}{cc} \W_{2,1} & -\W_{2,1}\end{array}}.
    \end{align*}
Then the output of the $K_1$-th hidden layer of $\net(\btheta)$ is equal to $(\relu(\net(\btheta_1)),\relu(-\net(\btheta_1)))$. Thus by the identity $x=\relu(x)-\relu(-x)$, we obtain the desired result.
\end{proof}

\begin{lemma}
\label{lemma:nn_identity_ft}
For any $K\in\bN_{\ge2}$, there exists a network parameter $\btheta_{\textup{id},K}\in\Theta_{(K, 2)}^{\le 1}$ such that
    \begin{equation*}
         \net(\btheta_{\textup{id},K})(x)=x
    \end{equation*}
for any $x\in\R$.
\end{lemma}

\begin{proof}
By the identity  $x=\relu(x)-\relu(-x)$ and the projection property  $\relu\circ\relu\equiv\relu$ of the ReLU activation function, the desired result easily follows.
\end{proof}

\subsubsection{Proof of \cref{lemma:dnn:approx:extend}}
\label{appen:proof_nn_inner_to_whole}

We first state two key lemmas, which are taken from the existing literature.

\begin{lemma}[Lemma 3.1 of \citetS{lu2020deep}, Lemma 6.2 of \citetS{yang2020approximation}]
\label{lemma:nn_mid}
There exists a network parameter $\btheta_{\mid}\in\Theta_{(4,14)}^{\le 1}$ such that
    \begin{equation*}
        \net(\btheta_{\mid})(x_1,x_2,x_3)=\mid(x_1,x_2,x_3)
    \end{equation*}
for any $(x_1,x_2,x_3)\in\R^d$, where $\mid:\R^3\mapsto\R$ is a
function returning the middle value of three inputs.
\end{lemma}

\begin{proof}
Note that we have a neural network expression for the max function such as
    \begin{align*}
        \max(x_1,x_2)&=\frac{1}{2}(x_1+x_2)+\frac{1}{2}|x_1-x_2|\\
        &=\frac{1}{2}\relu(x_1+x_2)-\frac{1}{2}\relu(-x_1-x_2)+\frac{1}{2}\relu(x_1-x_2)+\frac{1}{2}\relu(-x_1+x_2)\\
        &=\net(\btheta_{\max,2})(x_1,x_2) \mbox { with }\btheta_{\max,2}\in\Theta_{(2,4)}^{\le 1}.
    \end{align*}
Thus, using \cref{lemma:nn_composite}, we get a network parameter $\btheta_{\max,3}\in\Theta_{(4,6)}^{\le 1}$ such that
    \begin{align*}
        \max(x_1,x_2,x_3)
        = \max\del{\max(x_1,x_2),\net(\btheta_{\mathrm{id}, 2})(x_3)}
        =\net(\btheta_{\max,3})(x_1,x_2,x_3)
    \end{align*}
for any $(x_1,x_2,x_3)\in\R^3$, where $\btheta_{\mathrm{id}, 2}\in\Theta_{(2,2)}^{\le 1}$ is the network parameter constructed in \cref{lemma:nn_identity_ft}, whose realization is the identity on $\R$. Similarly, we can find a network parameter $\btheta_{\min,3}\in\Theta_{(4,6)}^{\le 1}$ such that $\min(\cdot,\cdot,\cdot)\equiv\net(\btheta_{\min,3})(\cdot,\cdot,\cdot)$. By the above constructions, \cref{lemma:nn_identity_ft} and the fact that
 \begin{align*}
        \mid(x_1,x_2,x_3)
        &= x_1+x_2+x_3-\max(x_1,x_2,x_3)-\min(x_1,x_2,x_3),
    \end{align*}
we conclude the desired result.
\end{proof}

\begin{lemma}[Lemma 3.4 of \citetS{lu2020deep}]
\label{lemma:general_inner_to_whole}
Suppose that, given $\varepsilon>0$, $T\in\bN$ and $\delta\in(0,1/(3T)]$,  two functions $f\in\cC^{0,d}$ and $g_0:\R^d\mapsto \R$ satisfy
    \begin{equation*}
         \sup_{\x\in\icubes(T,\delta)}\abs{g_0(\x)-f(\x)}\le \varepsilon.
    \end{equation*}
Then if we recursively define functions $g_1,\dots, g_d$ as
    \begin{equation*}
        g_{j}(\cdot):=\mid\del{g_{j-1}(\cdot-\delta\mathbf{e}_j), g_{j-1}(\cdot), g_{j-1}(\cdot+\delta\mathbf{e}_j)}
    \end{equation*}
for $j=[d]$, it follows that
    \begin{equation*}
         \sup_{\x\in[0,1]^d}\abs{g_d(\x)-f(\x)}\le\varepsilon+d\beth_f(\delta),
    \end{equation*}
where $\{\mathbf{e}_j\}_{j\in[d]}$ is the  standard basis in $\R^d$.
\end{lemma}

We now provide the proof of \cref{lemma:dnn:approx:extend}.

\begin{proof}[Proof of \cref{lemma:dnn:approx:extend}]
We will prove the desired result by employing \cref{lemma:general_inner_to_whole} in with $g_0=\net(\btheta_0)$. By induction, it suffices to show that if $g_{j-1}=\net(\btheta_{j-1})$ with $\btheta_{j-1}\in\Theta_{(K_j,M_j)}^{\le B_j}$, 
    \begin{equation}
    \label{eq:mid_induction}
        g_{j}=\net(\btheta_j)
        \mbox{ for some }
         \btheta_j\in\Theta_{(K_j+4, 3M_j\vee 14)}^{\le (B_j+\delta)\vee 1}.
    \end{equation}
By just adding $-\delta\mathbf{e}_j$ or $\delta\mathbf{e}_j$ to the first bias vector of $\btheta_{j-1}$, we can easily construct two network parameters $\btheta_{j-1}^-\in\Theta_{(K_j,M_j)}^{\le B_j+\delta}$ and $\btheta_{j-1}^+ \in\Theta_{(K_j,M_j)}^{\le B_j+\delta}$ such that $\net(\btheta_{j-1}^-)(\cdot)\equiv g_{j-1}(\cdot-\delta\mathbf{e}_j)$ and $\net(\btheta_{j-1}^+)(\cdot)\equiv g_{j-1}(\cdot+\delta\mathbf{e}_j)$. On the other hand, by \cref{lemma:nn_mid}, there is a network parameter $\btheta_{\mid}\in\Theta_{(4,14)}^{\le 1}$ that exactly recovers the middle value function. Therefore, using \cref{lemma:nn_composite}, we prove \labelcref{eq:mid_induction}.
\end{proof}

\subsubsection{Proof of \cref{lemma:dnn:approx:inner}}
\label{appen:proof_nn_inner_approx}

Before the proof, we give five lemmas used in the proof. First, we borrow the following two lemmas from \citetS{schmidt2020nonparametric}.

\begin{lemma}[Lemma A.2 of \citetS{schmidt2020nonparametric}]
\label{lemma:nn_multiply}
For any $K\in\bN_{\ge2}$, there exists a network parameter
    \begin{equation*}
        \btheta_{\mathrm{mult},K}\in\hTheta_{\M} \mbox{ with } \M=\del{2,6,\dots,6,1}\in\bN^{6+K}
    \end{equation*}
such that
    \begin{equation*}
        \sup_{(\x_1,\x_2)\in[0,1]^2}\abs{\net(\btheta_{\mathrm{mult},K})(x_1,x_2)-x_1x_2}\le2^{-K}
    \end{equation*}
and $\net(\btheta_{\mathrm{mult},K})(x_1,0)=\net(\btheta_{\mathrm{mult},K})(0,x_2)=0$ for any $x_1,x_2\in[0,1]$.
\end{lemma}

\begin{lemma}[Lemma A.4 of \citetS{schmidt2020nonparametric}]
\label{lemma:nn_monomial}
For any $d\in\bN$, $\beta>0$ and $K\in\bN_{\ge2}$, there exists a network parameter
    \begin{equation*}
        \btheta_{\mathrm{mon}, d,\beta, K}\in\hTheta_{\M} \mbox{ with } \M=\del{d,6\ceil{\beta}|\cA_{d,\beta}|,\dots,6\ceil{\beta}|\cA_{d,\beta}|,|\cA_{d,\beta}|}\in\bN^{3+(K+5)\ceil{\log_2(\beta\vee1)}}
    \end{equation*}
such that
    \begin{equation*}
        \sup_{\x\in[0,1]^d}\abs{\net( \btheta_{\mathrm{mon}, d,\beta, K})(\x)-(\x^{\a})_{\a\in\cA_{d,\beta}}}_{\infty}\le \beta^22^{-K}
    \end{equation*}
and $\net( \btheta_{\mathrm{mon},d, \beta, K})\in[0,1]^{|\cA_{d,\beta}|}$.
\end{lemma}

The next lemma introduces a neural network that appropriately ``discretizes'' an one-dimensional input. This neural network is useful to approximate the function $\fv(\cdot)$ defined in \labelcref{eq:grid_allocation}.

\begin{lemma}
\label{lemma:nn_step}
Let $T_0\in\bN$, $T:=T_0^2$ and $\delta\in(0,1/(3T)]$. Then there exists a network parameter $\btheta_{\mathrm{step}, T_0,\delta}\in\hTheta_{(1,T_0, 2, T_0,1)}$ such that
    \begin{equation*}
        \net(\btheta_{\mathrm{step}, T_0, \delta})(x)=\frac{\floor{Tx}}{T}-\frac{T-1}{T}\ind(x=1)
    \end{equation*}
for any $x\in\icubes(T, \delta,1):=[0,1]\setminus(\cup_{t=1}^{T-1}(t/T-\delta, t/T))$ and $|\btheta|_\infty\le (\delta T)^{-1}$
\end{lemma}

\begin{proof}
Let $\mathrm{step}:[0,1]\mapsto [0,1]$ be a function defined as
    \begin{equation*}
        \mathrm{step}(x)=\sum_{t=1}^{T-1}\frac{1}{\delta T}\del{\relu\del{x-\frac{t}{T}+\delta}-\relu\del{x-\frac{t}{T}}}
    \end{equation*}
for $x\in[0,1]$. Then we have that $\mathrm{step}(x)=\floor{Tx}/T-\{(T-1)/T\}\ind(x=1)$ for any $x\in\icubes(T, \delta,1)$. Thus, it remains to construct a neural network that recovers the function $\mathrm{step}(\cdot)$. 

Let $\btheta_{1}\in\hTheta_{(1,2T_0-2,1)}$ and $\btheta_{2}\in\hTheta_{(1,2T_0-2,1)}$  be network parameters such that
    \begin{align*}
        \net(\btheta_{1})(x)&=\sum_{t=1}^{T_0-1}\frac{T_0}{\delta T}\del{\relu\del{x-t\frac{T_0}{T}+\delta}-\relu\del{x-t\frac{T_0}{T}}}, \\
        \net(\btheta_{2})(x)&=\sum_{t=1}^{T_0-1}\frac{1}{\delta T}\del{\relu\del{x-\frac{t}{T}+\delta}-\relu\del{x-\frac{t}{T}}}
    \end{align*}
for $x\in\R$. Then for any $x\in\icubes(T, \delta,1)$, we have $\net(\btheta_{1})(x)=\floor{T_0x}/T_0-\{(T_0-1)/T_0\}\ind(x=1)$. Since
    \begin{equation*}
        x-\frac{\floor{T_0x}}{T_0}\in\bigcup_{t=0}^{T_0-1}\sbr{\frac{t}{T},\frac{t+1}{T}-\delta}\subset\icubes(T, \delta,1)
    \end{equation*}
for any $x\in\icubes(T, \delta,1)\setminus\{1\}$, we have
    \begin{align*}
        \net(\btheta_1)+&\net(\btheta_{2})(x-\net(\btheta_{1})(x))\\
        &=\frac{\floor{T_0x}}{T_0}-\frac{T_0-1}{T_0}\ind(x=1)+\frac{\floor{T(x-\floor{T_0x}/T_0)}}{T} +\frac{T_0-1}{T}\ind(x=1)
         \\
        &=\frac{T_0\floor{T_0x}}{T}+\frac{\floor{T x-T_0\floor{T_0x}}}{T}+\frac{T-1}{T}\ind(x=1)\\
            &=\frac{\floor{T x}}{T}+\frac{T-1}{T}\ind(x=1).
    \end{align*}
Thus, using the identity $\relu(x)=x$ for $x\ge 0$, a network parameter $\btheta_{\mathrm{step}, T_0,\delta}$ such that $\net(\btheta_{\mathrm{step}, T_0,\delta})\equiv \relu(\relu(\net(\btheta_1))) +\net(\btheta_{2})(\relu(\relu(x)-\net(\btheta_{1})(x)))$ satisfies the desired properties in the lemma.
\end{proof}

The following lemma constructs a neural network enumerates the mutivariate indices $\t\in[0:(T-1)]^d$.

\begin{lemma}
\label{lemma:nn_enumeration}
Let $d\in\bN$, $T_0\in\bN$ and $T:=T_0^2$. Then, there is a network parameter $\btheta_{\mathrm{enum},T_0}\in\hTheta_{1,2d(T_0+1),2d,2}$ such that
    \begin{equation*}
        \net(\btheta_{\mathrm{enum},T_0})(\t)=\del{1+\sum_{j=1}^d\floor{\frac{t_j}{T_0}}T_0^{j-1},1+\sum_{j=1}^d\del{t_j-T_0\floor{\frac{t_j}{T_0}}}T_0^{j-1} }
    \end{equation*}
for any $\t\in[0:(T-1)]^d$ and $|\btheta_{\mathrm{enum},T_0}|_\infty\le T_0^{d-1} $.
\end{lemma}

\begin{proof}
Let $\btheta_1\in\hTheta_{1, 2T_0,1}$ be a network parameter such that
    \begin{align*}
        \net(\btheta_1)(x)=\sum_{t'=1}^{T_0}2\cbr{\relu\del{x-t'T_0+\frac{1}{2}}-\relu\del{x-t'T_0}},
    \end{align*}
for $x\in\R$. Then, for any $t\in[0:(T-1)]$,  we have $\net(\btheta_1)(t)=\floor{t/T_0}$, and thus 
    \begin{align*}
        \net(\btheta_{2,j})(t)
        &=\del{T_0^{j-1}\net(\btheta_1)(t), T_0^{j-1}(\relu(t)-T_0\net(\btheta_1)(t))}\\
        &=\del{\floor{\frac{t_j}{T_0}}T_0^{j-1},\del{t_j-T_0\floor{\frac{t_j}{T_0}}}T_0^{j-1} }
    \end{align*}
for each $j\in[d]$, where $\btheta_{2,j}\in\hTheta_{1,2T_0+1,2}$ is a neural network with $|\btheta_{2,j}|_\infty\le T_0^{(j-1)\vee 2}$. Since $\relu(\net(\btheta_{2,j})(t))=\net(\btheta_{2,j})(t)$ for any $t\in[0:(T-1)]$, the neural network $\btheta_{\mathrm{enum},T_0}\in\hTheta_{1,2d(T_0+1),2d,2}$ defined as
    \begin{equation*}
        \net(\btheta_{\mathrm{enum},T_0})\equiv\sum_{j=1}^d\relu(\net(\btheta_{2,j}))+\one_2
    \end{equation*}
satisfies the desired condition of the lemma.
\end{proof}

Lastly, we construct the following ``point-fitting'' neural network that approximates the local Taylor coefficients $\partial^{\a}f(\t/T)$ for $\a\in\cA_{d,\beta}$ and $\t\in[0:T-1]^d$.

\begin{lemma}
\label{lemma:nn_point_fit}
Let $T_0\in\bN$ and $\Y:=(y_{t_1,t_2})_{t_1\in[T_0], t_2\in[T_0]}\in[0,1]^{T_0\times T_0}$. Then there exists a network parameter $\btheta_{\mathrm{fit},\Y,T_0}\in\Theta_{(6,4T_0)}^{\le B}$ with $B:= \max\cbr{3, T_0+1/6}$ such that
    \begin{equation*}
        \net(\btheta_{\mathrm{fit},\Y,T_0})(t_1,t_2)=y_{t_1,t_2}
    \end{equation*}
for any $(t_1,t_2)\in[T_0]^{\otimes 2}$.
\end{lemma}

\begin{proof}
For $T\in\bN$, let $\xi_{\mathrm{id},T}:=\net(\btheta_{\mathrm{id},T})$, where $\btheta_{\textup{id},T}\in\Theta_{(T, 2)}^{\le 1}$ is the depth $T$ network parameter constructed in \cref{lemma:nn_identity_ft}, whose realization is the identity on $\R$. For $t\in\bN$, define a function $\xi_{\mathrm{hat},t}:\R\mapsto [0,1]$ as
    \begin{align*}
        \xi_{\mathrm{hat},t}(x)
        =\relu\del{1-3\cbr{\relu\del{x-t-\frac{1}{6}} + \relu\del{-x+t-\frac{1}{6}}} }
    \end{align*}
for $x\in\R.$ Note that $\xi_{\mathrm{hat},t}(x)=1$ if $x\in[t-1/6,t+1/6]$, $\xi_{\mathrm{hat},t}(x)=0$ if $x\in(-\infty,t-1/2]\cup[t+1/2,\infty)$ and $0\le \xi_{\mathrm{hat},t}\le 1$ on $\R$. For notational convenience, let $\xi_{\mathrm{hat},1:T_0}:=(\xi_{\mathrm{hat},t})_{t\in[T_0]}\in[0,1]^{T_0}$. We also define a function $\xi_{\mathrm{test},t}:\R\times[0,1]\mapsto [0,1]$ as
    \begin{align*}
        \xi_{\mathrm{test},t}(x,y)&=\relu\del{\xi_{\mathrm{id},2}(y)-3\cbr{\relu\del{x-t-\frac{1}{6}} + \relu\del{-x+t-\frac{1}{6}}} }\\
        &=\relu\del{\relu(y)-\relu(-y)-3\cbr{\relu\del{x-t-\frac{1}{6}} + \relu\del{-x+t-\frac{1}{6}}} }
    \end{align*}
for $(x,y)\in\R\times[0,1]$. Then, for any $y\in[0,1]$, we have $\xi_{\mathrm{test},t}(x,y)=y$ for $x\in[t-1/6,t+1/6]$ and  $\xi_{\mathrm{test},t}(x,y)=0$ for $x\in(-\infty,t-1/2]\cup[t+1/2,\infty)$.

Next, recall $\Y:=(y_{t_1,t_2})_{t_1\in[T_0], t_2\in[T_0]}\in[0,1]^{T_0\times T_0}$. Note that, by the assumption that every $y_{t_1,t_2}$ lies in $[0,1]$ and the fact that $\sum_{t=1}^{T_0}\xi_{\mathrm{hat},t}(x)\in [0,1]$ for any $x\in\R$, we have $\mathbf{e}_{T_0,t}^\top\relu(\Y\xi_{\mathrm{hat},1:T_0}(x))\in[0,1]$ for any $x\in\R$ and any $t\in[T_0]$, where $\mathbf{e}_{T_0,t}\in\R^{T_1}$ is the $T_0$-dimensional vector with a 1 in the $t$-th coordinate and 0's elsewhere. Then, since both $\xi_{\mathrm{hat},t}$ and $\xi_{\mathrm{test},t}$ can be exactly represented by neural networks with depth 3, there is a depth 6 neural network $\btheta_{\mathrm{fit},\Y,T_0}\in\hTheta_{(2,2T_0+2,T_0+2, T_0+1, 4T_0, T_0, 1) }$ such that
    \begin{align*}
       \net(\btheta_{\mathrm{fit},\Y,T_0})(x_1,x_2)= \sum_{t=1}^{T_0}\xi_{\mathrm{test},t}\del{\relu(\xi_{\mathrm{id},3}(x_2)),\mathbf{e}_{T_0,i}^\top\relu(\Y\xi_{\mathrm{hat},1:T_0}(x_1) )}
    \end{align*}
for $(x_1,x_2)\in\R^2.$ Then it is easy to see that $\net(\btheta_{\mathrm{fit},\Y,T_0})(t_1,t_2)=y_{t_1,t_2}$ for any $(t_1,t_2)\in[T_0]^{\otimes 2}$ and that $|\btheta_{\mathrm{fit},\Y,T_0}|_\infty\le \max\cbr{3, T_0+1/6}$. We complete the proof.
\end{proof}

We are ready to prove \cref{lemma:dnn:approx:inner}.

\begin{proof}[Proof of \cref{lemma:dnn:approx:inner}]
By Taylor's theorem, the assumption that $\|f\|_{\cH^{\beta,d}}\le F_0$ and the inequality that $|(\x-\fv(\x))^{\a}|\le |\x-\fv(\x)|_\infty^{|\a|_1}$, we have
    \begin{equations}
    \label{eq:app_err_taylor}
         \sup_{\x\in\icubes(T,\delta)}&\abs{f(\x)-\sum_{\a\in\cA_{d,\beta}}\frac{\partial^{\a}f(\fv(\x))}{\a!}(\x-\fv(\x))^{\a}}\\
         &\le \sup_{\x\in\icubes(T,\delta)}\abs{\sum_{\a\in\bN_0^d:|\a|_1=\ceil{\beta-1}}\sup_{u\in[0,1]} \sbr{\partial^{\a}f(\fv(\x)+u(\x-\fv(\x)))-\partial^{\a}f(\fv(\x))}\frac{(\x-\fv(\x))^{\a}}{\a!}}\\
         &\le \sup_{\x\in\icubes(T,\delta)}\sum_{\a\in\bN_0^d:|\a|_1=\ceil{\beta-1}}\del{F_0\sup_{u\in[0,1]} \abs{u(\x-\fv(\x))}_\infty^{\beta-\ceil{\beta-1}}\frac{|\x-\fv(\x)|_\infty^{|\a|_1}}{\a!}}\\
         &= \del{F_0\sum_{\a\in\bN_0^d:|\a|_1=\ceil{\beta-1}}\frac{1}{\a!}}\sup_{\x\in\icubes(T,\delta)}|\x-\fv(\x)|_\infty^{\beta}\\
         &\le 
          C_{\beta, d, F_0}T^{-\beta},
    \end{equations}
where we used multi-index notation $\x^{\a}:=\prod_{j=1}^dx_j^{a_j}$ and $\a!:=\prod_{j=1}^d(a_j!)$ and let $C_{\beta, d, F_0}:=F_0\sum_{\a\in\bN_0^d:|\a|_1=\ceil{\beta-1}}(1/\a!)$. Thus it suffices to find a neural network approximation of the local Taylor polynomial function
    \begin{equation}
        P_f(\x)=\sum_{\a\in\cA_{d,\beta}}\frac{\partial^{\a}f(\fv(\x))}{\a!}(\x-\fv(\x))^{\a}
    \end{equation}
supported on $\icubes(T,\delta)$. 

Let $T_0:=\lfloor M^{1/d}\rfloor=\sqrt{T}$. For $\btheta_{\mathrm{step}, T_0,\delta}\in\Theta_{(4, T_0\vee2)}^{\le (\delta T)^{-1}}$ defined in \cref{lemma:nn_step}, we let $\xi_{\mathrm{grid}}:\icubes(T,\delta)\mapsto [0,1]^d$ be a function defined as
    \begin{equation*}
        \xi_{\mathrm{grid}}(\x)=\del{\net(\btheta_{\mathrm{step}, T_0,\delta})(x_j)}_{j\in[d]}
    \end{equation*}
for $\x\in\icubes(T,\delta)$. Note that $\xi_{\mathrm{grid}}$ is a neural network with depth $4$ and width $d(T_0\vee2)\le 2dT_0$, which satisfies $\xi_{\mathrm{grid}}\equiv \fv$ on $\icubes(T,\delta)$. We also define a function $\xi_{\mathrm{mon}}:=(\xi_{\mathrm{mon}, \a})_{\a\in\cA_{d,\beta}}:\icubes(T,\delta)\mapsto \R^{|\cA_{d,\beta}|}$ such that
    \begin{equation*}
        \xi_{\mathrm{mon}}(\x)=\net(\btheta_{\mathrm{mon},d, \beta, K})(\x-  \xi_{\mathrm{grid}}(\x))
    \end{equation*}
for $\x\in\icubes(T,\delta)$, where  $\btheta_{\mathrm{mon}, d,\beta, K}$ is a network parameter with depth $2+(K+5)\lceil\log_2(\beta\vee1)\rceil$ and width $6\lceil\beta\rceil|\cA_{d,\beta}|\le6\lceil\beta\rceil(d+1)^{\lceil\beta\rceil}$, which is defined in \cref{lemma:nn_monomial}.  Then by \cref{lemma:nn_monomial}, we have  that
    \begin{equations}
    \label{eq:app_err_mom}
        \sup_{\x\in\in\icubes(T,\delta)}&\abs{\xi_{\mathrm{mon},\a}(\x)-(\x- \fv(\x))^{\a}}\\
        &=\sup_{\x\in\in\icubes(T,\delta)}\abs{\net(\btheta_{\mathrm{mon}, d,\beta, K})(\x- \fv(\x))-(\x- \fv(\x))^{\a}}\le\beta^22^{-K}
    \end{equations}
for any $\a\in\cA_{d,\beta}$.
On the other hand, since the identity map $\x\mapsto\x$ can be exactly represented by a neural network with width $2d$ by \cref{lemma:nn_identity_ft},   \cref{lemma:nn_composite} implies that  there is a network parameter $\btheta_1$ with depth $K_1:=6+(K+5)\ceil{\log_2(\beta\vee1)}$ and width $M_1:=(4dM^{1/d})\vee( 6\lceil\beta\rceil(d+1)^{\lceil\beta\rceil}) \ge (2d(T_0+1))\vee( 6\lceil\beta\rceil(d+1)^{\lceil\beta\rceil})$, which satisfies $\xi_{\mathrm{mon}}\equiv\net(\btheta_1)$ on $\icubes(T,\delta)$. 

We then for each $\a\in\cA_{d,\beta}$ construct a neural network that approximates the local Taylor coefficients. First let $\xi_{\mathrm{enum}}:[0:T-1]^d\mapsto [T_0^d]^{\otimes2}$ be a function such that
    \begin{equation*}
        \xi_{\mathrm{enum}}(\t)=\del{1+\sum_{j=1}^d\floor{\frac{t_j}{T_0}}T_0^{j-1},1+\sum_{j=1}^d\del{t_j-T_0\floor{\frac{t_j}{T_0}}}T_0^{j-1} }.
    \end{equation*}
Note that the map $\xi_{\mathrm{enum}}$ is a bijection and can be exactly represented by a neural network $\btheta_{\mathrm{enum},T_0}\in\hTheta_{1,2d(T_0+1),2d,1}$ with $|\btheta_{\mathrm{enum},T_0}|_\infty\le T_0^{d-1}$ by \cref{lemma:nn_enumeration}. On the other hand, for $\a\in\cA_{d,\beta}$ and $\t\in[0:T-1]^d$, let
    \begin{equation*}
        h_{\a, \t}:=\frac{1}{2}\del{F_0^{-1}\partial^{\a}f\del[0]{T^{-1}\t}+1}
    \end{equation*}
and $\h_{\a}:=(h_{\a,\t})_{\t\in[0:T-1]^d}$. With this notation, we can write
    \begin{equation*}
        P_f(\x)=\sum_{\a\in\cA_{d,\beta}}\frac{F_0}{\a!}\del{2\h_{\a,T\fv(\x)}-1}(\x-\fv(\x))^{\a}
    \end{equation*}
for any $\x\in\icubes(T,\delta)$.  We use \cref{lemma:nn_point_fit} to complete the construction. Since $f\in\cH^{\beta,d, F_0}$, we have $|\h_{\a}|_\infty\le 1$, and so we can apply the lemma. Let $\xi_{\mathrm{coef},\a}:\icubes(T,\delta)\mapsto\R$ be a function defined as
    \begin{equation*}
        \xi_{\mathrm{coef},\a}(\x)
        =\net(\btheta_{\mathrm{fit},\h_{\a},T_0^d})\circ\net(\btheta_{\mathrm{enum},T_0})
        \circ\del[1]{T\net(\btheta_{\mathrm{step}, T_0,\delta})(\x)}
    \end{equation*}
for $\x\in\icubes(T,\delta)$, where $\btheta_{\mathrm{fit}, \h_{\a},T_0^d}$ is a network parameter with depth $6$ and width $4T_0^d$, which satisfies 
    \begin{equation*}
        \net\del[1]{\btheta_{\mathrm{fit}, \h_{\a},T_0^d}}\circ \xi_{\mathrm{enum}}(\t)=h_{\a, \t}
    \end{equation*}
and $|\btheta_{\mathrm{fit}, \h_{\a},T_0^d}|_\infty\le T_0^d+2\le M+2$. The existence of the network parameter $\btheta_{\mathrm{fit}, \h_{\a},T_0^d}$ is guaranteed by  \cref{lemma:nn_point_fit}. Then since $\net(\btheta_{\mathrm{enum},T_0})\circ(T\net(\btheta_{\mathrm{step}, T_0,\delta}))\equiv \xi_{\mathrm{enum}}\circ\fv$ on $\icubes(T,\delta)$,  we have
    \begin{equations}
        \label{eq:app_err_coef}
          &\abs{\del{ 2F_0\xi_{\mathrm{coef},\a}(\x)-F_0}-\partial^{\a}f\del{\fv(\x)} }\\
          &= \abs{\del{ 2F_0\net(\btheta_{\mathrm{fit},\h_{\a},T_0^d})\circ\net(\btheta_{\mathrm{enum},T_0})(T\fv(\x))-F_0}-\partial^{\a}f\del{\fv(\x)} }\\
          &=     \abs{2F_0 h_{\a, \t}-F_0 -\partial^{\a}f\del{\fv(\x)} }=0
    \end{equations}
for any $\x\in\icubes(T,\delta)$. Due to \cref{lemma:nn_composite}, there is a neural network $\btheta_{2,\a}\in\Theta_{(K_2, M_2) }^{\le B_2}$ with $K_2:=13=6+4+3$, $ M_2:= 4dM\ge\max\{2d(T_0+1),4T_0^d\}$ and $B_2:=\max\{M+2, 2(\delta M^{2/d})^{-1}\}$ (since $\delta T =\delta\lfloor M^{1/d}\rfloor^{2}\ge \delta M^{2/d}/2$) such that $  \xi_{\mathrm{coef},\a}\equiv\net(\btheta_{2,\a})$
on $\icubes(T,\delta)$.

Lastly, let $\xi_{\mathrm{Taylor}}:\icubes(T,\delta)\mapsto \R$ be a function defined as
        \begin{equation*}
        \xi_{\mathrm{Taylor}}(\x)
        =\sum_{\a\in\cA_{d,\beta}}\frac{F_0}{\a!}\cbr{2\net(\btheta_{\mathrm{mult}, K})\del{\xi_{\mathrm{coef},\a}(\x), \xi_{\mathrm{mon}, \a}(\x)}-\xi_{\mathrm{mon}, \a}(\x)}
    \end{equation*}
for $\x\in\icubes(T,\delta)$, where $\btheta_{\mathrm{mult}, K}\in\Theta_{(5+K, 6)}^{\le 1}$ is the network parameter defined in \cref{lemma:nn_multiply}.  Since $\xi_{\mathrm{coef},\a}, \xi_{\mathrm{mon}, \a}\in[0,1]$, we have
        \begin{align*}
        &\abs{\net(\btheta_{\mathrm{mult}, K})\del{\xi_{\mathrm{coef},\a}(\x), \xi_{\mathrm{mon}, \a}(\x)}-h_{\a, T\fv(\x)}(\x-\fv(\x))^{\a}}\\
        &\le \abs{\net(\btheta_{\mathrm{mult}, K})\del{\xi_{\mathrm{coef},\a}(\x), \xi_{\mathrm{mon}, \a}(\x)} -h_{\a, T\fv(\x)}\xi_{\mathrm{mon}, \a}(\x)}\\
        & \hspace{15em}+\abs{h_{\a, T\fv(\x)}\xi_{\mathrm{mon}, \a}(\x)-h_{\a, T\fv(\x)}(\x-\fv(\x))^{\a} }\\
        &\le 2^{-K}+\abs{\xi_{\mathrm{coef},\a}(\x)\xi_{\mathrm{mon}, \a}(\x)-h_{\a, T\fv(\x)}\xi_{\mathrm{mon}, \a}(\x)}\\
        & \hspace{15em}+\abs{h_{\a, T\fv(\x)}\xi_{\mathrm{mon}, \a}(\x)-h_{\a, T\fv(\x)}(\x-\fv(\x))^{\a} }\\
        &\le 2^{-K}+ \abs{\xi_{\mathrm{mon}, \a}(\x)-(\x-\fv(\x))^{\a}}\\
        &\le (\beta^2+1)2^{-K}
    \end{align*}
for any  $\x\in\icubes(T,\delta)$ and any $\a\in\cA_{d,\beta}$, where the second inequality follows from \cref{lemma:nn_multiply}, the fourth one from \labelcref{eq:app_err_coef}  and the last one from \labelcref{eq:app_err_mom}. Thus, it follows that
      \begin{align*}
        \sup_{\x\in\icubes(T,\delta)}\abs{\xi_{\mathrm{Taylor}}(\x)-P_f(\x)}
        &\le \sum_{\a\in\cA_{d,\beta}}\frac{F_0}{\a!}(3\beta^2+2)2^{-K}\\
        &\le (3\beta^2+2)F_0(d+1)^{\lceil\beta\rceil}2^{-K}.
    \end{align*}
Combining the above display and \labelcref{eq:app_err_taylor}, we obtain the error bound in the conclusion. On the other hand, using \cref{lemma:nn_composite} again, we can show that there is a network parameter $\btheta\in\Theta_{(K^\dag, M^\dag)}^{B^\dag}$ with $K^\dag:=(K_1\vee K_2)+K+6=K_1+K+6$, $M^\dag:=M_1+M_2$ and $B^\dag= \max\{M+2, 2(\delta M^{2/d})^{-1}, F_0\}$ such that $\net(\btheta)\equiv  \xi_{\mathrm{Taylor}}$ on $\icubes(T,\delta)$, which completes the proof.
\end{proof}

\subsection{Approximation of composition structured functions}
\label{appen:proof_dnn_approx:composition}

\subsubsection{Proof of \cref{lemma:dnn:approx:composite}}

We need the following technical lemma.

\begin{lemma}[Lemma 3 of \citetS{schmidt2020nonparametric}]
\label{lemma:composite_err}
For any two functions $f^{[1]}=f_r^{[1]}\circ(f_{r-1,j}^{[1]})_{j\in[d_{r}]}\cdots\circ (f_{1,j}^{[1]})_{j\in[d_{2}]}$ and $f^{[2]}=f_r^{[2]}\circ(f_{r-1,j}^{[2]})_{j\in[d_{r}]}\cdots\circ (f_{1,j}^{[2]})_{j\in[d_{2}]}$ in $\cF^{\textsc{comp}}\del{r,\mathbf{d}, \bbeta,\s, F_0}$,  
    \begin{align*}
        &\norm[1]{f^{[1]}-f^{[2]}}_\infty
       \le F_0(2F_0)^{\sum_{\ell=1}^r\beta_{\ell}}\sum_{\ell=1}^r\max_{j\in [d_{\ell+1}]}\norm[1]{f_{\ell,j}^{[1]}-f_{\ell,j}^{[2]}}_\infty^{\prod_{h=\ell+1}^r(\beta_h\wedge 1)}
    \end{align*}
with the convention $\prod_{h=r+1}^r(\beta_h\wedge 1)=1$.
\end{lemma}

We prove \cref{lemma:dnn:approx:composite} using \cref{lemma:dnn:approx:holder,lemma:composite_err} as follows.

\begin{proof}[Proof of \cref{lemma:dnn:approx:composite}]
For $f^\star\in\cF^{\textsc{comp}}\del{r,\mathbf{d}, \bbeta,\s, F_0}$, we write $f^\star:=f_r^\star\circ(f_{r-1,j}^\star)_{j\in[d_{r}]}\circ\cdots\circ (f_{1,j}^\star)_{j\in[d_{2}]}$, where each $f_{\ell,j}\in\cH^{\beta_\ell,s_\ell, F_0}$. For notational convenience, we write $f_{r,1}^\star:=f_r^\star$ and $d_{r+1}:=1$ so that the set of all component functions of $f^\star$ can be written as $\{f_{\ell,j}:j\in[d_{\ell+1}], \ell\in[r]\}$. By \cref{lemma:dnn:approx:holder}, for any large enough $\tiK\in\bN$ and $\tiM\in\bN$ and any $B>\fc_1'\max_{\ell\in[r]}\tiM^{\max\{1,2(\beta_\ell-1)/s_\ell\}}$, there exists absolute constants $\fc_1'>0$ and $\fc_2'>0$ such that there is a network parameter $\btheta_{\ell,j}^*\in \Theta^{\le B}_{(\tiK,\tiM)}$ satisfying
    \begin{equation*}
        \norm[1]{\net(\btheta_{\ell,j}^*)-f_{\ell,j}^\star}_\infty\le \fc_2'(2^{-\tiK}+\tiM^{-2\beta_\ell/s_\ell})
    \end{equation*}
for each $j\in[d_{\ell+1}]$ and $\ell\in[r]$. Then the function defined as
    \begin{align*}
        f_{\text{net,comp}}:=\net(\btheta_{r,1}^*)\circ\del[1]{\net(\btheta_{r-1,j}^*)}_{j\in[d_{r}]}\circ\cdots
        \circ \del[1]{\net(\btheta_{1,j}^*)}_{j\in[d_{2}]}
    \end{align*}
satisfies, by \cref{lemma:composite_err},  
    \begin{align*}
      \norm{f^\star- f_{\text{net,comp}} }_\infty
      &\le \fc_3'\sum_{\ell=1}^{r}\del{2^{-\tiK}+\tiM^{-2(\beta_\ell/s_\ell)}}^{\prod_{h=\ell+1}^r(\beta_h\wedge 1)}\\
      &\le \fc_3'\sum_{\ell=1}^{r}\del{2^{-\tiK\prod_{h=\ell+1}^r(\beta_h\wedge 1)}+\tiM^{-2(\beta_\ell/s_\ell)\prod_{h=\ell+1}^r(\beta_h\wedge 1)}}\\
      &\le r\fc_3' \del{\e^{-\fc_4'\tiK}+\tiM^{-2\max_{\ell\in[r]}(\beta_{\ge \ell}/s_\ell)}}
    \end{align*}
for some absolute constants $\fc_3'>0$ and $\fc_4'>0$, where we use the inequality $(x+y)^{u} \le x^u+y^u$ for $x,y\in\R_+$ and $u\in(0,1]$ for deriving the second inequality. Furthermore, by parallelizing the neural networks $(\net(\btheta_{\ell,j}^*))_{j\in[d_{\ell+1}]}$ for $\ell\in[r]$ and staking these $r$ parallelized neural networks by  \cref{lemma:nn_composite}, we have the network parameter $\btheta^*$ with depth $K:=r\tiK$,  width $M:=2\tiM\max_{2\le \ell\le r-1}d_\ell $ and magnitude bound $B$ such that $\net(\btheta^*)\equiv f_{\text{net,comp}}$, which completes the proof.
\end{proof}

\bibliographystyleS{plainnat}
\bibliographyS{_references}

\end{appendices}

\end{document}